\numberwithin{figure}{section}
\numberwithin{equation}{section}
\numberwithin{equation}{section}
\newtheorem{theorem}{Theorem}[section]
\newtheorem{proposition}[theorem]{Proposition}
\newtheorem{corollary}[theorem]{Corollary}
\newtheorem{lemma}[theorem]{Lemma}
\newtheorem{conjecture}[theorem]{Conjecture}
\newtheorem{question}[theorem]{Question}
\theoremstyle{definition}
\newtheorem{remark}[theorem]{Remark}
\newtheorem{example}[theorem]{Example}
\newtheorem{definition}[theorem]{Definition}
\newtheorem{notation}[theorem]{Notation}
\DeclareMathOperator{\ext}{\scalebox{1.2}{$\mathsf{\Lambda}$}}
\author{Kazuo Habiro}
\address{Department of Mathematics, Kyoto University, Kyoto 606-8502, Japan}
\email{habiro@math.kyoto-u.ac.jp}
\author{Anderson Vera}
\address{Department of Mathematics, Kyoto University, Kyoto 606-8502, Japan}
\email{aaveraa@unal.edu.co}
\subjclass[2010]{14J50, 20F14, 20F28}
\keywords{Mapping class groups of surfaces, Goeritz group, Automorphism groups of free groups, Johnson filtrations, Johnson homomorphisms.}
\date{\today}
\title{Double Johnson filtrations for mapping class groups}
\newcommand\nc{\newcommand}
\nc\rnc{\renewcommand}
\nc\np{\newpage}
\nc\no[1]{}
\nc\ho{{\hat\otimes }}
\nc\zzzcolon {\colon\thinspace}
\nc\zzzsharp {\sharp}
\nc\plim{\varprojlim}
\nc\modone {{\mathbf 1}}
\nc\g{{\mathfrak g}}
\nc\modk {{\mathbf k}}
\nc\modL {{\mathcal L}}
\nc\modN {{\mathbb N}}
\nc\modQ {{\mathbb Q}}
\nc\modR {{\mathcal R}}
\nc\R{{\mathbb R}}
\nc\modZ {{\mathbb Z}}
\nc\Z{\modZ}
\nc\ul{\underline}
\nc\modF {\mathcal{F}}
\nc\la{\langle}
\nc\ra{\rangle}
\nc\lala{\la\negthinspace\la}
\nc\rara{\ra\negthinspace\ra}
\nc\Der{\opn{Der}}
\nc\btau{\bar{\tau }}
\nc\bG{\bar{G}}
\nc\sh{\opn{\sharp}}
\nc\fa{\ \text{for all }}
\nc\lex{{\mathrm{lex}}}
\nc\bK{\bar{K}}
\nc\bL{\bar{L}}
\nc\bX{\bar{X}}
\nc\bY{\bar{Y}}
\nc\opn{\operatorname}
\nc\hide[1]{}
\nc\gr{\mathrm{gr}}
\nc\bubu{{\bu,\bu}}
\nc\tot{\mathrm{tot}}
\nc\trl{\triangleleft}
\nc\trr{\triangleright}
\nc\simeqto{\overset{\simeq}{\rightarrow }}
\nc\Aut{\opn{Aut}}
\nc\xto[1]{{\overset{#1}{\longrightarrow}}}
\nc\yto[1]{{\underset{#1}{\longrightarrow}}}
\nc\xyto[2]{{\overset{#1}{\underset{#2}{\longrightarrow}}}}
\nc\Tot{\opn{Tot}}
\nc\bu{\bullet}
\nc\ct{\overset{\cong}{\to}}
\nc\A{\mathcal A}
\nc\M{\mathcal M}
\nc\G{\mathcal G}
\rnc\H{\mathcal H}
\nc\FPB{\mathrm{FPB}}
\nc\Q{\mathbb{Q}}
\begin{document}

\begin{abstract}
  We first develop a general theory of Johnson filtrations and Johnson homomorphisms for a group $G$ acting on another group $K$ equipped with a filtration indexed by a ``good'' ordered commutative monoid.  Then, specializing it to the case where the monoid is the additive monoid $\mathbb N^2$ of pairs on nonnegative integers, we obtain a theory of double Johnson filtrations and homomorphisms.  We apply this theory to the mapping class group $\M$ of a surface $\Sigma_{g,1}$ with one boundary component, equipped with the normal subgroups $\bX$, $\bY$ of $\pi_1(\Sigma_{g,1})$ associated to a standard Heegaard splitting of the $3$-sphere.  We also consider the case where the group $G$ is the automorphism group of a free group.
\end{abstract}

\maketitle

\tableofcontents

\section{Introduction}

One of the main objects associated to a compact, connected, oriented manifold is the group of isotopy classes of orientation-preserving self-homeomorphisms. A natural way of studying this group is by considering the canonical action on the fundamental group of the manifold. This paradigm is also present in the study of  the automorphism group of a given group: the former is studied by considering the canonical action on the latter. 

This general setting can be explained  concretely in the following two cases. 

Firstly, let $\Sigma$ be a compact, connected, oriented surface with  one boundary component. The mapping class group  $\mathcal{M}$  of $\Sigma$ acts on the fundamental group $\pi_1(\Sigma)$. In the eighties Johnson \cite{MR718141} started the study of this action by a \emph{step-by-step approximation}: he considered the lower central series $(\Gamma_i\pi_1(\Sigma))_{i\geq 1}$ of $\pi_1(\Sigma)$ and studied the action of $\mathcal{M}$ on the quotients $\pi_1(\Sigma)/\Gamma_i\pi_1(\Sigma)$. This allowed him to define a decreasing sequence
\begin{gather}\label{intro_equ_1}
\mathcal{M}\supset J_1\mathcal{M}\supset J_2\mathcal{M}\supset \cdots
\end{gather}
of normal subgroups of $\mathcal{M}$, which is now called the \emph{Johnson filtration of} $\mathcal{M}$; this approach was also pursued by Morita~\cite{MR1224104}. The subgroup $J_i\mathcal{M}$ consists of the mapping classes acting trivially on   $\pi_1(\Sigma)/\Gamma_{i+1}\pi_1(\Sigma)$. In particular, $\mathcal{I}:=J_1\mathcal{M}$ is the subgroup consisting of mapping classes acting trivially on the homology of the surface. This group, called the \emph{Torelli group} of $\Sigma$,  plays an important role in the algebraic structure of $\mathcal{M}$ as well as in the study of homology spheres, see \cite{MR1014464,MR1133875,MR1617632}. 

Secondly, let $F_n$ denote the free group on $n$ generators and $\mathcal{A}:=\mathrm{Aut}(F_n)$ the group of automorphisms of $F_n$. The \emph{Andreadakis-Johnson} filtration of $\mathcal{A}$ is the decreasing sequence 
\begin{gather}\label{intro_equ_2}
\mathcal{A}\supset \mathcal{IA}_1\supset \mathcal{IA}_2\supset \cdots
\end{gather}
of normal subgroups of $\mathcal{A}$. The subgroup $\mathcal{IA}_i$ consists of the automorphisms acting trivially on   $F_n/\Gamma_{i+1}F_n$. The subgroup $\mathcal{IA}:=\mathcal{IA}_1$, called \emph{$IA$-automorphism group of $F_n$},  consists of the automorphisms acting trivially on the abelianization of $F_n$.

Each term $J_n\mathcal{M}$ of the Johnson filtration  is equipped with a group homomorphism
$$\tau_n: J_n\mathcal{M}\longrightarrow \mathrm{Der}_n(\mathfrak{Lie}(H_1(\Sigma)))$$ taking values in the abelian group $\mathrm{Der}_n(\mathfrak{Lie}(H_1(\Sigma)))$ of degree $n$ derivations of the graded Lie algebra $\mathfrak{Lie}(H_1(\Sigma))$ freely generated by $H_1(\Sigma)$ in degree~$1$. The homomorphism $\tau_n$ is called the \emph{$n$-th Johnson homomorphism} and it satisfies $\mathrm{ker}(\tau_n)=J_{n+1}\mathcal{M}$. There are similar homomorphisms for the Andreadakis-Johnson filtration~\eqref{intro_equ_2}. These homomorphisms are useful tools for studying the algebraic structure of~$\mathcal{M}$ and~$\mathcal{A}$, see the surveys \cite{satoh, MR3380338} for further details and references.

The underlying abstract setting in the above parallel situations is a group $G$ acting on another group $K$ which is endowed with a \emph{filtration}   $(K_i)_{i\geq 1}$  (called \emph{N-series} in the literature) such that the action preserves the filtration. 

In \cite{HM} Massuyeau and the first  author  considered the case of a group $G$ acting on another group $K$ which is endowed with
an \emph{extended N-series}, which we will call \emph{$\mathbb N$-filtration} below, such that the action preserves the filtration. 

In this paper, we develop a more general theory of Johnson filtrations and homomorphisms for a group~$G$ acting on another group $K$
endowed with a filtration indexed by a more general ordered commutative monoid.
If the indexing monoid is the additive monoid $\mathbb{N}=\{0,1,2,\ldots\}$, we recover the notion of an extended N-series.
Besides, if we consider the additive monoid $\mathbb{N}^2$
of pairs of nonnegative integers we obtain a theory of double Johnson
filtrations and homomorphisms which can be applied to mapping class
groups and to automorphism groups of free groups.

\subsection{\texorpdfstring{$\Lambda$}{L}-filtered groups and \texorpdfstring{$\Lambda_+$}{L+}-graded Lie algebras}

By a \emph{good ordered commutative monoid} we mean a commutative monoid $(\Lambda,+,0)$ together with a partial order $\leq$ which is compatible with the addition and such that the zero element $0$ for the addition is the smallest element, see Section~\ref{sec_1} for the precise definition. For instance, the additive monoid $\mathbb{N}=\{0,1,2,\ldots\}$ with the usual order is a good   ordered commutative monoid. Similarly, for each integer $r\geq 0$ the direct product $\mathbb{N}^r=\mathbb{N}\times \cdots \times \mathbb{N}$ with component-wise addition and order is also an example of good ordered commutative monoid.  

Let $\Lambda$ be a good ordered commutative monoid and $\Lambda_+:=\Lambda\setminus \{0\}$.  A \emph{$\Lambda$-filtration} of a group $K$ is a family $K_* = (K_{\lambda})_{\lambda\in\Lambda}$ of normal subgroups of $K$ such that $K_0=K$, $K_{\lambda} \supset K_{\mu}$ for $\lambda\leq \mu$  and $[K_{\lambda}, K_{\mu}]\subset K_{\lambda + \mu}$ for $\lambda,\mu\in\Lambda$. A \emph{$\Lambda$-filtered group} $K_*$ is a group endowed with a $\Lambda$-filtration.

A \emph{$\Lambda_+$-graded Lie algebra} is a Lie algebra $L$ over
$\mathbb{Z}$ equipped with a direct sum decomposition {$L=\bigoplus_{\lambda \in \Lambda _+}L_\lambda$} such that the Lie bracket satisfies
$[L_\lambda ,L_\mu ]\subset L_{\lambda +\mu }$ for $\lambda ,\mu \in \Lambda _+$.

To each $\Lambda$-filtered group $K_*$ we associate a $\Lambda_+$-graded Lie algebra $\bar{K} := \bigoplus _{\lambda\in\Lambda_+} \bar{K}_{\lambda}$. Here we set
$\bar{K}_{\lambda} := K_{\lambda}/K_{> \lambda}$, where $K_{> \lambda}$ denotes the (normal) subgroup of $K$ generated by $\bigcup_{\mu >\lambda }K_\mu $.  The Lie bracket in $\bar{K}$ is induced by the commutator operation in $K$ (see Proposition~\ref{r24}).

\subsection{Johnson filtration and Johnson homomorphisms}

Let $K_*$ be a $\Lambda $-filtered group.  Let $G$ be a group acting on the
group $K_0$.  For each $\lambda \in \Lambda $, define  $G_\lambda \subset G$ by
\begin{gather*}
  G_\lambda =\{g\in G\mid g^{\pm 1}(k)k^{-1}\in K_{\lambda +\mu }\quad \text{for all $\mu \in\Lambda$ and $k\in K_{\mu}$}\}.
\end{gather*}

We show that    $G_* =(G_\lambda )_{\lambda \in \Lambda }$ is a $\Lambda $-filtration for $G_0$ \emph{acting} on $K_*$ (see Proposition~\ref{r14}).

Let $\bar{K}$ be the $\Lambda_+$-graded Lie algebra associated to $K_*$. For $\lambda\in\Lambda_+$ we denote by $\mathrm{Der}_{\lambda}(\bar{K})$ the abelian group of derivations of $\bar{K}$ of degree $\lambda$ (derivations $d:\bar{K}\rightarrow \bar{K}$ such that $d(\bar{K}_{\mu})\subset \bar{K}_{\lambda+\mu}$ for all $\mu\in\Lambda_+$). We show that for each $\lambda \in \Lambda _+$, there is a group homomorphism
  \begin{gather}\label{intro_equ_3}
    \tau _\lambda \zzzcolon G_\lambda \longrightarrow \mathrm{Der}_\lambda (\bK)
  \end{gather}
such that $G_{>\lambda}\subset\mathrm{ker}(\tau_{\lambda})$. We call it the \emph{$\lambda$-th Johnson homomorphism} (see Proposition~\ref{r39}). Let  $\bar{G}$ be the  $\Lambda_+$-graded Lie algebra associated to  $G_*$,  the family of Johnson homomorphisms \eqref{intro_equ_3} induces an injective morphism of $\Lambda _+$-graded Lie algebras
\begin{gather*}
  \bar{\tau}\zzzcolon \bG\longrightarrow \bigoplus_{\lambda\in\Lambda_+}\mathrm{Der}_{\lambda}(\bK),
\end{gather*}
which we call \emph{the Johnson homomorphism} of $G_*$ with respect to its action on $K_*$ (see Proposition~\ref{r23}).

\subsection{Double lower central series}

If we specialize the above general theory to the monoid $\mathbb{N}^2$ of pairs of nonnegative integers we obtain a theory of double filtrations and double Johnson homomorphisms. In order to apply this theory to  the mapping class group~$\mathcal{M}$ of the surface~$\Sigma$  or to the group of automorphisms~$\mathcal{A}$ of the free group~$F_n$, we need to endow~$\pi_1(\Sigma)$ or~$F_n$ with a $\modN^2$-filtration. This motivates the following notion.   Let~$\bar{X}$ and~$\bar{Y}$ be two normal subgroups of a group~$K$. The \emph{double lower central series} of the triple $(K; \bar{X}, \bar{Y})$ is the $\modN^2$-filtration $(K_{m,n})_{(m,n)\in\modN^2}$ of~$K$ defined inductively by
\begin{gather*}
    K_{0,0}= K,\quad K_{m,0}=\Gamma_m \bX \text{\ \  for } m\ge 1,\quad K_{0,n}=\Gamma_n\bY  \text{\ \ for } n\ge 1 \quad \text{ and }\\
    K_{m,n}=[K_{1,0},K_{m-1,n}]\;[K_{0,1},K_{m,n-1}] \quad \text{ for \ \ } m,n\ge 1.
\end{gather*}

 This is the fastest decreasing $\modN ^2$-filtration of $K$ satisfying $K_{1,0}=\bX$ and $K_{0,1}=\bY$. We are mainly interested in the case where $K=\langle x_1,\ldots,x_p,y_1,\ldots,y_q \rangle$ is a free group and $\bar{X}=\langle\langle x_1,\ldots,x_p\rangle\rangle_K$  (resp. $\bar{Y}=\langle\langle y_1,\ldots,y_q\rangle\rangle_K$) is the normal closure in $K$ of  $\{x_1,\ldots,x_p\}$ (resp. $\{y_1,\ldots, y_q\}$). In this case we can determine explicitly the associated $\modN^2_+$-graded Lie algebra $\bar{K}$ of the double lower central series of the triple $(K;\bar{X},\bar{Y})$. We obtain that it is isomorphic to the $\modN^2_+$-graded  Lie algebra $\mathfrak{Lie}(A,B)$ freely generated by $A$ in degree $(1,0)$ and $B$ in degree $(0,1)$, where $A$ (resp. $B$) is a free abelian group of rank $p$ (resp. $q$) (see Theorem~\ref{r51}).

\subsection{Double filtration for the mapping class group}
We endow  $\pi_1(\Sigma)$ with a $\modN^2$-filtration as follows. Consider the standard Heegaard splitting of the $3$-sphere, i.e., the decomposition $S^3=V\cup V'$ where $V$ and $V'$ are handlebodies of the same genus (which is the same as the genus of the surface $\Sigma$). Fix a disk $D\subset \partial V=\partial V'$. Hence, $\partial V=\partial V'=\Sigma\cup D$. We have two natural embeddings $\iota\zzzcolon\Sigma\rightarrow V$ and $\iota'\zzzcolon\Sigma\rightarrow V'$ which determine normal  subgroups of $\pi_1(\Sigma)$ defined as follows. 
\begin{gather*}
\bar{X}:=\mathrm{ker}\big(\pi_1(\Sigma)\xrightarrow{\  \iota_{\#}\  }\pi_1(V)\big) \quad \text{ and } \quad
\bar{Y}:=\mathrm{ker}\big(\pi_1(\Sigma)\xrightarrow{\  \iota'_{\#}\  }\pi_1(V')\big),
\end{gather*}
where $\iota_{\#}$ and $\iota'_{\#}$ denote the induced maps in homotopy.

The subgroup of $\mathcal{M}$ given by
\begin{gather*}
\mathcal{G}=\big\{h\in\mathcal{M}\ | \ h_{\#}(\bar{X})= \bar{X} \text{ and } h_{\#}(\bar{Y})= \bar{Y}\big\}
\end{gather*}
can be identified with the group of isotopy classes of orientation-preserving homeomorphism $h\zzzcolon {S}^3\rightarrow {S}^3$ such that $h(\partial V)=\partial V$ and $h(D)=D$ or equivalently  the  group of isotopy classes of orientation-preserving homeomorphism $h\zzzcolon {S}^3\rightarrow {S}^3$ preserving the standard Heegaard splitting $S^3=V\cup V'$ and $h(D)=D$. This group is called  the \emph{Goeritz group of~${S}^3$} \emph{(relative to the disk~$D$)}. It acts on $\pi_1(\Sigma)$  preserving the double lower central series  of the triple $(\pi_1(\Sigma); \bX, \bY)$. Therefore we obtain the double Johnson filtration $(\mathcal{G}_{m,n})_{(m,n)\in\modN^2}$ of the Goeritz group: 
 \begin{gather*}
\begin{matrix}
\G_{0,0}&\supset&\G_{0,1}&\supset&\G_{0,2}&\supset&\dots\\
\cup&&\cup&&\cup&&\\
\G_{1,0}&\supset&\G_{1,1}&\supset&\G_{1,2}&\supset&\dots\\
\cup&&\cup&&\cup&&\\
\G_{2,0}&\supset&\G_{2,1}&\supset&\G_{2,2}&\supset&\dots\\
\cup&&\cup&&\cup&&\\
\vdots&&\vdots&&\vdots&&
\end{matrix}
  \end{gather*}
We extend the above double filtration to obtain the double Johnson filtration $(\M_{m,n})_{m,n\geq -1}$ for the whole mapping class group:
  \begin{gather*}
\begin{matrix}
\M_{-1,-1}&\supset&\M_{-1,0}&\supset&\M_{-1,1}&\supset&\dots\\
\cup&&\cup&&\cup&&\\
\M_{0,-1}&\supset&\M_{0,0}&\supset&\M_{0,1}&\supset&\dots\\
\cup&&\cup&&\cup&&\\
\M_{1,-1}&\supset&\M_{1,0}&\supset&\M_{1,1}&\supset&\dots\\
\cup&&\cup&&\cup&&\\
\vdots&&\vdots&&\vdots&&
\end{matrix}
  \end{gather*}
which satisfies $\M_{m,n}=\G_{m,n}$ for $(m,n)\in\modN^2$ and $\M_{-1,-1}=\M$. The double Johnson filtration of the mapping class group is related with its usual Johnson filtration \eqref{intro_equ_1}  as follows. For $m,n\geq -1$ with $m+n\geq 1$ we have $\M_{m,n}\subset J_{m+n}\M$. 

We identify the leading terms of the double Johnson  filtration of $\M$. The groups $\M_{0,-1}$ and $\M_{-1,0}$ are the mapping class groups (relative to the disk $D$) of $V$ and $V'$ respectively, (see Proposition~\ref{prop_leading}). The groups $\M_{1,-1}$ and $\M_{-1,1}$ are the \emph{Twist groups} (relative to the disk $D$) of  $V$ and $V'$ respectively, (see Proposition~\ref{prop2_8}). 

We also obtain an interesting decomposition of the Torelli group:
\begin{gather*}
\mathcal{I} = \mathcal{M}_{2,-1}\cdot \mathcal{M}_{1,0}\cdot \mathcal{M}_{0,1}\cdot \mathcal{M}_{-1,2}\cdot \mathcal{K},
\end{gather*}
where $\mathcal{K}:=J_2\M$ is the so-called \emph{Johnson subgroup} of $\M$ (see Theorem~\ref{r46}).

Besides, the family of Johnson homomorphisms obtained for $(\G_{m,n})_{(m,n)\in\modN^2_+}$ can be extended to the double Johnson filtration of $\M$. In particular,  for $m,n\geq -1$ with $m+n\geq 1$ we have group homomorphisms
\begin{gather*}\tau_{m,n}:\mathcal{M}_{m,n}\longrightarrow D_{m,n}(A,B)
\end{gather*}
such that $$\mathcal{M}_{m+1,n}\cdot \mathcal{M}_{m,n+1}\subset \mathrm{ker}(\tau_{m,n}).$$
Moreover, these homomorphisms are compatible with the usual Johnson homomorphisms: there is a diagram
$$
\xymatrix{\mathcal{M}_{m,n}\ar[r]^{\subset}\ar[d]_{\tau_{m,n}} & J_{m+n}\mathcal{M} \ar[d]^{\tau_{m+n}} \\
						D_{m,n}(A,B)\ar[r]^{j} & D_{m+n}(H_1(\Sigma))}
$$
which is commutative (see Proposition~\ref{r55} and~\ref{r56}). Here $A$ and $B$ are subgroups of $H_1(\Sigma)$ such that $H_1(\Sigma)=A\oplus B$ and $D_{m,n}(A,B)$ and $D_{m+n}(H_1(\Sigma))$ are certain abelian groups.

As a corollary of the above results we obtain that for $h\in\mathcal{I}$  there exist
$$h_1\in\mathcal{M}_{2,-1}, \quad h_2\in\mathcal{M}_{1,0}, \quad h_3\in\mathcal{M}_{0,1}, \quad h_4\in\mathcal{M}_{-1,2}$$ such that
$$\tau_1(h)=\tau_{2,-1}(h_1) + \tau_{1,0}(h_2) + \tau_{0,1}(h_3) + \tau_{-1,2}(h_4).$$

\subsection{The case of the group of automorphisms of a free group}

We can develop a parallel theory of double filtrations for the automorphism group of a free group. Let $F=F_{p,q}=\langle x_1,\ldots,x_p,y_1,\ldots,y_q\rangle$ be the free group on $p+q$ generators. We  define a double filtration $(\mathcal{A}_{i,j})_{i,j\geq -1}$ for $\mathcal{A}=\mathrm{Aut}(F)$ which we call \emph{double Andreadakis-Johnson filtration}. This double filtration satisfies properties similar to those satisfied by the double Johnson filtration of the mapping class group.  

In particular in Theorem~\ref{thm_free} we show that if $\mathcal{IA}:=\mathrm{ker}\left(\mathcal{A}\rightarrow \mathrm{Aut}(F/[F,F])\right)$ is the \emph{IA-automorphism group} of $F$, then we have
$$\mathcal{IA} = \mathcal{A}_{2,-1}\cdot \mathcal{A}_{1,0}\cdot \mathcal{A}_{0,1}\cdot \mathcal{A}_{-1,2}.$$

\subsection{Organization of the paper}

We organize the rest of the paper as follows. In Section~\ref{sec_1} we develop the general theory of filtrations indexed by ordered commutative monoids.    In Section~\ref{sec_2} we specialize  the general theory to the monoid $\modN^2$, in particular we study the double lower central series of a free group. Section~\ref{sec_three} deals with the double filtration of the mapping class group and the group of automorphisms of a free group. Finally, Section \ref{sec_4} is devoted to the study of the double Johnson homomorphisms and their relation with the usual Johnson homomorphisms as well as with the alternative Johnson homomorphisms.

\bigskip
\medskip

\textbf{Acknowledgements.} 
The work of K.H. is supported by JSPS KAKENHI Grant Number 18H01119. 
The work of A.V. is supported by the JSPS Postdoctoral Fellowship for Research in Japan ID~No.~PE19728.
{The authors thank  Quentin Faes, Richard Hain, Mai Katada, Peter Lambert-Cole, and Gw\'ena\"el Massuyeau for helpful comments.}

\section{General filtrations for groups}\label{sec_1}

In this section, we define filtrations of groups indexed by {\em good
  ordered commutative monoids}.

\subsection{Preliminaries and Notations}\label{sec_1.1}

For any elements $a$ and $b$ in a group, set
\begin{gather*}
  [a,b]=aba^{-1}b^{-1}\quad \text{and} \quad {}^a b=aba^{-1}.
\end{gather*}
We will freely use the following identities.
\begin{multicols}{2}
\noindent
\begin{gather}
  \label{e4}
  [a,b]^{-1}=[b,a],\\
  \label{e15}
  [ab,c]={}^a[b,c]\cdot[a,c],
\end{gather}
\columnbreak
\begin{gather}
  \label{e16}
  [a,bc]=[a,b]\cdot{}^b[a,c],\\
  \label{e17}
  [[a,b],{}^bc]\cdot[[b,c],{}^ca]\cdot[[c,a],{}^ab]=1.
\end{gather}
\end{multicols}

For subgroups $S$ and $T$ of a group $G$, we denote by $[S,T]$ the subgroup of $G$ generated by commutators $[s,t]$ with $s\in S$ and $t\in T$. Let $G'$ be a normal subgroup of $G$ and $a,b\in G$, when we write \mbox{$a\equiv b\pmod{G'}$} we mean
that $ab^{-1}\in G'$. If the group $G$ acts on a group~$K$, we regard $K$ and $G$ as subgroups of the semidirect product $K\rtimes G$. In particular, if $g\in G\subset K\rtimes G$ and $k\in K\subset
K\rtimes G$, then we have $[g,k]=g(k)k^{-1}{\in K}$ {and ${}^g k=g(k)\in K$.}

Recall the three subgroups lemma.
\begin{lemma}\label{tsl}
Let $G$ be a group. Let $A$, $B$, $C$ and $N$ be subgroups of $G$ with $N$ a normal subgroup. If $[A,[B,C]]\leq N$ and $[B, [C,A]]\leq N$, then $[[A,B],C]\leq N$. 
\end{lemma}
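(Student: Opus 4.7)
The plan is to apply the Hall--Witt identity \eqref{e17}, namely
$$[[a,b],{}^bc]\cdot[[b,c],{}^ca]\cdot[[c,a],{}^ab]=1,$$
to arbitrary $a\in A$, $b\in B$, $c\in C$. The strategy is to use the two hypotheses to show that the second and third factors on the left-hand side lie in the normal subgroup $N$, deduce from the identity that the first factor lies in $N$ as well, and then extract $[[a,b],c]\in N$ from this first factor by stripping away a conjugation.

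The algebraic tool I will use repeatedly is the conjugation identity $[u,{}^vw]={}^v[u^v,w]$, where $u^v:=v^{-1}uv$, which follows from a short direct computation. Applied to the second Hall--Witt factor it gives $[[b,c],{}^ca]={}^c[[b,c]^c,a]$, and the identity $[b,c]^c=c^{-1}bcb^{-1}=[c^{-1},b]\in[B,C]$ shows that $[[b,c]^c,a]\in[[B,C],A]=[A,[B,C]]\subseteq N$; by normality of $N$ this yields $[[b,c],{}^ca]\in N$. An entirely parallel argument, using $[c,a]^a=[a^{-1},c]\in[C,A]$ and the hypothesis $[B,[C,A]]\subseteq N$, shows $[[c,a],{}^ab]\in N$. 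The Hall--Witt identity then forces $[[a,b],{}^bc]\in N$, and one further application of the conjugation trick together with $[a,b]^b=[b^{-1},a]$ gives $[[b^{-1},a],c]\in N$ for every $a\in A$, $b\in B$, $c\in C$. Since $b\mapsto b^{-1}$ is a bijection of $B$, this is equivalent to $[[b,a],c]\in N$ for all admissible triples; and the identity $[x^{-1},c]={}^{x^{-1}}[c,x]$ together with normality of $N$ lets me pass to $[[a,b],c]\in N$.

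It remains to upgrade this from the generators to all of $[[A,B],C]$. By definition this subgroup is generated by commutators $[w,c]$ with $w\in[A,B]$ and $c\in C$, and every such $w$ is a finite product of terms $[a_i,b_i]^{\pm1}$; repeated use of the expansions \eqref{e15} and \eqref{e16}, combined with normality of $N$, writes $[w,c]$ as a product of conjugates of the generating commutators $[[a_i,b_i]^{\pm1},c]$, each of which already lies in $N$. There is no conceptual obstacle in this proof; the only thing to be careful about is the bookkeeping of conjugations, so that the terms appearing on the right-hand side of the Hall--Witt identity match the hypothesized subgroups $[A,[B,C]]$ and $[B,[C,A]]$ exactly.
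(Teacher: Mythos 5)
The paper does not actually prove Lemma~\ref{tsl}; it only \emph{recalls} the three subgroups lemma as a known fact, so there is no in-paper proof to compare against. Your argument is the standard proof via the Hall--Witt identity, which is precisely the identity \eqref{e17} the paper records in its preliminaries, so this is clearly the intended route. I checked the details and they are correct: the conjugation identity $[u,{}^v w]={}^v[u^v,w]$ is right, the computations $[b,c]^c=[c^{-1},b]\in[B,C]$, $[c,a]^a=[a^{-1},c]\in[C,A]$, $[a,b]^b=[b^{-1},a]$ are all right, the two hypotheses plus normality of $N$ do kill the second and third Hall--Witt factors, and the passage from $[[a,b],{}^bc]\in N$ to $[[a,b],c]\in N$ and then from generators to the full subgroup $[[A,B],C]$ via the expansions \eqref{e15}/\eqref{e16} (together with normality to absorb conjugates and inverses) is sound.
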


For a group $G$, we denote by $(\Gamma_n G)_{n\geq 1}$ its lower central series, which is defined by $\Gamma_1 G = G$ and $\Gamma_{n+1}G =[\Gamma_n G, G]$ for $n\geq 1$.

\subsection{Good ordered commutative monoid}\label{sec_1.2}

By an \emph{ordered commutative monoid} $\Lambda $, we mean a commutative
monoid $(\Lambda ,+,0)$ with a partial order $\le $ on the set $\Lambda $ such that
\begin{itemize}
\item if $\lambda \le \lambda '$ then $\lambda +\mu \le \lambda '+\mu $.
\end{itemize}
We call $\Lambda $ {\em good} if the following conditions are satisfied:
\begin{itemize}
\item $0\le \lambda $ for all $\lambda \in \Lambda $,
\item If $\lambda <\lambda '$ then $\lambda +\mu <\lambda '+\mu $.
\end{itemize}

\begin{example}
  \label{r21}
  Here are some examples of good ordered commutative monoids.
  
  \begin{enumerate}
  \item $\{0\}$.
  \item $\modN =\{0,1,\ldots \}$ with the usual addition and order.
  \item (Direct product) For two good ordered commutative monoids $\Lambda $ and $\Lambda '$, the
    direct product monoid $\Lambda \times \Lambda '$ has a structure of good ordered
    commutative monoid by defining the order on $\Lambda \times \Lambda '$ by
    \begin{gather*}
      (\lambda ,\lambda ')\le (\mu ,\mu ')\quad \text{if and only if}\quad \lambda \le \mu \text{ and } \lambda '\le \mu '.
    \end{gather*}
    Similarly, we can define direct products $\Lambda _1\times \dots \times \Lambda _n$ of good
    ordered commutative monoids $\Lambda _1,\ldots ,\Lambda _n$.  Consequently, for each
    integer $r\ge 0$, $\modN ^r=\modN \times \dots \times \modN $ has a structure of a good ordered commutative
    monoid.
  \item Any submonoid $\Lambda '$ of a good ordered commutative monoid $\Lambda $
    inherits a good ordered commutative monoid structure from $\Lambda $.
    For example, $\{(a,b)\in \modN ^2\mid 0\le b\le a\}$ is a good ordered
    commutative monoid.
  \item (Lexicographic product)  For good ordered
    commutative monoids $\Lambda $ and $\Lambda '$, define the lexicographic order $\le _\lex$ on the direct product monoid
    $\Lambda \times \Lambda '$ by
    \begin{gather*}
      (\lambda ,\lambda ')\le _\lex (\mu ,\mu ')\quad \text{if and only if}\quad
      \text{either $\lambda <\mu$ or $\lambda =\mu$, $\lambda ' \le \mu '$}.
    \end{gather*}
    This gives $\Lambda \times \Lambda '$ a good order.  This is called the {\em
      lexicographic product} of~$\Lambda $ and~$\Lambda '$, and it is denoted by
    $\Lambda \times _\lex\Lambda '$.
  \item Let $\omega$ be the smallest infinite ordinal ($\omega=\modN$). The lexicographic product $\modN ^{2,\lex}:=\modN \times _\lex\modN $ is
    isomorphic, as a poset, to the ordinal
    \begin{gather*}
      \begin{split}
	\omega^2
	&=\{0,1,2,\ldots ,\omega,\omega+1,\ldots ,\omega+\omega=\omega2,\omega2+1,\ldots ,\omega3,\ldots \}\\
	&=\{\omega p+q\mid p,q\in \omega\}.
      \end{split}
    \end{gather*}
    Under this identification $\modN ^{2,\lex}\simeq\omega^2$, the addition
    is given by $(\omega p+q)+(\omega p'+q')=\omega(p+p')+(q+q')$.  {This operation is called the
    \emph{natural sum} or \emph{the Hessenberg sum}.  See also Section
    \ref{sec:transfinite-calculus}.  Similarly, $\modN ^{n,\lex}=\modN \times _\lex\dots \times _\lex\modN $ is identified with
    the ordinal $\omega^n$ for $n\in \modN $.}  
  \item Here are some examples of good orders on the additive monoid
    $\modN ^2$: $(a,b)\le (a',b')$ if and only if
    \begin{itemize}
    \item $a\le a'$ and $b\le b'$, (usual)
    \item either $a<a'$ or $a=a'$, $b\le b'$, (lexicographic)
    \item either $(a,b)=(a',b')$ or $a+b<a'+b'$.  (anti-diagonal or
      total)
    \item $a\le a'$ and $a+b\le a'+b'$
    \item either $a+b=a'+b'$, $a\le a'$ or $a+b<a'+b'$.
    \end{itemize}
  \item The above examples of good order commutative monoids are {\em
    well-founded} ones, i.e., those with well-founded partial order (any non-empty subset has a minimum element). However, there are not-well-founded examples such as $\modQ _{\ge 0}$ or
    $\mathbb R_{\ge 0}$. Notice that the ordered commutative monoid $(\mathbb{Z},+, 0)$ is not good.
  \end{enumerate}
\end{example}

\subsection{\texorpdfstring{$\Lambda$}{L}-filtered groups}\label{sec_1.3}

Let $\Lambda$ be a ordered commutative monoid and let $K$ be a group.  By a \emph{$\Lambda $-filtration} of $K$ we mean
a family $K_*=(K_\lambda )_{\lambda \in \Lambda }$ of normal subgroups $K_\lambda $ of $K$ such
that
\begin{itemize}
\item $K_0=K$,
\item if $\lambda \le \mu $, then $K_\lambda \supset K_\mu $,
\item $[K_\lambda ,K_\mu ]\subset K_{\lambda +\mu }$ for all $\lambda ,\mu \in \Lambda $.
\end{itemize}
We call a group with a $\Lambda $-filtration a {\em $\Lambda $-filtered group}.

If $\Lambda =\modN $, then the notion of $\modN $-filtration coincides with that of
\emph{extended N-series} in the sense of \cite{HM}.

For each $\lambda \in \Lambda$, let $K_{>\lambda }$ denote the subgroup of $K$ generated
by $\bigcup_{\mu >\lambda }K_\mu $.  Since each~$K_\mu $ is normal in~$K$, so is
$K_{>\lambda }$.  We also have $K_{>\lambda }\subset K_\lambda $.

We have the following.
\begin{lemma}
  \label{r22}\label{r44c}
  Let $\Lambda $ be a good ordered commutative monoid, and let $K_*$ be a
  $\Lambda $-filtered group.  For \mbox{$a,a'\in K_\lambda$}, $b,b'\in K_\mu $, $c\in K_\nu $,
  $\lambda ,\mu ,\nu \in \Lambda $, $\lambda ,\mu ,\nu >0$, we have
\begin{gather}
  \label{e23}[aa',b]\equiv[a,b]\cdot[a',b]\pmod{K_{>\lambda +\mu }},\\
  \label{e20}[a,bb']\equiv[a,b]\cdot[a,b']\pmod{K_{>\lambda +\mu }},\\
  \label{e21}[[a,b],c]\cdot[[b,c],a]\cdot[[c,a],b]\equiv1\pmod{K_{>\lambda +\mu +\nu }},\\
  \label{e45}  [a^{-1},b]\equiv [a,b^{-1}]\equiv [a,b]^{-1} \pmod{K_{>\lambda+\mu}}.
\end{gather}
\end{lemma}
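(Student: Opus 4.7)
The plan is to deduce all four congruences from the basic commutator identities~\eqref{e15}, \eqref{e16}, \eqref{e17} by absorbing ``correction terms'' into $K_{>\lambda+\mu}$ (or $K_{>\lambda+\mu+\nu}$). Goodness of $\Lambda$ enters through the strict inequality $\rho+\sigma>\rho$ whenever $\sigma>0$, which guarantees $K_{\rho+\sigma}\subset K_{>\rho}$. I will first record an auxiliary \emph{conjugation lemma}: if $g\in K_\alpha$ and $x\in K_\beta$ with $\alpha>0$, then ${}^g x=[g,x]\cdot x\equiv x\pmod{K_{\alpha+\beta}}$, which is immediate from the filtration axiom $[K_\alpha,K_\beta]\subset K_{\alpha+\beta}$. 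A second preliminary, needed to reorder products, is that the quotient $K_\rho/K_{>\rho}$ is abelian for every $\rho>0$, because $[K_\rho,K_\rho]\subset K_{2\rho}\subset K_{>\rho}$.

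For \eqref{e23}, I apply~\eqref{e15} to obtain $[aa',b]={}^a[a',b]\cdot[a,b]$, use the conjugation lemma (with $\alpha=\lambda>0$ and $x=[a',b]\in K_{\lambda+\mu}$) to replace ${}^a[a',b]$ by $[a',b]$ modulo $K_{>\lambda+\mu}$, and then commute the two factors in the abelian quotient $K_{\lambda+\mu}/K_{>\lambda+\mu}$. Identity~\eqref{e20} is entirely symmetric, starting from~\eqref{e16}. For~\eqref{e45}, I specialise~\eqref{e23} to $a'=a^{-1}$, getting $1=[aa^{-1},b]\equiv[a,b]\cdot[a^{-1},b]\pmod{K_{>\lambda+\mu}}$, so $[a^{-1},b]\equiv[a,b]^{-1}$; the congruence $[a,b^{-1}]\equiv[a,b]^{-1}$ follows symmetrically from~\eqref{e20}.

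The remaining identity~\eqref{e21} (a graded Jacobi relation) is the main obstacle and requires a little more care. The plan is to start from the Hall--Witt identity~\eqref{e17} and replace each twisted commutator ${}^b c$, ${}^c a$, ${}^a b$ inside the outer brackets by $c$, $a$, $b$ respectively, modulo $K_{>\lambda+\mu+\nu}$. Concretely, writing ${}^b c=[b,c]\cdot c$ and expanding $[[a,b],{}^b c]$ via~\eqref{e16} produces two error terms: $[[a,b],[b,c]]\in[K_{\lambda+\mu},K_{\mu+\nu}]\subset K_{\lambda+2\mu+\nu}$, and a conjugation correction from $[b,c]\in K_{\mu+\nu}$ acting on $[[a,b],c]\in K_{\lambda+\mu+\nu}$, which lies in $K_{\lambda+2\mu+2\nu}$ by the conjugation lemma. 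Since $\mu,\nu>0$, goodness places both error terms in $K_{>\lambda+\mu+\nu}$. Treating the other two terms of~\eqref{e17} in exactly the same way and multiplying yields~\eqref{e21}. The only real difficulty is the nested bookkeeping; once the conjugation lemma and the abelianness of the graded piece are isolated, each step is mechanical.
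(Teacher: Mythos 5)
Your proof is correct and follows essentially the same route as the paper: the three easy congruences are handled exactly as the paper intends (``follow easily''), and for \eqref{e21} you prove $[[a,b],c]\equiv[[a,b],{}^bc]$ (and cyclic analogues) modulo $K_{>\lambda+\mu+\nu}$ and combine with the Hall--Witt identity~\eqref{e17}, which is precisely what the paper does.
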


\begin{proof}
\eqref{e23}, \eqref{e20} and \eqref{e45} follow easily. To show \eqref{e21} we use
 \begin{gather*}
    [[a,b],c]\equiv [[a,b],{}^bc] \pmod{K_{>\lambda +\mu + \nu }},\quad \quad
    [[b,c],a]\equiv [[b,c],{}^ca] \pmod{K_{>\lambda +\mu + \nu}},
  \end{gather*}
  \begin{gather*}
    [[c,a],b]\equiv [[c,a],{}^ab] \pmod{K_{>\lambda +\mu + \nu}}, 
  \end{gather*}
and \eqref{e17}.
\end{proof}

\begin{lemma}
  \label{r43}
  Let $\Lambda $ be a good ordered commutative monoid, and let $K_*$ be a
  $\Lambda $-filtered group.  For any $\lambda ,\mu \in  \Lambda $, we have
  \begin{gather*}
    [K_\lambda ,K_{>\mu }]\subset K_{>\lambda +\mu },\quad
    [K_{>\lambda },K_\mu ]\subset K_{>\lambda +\mu },\quad
    [K_{>\lambda },K_{>\mu }]\subset K_{>\lambda +\mu }.
  \end{gather*}
\end{lemma}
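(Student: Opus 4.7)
The plan is to prove the three inclusions in order, with the first being the main work and the other two reducing to it. The goodness hypothesis enters through the strict-monotonicity of addition, which is exactly what converts $\nu>\mu$ into $\lambda+\nu>\lambda+\mu$.

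For the first inclusion $[K_\lambda,K_{>\mu}]\subset K_{>\lambda+\mu}$, I would start by observing that $[K_\lambda,K_{>\mu}]$ is generated, as a subgroup, by commutators $[a,b]$ with $a\in K_\lambda$ and $b\in K_{>\mu}$, so it suffices to show each such $[a,b]$ lies in $K_{>\lambda+\mu}$. By definition of $K_{>\mu}$ as the subgroup generated by $\bigcup_{\nu>\mu}K_\nu$, every $b\in K_{>\mu}$ can be written as a finite product $b=b_1 b_2\cdots b_k$ with $b_i\in K_{\nu_i}$ and $\nu_i>\mu$ (each $K_\nu$ is already a subgroup, so no inverses are needed).

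Next, I would iterate the identity \eqref{e16} to obtain the expansion
\begin{gather*}
[a,b_1\cdots b_k]=\prod_{i=1}^{k}{}^{b_1\cdots b_{i-1}}[a,b_i].
\end{gather*}
Each individual commutator $[a,b_i]\in[K_\lambda,K_{\nu_i}]\subset K_{\lambda+\nu_i}$ by the $\Lambda$-filtration axiom. Goodness of $\Lambda$ is invoked here: from $\nu_i>\mu$ one concludes $\lambda+\nu_i>\lambda+\mu$, so $K_{\lambda+\nu_i}\subset K_{>\lambda+\mu}$. Since $K_{>\lambda+\mu}$ is normal in $K$, each conjugate ${}^{b_1\cdots b_{i-1}}[a,b_i]$ remains in $K_{>\lambda+\mu}$, and hence so does the product, giving $[a,b]\in K_{>\lambda+\mu}$.

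The second inclusion $[K_{>\lambda},K_\mu]\subset K_{>\lambda+\mu}$ is then immediate from the first via \eqref{e4}: $[K_{>\lambda},K_\mu]=[K_\mu,K_{>\lambda}]^{-1}\subset K_{>\mu+\lambda}=K_{>\lambda+\mu}$ by commutativity of $\Lambda$. Finally, the third inclusion $[K_{>\lambda},K_{>\mu}]\subset K_{>\lambda+\mu}$ follows from either of the first two, since $K_{>\lambda}\subset K_\lambda$ gives $[K_{>\lambda},K_{>\mu}]\subset[K_\lambda,K_{>\mu}]\subset K_{>\lambda+\mu}$. There is no real obstacle here; the only subtle point is making sure the commutator expansion stays inside the normal subgroup $K_{>\lambda+\mu}$, which is why normality of each $K_\mu$ (and hence of $K_{>\lambda+\mu}$) is needed.
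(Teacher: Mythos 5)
Your proof is correct and follows essentially the same approach as the paper: decompose $b\in K_{>\mu}$ as a product of elements in $K_{\nu_i}$ with $\nu_i>\mu$, expand $[a,b]$ via iterated use of the commutator identity, use goodness to get $\lambda+\nu_i>\lambda+\mu$, and invoke normality of $K_{>\lambda+\mu}$ to absorb the conjugates. The only cosmetic difference is that you spell out how the second and third inclusions reduce to the first, where the paper simply says they follow.
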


\begin{proof}
  We will prove the first inclusion, from which the others follow.

  For $a\in K_\lambda $ and $b\in K_{>\mu }$, we are going to prove
  $[a,b]\in K_{>\lambda +\mu }$.  By the definition of~$K_{>\mu }$, we can express
  $b$ as a product $b=b_1\dots b_p$, where $p\ge 1$ and $b_i\in K_{\mu _i}$,
  $\mu _i>\mu $ for $i=1,\ldots ,p$.  Then
  \begin{gather*}
    [a,b]=[a,b_1\dots b_p]=\prod_{i=1}^p{}^{b_1\dots b_{i-1}}[a,b_i].
  \end{gather*}
  We have $[a,b_i]\in [K_\lambda ,K_{\mu _i}]\subset K_{\lambda +\mu _i}\subset K_{>\lambda +\mu }$ since
  $\lambda +\mu _i>\lambda +\mu $.
  Since $K_{>\lambda +\mu }$ is normal in $K_{>\mu }$, it follows that
  $[a,b]\in K_{>\lambda +\mu }$.
\end{proof}

\subsection{\texorpdfstring{$\Lambda_+$}{L}-graded Lie algebras associated to \texorpdfstring{$\Lambda$}{L}-filtered groups}\label{sec_1.4}

For a good ordered commutative monoid $\Lambda $, let
\begin{gather*}
  \Lambda _+:=\{\lambda \in \Lambda \mid \lambda >0\}=\Lambda \setminus \{0\},
\end{gather*}
which is an \emph{ordered commutative semigroup} (a commutative semigroup equipped with a partial order $\le $ such that if $\lambda \le \lambda '$ then $\lambda +\mu \le \lambda '+\mu $).

By a {\em $\Lambda _+$-graded Lie algebra} we mean a Lie algebra $L$ over
$\modZ $ equipped with a direct sum decomposition
$L=\bigoplus_{\lambda \in \Lambda _+}L_\lambda $ such that the Lie bracket satisfies
$[L_\lambda ,L_\mu ]\subset L_{\lambda +\mu }$ for any $\lambda ,\mu \in \Lambda _+$.

A {\em morphism} $f\zzzcolon L\rightarrow L'$ between $\Lambda _+$-graded Lie algebras $L$ and
$L'$ is a Lie algebra homomorphism $f\zzzcolon L\rightarrow L'$ such that
$f(L_\lambda )\subset L_{\lambda}'$ for any $\lambda \in \Lambda _+$.

Let $K_*$ be a  $\Lambda $-filtered group.  We define a $\Lambda _+$-graded
Lie algebra $\bK$ as follows.  
For each $\lambda \in \Lambda$, set
\begin{gather}\label{e31}
  \bK_\lambda =K_\lambda /K_{>\lambda }.
\end{gather}
For $a\in K_\lambda $, the coset $aK_{>\lambda } \in \bK_\lambda $ is denoted by $[a]_\lambda $.

For $\lambda >0$, $\bK_\lambda $ is abelian.  Indeed,
since $\Lambda $ is good, we have for all $\lambda >0$
\begin{gather*}
  [K_\lambda ,K_\lambda ]\subset K_{\lambda +\lambda }\subset K_{>\lambda }.
\end{gather*}

For $\lambda ,\mu \in \Lambda $, define a bracket map
\begin{gather}
  \label{e18}
  [\ ,\ ]\zzzcolon \bK_\lambda \times \bK_\mu \rightarrow \bK_{\lambda +\mu }
\end{gather}
by
\begin{gather*}
  [[a]_\lambda , [b]_\mu ] = [[a,b]]_{\lambda +\mu }.
\end{gather*}
By Lemma \ref{r43}, this bracket is well-defined.

\begin{proposition}
  \label{r24}
  Let $\Lambda $ be a good ordered commutative monoid, and let $K_*$ be a
  $\Lambda $-filtered group.  Then the bracket maps \eqref{e18} for $\lambda ,\mu >0$
  form a $\Lambda _+$-graded Lie algebra structure on the graded abelian
  group $\bK=\bigoplus_{\lambda >0}\bK_\lambda $. 
\end{proposition}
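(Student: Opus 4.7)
The plan is to verify the three Lie-algebra axioms for the bracket \eqref{e18}, working piece by piece on the homogeneous components $\bK_\lambda$ and then extending by $\modZ$-bilinearity to $\bK = \bigoplus_{\lambda > 0} \bK_\lambda$. Well-definedness of the bracket maps on each pair $(\lambda,\mu)$ has already been established via Lemma~\ref{r43}, so the task reduces to checking biadditivity, alternation, and Jacobi, each on homogeneous generators.

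First I would establish biadditivity. For $a,a' \in K_\lambda$ and $b \in K_\mu$, the identity \eqref{e23} from Lemma~\ref{r22} gives $[aa',b] \equiv [a,b]\cdot[a',b] \pmod{K_{>\lambda+\mu}}$, which upon passing to $\bK_{\lambda+\mu}$ says exactly
\begin{gather*}
  [[a]_\lambda + [a']_\lambda,\, [b]_\mu] = [[a]_\lambda, [b]_\mu] + [[a']_\lambda, [b]_\mu].
\end{gather*}
Additivity in the second slot is analogous using \eqref{e20}. Next, for alternation on homogeneous elements, $[a,a] = 1$ in $K$ so $[[a]_\lambda, [a]_\lambda] = [1]_{2\lambda} = 0$ in $\bK_{2\lambda}$; equivalently, from \eqref{e4} one gets $[[a]_\lambda, [b]_\mu] = -[[b]_\mu, [a]_\lambda]$, and combining this with $[x_\lambda, x_\lambda] = 0$ on each homogeneous component yields $[x,x] = 0$ for arbitrary $x \in \bK$ (the cross-terms cancel pairwise by skew-symmetry). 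Finally, Jacobi for homogeneous $[a]_\lambda, [b]_\mu, [c]_\nu$ reads
\begin{gather*}
  [[[a]_\lambda,[b]_\mu],[c]_\nu] + [[[b]_\mu,[c]_\nu],[a]_\lambda] + [[[c]_\nu,[a]_\lambda],[b]_\mu] = 0
\end{gather*}
in $\bK_{\lambda+\mu+\nu}$, which is precisely the image in $K_{\lambda+\mu+\nu}/K_{>\lambda+\mu+\nu}$ of the congruence \eqref{e21}.

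Since each axiom holds on homogeneous generators and the bracket is $\modZ$-bilinear, all three Lie-algebra axioms extend to the whole graded abelian group $\bK$. The bracket has $[\bK_\lambda, \bK_\mu] \subset \bK_{\lambda+\mu}$ by construction (and because $\lambda+\mu \in \Lambda_+$ whenever $\lambda,\mu \in \Lambda_+$, using that $\Lambda$ is good, so the target $\bK_{\lambda+\mu}$ sits inside $\bK$). Thus $\bK$ is a $\Lambda_+$-graded Lie algebra over $\modZ$. I do not foresee a substantive obstacle: the entire argument is a straightforward transcription of the commutator identities of Lemma~\ref{r22} into the quotients $K_\lambda/K_{>\lambda}$; the only point requiring mild care is to formulate alternation (rather than just skew-symmetry) correctly over $\modZ$, which is handled by the observation $[a,a] = 1$ on each homogeneous piece.
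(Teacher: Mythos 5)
Your proposal is correct and follows essentially the same route as the paper: biadditivity from \eqref{e23}/\eqref{e20}, alternation from $[a,a]=1$ combined with the skew-symmetry consequence of \eqref{e4}, and Jacobi from \eqref{e21}, all verified on homogeneous elements and extended by $\modZ$-bilinearity. You are in fact slightly more careful than the paper in spelling out the cross-term cancellation needed to pass from homogeneous alternation plus skew-symmetry to $[x,x]=0$ for general $x\in\bK$ (and in placing $[[a]_\lambda,[a]_\lambda]$ in $\bK_{2\lambda}$ rather than $\bK_\lambda$).
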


{
\begin{proof}
We show that
$[\ ,\ ]$ is bilinear.  For $[a]_\lambda ,[a']_\lambda \in \bK_\lambda $ and $[b]_\mu \in \bK_\mu $,
we have
\begin{gather}
  \label{e3}
  \begin{split}
    [[a]_\lambda [a']_\lambda ,[b]_\mu ]=[[aa']_\lambda ,[b]_\mu ]=[[aa',b]]_{\lambda +\mu }
    =[[a,b]]_{\lambda +\mu } [[a',b]]_{\lambda +\mu }.
  \end{split}
\end{gather}
where we used \eqref{e23}.
Writing additively, we have
\begin{gather*}
  [[a]_\lambda +[a']_\lambda ,[b]_\mu ] = [[a]_\lambda ,[b]_\mu ] + [[a']_\lambda ,[b]_\mu ],
\end{gather*}
i.e., $[[a]_\lambda ,[b]_\mu ]$ is additive in $[a]_\lambda $.  Similarly, one can
check that it is additive in $[b]_\mu $ as well.

We now show that the bracket maps $[\ ,\ ]\zzzcolon \bK_\lambda \times \bK_\mu \rightarrow \bK_{\lambda +\mu }$
form a Lie bracket for $\bK$.  First, for $[a]_\lambda \in \bK_\lambda $, $\lambda >0$,
we have
\begin{gather*}
  [[a]_\lambda ,[a]_\lambda ]=[[a,a]]_\lambda =[1]_\lambda =0.
\end{gather*}
Secondly, for $[a]_\lambda \in \bK_\lambda $, $[b]_\mu \in \bK_\mu $, $\lambda ,\mu >0$, we have
\begin{gather*}
  [[a]_\lambda ,[b]_\mu ]+[[b]_\mu ,[a]_\lambda ]
  =[[a,b]]_{\lambda +\mu }+[[b,a]]_{\lambda +\mu }
  =[[a,b][b,a]]_{\lambda +\mu }
  =[1]_{\lambda +\mu }=0.
\end{gather*}
Thirdly, for $[a]_\lambda \in \bK_\lambda $, $[b]_\mu \in \bK_\mu $, $[c]_\nu \in \bK_\nu $,
$\lambda ,\mu ,\nu >0$, we have
\begin{gather*}
  \begin{split}
  &[[[a]_\lambda ,[b]_\mu ],[c]_\nu ]+
  [[[b]_\mu ,[c]_\nu ],[a]_\lambda ]+
  [[[c]_\nu ,[a]_\lambda ],[b]_\mu ]\\
  &\quad =
  [[[a,b]]_{\lambda +\mu },[c]_\nu ]]+
  [[[b,c]]_{\mu +\nu },[a]_\lambda ]]+
  [[[c,a]]_{\nu +\lambda },[b]_\mu ]]\\
  &\quad =
  [[[a,b],c]]_{\lambda +\mu +\nu }
  [[[b,c],a]]_{\lambda +\mu +\nu }
  [[[c,a],b]]_{\lambda +\mu +\nu }\\
  &\quad =
  [[[a,b],c][[b,c],a][[c,a],b]]_{\lambda +\mu +\nu }\\
  &\quad = 0,
  \end{split}
\end{gather*}
where we used \eqref{e21}.  Thus, $[\ ,\ ]$ gives a  Lie bracket for $\bK$.
\end{proof}
}

\subsection{Actions}\label{sec_actions}

Let $K_*$ be a $\Lambda $-filtered group and let $G$ be a group acting on
$K_0$.  We say that ``$G$ acts on~$K_*$'' if we have
\begin{gather*}
  g(K_\mu )=K_\mu \quad \text{for all $g\in G$ and $\mu \in \Lambda $}.
\end{gather*}

Let $K_*$ and $G_*$ be $\Lambda $-filtered groups.  We say that ``$G_*$ acts
on $K_*$'' if we have
\begin{gather}
  \label{e26}
  [G_\lambda ,K_\mu ]\subset K_{\lambda +\mu }
\end{gather}
{for all $\lambda ,\mu \in \Lambda $}, or, equivalently, if $G_0$ acts on $K_*$ and if
\eqref{e26} holds for all $\lambda \in \Lambda _+$ and $\mu \in \Lambda $. Notice that in \eqref{e26} we see $G_{\lambda}$ and $K_{\mu}$ as subgroups of $K\rtimes G$, from now on we will continue doing this.

\subsection{Johnson filtration}\label{sec_JF}

Let $K_*$ be a $\Lambda $-filtered group.  Let $G$ be a group acting on the
group $K_0$.  For each $\lambda \in \Lambda $, define a subset $G_\lambda $ of $G$ by
\begin{gather*}
  G_\lambda =\modF _\lambda ^{K_*}(G)=\{g\in G\mid [g^{\pm 1},K_\mu ]\subset K_{\lambda +\mu }\quad \text{for all $\mu \in\Lambda$}\}.
\end{gather*}
It is easy to check that $\lambda \le \lambda '$ implies $G_\lambda \supset G_{\lambda '}$, and that
we have
\begin{gather*}
  G_0=\{g\in G\mid g(K_\mu )=K_\mu \quad \text{for all $\mu \in\Lambda$}\},\\
  G_\lambda =G_0\cap \{g\in G\mid [g,K_\mu ]\subset K_{\lambda +\mu }\quad \text{for all
  $\mu \in\Lambda$}\}\quad \text{for $\lambda >0$}.
\end{gather*}
We have $G=G_0$ if and only if $G$ acts on $K_*$.

\begin{proposition}
  \label{r14}
  Let $\Lambda $ be a good ordered commutative monoid and let $K_*$ be a
   $\Lambda $-filtered group.  Let $G$ be a group acting on $K_0$.  Then
   $G_* =(G_\lambda )_{\lambda \in \Lambda }$ is a $\Lambda $-filtration for $G_0$ acting on $K_*$.
   Moreover, $G_*$ is the slowest decreasing $\Lambda $-filtration for $G_0$ acting on
   $K_*$, i.e., if $G'_*$ is another $\Lambda $-filtration of $G_0$ acting
   on $K_*$, then $G'_\lambda \le G_\lambda $ for all $\lambda $.
\end{proposition}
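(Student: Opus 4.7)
The plan is to verify the three defining properties of a $\Lambda$-filtration for $G_*$, together with the action condition and the maximality, in a few short steps.

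First I would show that $G_\lambda \subset G_0$: for $g \in G_\lambda$ and $k \in K_\mu$, the definition gives $g^{\pm 1}(k) k^{-1} = [g^{\pm 1}, k] \in K_{\lambda+\mu} \subset K_\mu$, so $g^{\pm 1}(k) \in K_\mu$, forcing $g(K_\mu) = K_\mu$. Closure under inverses is built into the definition via the $\pm 1$, and closure under products uses the commutator identity \eqref{e15} to write $[gh, k] = {}^g[h, k] \cdot [g, k]$, with both factors in $K_{\lambda+\mu}$ and normality of $K_{\lambda+\mu}$ in $K$ absorbing the conjugation. Normality of $G_\lambda$ in $G_0$ then follows from the identity $[hgh^{-1}, k] = {}^h[g, h^{-1}(k)]$: for $h \in G_0$ we have $h^{-1}(k) \in K_\mu$, so the inner bracket lies in $K_{\lambda+\mu}$, and $h$ preserves this subgroup because $h \in G_0$. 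The order-reversal $G_\lambda \supset G_\mu$ for $\lambda \le \mu$ is immediate from $K_{\lambda+\nu} \supset K_{\mu+\nu}$, and the action condition $[G_\lambda, K_\mu] \subset K_{\lambda+\mu}$ is built into the definition of $G_\lambda$.

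The core step is the commutator condition $[G_\lambda, G_\mu] \subset G_{\lambda+\mu}$, which unravels to
\begin{gather*}
[[G_\lambda, G_\mu], K_\nu] \subset K_{\lambda+\mu+\nu} \quad \text{for every } \nu \in \Lambda.
\end{gather*}
The natural tool is the three subgroups lemma (Lemma~\ref{tsl}) applied inside the semidirect product $K \rtimes G_0$, with $A = G_\lambda$, $B = G_\mu$, $C = K_\nu$ and $N = K_{\lambda+\mu+\nu}$. The delicate point, and what I expect to be the main obstacle, is verifying that $N$ is normal in the ambient group $K \rtimes G_0$: normality in $K$ is given, and $N$ is preserved by the $G_0$-action by the definition of $G_0$, so normality in $K \rtimes G_0$ follows. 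The two hypotheses of the three subgroups lemma then follow from the definitions of $G_\mu$ and $G_\lambda$:
\begin{gather*}
[A, [B, C]] \subset [G_\lambda, K_{\mu+\nu}] \subset K_{\lambda+\mu+\nu}, \qquad [B, [C, A]] \subset [G_\mu, K_{\lambda+\nu}] \subset K_{\lambda+\mu+\nu},
\end{gather*}
using also that $[K_\nu, G_\lambda] = [G_\lambda, K_\nu]$ as subgroups (since $[k,g] = [g,k]^{-1}$) and hence lies in $K_{\lambda+\nu}$. Applying Lemma~\ref{tsl} gives the desired inclusion.

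For the maximality, if $G'_*$ is another $\Lambda$-filtration of $G_0$ acting on $K_*$, then any $g \in G'_\lambda$ satisfies $[g, K_\mu] \subset K_{\lambda+\mu}$ for all $\mu$ by \eqref{e26}, and the same holds for $g^{-1} \in G'_\lambda$; this is exactly the membership condition for $G_\lambda$, yielding $G'_\lambda \subset G_\lambda$.
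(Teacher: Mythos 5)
Your proof is correct and follows essentially the same route as the paper: verifying the subgroup and normality properties via the standard commutator identities, and establishing $[G_\lambda, G_\mu]\subset G_{\lambda+\mu}$ via the three subgroups lemma in the semidirect product $K_0\rtimes G_0$, checking that $K_{\lambda+\mu+\nu}$ is normal there. You also supply the easy maximality argument, which the paper's proof leaves implicit; the only cosmetic difference is that the paper phrases the three subgroups step via normal closures $\lala\,\cdot\,\rara$ rather than invoking Lemma~\ref{tsl} directly.
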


\begin{proof}
  The proof is almost identical to the proof of \cite[Proposition 3.1]{HM} except
  that we use~$\Lambda $ instead of~$\modN $.  Here we give a proof for
  completeness.

  One checks that $G_\lambda $ is a subgroup of $G_0$ as follows.  The case
  $\lambda =0$ is obvious.  Let $\lambda >0$ and let $g,g'\in G_\lambda $.  Then for any
  $k\in K_\mu $, $\mu \in  \Lambda $, we have
  \begin{gather*}
    \begin{split}
    [gg',k]={}^g[g',k]\cdot[g,k]\in {}^{G_\lambda }[G_\lambda ,K_\mu ]\cdot[G_\lambda ,K_\mu ]
    \subset {}^{G_\lambda }K_{\lambda +\mu }\cdot K_{\lambda +\mu }
    \end{split}
  \end{gather*}
  Since $G_\lambda \subset G_0$ we have
  ${}^{G_\lambda }K_{\lambda +\mu }\subset {}^{G_0}K_{\lambda +\mu }\subset K_{\lambda +\mu }$.  Hence
  $[gg',k]\in K_{\lambda +\mu }$.  Therefore, we have $gg'\in G_\lambda $.  Clearly,
  $g^{-1}\in G_\lambda $ for any $g\in G_\lambda $.  Thus, $G_\lambda $ is a subgroup of $G_0$.

  We show that $G_\lambda $ is a normal subgroup of $G_0$.  Let $g\in G_0$ and
  $h\in G_\lambda $.  Then for $k\in K_\mu $, $\mu \in \Lambda $, we have
  \begin{gather*}
    \begin{split}
      [{}^g h,k]={}^g[h,{}^{g^{-1}}k]\subset {}^{G_0}[G_\lambda ,{}^{G_0}K_\mu ]
	\subset {}^{G_0}[G_\lambda ,K_\mu ]
	\subset {}^{G_0}K_{\lambda +\mu }\subset K_{\lambda +\mu }.
    \end{split}
  \end{gather*}
  Thus $[{}^gh,K_\mu ]\subset K_{\lambda +\mu }$.  Hence ${}^gh\in G_\lambda $.  Thus, $G_\lambda $
  is normal in $G_0$.

  We show that $[G_\lambda ,G_\mu ]\subset G_{\lambda +\mu }$ for $\lambda ,\mu \in \Lambda $.
  For any $\nu \in \Lambda $, we have by the three subgroup lemma
  \begin{gather*}
    \begin{split}
      [[G_\lambda ,G_\mu ],K_\nu ]
      &\subset \lala[G_\lambda ,[G_\mu ,K_\nu ]]\;[G_\mu ,[G_\lambda ,K_\nu ]]\rara_{K_0\rtimes G_0}\\
      &\subset \lala[G_\lambda ,K_{\mu +\nu }]\;[G_\mu ,K_{\lambda +\nu }]\rara_{K_0\rtimes G_0}\\
      &\subset \lala K_{\lambda +\mu +\nu }\rara_{K_0\rtimes G_0}\\
      &= K_{\lambda +\mu +\nu }.
    \end{split}
  \end{gather*}
  Hence $[G_\lambda ,G_\mu ]\subset G_{\lambda +\mu }$.
\end{proof}

\begin{definition} The $\Lambda $-filtration $G_*$, in Proposition \ref{r14}, is called {\em the Johnson filtration} of~$G_0$ with respect to the action of~$G$ on~$K_0$.
\end{definition}

\subsection{Automorphism group of a \texorpdfstring{$\Lambda$}{L}-filtration}

Let $\Lambda$ be a good ordered commutative monoid, and let~$K_*$ be a $\Lambda$-filtered group.  The {\em automorphism group} of $K_*$ is
defined by
\begin{gather*}
  \Aut(K_*)=\{g\in \Aut(K_0)\mid g(K_{\lambda})=K_{\lambda}\quad \text{for all } \lambda\in\Lambda\}.
\end{gather*}
Let $\Aut_*(K_*)=(\Aut_{\lambda}(K_*))_{\lambda\in\Lambda}$ denote the Johnson filtration of
$\Aut(K_*)$ defined by its action on $K_*$:
\begin{gather*}
  \Aut_{\lambda}(K_*)=\{g\in \Aut(K_*)\mid [g,K_{\mu}]\subset K_{\lambda + \mu}\quad \text{for all } \lambda\in\Lambda\}.
\end{gather*}

\subsection{Derivation Lie algebras}\label{sec_dla}

Let $\Lambda $ be a good ordered commutative monoid.  Let $L$ be a
$\Lambda _+$-graded Lie algebra.  We define the {\em derivation
$\Lambda _+$-graded Lie algebra} $\Der(L)$ for $L$ as follows.

A derivation $d\zzzcolon L\rightarrow L$ of the Lie algebra $L$ (i.e., a $\modZ $-linear map
such that $d([x,y])=[d(x),y]+[x,d(y)]$ for all $x,y\in L$) is said to be
of {\em degree $\lambda \in\Lambda$} if we have $d(L_\mu )\subset L_{\lambda +\mu }$ for all $\mu \in \Lambda_+ $.
For such a derivation $d$ and $\mu \in \Lambda_+ $, let $d_\mu \zzzcolon L_\mu \rightarrow L_{\lambda +\mu }$ be
its restriction to $L_\mu $.  For $\lambda \in \Lambda _+$, let $\Der_\lambda (L)$ be the
abelian group of derivations of $L$ of degree $\lambda $, and set
\begin{gather*}
  \Der(L)=\bigoplus_{\lambda \in \Lambda _+}\Der_\lambda (L).
\end{gather*}
It is easy to see that $\Der(L)$ is a Lie subalgebra of the Lie
algebra of all derivations of $L$, and that $\Der(L)$ is $\Lambda _+$-graded
Lie algebra, i.e., we have
\begin{gather*}
  [\Der_\lambda (L),\Der_\mu (L)]\subset \Der_{\lambda +\mu }(L)
\end{gather*}
for all $\lambda ,\mu \in \Lambda_+$.

\subsection{The Johnson homomorphisms}\label{sec_1.9}

Let $\Lambda $ be a good ordered commutative monoid.  Let $K_*$ be a
 $\Lambda $-filtered group, and let $\bK$ be its associated $\Lambda_+$-graded
 Lie algebra.  Let $\Der(\bK)$ be the derivation $\Lambda_+$-graded Lie
 algebra of $\bK$.  Let $G_*$ be a $\Lambda $-filtered group acting on
 $K_*$. (Here $G_*$ is not necessarily the Johnson filtration of $G_0$.)

\begin{proposition}[Cf. {\cite[Proposition 6.2]{HM}}]
  \label{r39}
  For each $\lambda \in \Lambda _+$, there is a homomorphism
  \begin{gather*}
    \tau _\lambda \zzzcolon G_\lambda \longrightarrow \Der_\lambda (\bK)
  \end{gather*}
  which  maps each $g\in G_\lambda $ to
  \begin{gather*}
    \tau _\lambda (g)=\left(\tau _\lambda (g)_\mu \zzzcolon \bK_\mu \longrightarrow \bK_{\lambda +\mu }\right)_{\mu \in \Lambda _+}
  \end{gather*} defined by    $\tau _\lambda (g)_\mu ([a]_\mu )=[[g,a]]_{\lambda +\mu }$  for $a\in K_\mu $.
\end{proposition}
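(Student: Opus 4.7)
\textbf{Proof plan for Proposition \ref{r39}.} The plan is to construct $\tau_\lambda(g)$ componentwise as the family of maps $\tau_\lambda(g)_\mu\zzzcolon \bK_\mu \to \bK_{\lambda+\mu}$, check that each $\tau_\lambda(g)_\mu$ is a well-defined homomorphism of abelian groups, verify that the resulting element $\tau_\lambda(g)$ of $\bigoplus_{\mu\in\Lambda_+}\opn{Hom}(\bK_\mu,\bK_{\lambda+\mu})$ satisfies the Leibniz rule and is therefore a degree-$\lambda$ derivation, and finally show that $g\mapsto \tau_\lambda(g)$ is a group homomorphism into the abelian group $\Der_\lambda(\bK)$. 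All of the bracket manipulations will take place inside the semidirect product $K_0\rtimes G_0$, where the action hypothesis $[G_\mu,K_\nu]\subset K_{\mu+\nu}$ lets us treat commutators involving $g$ just like commutators among the $K_\mu$'s.

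First I would check well-definedness. Since $G_*$ acts on $K_*$, for $a\in K_\mu$ we have $[g,a]\in K_{\lambda+\mu}$, so the assignment $a\mapsto [[g,a]]_{\lambda+\mu}$ at least makes sense. To see it factors through $K_\mu/K_{>\mu}$, I take $a'=ac$ with $c\in K_{>\mu}$ and use \eqref{e16} to write $[g,ac]=[g,a]\cdot{}^a[g,c]$; Lemma \ref{r43} yields $[g,c]\in K_{>\lambda+\mu}$, and since $K_{>\lambda+\mu}$ is normal in $K_0$, conjugation by $a$ keeps it there. Additivity in $a$ uses the same identity: $[g,aa']=[g,a]\cdot{}^a[g,a']$ and ${}^a[g,a']\equiv [g,a']\pmod{K_{>\lambda+\mu}}$ because $[a,[g,a']]\in [K_\mu,K_{\lambda+\mu}]\subset K_{\lambda+2\mu}\subset K_{>\lambda+\mu}$. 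Thus each $\tau_\lambda(g)_\mu$ is a well-defined homomorphism $\bK_\mu\to\bK_{\lambda+\mu}$.

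Next I would establish the Leibniz rule, which is the main obstacle and the only step requiring real care. Applying the Hall--Witt identity \eqref{e17} inside $K_0\rtimes G_0$ to the triple $(g,a,b)$ with $a\in K_\mu$, $b\in K_\nu$ produces
\begin{gather*}
  [[g,a],{}^ab]\cdot[[a,b],{}^bg]\cdot[[b,g],{}^ga]=1.
\end{gather*}
I would then strip each twist one at a time modulo $K_{>\lambda+\mu+\nu}$: for instance, ${}^ab=[a,b]\cdot b$ together with \eqref{e16} and Lemma \ref{r43} gives $[[g,a],{}^ab]\equiv[[g,a],b]$, and analogous computations using ${}^bg=[b,g]\cdot g$ and ${}^ga=[g,a]\cdot a$ yield $[[a,b],{}^bg]\equiv[[a,b],g]$ and $[[b,g],{}^ga]\equiv[[b,g],a]$, all modulo $K_{>\lambda+\mu+\nu}$. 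Projecting the resulting congruence to the abelian group $\bK_{\lambda+\mu+\nu}$ and using $[X,g]\equiv -[g,X]$ there, one obtains exactly
\begin{gather*}
  \tau_\lambda(g)_{\mu+\nu}([[a]_\mu,[b]_\nu])=[\tau_\lambda(g)_\mu([a]_\mu),[b]_\nu]+[[a]_\mu,\tau_\lambda(g)_\nu([b]_\nu)],
\end{gather*}
so $\tau_\lambda(g)$ is a derivation of degree $\lambda$.

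Finally, to check that $\tau_\lambda\zzzcolon G_\lambda\to\Der_\lambda(\bK)$ is a group homomorphism, I would expand $[gh,a]$ with \eqref{e15} as ${}^g[h,a]\cdot[g,a]$ and note that ${}^g[h,a]\equiv[h,a]$ modulo $K_{>\lambda+\mu}$ since $[g,[h,a]]\in[G_\lambda,K_{\lambda+\mu}]\subset K_{2\lambda+\mu}\subset K_{>\lambda+\mu}$; taking classes gives $\tau_\lambda(gh)_\mu=\tau_\lambda(g)_\mu+\tau_\lambda(h)_\mu$ in the abelian target. The hardest piece is the Leibniz calculation, because one must juggle several conjugation twists and keep track of which commutators fall into $K_{>\lambda+\mu+\nu}$; the goodness of the monoid $\Lambda$ (strict monotonicity of $+$) is what guarantees these higher-order terms really are negligible.
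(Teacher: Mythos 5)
Your proof is correct and follows essentially the same route as the paper: well-definedness and additivity of $\tau_\lambda(g)_\mu$ via the commutator identity $[g,ab]=[g,a]\cdot{}^a[g,b]$ and negligibility of the conjugation correction, the Leibniz rule via the Hall--Witt identity, and linearity of $\tau_\lambda$ via $[gh,a]={}^g[h,a]\cdot[g,a]$. The only stylistic difference is that the paper invokes the pre-packaged Jacobi congruence of Lemma~\ref{r22}\,\eqref{e21} to get $[g,[a,a']]\equiv[[g,a],a']\cdot[a,[g,a']]$ in one step, whereas you redo the twist-stripping from \eqref{e17} by hand; both amount to the same computation inside $K_0\rtimes G_0$, and you additionally spell out the check that $\tau_\lambda$ itself is a group homomorphism, which the paper leaves implicit.
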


\begin{proof}
  Well-definedness of the map $\tau _\lambda (g)_\mu \zzzcolon \bK_\mu \rightarrow \bK_{\lambda +\mu }$ is
  easily verified since $G_*$ acts on~$K_*$.

  We show that $\tau _\lambda (g)_\mu $ is a homomorphism.  Clearly,
  $\tau _\lambda (g)_\mu ([1]_\mu )=[1]_{\lambda +\mu }$.  For $a,b\in K_\mu $, we have
  \begin{gather*}
    \begin{split}
      &\tau _\lambda (g)_\mu ([a]_\mu [b]_\mu )
      =\tau _\lambda (g)_\mu ([ab]_\mu )
       =[[g,{ab}]]_{\lambda +\mu }
       =[[g,a]\cdot {}^a[g,b]]_{\lambda +\mu }\\
       &=[[g,a]]_{\lambda +\mu }\cdot[{}^a[g,b]]_{\lambda +\mu }
       =[[g,a]]_{\lambda +\mu }\cdot[[g,b]]_{\lambda +\mu }
       =\tau _\lambda (g)_\mu ([a]_\mu )\cdot\tau _\lambda (g)_\mu ([b]_\mu ).
    \end{split}
  \end{gather*}
  Here we have used ${}^a[g,b]=[a,[g,b]\;][g,b]\equiv[g,b]$ mod
  $K_{\mu +\lambda +\mu }\subset K_{>\lambda +\mu }$.

  We show that $\tau _\lambda (g)$ is a derivation on $\bK$.
  For $[a]_\mu \in K_\mu $, $[a']_{\mu '}\in K_{\mu '}$, $\mu ,\mu '>0$, we have
  \begin{gather*}
    \begin{split}
      \tau _\lambda (g)_{\mu +\mu '}([[a]_\mu ,[a']_{\mu '}])
    &=\tau _\lambda (g)_{\mu +\mu '}([[a,a']]_{\mu +\mu '})\\
    &=[[g,[a,a']]]_{\lambda +\mu +\mu '}\\
    &=[[[g,a],a']\cdot[a,[g,a']]]_{\lambda +\mu +\mu '}\\
    &=[[[g,a],a']]_{\lambda +\mu +\mu '}\cdot[[a,[g,a']]]_{\lambda +\mu +\mu '}\\
    &=[[[g,a]]_{\lambda + \mu}, [a']_{{\mu'}}]\cdot [[a]_{\mu},[[g, a']]_{\lambda +{\mu'}}]\\
    &=[\tau _\lambda (g)([a]_\mu ),[a']_{\mu '}]\cdot[[a]_\mu ,\tau _\lambda (g)([a']_{\mu '})].
    \end{split}
  \end{gather*}
\end{proof}

Note that we have
\begin{gather}
  \label{e22}
  \ker(\tau _\lambda )= \{g\in G_\lambda \mid [g,K_\mu ]\subset K_{>\lambda +\mu }\text{ for all }\mu \in \Lambda _+\}.
\end{gather}

Let $\bG$ be the  $\Lambda_+$-graded Lie algebra associated to $G_*$. By Proposition \ref{r39}, the homomorphism $\tau _\lambda $ induces a
homomorphism
\begin{gather}
\label{e24}
  \btau_\lambda \zzzcolon \bG_\lambda \longrightarrow \Der_\lambda (\bK)
\end{gather}
for each $\lambda\in\Lambda_+$. Moreover, by \eqref{e22}, we have
\begin{gather*}
  \ker(\btau_\lambda )=\{g\in G_\lambda \mid [g,K_\mu ]\subset K_{>\lambda +\mu }\text{ for }\mu \in \Lambda _+\}/G_{>\lambda }.
\end{gather*}

\subsection{The Johnson morphism}\label{sec_1.10}

\begin{proposition}[Cf.  {\cite[Theorem 6.4]{HM}}]
  \label{r23}
  Let $\Lambda $ be a good ordered commutative monoid. Let $G_*$ and $K_*$ be 
  $\Lambda $-filtered groups such that  $G_*$ acts on $K_*$, and let $\bG$ and $\bK$ be their respective $\Lambda_+$-graded Lie algebras.  Then the family $\btau=(\btau_\lambda )_{\lambda \in \Lambda _+}$ of homomorphism
  $\btau_\lambda $ in~\eqref{e24} is a morphism of $\Lambda _+$-graded Lie
  algebras
\begin{gather}
  \label{e25}
  \btau\zzzcolon \bG\longrightarrow \Der(\bK).
\end{gather}
Moreover,  if $G_*$ is the Johnson
filtration $\modF _*^{K_*}(G)$ then $\btau$ is injective.
\end{proposition}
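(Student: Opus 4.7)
My plan is to prove Proposition \ref{r23} in two stages: first, verify that $\btau$ is a morphism of $\Lambda_+$-graded Lie algebras (this requires only that $G_*$ acts on $K_*$); then deduce injectivity from the Johnson-filtration hypothesis.

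For bracket-compatibility, fix $g \in G_\lambda$ and $h \in G_\mu$ with $\lambda,\mu \in \Lambda_+$. Since $G_*$ acts on $K_*$, we have $[g,h] \in G_{\lambda+\mu}$, and the target identity
\[
\btau_{\lambda+\mu}\bigl([[g,h]]_{\lambda+\mu}\bigr) = \bigl[\btau_\lambda([g]_\lambda),\ \btau_\mu([h]_\mu)\bigr]
\]
is an equality in $\Der_{\lambda+\mu}(\bK)$. Evaluating on a generator $[a]_\nu \in \bK_\nu$, the left side is $[[[g,h],a]]_{\lambda+\mu+\nu}$, while expanding the commutator of derivations and using \eqref{e45} to flip signs yields the right side as $[[g,[h,a]]\cdot [[g,a],h]]_{\lambda+\mu+\nu}$. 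The two coincide by the Hall--Witt / Jacobi identity \eqref{e21} of Lemma \ref{r22}, applied inside the semidirect product $K_0 \rtimes G_0$ equipped with its natural $\Lambda$-filtration (so that $g,h,a$ and their iterated commutators lie in the appropriate filtration pieces). The grading condition is automatic from \eqref{e24}.

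For injectivity, identity \eqref{e22} gives
\[
\ker(\btau_\lambda) = \ker(\tau_\lambda)/G_{>\lambda}, \qquad \ker(\tau_\lambda) = \{g \in G_\lambda : [g,K_\mu] \subset K_{>\lambda+\mu}\ \forall \mu \in \Lambda_+\},
\]
so it suffices to show $\ker(\tau_\lambda) = G_{>\lambda}$ when $G_* = \modF_*^{K_*}(G)$. The inclusion $G_{>\lambda} \subset \ker(\tau_\lambda)$ holds for any filtered $G_*$ acting on $K_*$: each generator $g \in G_\nu$ with $\nu > \lambda$ satisfies $[g,K_\mu] \subset K_{\nu+\mu} \subset K_{>\lambda+\mu}$, and the kernel is a subgroup. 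For the reverse inclusion, set $H := \{g \in G_0 : [g,K_\mu] \subset K_{>\lambda+\mu}\ \forall \mu\}$; this is a normal subgroup of $G_0$ contained in $G_\lambda$, by a proof analogous to Proposition \ref{r14}. I would then consider the candidate family defined by $\tilde G_\nu = G_\nu$ for $\nu \le \lambda$ and $\tilde G_\nu = H \cdot G_\nu$ for $\nu > \lambda$, verify that $\tilde G_*$ is a $\Lambda$-filtration of $G_0$ acting on $K_*$, and conclude from the maximality in Proposition \ref{r14} that $\tilde G_\nu \subset G_\nu$, hence $H \subset G_\nu$ for every $\nu > \lambda$, and so $H \subset G_{\nu_0} \subset G_{>\lambda}$ for any fixed $\nu_0 > \lambda$.

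The main obstacle will be verifying the bracket condition $[\tilde G_\mu, \tilde G_{\mu'}] \subset \tilde G_{\mu+\mu'}$ for $\tilde G_*$; in the non-trivial case where at least one of $\mu,\mu'$ exceeds $\lambda$, this amounts to showing commutators of the form $[G_\mu, H]$ lie in $H \cdot G_{\mu+\mu'}$, which is proved via the three-subgroup lemma (Lemma \ref{tsl}), Lemma \ref{r43}, and the defining properties of $H$ and the Johnson filtration. For general good monoids, $G_{>\lambda}$ need not coincide with any single $G_\nu$ (in contrast with the $\Lambda = \modN$ case of \cite{HM}), so this indirect argument via the maximality of $\modF_*^{K_*}$ seems necessary; once $\tilde G_*$ is known to be such a filtration, the rest follows routinely.
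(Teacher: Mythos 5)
Your treatment of the Lie-algebra-morphism part is correct and takes the same route the paper does: the paper simply defers the computation to \cite[Theorem 6.4]{HM}, instructing the reader to replace the integers $m,n,i$ by $\lambda,\lambda',\mu$ and the groups $K_{\nu+1}$ by $K_{>\nu}$; your evaluation-on-generators argument via \eqref{e21} and \eqref{e45} is exactly that computation spelled out.

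The injectivity argument, however, has a genuine gap. For the maximality statement of Proposition~\ref{r14} to apply to your candidate family $\tilde G_*$, that family must \emph{act} on $K_*$, i.e.\ one needs $[\tilde G_\nu, K_\mu]\subset K_{\nu+\mu}$ for all $\nu,\mu\in\Lambda$. Taking $\nu>\lambda$, since $\tilde G_\nu = H\cdot G_\nu\supset H$, this would force $[H,K_\mu]\subset K_{\nu+\mu}$. But the defining property of $H$ only gives $[H,K_\mu]\subset K_{>\lambda+\mu}$, and in general $K_{>\lambda+\mu}\not\subset K_{\nu+\mu}$: the indices $\nu'$ generating $K_{>\lambda+\mu}$ and the target index $\nu+\mu$ both exceed $\lambda+\mu$, but two elements strictly above $\lambda+\mu$ in a mere partial order need not be comparable. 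Concretely, take $\Lambda=\modN^2$, $\lambda=(1,0)$, $\mu=(0,0)$, $\nu=(1,1)>\lambda$: then $K_{>\lambda+\mu}=K_{2,0}\cdot K_{1,1}$, which is not contained in $K_{\nu+\mu}=K_{1,1}$ (already for the double lower central series of a free group, $[\bX,\bX]\not\subset[\bX,\bY]$). So $\tilde G_*$ does not act on $K_*$, and the maximality argument is unavailable. Two further points you would also need to handle: the definition of $\tilde G_\nu$ does not cover $\nu$ incomparable to $\lambda$ (the order on $\Lambda$ is only partial), and you would still need $\ker(\tau_\lambda)\subset H$, i.e.\ that the kernel conditions for $\mu\in\Lambda_+$ already force the $\mu=0$ condition built into $H$ and into each $G_\nu$; for $\Lambda=\modN$ with $K_1=K_0$ this is immediate, but the mechanism does not carry over to general good $\Lambda$ — precisely the subtlety your closing sentence gestures at without resolving.
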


\begin{proof}
  The proof is essentially the same as a part of the proof of  \cite[Theorem 6.4]{HM}.   We have seen that $\btau_\lambda \zzzcolon \bG_\lambda \rightarrow \Der_\lambda (\bK)$ is a homomorphism
  for $\lambda >0$.  To show that $\btau$ is a morphism of $\Lambda _+$-graded Lie
  algebras, let $g\in G_\lambda $, $g'\in G_{\lambda '}$, $\lambda ,\lambda '\in \Lambda $, $a\in K_\mu $,
  $\mu \in \Lambda_{+}$.  One can prove
  \begin{gather*}
    \btau_{\lambda +\lambda '}([[g]_\lambda ,[g']_{\lambda '}])([a]_\mu )=
    [\btau_\lambda ([g]_\lambda ),\btau_{\lambda '}([g']_{\lambda '})]([a]_\mu )
  \end{gather*}
  in the same way as in the proof of \cite[Theorem 6.4]{HM}.  Here the
  integers $m,
  n, i$ should be replaced with elements of $\Lambda $ by setting $m=\lambda $,
  $n=\lambda '$, $i=\mu $, and the groups $K_{\nu +1}$ should be interpreted as
  $K_{>\nu }$.
\end{proof}

\begin{remark} \label{r42}
In \cite{HM} were introduced the notions of \emph{extended graded Lie algebra} (eg-Lie algebra for short), \emph{derivation eg-Lie algebra of an eg-Lie algebra} and to each $\modN$-filtered group was associated an eg-Lie algebra. Moreover, for  $\modN$-filtered groups $G_*$, $K_*$ with~$G_*$ acting on~$K_*$ it was proved that there is an \emph{eg-Lie algebra homomorphism} between the associated eg-Lie algebra of  $G_*$ and the derivation eg-Lie algebra of the eg-Lie algebra corresponding to $K_*$ \cite[Theorem 6.4]{HM}. We can generalize all those definitions in the context of $\Lambda$-filtered groups developed in the present paper. Thus,
we can  obtain a generalization of Proposition~\ref{r23}. All of these generalizations are straightforward by following \cite{HM}. Hence, we briefly sketch them in the next subsection.
\end{remark}

\subsection{Generalizations to extended \texorpdfstring{$\Lambda$}{L}-graded Lie
  algebras}

Let $\Lambda$ be a good commutative ordered monoid. An \emph{extended $\Lambda $-graded Lie algebra} $L=(L_0,L_+)$ consists of a group~$L_0$, a $\Lambda _+$-graded
Lie algebra $L_+=\bigoplus_{\lambda\in\Lambda_+} L_{\lambda}$ and a grading-preserving action of $L_0$ on $L_+$ denoted by $(g,x)\mapsto {}^gx$ for $g\in L_0$ and $x\in L_+$. 
  
A \emph{morphism} $f=(f_0,f_+)\zzzcolon L\rightarrow L'$ between extended $\Lambda $-graded Lie algebras $L$ and $L'$ consists of a group homomorphism $f_0:L_0\to L'_0$ and a $\Lambda_+$-graded Lie algebra morphism $f_+:L_+\rightarrow L'_+$ such that $f_+({}^gx)= {}^{f_0(g)}(f_+(x))$ for $g\in L_0$ and $x\in L_+$.

Let $K_*$ be a $\Lambda$-filtered group, then $\bK=(\bK_0, \bK_+=\bigoplus_{\lambda\in\Lambda_+}\bK_{\lambda})$, where $\bK_{\lambda}$ is defined in \eqref{e31}, is an extended $\Lambda $-graded Lie algebra. The action of $\bK_0$ on $\bK_+$ is induced by conjugation. 

Let $L=(L_0,L_+)$ be an extended $\Lambda $-graded  Lie algebra. Let $\lambda\in\Lambda_+$, a \emph{derivation} $d=(d_0,d_+)$ of \emph{degree~$\lambda$} of $L$  consists of a derivation $d_+:L_+\to L_+$ of degree~$\lambda$ and a $1$-cocycle $d_0:L_0\to L_{\lambda}$, i.e., a map satisfying
$$d_0(gh)= d_0(g) + {}^gd_0(h)$$
for $g,h\in L_0$, such that for all $\mu\in\Lambda_+$, $g\in L_0$ and $x\in L_{\mu}$ we have
$$d_{\lambda}({}^gx) - {}^gd_{\lambda}(x) = [d_0(g), {}^gx].$$ Denote
by $\mathrm{Der}_{\lambda}(L)$ the abelian group of derivations of $L$
of degree $\lambda$. Given two derivations
$d\in\mathrm{Der}_{\lambda}(L)$ and $d'\in \mathrm{Der}_{\mu}(L)$ of
$L$, it is straightforward to define the Lie bracket $[d,d']\in
\mathrm{Der}_{\lambda+\mu}(L)$.

The only
caveat is that the $1$-cocycle $[d,d']_0$ is defined by
$$[d,d']_0(g)= d(d'(g)) - d'(d(g))-[d_0(g),d'_0(g)]$$
for $g\in L_0$. 

Set $\Der _0(L) = \Aut(L)$ and $\Der _{+}(L)=\bigoplus_{\lambda \in \Lambda _+} \Der _{\lambda}(L)$. The proof in  \cite[Theorem 5.3]{HM} can be adapted straightforwardly to show that $\mathrm{Der}_{0}(L)$ acts on $\mathrm{Der}_{\lambda}(L)$ by conjugation and that  $\Der (L) = \left(\Der _{0}(L), \Der _{+}(L)\right)$ is an extended $\Lambda $-graded  Lie algebra.

 The following is the statement of \cite[Proposition 6.1]{HM} in our context. Let $G_*$ and $K_*$ be 
  $\Lambda $-filtered groups such that  $G_*$ acts on $K_*$, and let $\bG$ and $\bK$ their respective 
  extended $\Lambda$-graded
    Lie algebras.   There is a homomorphism 
\begin{equation*}\label{e32}
\tau_0\zzzcolon G_0\longrightarrow \mathrm{Der}_0(\bK)
\end{equation*}
  which maps $g\in G_0$ to the automorphism $\tau_0(g):\bK\to\bK$ defined by $\tau_0(g)(xK_{>\lambda}) = ({}^gx)K_{>\lambda}$ for $\lambda\in\Lambda_+$ and $x\in K_{\lambda}$. This map induces a homomorphism 
\begin{equation}\label{e33}
\btau_0\zzzcolon \bG_0\longrightarrow \mathrm{Der}_0(\bK).
\end{equation}
  
Finally, the generalization of Proposition \ref{r23} says that the family $\btau=(\btau_\lambda )_{\lambda \in \Lambda}$ of homomorphisms
  $\btau_\lambda$ defined in \eqref{e33} and \eqref{e24} gives a morphism of extended $\Lambda$-graded  Lie  algebras
\begin{gather}\label{e34}
  \btau\zzzcolon \bG\longrightarrow \Der(\bK).
\end{gather}

\subsection{Natural transfinite lower central series}
\label{sec:transfinite-calculus}

In this subsection we discuss a generalization of filtered groups in
the context of ordinal numbers. We follow \cite[Chapter~XIV]{MR0194339} for the preliminary definitions.  Let $\omega$ be
the smallest infinite ordinal. 
Given two ordinals~$\alpha$ and~$\beta$
we can find $k\in\modN$, a decreasing sequence of ordinals
$\epsilon_1,\ldots,\epsilon_k$ and $m_1,n_1,\ldots,m_k,n_k\in\modN$
such that
\begin{gather}\label{cantorform}
\alpha = \sum_{i=1}^k\omega^{\epsilon_i}m_i \quad \quad \text{and} \quad \quad \beta = \sum_{i=1}^k\omega^{\epsilon_i}n_i.
\end{gather}

Then the \emph{natural sum} or the \emph{Hessenberg sum} $\alpha\sharp\beta$ of $\alpha$ and $\beta$ is the ordinal given by
$$\alpha\sharp\beta = \sum_{i=1}^k\omega^{\epsilon_i}(a_i+b_i).$$ 
The natural sum $\sh$ is commutative, associative and unital with unit $0$:
\begin{gather*}
  \alpha \sh \beta =\beta \sh\alpha ,\quad
  (\alpha \sh\beta )\sh\gamma =\alpha \sh(\beta \sh\gamma ),\quad
  0\sh\alpha =\alpha \sh0=\alpha .
\end{gather*}
for any ordinals $\alpha ,\beta ,\gamma $.
Moreover, $\sh$ is (left and right) cancellative, i.e.,
\begin{gather*}
  \alpha \sh \beta =\alpha '\sh \beta  \quad \text{implies}\quad \alpha =\alpha '.
\end{gather*}

For any ordinal $\eta $, the ordinal $\omega ^\eta $ is closed under $\sh$. Hence $(\omega ^\eta ,\sh,0,\le )$ is a good ordered commutative monoid with the usual order of ordinals. If we ignore size issues, we can say that the class of ordinals is a (large) good ordered commutative monoid.

\begin{definition} Let $G$ be a group.  A {\em transfinite strongly central series} for
$G$ is a family $(G_\alpha )_\alpha $ of normal subgroups $G_\alpha $ of $G$ for
all ordinals $\alpha >0$ such that
\begin{itemize}
\item $G_1=G$,
\item if $\alpha \le \beta $ then $G_\alpha \supset G_\beta $,
\item for any ordinals $\alpha ,\beta >0$, we have
  $[G_\alpha ,G_\beta ]\subset G_{\alpha \sh \beta }$.
\end{itemize}
\end{definition}

\begin{definition}
The {\em natural transfinite lower central series} $(N_\alpha G)_{\alpha >0}$ is
defined as follows.
\begin{itemize}
\item if $\alpha =1$, then $N_1G=G$,
\item if $\alpha =\omega ^\eta $ for any ordinal $\eta >0$, then
  $N_{\omega ^\eta }G=\bigcap_{\beta <\omega ^\eta }N_\beta G$,
\item otherwise, we have
  \begin{gather*}
    N_\alpha G=\left\langle\  \bigcup_{\alpha '\sh\alpha ''=\alpha ,\;\alpha ',\alpha ''>0}[N_{\alpha '}G,N_{\alpha ''}G]\ \right\rangle ,
  \end{gather*}
  the subgroup of $G$ generated by all commutators of the form
  $[g',g'']$ where $g'\in N_{\alpha '}G$, $g''\in N_{\alpha ''}G$, $\alpha ',\alpha ''>0$ and $\alpha '\sh\alpha ''=\alpha $.
\end{itemize}
Then $(N_\alpha G)_{\alpha >0}$ is a transfinite strongly central series.
\end{definition}

The definitions of transfinite strongly central series and
  natural transfinite lower central series seem to be natural ones.
  One can apply the notions of Johnson filtrations and Johnson homomorphisms given in this section, which can not be applied to the usual notion of transfinite lower central series.

\section{Double filtrations}\label{sec_2}

Consider the commutative monoid $\modN ^2$ with its usual order, i.e., if $(m,n),(m',n')\in\modN ^2$,  we have \mbox{$(m,n)\leq (m',n')$} if and only if $m\leq m'$ and $n\leq n'$. This section focuses on the study of $\modN ^2$-filtered groups and its associated Johnson filtrations.

\subsection{Definition and examples}\label{sec_2.2}\label{sec_2.3}

By definition, an $\modN ^2$-filtered group consists of a group~$K$ and  normal subgroups $K_{m,n}\trl K$ for $(m,n)\in \modN ^2$ such that $K_{0,0}=K$ and  $K_{m,n}\supset K_{m',n'}$ for $(m,n)\le (m',n')$ and $$[K_{m,n},K_{m',n'}]\subset K_{m+m',n+n'}$$ for any $(m,n),(m',n')\in \modN ^2$.

Let $K_{*,*}=(K_{m,n})_{(m,n)\in \modN ^2}$ be an $\modN ^2$-filtered group. For any $(m,n)\in \modN ^2$ we have
\begin{gather}
  K_{>(m,n)}=K_{m+1,n}\cdot K_{m,n+1}.
\end{gather}

From Sections \ref{sec_actions} and \ref{sec_JF} we have the following. An $\modN ^2$-filtered group $G_{*,*}$ acts on $K_{*,*}$ if we have $[G_{m,n}, K_{i,j}]\subset K_{m+i,n+j}$ for all $(m,n),(i,j)\in\modN ^2$.

Let $G$ be a group acting on $K_{0,0}$. From Section \ref{sec_JF}, we have 
\begin{gather} 
\label{e27}
  G_{0,0}=\{g\in G\mid g(K_{i,j})=K_{i,j}\fa(i,j)\in \modN ^2\},
\end{gather}
and the Johnson filtration of $G_{0,0}$ is given by
\begin{gather}  
\label{e28}
  G_{m,n}=\{g\in G_{0,0}\mid [g,K_{i,j}]\subset K_{m+i,n+j}\fa(i,j)\in \modN ^2\}
\end{gather}
for all $(m,n)>(0,0)$.

We will call an $\modN^2$-filtration a \emph{double filtration}.

\begin{example}[Double lower central series]\label{exdlcs}
  Let $\bX$ and $\bY$ be two normal subgroups of a group $K$.
  Define $K_{m,n}=\Gamma _{m,n}(K;\bX,\bY)\trl K$ inductively by
  \begin{align*}
    K_{0,0}&
    =K,\\
    K_{m,0}&
    =\Gamma _m\bX\quad (m\ge 1),\\
    K_{0,n}&
    =\Gamma _n\bY\quad (n\ge 1),\\
    K_{m,n}&
    =[K_{1,0},K_{m-1,n}]\;[K_{0,1},K_{m,n-1}]\quad (m,n\ge 1).
\end{align*}
Then one can check easily that $\Gamma _{*,*}(K;\bX,\bY)=(\Gamma
_{m,n}(K;\bX,\bY))_{(m,n)\in\modN^2}$ is an $\modN ^2$-filtration of $K$, which we call the {\em double lower central series} of the triple
$(K;\bX,\bY)$.  This is the fastest decreasing $\modN ^2$-filtration
of $K$ satisfying $K_{1,0}=\bX$ and $K_{0,1}=\bY$. By definition, it follows that
$$ \Gamma_{m,n}(K;\bX,\bY)\subset \Gamma_{m+n}(\bX\bY){\subset \Gamma_{m+n}K}$$ for
all $(m,n)\in \modN^2$, where {$\Gamma_0 (\bX\bY):= \bX\bY$ and} $\Gamma_0 K:= K$. Some leading terms of the double lower central series are
\begin{gather*}
\Gamma _{1,1}(K;\bX,Y)=[\bX,\bY],\quad
\Gamma _{2,1}(K;\bX,\bY)=[\bX,[\bX,\bY]],\\
\Gamma _{1,2}(K;\bX,\bY)=[\bY,[\bX,\bY]],\quad
\Gamma_{2,2}(K;\bX,\bY)=[\bY,[\bX,[\bX,\bY]]]\;[\bX,[\bY,[\bX,\bY]]].
\end{gather*}

\medskip
  
In Sections~\ref{sec_dlcs} and~\ref{sec_dlcsfg}  we study double lower central series in detail, particularly when~$K$ is a free group.
\end{example}

\begin{example}
  We have another example of an $\modN ^2$-filtration from the same
  data $(K;\bX,\bY)$ as in Example~\ref{exdlcs} as follows. For $m,n\geq 0$ set $K'_{m,0}=\Gamma_m\bX$, $K'_{0,n}=\Gamma_n\bY$ and for $(m,n)\in \modN ^2$,
  \begin{gather*}
    K'_{m,n}=K'_{m,0}\cap K'_{0,n}.
  \end{gather*}
This is the slowest decreasing $\modN ^2$-filtration {of $K$} such that $K'_{m,0}=\Gamma_m\bX$ and
  $K'_{0,m}=\Gamma_m\bY$ for all $m\ge 1$.
\end{example}

\begin{example}[Double dimension series]
  \label{r20}
  As before, let $\bX$ and $\bY$ be two normal subgroups of a group~$K$.
  Consider the ideals $I_{\bX}:=\ker(\modZ [K]\xto{\epsilon _{\bX}}\modZ [K/\bX])$ and
  $I_{\bY}:=\ker(\modZ [K]\xto{\epsilon_{\bY}}\modZ [K/\bY])$, where $\epsilon _{\bX}$ and $\epsilon _{\bY}$ are the
  natural maps.  Define a double filtration
  $I_{*,*}=I_{*,*}(K;\bX,\bY)=(I_{m,n})_{(m,n)\in\modN ^2}$ of the algebra $\modZ [K]$ by
  \begin{gather*}
    I_{0,0}=\modZ [K],\quad I_{m,0}=I_{\bX}^m \text{\ \  for } m\ge 1,\quad I_{0,n}=I_{\bY}^n  \text{\ \ for } n\ge 1 \quad \text{ and }\\
    I_{m,n}(K)=I_{1,0}I_{m-1,n}+I_{0,1}I_{m,n-1}\quad \text{ for \ \ } (m,n)>(0,0).
  \end{gather*}
  Then we have
  \begin{gather*}
    I_{m,n}I_{m',n'}\subset I_{m+m',n+n'}\quad \text{for\ \ } (m,n),(m',n')\in \modN ^2.\\
\end{gather*}

For $(m,n)\in\modN ^2$, set
  \begin{gather*}
    D_{m,n}=D_{m,n}(K;\bX,\bY)=\{k\in K\mid k-1\in I_{m,n}\}.
  \end{gather*}
One can show that $D_{*,*}=(D_{m,n})_{(m,n)\in\modN^2}$ is an $\modN ^2$-filtration of $K$ and that  $\Gamma _{m,n}(K;\bX,\bY)\subset D_{m,n}$.  An interesting problem is the comparison of the two $\modN^2$-filtrations $\Gamma _{*,*}(K;\bX,\bY)$ and $D_{*,*}(K;\bX,\bY)$ of~$K$, this is related with the \emph{dimension subgroup problem}, see \cite{MR537126, MR798076}.
\end{example}

\subsection{Double lower central series}\label{sec_dlcs}

Let $K$ be a group and $\bX$ and $\bY$ two normal subgroups of it. We consider
the double lower central series
$K_{*,*}= (\Gamma_{m,n}(K;\bX,\bY))_{(m,n)\in\modN^2}$ of the triple $(K;\bar{X},\bar{Y})$.

By the following lemma, one can study the lower central series of a
group by using the double lower central series.

\begin{lemma}
  \label{r12} We have  $$\Gamma _m(\bX\bY)=\prod_{i+j=m}\Gamma _{i,j}(K;\bX,\bY)= \prod_{i+j=m}K_{i,j}$$
  for $m\ge 1$.
\end{lemma}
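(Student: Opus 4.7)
The plan is to prove the two inclusions separately. The containment $\prod_{i+j=m} K_{i,j}\subseteq \Gamma_m(\bar{X}\bar{Y})$ is immediate from the fact already noted in Example~\ref{exdlcs} that $K_{i,j}=\Gamma_{i,j}(K;\bar{X},\bar{Y})\subseteq \Gamma_{i+j}(\bar{X}\bar{Y})$; since $\Gamma_m(\bar{X}\bar{Y})$ is a subgroup containing each $K_{i,j}$ for $i+j=m$, it also contains their product.

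For the reverse inclusion $\Gamma_m(\bar{X}\bar{Y})\subseteq \prod_{i+j=m}K_{i,j}$, I will argue by induction on $m$. First I note that $\prod_{i+j=m}K_{i,j}$ is actually a (normal) subgroup of $K$, since each $K_{i,j}$ is normal in $K$ and a finite product of normal subgroups is a normal subgroup. The base case $m=1$ is simply $\Gamma_1(\bar{X}\bar{Y})=\bar{X}\bar{Y}=K_{1,0}\cdot K_{0,1}$. For the inductive step, assuming the result for $m-1$, I will compute
\begin{gather*}
\Gamma_m(\bar{X}\bar{Y})=[\Gamma_{m-1}(\bar{X}\bar{Y}),\,\bar{X}\bar{Y}]=\Big[\prod_{i+j=m-1}K_{i,j},\,K_{1,0}\cdot K_{0,1}\Big],
\end{gather*}
and then use the commutator-of-products identity to expand.

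The key technical tool is the following elementary observation: if $A$, $B$, $C$ are normal subgroups of a group, then $[AB,C]=[A,C]\cdot[B,C]$ (and symmetrically on the right). This follows from $[ab,c]={}^{a}[b,c]\cdot[a,c]$ together with the normality of $[B,C]$, which gives ${}^{a}[b,c]\in[B,C]$. Iterating this identity, the commutator on the right-hand side above factors as $\prod_{i+j=m-1}\bigl([K_{i,j},K_{1,0}]\cdot[K_{i,j},K_{0,1}]\bigr)$, and by the very definition of an $\mathbb{N}^2$-filtration we have $[K_{i,j},K_{1,0}]\subseteq K_{i+1,j}$ and $[K_{i,j},K_{0,1}]\subseteq K_{i,j+1}$. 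Reindexing the resulting product by $(a,b)$ with $a+b=m$ gives $\Gamma_m(\bar{X}\bar{Y})\subseteq\prod_{a+b=m}K_{a,b}$, completing the induction.

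The main obstacle is simply keeping the bookkeeping of normal-subgroup products clean; once one records that every $K_{i,j}$ is normal in $K$ and invokes the commutator identity for normal subgroups, the rest is a direct calculation. No deeper structural input is needed beyond the definitions in Example~\ref{exdlcs} and the inclusion $\Gamma_{m,n}(K;\bar{X},\bar{Y})\subseteq\Gamma_{m+n}(\bar{X}\bar{Y})$ already recorded there.
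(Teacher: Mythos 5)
Your proof is correct and follows the same strategy as the paper's: the easy inclusion from $K_{i,j}\subseteq\Gamma_{i+j}(\bX\bY)$, then induction on $m$ for the reverse, expanding $[\Gamma_{m-1}(\bX\bY),K_{1,0}K_{0,1}]$ via the commutator-of-products identity for normal subgroups. The only difference is cosmetic: you spell out the identity $[AB,C]=[A,C][B,C]$ explicitly, while the paper applies it silently inside the displayed chain of inclusions.
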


\begin{proof}
We only need to check the inclusion $\Gamma _m(\bX\bY)\subset\prod_{i+j=m}K_{i,j}$. We proceed by induction on $m$. The case $m=1$ is trivial, if $m\geq 2$ then
\begin{equation*}
\begin{split}
\Gamma_{m}(\bX\bY)&\subset[\Gamma_{m-1}(\bX\bY), K_{1,0}K_{0,1}] \\
&\subset \Big[\prod_{i+j=m-1}K_{i,j}, K_{1,0}K_{0,1}\Big]\\
 & \subset  \left(\prod_{i+j=m-1}[K_{i,j}, K_{1,0}]\right) \left(\prod_{i+j=m-1}[K_{i,j}, K_{0,1}]\right)\\
 &\subset \left(\prod_{i+j=m-1}K_{i+1,j}\right)\left(\prod_{i+j=m-1}K_{i,j+1}\right)\\
 & \subset \prod_{i+j=m}K_{i,j}.
\end{split}
\end{equation*}
\end{proof}

\begin{lemma}\label{r47} For $m,n\geq 1$ we have

\emph{(i)}  $[K_{0,1}, K_{m,0}]\subset [K_{1,0}, K_{m-1,1}]$.

\emph{(ii)} $[K_{1,0}, K_{0,n}]\subset [K_{0,1}, K_{1,n-1}]$.
\end{lemma}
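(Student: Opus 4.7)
I would deduce both (i) and (ii) from a single strengthening amenable to induction on one parameter. Concretely, I propose to prove:

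\emph{Generalized claim.} For all $i\ge 1$ and $j\ge 0$,
\[
  [K_{i,0},K_{j,1}]\subset [K_{1,0},K_{i+j-1,1}].
\]
Statement (i) is the case $j=0$, $i=m$ (using $[K_{0,1},K_{m,0}]=[K_{m,0},K_{0,1}]$). For (ii), I would invoke the manifest $\bar X\leftrightarrow \bar Y$ symmetry in the definition of the double lower central series: the filtration $K'_{m,n}:=\Gamma_{m,n}(K;\bar Y,\bar X)$ satisfies $K'_{m,n}=K_{n,m}$, so applying the already-proved (i) to $K'$ gives $[K'_{0,1},K'_{n,0}]\subset [K'_{1,0},K'_{n-1,1}]$, which reads $[K_{1,0},K_{0,n}]\subset [K_{0,1},K_{1,n-1}]$, i.e.\ (ii).

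The generalized claim I would prove by induction on $i$. The base case $i=1$ is a tautology. For $i\ge 2$, the identity $K_{i,0}=\Gamma_i\bar X=[K_{1,0},K_{i-1,0}]$ turns the goal into $[[K_{1,0},K_{i-1,0}],K_{j,1}]\subset N$ where $N:=[K_{1,0},K_{i+j-1,1}]$. I would apply the three subgroups lemma (Lemma~\ref{tsl}) with $A=K_{1,0}$, $B=K_{i-1,0}$, $C=K_{j,1}$; note $N$ is normal in $K$ as it is a commutator of two normal subgroups. For the hypothesis $[A,[B,C]]\subset N$, the inductive hypothesis at $(i-1,j)$ gives $[K_{i-1,0},K_{j,1}]\subset [K_{1,0},K_{i+j-2,1}]\subset K_{i+j-1,1}$, whence $[K_{1,0},[K_{i-1,0},K_{j,1}]]\subset N$. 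For $[B,[C,A]]\subset N$, the filtration property yields $[K_{j,1},K_{1,0}]\subset K_{j+1,1}$, and then the inductive hypothesis at $(i-1,j+1)$ gives $[K_{i-1,0},K_{j+1,1}]\subset [K_{1,0},K_{(i-1)+(j+1)-1,1}]=N$. The three subgroups lemma then delivers $[K_{i,0},K_{j,1}]\subset N$, closing the induction.

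\textbf{Main obstacle.} A direct induction on $m$ in (i) alone does not close. Applying the three subgroups lemma to $[[K_{1,0},K_{m-1,0}],K_{0,1}]$ reduces one to showing $[K_{m-1,0},K_{1,1}]\subset [K_{1,0},K_{m-1,1}]$; but unwinding $K_{1,1}=[K_{1,0},K_{0,1}]$ via three subgroups lemma a second time leads back to $[K_{0,1},K_{m,0}]$, which is exactly the statement being proved. The inserted parameter $j$ is precisely what breaks this loop: the descent $(i,j)\mapsto (i-1,j+1)$ is a legitimate step in an induction on $i$ alone, and it converts the offending auxiliary commutator $[K_{m-1,0},K_{1,1}]$ into the $(i-1,j+1)=(m-1,1)$ instance of the same strengthened claim. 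Once the parameter shift is set up, everything else is bookkeeping with the three subgroups lemma and the filtration property.
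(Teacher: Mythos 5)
Your proof is correct and is essentially the same approach as the paper's: both unroll $K_{m,0}=\Gamma_m\bX$ one factor of $K_{1,0}$ at a time and invoke the Hall--Witt/three-subgroups mechanism at each stage. The auxiliary claim $[K_{i,0},K_{j,1}]\subset[K_{1,0},K_{i+j-1,1}]$ you isolate is exactly the running inclusion that the paper's chain of commutator manipulations (the ``using \eqref{e17} $m-1$ times'' computation) proves implicitly; your packaging as an explicit induction on $i$ with the shifting parameter $j$, together with reducing (ii) to (i) by the $\bX\leftrightarrow\bY$ symmetry of the double lower central series, is a cleaner organization of the same argument.
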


\begin{proof}
The claim (i) is trivial for $m=1$. If $m\geq 2$, then by using identity~\eqref{e17} $m-1$ times
we have 
 \begin{gather*}
    \begin{split}
      [K_{0,1}, K_{m,0}]
      &=[K_{0,1}, [K_{m-1,0}, K_{1,0}]]\\
      &\subset [[K_{0,1},K_{m-1,0}], K_{1,0}]\;[[K_{0,1},K_{1,0}], K_{m-1,0}]\\
      &\subset [K_{m-1,1}, K_{1,0}]\;[K_{1,1}, K_{m-1,0}]\\
      & =  [K_{m-1,1}, K_{1,0}]\;[K_{1,1},[ K_{m-2,0}, K_{1,0}]]\\
      & \subset [K_{m-1,1}, K_{1,0}]\;[[K_{1,1}, K_{m-2,0}], K_{1,0}]\;[[K_{1,1}, K_{1,0}],  K_{m-2,0}]\\
      &\subset [K_{m-1,1}, K_{1,0}]\; [K_{m-1,1}, K_{1,0}]\; [K_{2,1},  K_{m-2,0}]\\
      & = [K_{m-1,1}, K_{1,0}]\; [K_{2,1},  K_{m-2,0}]\\
      &\;\;\vdots\\
      &\subset [K_{m-1,1}, K_{1,0}]\; [K_{m-2,1},  K_{2,0}]\\
      & = [K_{m-1,1}, K_{1,0}]\; [K_{m-2,1},  [K_{1,0}, K_{1,0}]]\\
      & \subset [K_{m-1,1}, K_{1,0}]\;[[K_{m-2,1},  K_{1,0}], K_{1,0}]\;[[K_{m-2,1},  K_{1,0}], K_{1,0}]\\
      & \subset [K_{m-1,1}, K_{1,0}]\;[K_{m-1,1},  , K_{1,0}]\;[K_{m-1,1}, K_{1,0}]\\
      & = [K_{m-1,1}, K_{1,0}].
    \end{split}
  \end{gather*}

 (ii) can be proved similarly to (i).
\end{proof}

\begin{proposition} For $m,n\geq 1$ we have

\emph{(i)} $K_{m,1} = [K_{1,0}, K_{m-1,1}] = [K_{1,0}, [K_{1,0},[\ldots [K_{1,0},[K_{1,0},K_{0,1}]]\ldots]]$. Here $K_{1,0}$ appears~$m$ times.

\emph{(ii)} $K_{1,n} = [K_{0,1}, K_{1,n-1}]=[K_{0,1}, [K_{0,1},[\ldots [K_{0,1},[K_{0,1},K_{1,0}]]\ldots]]$. Here $K_{0,1}$ appears~$n$ times.
\end{proposition}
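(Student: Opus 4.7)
The plan is to combine the recursive definition of $K_{m,n}=\Gamma_{m,n}(K;\bar X,\bar Y)$ with the inclusion established in Lemma~\ref{r47}, and then iterate. I will only write out part (i); part (ii) follows by the symmetric argument, exchanging the roles of $\bar X$ and $\bar Y$ (equivalently, of $K_{1,0}$ and $K_{0,1}$) throughout.

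First I will establish the identity $K_{m,1}=[K_{1,0},K_{m-1,1}]$ for every $m\ge 1$. For $m=1$, the defining formula gives
\begin{gather*}
  K_{1,1}=[K_{1,0},K_{0,1}]\cdot[K_{0,1},K_{1,0}],
\end{gather*}
and since $[K_{0,1},K_{1,0}]=[K_{1,0},K_{0,1}]$ (both are generated by commutators and their inverses of the same set of pairs, using \eqref{e4}), this is just $[K_{1,0},K_{0,1}]=[K_{1,0},K_{0,1}]$, matching the claim. For $m\ge 2$, the defining formula gives
\begin{gather*}
  K_{m,1}=[K_{1,0},K_{m-1,1}]\cdot[K_{0,1},K_{m,0}],
\end{gather*}
and by Lemma~\ref{r47}(i) we have $[K_{0,1},K_{m,0}]\subset[K_{1,0},K_{m-1,1}]$, so the second factor is absorbed into the first, yielding the desired equality.

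Next, I will unfold this identity inductively. Writing $K_{m,1}=[K_{1,0},K_{m-1,1}]$ and substituting the analogous expression for $K_{m-1,1}$, and so on down to the base case $K_{1,1}=[K_{1,0},K_{0,1}]$, produces the nested commutator with $K_{1,0}$ appearing exactly $m$ times and $K_{0,1}$ at the innermost slot. A subtle point to keep in mind is that when we write a commutator of subgroups such as $[A,B]$ inside another commutator $[C,[A,B]]$, the outer commutator is by convention the subgroup generated by commutators $[c,[a_1,b_1]\cdots[a_r,b_r]]$ with $c\in C$ and $[a_i,b_i]\in[A,B]$, not just $[c,[a,b]]$; however this matches exactly the convention used throughout Section~\ref{sec_2}, so the nested commutator expression in the statement is well defined and coincides with what the inductive unfolding gives.

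I do not expect any serious obstacle here: the hard content has already been packaged into Lemma~\ref{r47}, and the role of the current proposition is just to feed that into the recursion coming from the definition of $K_{m,n}$. The only thing to be careful about is the base case $m=1$, where the two terms $[K_{1,0},K_{0,1}]$ and $[K_{0,1},K_{1,0}]$ in the defining formula coincide and one must not overcount, and the remark about the convention for iterated commutators of subgroups noted above.
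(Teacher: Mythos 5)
Your proof is correct and takes the same route as the paper, which just says the result ``follows from the definition and Lemma~\ref{r47}'': for $m\geq 2$ you combine the defining identity $K_{m,1}=[K_{1,0},K_{m-1,1}]\,[K_{0,1},K_{m,0}]$ with Lemma~\ref{r47}(i) to absorb the second factor, handle the base case $m=1$ via $[K_{1,0},K_{0,1}]=[K_{0,1},K_{1,0}]$, and then unfold recursively. Your side remark about the convention for iterated commutators of subgroups is a reasonable point of care, though standard.
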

\begin{proof} This follows {from the}
definition and Lemma~\ref{r47}.
\end{proof}

\subsection{Double lower central series for free groups}\label{sec_dlcsfg}

Several results valid for the lower central series of a free group of finite rank can be developed for the double lower central series, see~\cite[Section 5]{MR2109550}. In this section we provide some of such developments. 
Let $p\ge 1$ and $q\ge 1$. Consider disjoint sets $X=\{x_1,\ldots ,x_p\} $  and $Y=\{ y_1,\ldots ,y_q\}$. Throughout this  subsection, let $K$ be the free group on $X\cup Y$, that is, $K=\langle x_1,\ldots ,x_p,y_1,\ldots ,y_q\rangle$. Let $\bar{X}$ (resp.~$\bar{Y}$) be the normal closure in $K$ of $X$ (resp.~$Y$). We consider the double lower central series
$K_{*,*}= (K_{m,n})_{(m,n)\in\modN^2}$ of the triple $(K;\bar{X},\bar{Y})$.

We define the multibracket $[u_1,...,u_r]$ of $r$ elements $u_1,\ldots, u_r$ in  $K$ as the \emph{right-bracketed} element
$$
[u_1,[u_2,[..,[u_{{r-1}},u_{{r}}]...]\in K
$$
(for $r=1$, we set $[u_1]=u_1$). We say that $[u_1,...,u_{{r}}]$ is an \emph{$(m,n)$-commutator} (in  $X\cup Y$) if  $u_1,\dots,u_r\in X\cup Y$ and 
$\#\{i\mid u_i\in X\}=m$ and $\#\{i\mid u_i\in Y\}=n$. In other words, an $(m,n)$-commutator (in $X\cup Y$) is a multibracket involving exactly~$m$ elements of~$X$ and~$n$ elements of~$Y$ (counted with repetition). By definition,  some $(m,n)$-commutators could be trivial.

\begin{example}\label{ex_comm} Let $K =\la x_1,x_2,y_1,y_2\ra$. Then

$\bullet$ The $(1,0)$-commutators are $x_1,x_2$.

$\bullet$ The $(0,1)$-commutators are $y_1,y_2$.

$\bullet$ The non-trivial $(2,0)$-commutators are $[x_1,x_2]$, $[x_2,x_1]$.

$\bullet$ The non-trivial $(0,2)$-commutators are $[y_1,y_2]$, $[y_2,y_1]$.

$\bullet$ The non-trivial $(1,1)$-commutators are $[x_1, y_1]$, $[x_1, y_2]$, $[x_2, y_1]$, $[x_2, y_2]$, $[y_1, x_1]$, $[y_1, x_2]$, $[y_2, x_1]$, $[y_2, x_2]$.

$\bullet$ {Some} non-trivial $(2,1)$-commutators are  $[x_2, [x_1, y_2]]$, $[x_2, [x_2, y_1]]$ and $[x_2, [x_2, y_2]]$.
\end{example}

\begin{lemma}\label{r49}  Let $A,B\leq H_1(K;\mathbb{Z})$ be the free abelian groups generated by the families $\{x_i\Gamma_{2}K\}_{1\leq i\leq p}$ and $\{y_j\Gamma_{2}K\}_{1\leq j\leq q}$ respectively.  Then $$\bK_{1,0}\simeq A \quad \text{ and } \quad \bK_{0,1}\simeq B.$$
\end{lemma}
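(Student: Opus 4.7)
The plan is to identify the quotient $\bar{K}_{1,0} = K_{1,0}/K_{>(1,0)}$ with the more familiar abelian group $\bar{X}/[\bar{X},K]$, and then show the latter is naturally isomorphic to $A$ via the inclusion $\bar{X}\hookrightarrow K$ followed by abelianization. The proof for $\bar{K}_{0,1}\simeq B$ is entirely symmetric.

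First I would compute $K_{>(1,0)}$. By definition, $K_{>(1,0)} = K_{2,0}\cdot K_{1,1} = [\bar{X},\bar{X}]\cdot[\bar{X},\bar{Y}]$. I claim this equals $[\bar{X},K]$. Since $K/\bar{Y}$ is isomorphic to the free group on $\{x_1,\ldots,x_p\}$, the normal subgroup $\bar{X}$ surjects onto $K/\bar{Y}$, so $K = \bar{X}\bar{Y}$. Using the commutator identity $[a,bc] = [a,b]\cdot{}^{b}[a,c]$ together with the fact that $[\bar{X},\bar{Y}]$ is normal in $K$ (being the commutator of two normal subgroups), one obtains $[\bar{X},K] = [\bar{X},\bar{X}\bar{Y}] = [\bar{X},\bar{X}]\cdot[\bar{X},\bar{Y}] = K_{>(1,0)}$. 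Hence $\bar{K}_{1,0} = \bar{X}/[\bar{X},K]$.

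Next I would construct mutually inverse homomorphisms between $\bar{X}/[\bar{X},K]$ and $A$. The inclusion $\bar{X}\hookrightarrow K$ composed with the abelianization $K\twoheadrightarrow K^{ab}$ sends $[\bar{X},K]$ to $0$ (since $[\bar{X},K]\subset \Gamma_2 K$) and has image contained in $A$ (since $\bar{X}$ is normally generated by the $x_i$'s), so it factors through a surjective homomorphism $\phi\colon \bar{X}/[\bar{X},K]\to A$, with $\phi([x_i]) = x_i\Gamma_2 K$. In the other direction, since $A$ is the free abelian group on $\{x_i\Gamma_2 K\}_{i=1}^p$, the assignment $x_i\Gamma_2 K \mapsto [x_i]_{1,0}\in \bar{X}/[\bar{X},K]$ extends uniquely to a homomorphism $\psi\colon A\to \bar{X}/[\bar{X},K]$. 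Clearly $\phi\circ\psi = \mathrm{id}_A$.

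The key step is to verify $\psi\circ\phi = \mathrm{id}$. Any $w\in\bar{X}$ can be written as a product $w = \prod_k u_k\, x_{i_k}^{\epsilon_k}\, u_k^{-1}$ with $u_k\in K$ and $\epsilon_k\in\{\pm 1\}$. Modulo $[\bar{X},K]$, each factor satisfies $u_k x_{i_k}^{\epsilon_k}u_k^{-1} \equiv x_{i_k}^{\epsilon_k}$, because $u_k x_{i_k} u_k^{-1}x_{i_k}^{-1} = [u_k,x_{i_k}]\in[K,\bar{X}] = [\bar{X},K]$. Thus in the abelian quotient $\bar{X}/[\bar{X},K]$ we have $[w]_{1,0} = \sum_i n_i [x_i]_{1,0}$ where $n_i = \sum_{k\colon i_k=i}\epsilon_k$, and on the other hand $\phi([w]_{1,0}) = \sum_i n_i(x_i\Gamma_2 K)$, so $\psi(\phi([w]_{1,0})) = \sum_i n_i [x_i]_{1,0} = [w]_{1,0}$. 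This completes the proof. No step presents serious difficulty; the main computational content is the commutator manipulation establishing $K_{>(1,0)} = [\bar{X},K]$.
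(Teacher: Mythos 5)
Your proof is correct, and it takes a genuinely different route from the paper's. Both proofs begin by showing $K_{>(1,0)} = [\bar{X},\bar{X}]\,[\bar{X},\bar{Y}] = [\bar{X},K]$ (you give the commutator identity and the observation $K=\bar{X}\bar{Y}$ explicitly; the paper states this equality without elaboration). The two proofs then diverge at the key step of identifying $\bar{X}/[\bar{X},K]$ with $A$. The paper invokes Hopf's formula $(\Gamma_2 K\cap\bar{X})/[K,\bar{X}]\simeq H_2(K/\bar{X})$ and the vanishing of $H_2$ of the free group $K/\bar{X}$ to conclude $[\bar{X},K] = \Gamma_2 K\cap\bar{X}$, from which the isomorphism $\bar{X}/(\Gamma_2K\cap\bar{X})\simeq\bar{X}\Gamma_2K/\Gamma_2K = A$ follows by the second isomorphism theorem. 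You instead build the isomorphism by hand: the map $\phi$ induced by abelianization is obviously surjective onto $A$, and you verify injectivity by constructing a two-sided inverse $\psi$ using the freeness of $A$ on $\{x_i\Gamma_2 K\}$ and the observation that every element of $\bar{X}$ reduces, modulo $[\bar{X},K]$, to a product of the $x_i^{\pm 1}$ with no conjugators. Your argument is more elementary (no group homology needed) at the cost of a slightly longer computation; the paper's is shorter but borrows the nontrivial fact that free groups have vanishing $H_2$. Both are valid; in particular, your element-level verification of $\psi\circ\phi = \mathrm{id}$ essentially re-proves, in the case at hand, the piece of Hopf's formula that the paper outsources.
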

\begin{proof}
By Hopf's formula we have $(\Gamma_2 K\cap\bar{X})/[K,\bar{X}]\simeq H_2(K/\bar{X})$ and $H_2(K/\bar{X})=0$ because $K/\bar{X}$ is a free group. Hence
$$K_{>(1,0)} = K_{2,0}\cdot K_{1,1}= [\bar{X},\bar{X}]\;[\bar{Y},\bar{X}] = [K,\bar{X}]=\Gamma_2K\cap\bar{X}.$$
Therefore
$$\bK_{1,0} = \frac{K_{1,0}}{K_{>(1,0)}} = \frac{\bar{X}}{\Gamma_2K\cap\bar{X}} \mathrel{\mathop{\longrightarrow}^{\mathrm{\simeq}}_{\mathrm{ab}}} A,$$
where $\mathrm{ab}\zzzcolon K\rightarrow H_1(K;\modZ)=K/\Gamma_2K$ is the canonical map. Similarly $\bK_{0,1}\simeq B$.
\end{proof}

\begin{proposition}\label{r48} For $(m,n)\in\modN^2_+$, the abelian group $$\bK_{m,n}=\frac{K_{m,n}}{K_{>(m,n)}}=\frac{K_{m,n}}{K_{m+1,n}\cdot K_{m,n+1}}$$
is generated by the cosets of $(m,n)$-commutators.
\end{proposition}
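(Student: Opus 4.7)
The plan is to argue by induction on $m+n\ge 1$. The base case $m+n=1$ is Lemma~\ref{r49}, which identifies $\bK_{1,0}$ (resp.\ $\bK_{0,1}$) with the free abelian group on the cosets of $x_1,\ldots,x_p$ (resp.\ $y_1,\ldots,y_q$); by definition these cosets are exactly the $(1,0)$-commutators (resp.\ $(0,1)$-commutators).

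For the inductive step with $m+n\ge 2$, the idea is to combine the defining recursion for $K_{m,n}$ with the graded bilinearity of the bracket modulo $K_{>(m,n)}$ provided by Lemma~\ref{r22}. When $m,n\ge 1$, the recursion gives $K_{m,n}=[K_{1,0},K_{m-1,n}]\cdot[K_{0,1},K_{m,n-1}]$, so $K_{m,n}$ is generated (as a subgroup) by commutators $[a,b]$ with either $a\in K_{1,0}$, $b\in K_{m-1,n}$, or $a\in K_{0,1}$, $b\in K_{m,n-1}$. The remaining cases $(m,0)$ with $m\ge 2$ and $(0,n)$ with $n\ge 2$ are treated identically, using the identities $K_{m,0}=\Gamma_m\bX=[K_{1,0},K_{m-1,0}]$ and $K_{0,n}=\Gamma_n\bY=[K_{0,1},K_{0,n-1}]$.

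For a typical generator $[a,b]$ with $a\in K_{1,0}$ and $b\in K_{m-1,n}$ (the other case is symmetric), the base case applied to $\bK_{1,0}$ yields $a\equiv x_{i_1}^{\epsilon_1}\cdots x_{i_s}^{\epsilon_s}\pmod{K_{>(1,0)}}$ with $\epsilon_k\in\{\pm 1\}$, while the inductive hypothesis applied to $\bK_{m-1,n}$ (available since $(m-1)+n<m+n$) yields $b\equiv w_1^{\delta_1}\cdots w_t^{\delta_t}\pmod{K_{>(m-1,n)}}$ with each $w_l$ an $(m-1,n)$-commutator. Iterating identities~\eqref{e23}, \eqref{e20} and~\eqref{e45} of Lemma~\ref{r22}, and invoking Lemma~\ref{r43} to absorb all error terms coming from $K_{>(1,0)}$ and $K_{>(m-1,n)}$ into $K_{>(m,n)}$, one obtains
$$[a,b]\equiv\prod_{k,l}[x_{i_k},w_l]^{\epsilon_k\delta_l}\pmod{K_{>(m,n)}},$$
and each $[x_{i_k},w_l]$ is visibly an $(m,n)$-commutator. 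Therefore the coset of $[a,b]$ in $\bK_{m,n}$ is a $\modZ$-linear combination of cosets of $(m,n)$-commutators, which closes the induction.

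The only step requiring attention is the bilinearity reduction leading to the displayed congruence: it is an iterative application of~\eqref{e23} and~\eqref{e20} together with the elementary observation that ${}^u[v,b]\equiv [v,b]\pmod{[K_{1,0},K_{m,n}]}$ for $u\in K_{1,0}$, combined with $[K_{1,0},K_{m,n}]\subset K_{>(m,n)}$ from Lemma~\ref{r43}. This is routine bookkeeping once the inductive framework is in place, so no genuine obstacle is expected.
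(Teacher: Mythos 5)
Your proof is correct and takes essentially the same route as the paper's: induction, reduction via the defining recursion to commutator generators $[a,b]$, and iterated bilinearity modulo $K_{>(m,n)}$ supplied by Lemmas~\ref{r22} and~\ref{r43}. The one cosmetic difference is in the handling of the degree-$(1,0)$ factor: the paper writes $z\in K_{1,0}=\bX$ exactly as a product of $\langle Y\rangle$-conjugates of the $x_i^{\pm1}$ and then absorbs the resulting $K_{1,1}$-errors, whereas you invoke Lemma~\ref{r49} to replace $a$ by a word in the $x_i^{\pm1}$ modulo $K_{>(1,0)}$ and absorb the $K_{>(1,0)}$-error; both are valid and of comparable length.
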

\begin{proof}
The proof is  by induction on $(m,n)\in\modN^2_+$. The cases $(1,0)$ and $(0,1)$ follows {from} Lemma~\ref{r49}. 

Consider the case $m\geq 2$, $n=0$. Then $K_{m,0}=[K_{1,0}, K_{m-1,0}]$ is generated by elements of the form $[z,w]$ with $z\in K_{1,0}$ and $w\in K_{m-1,0}$. It follows that $\bK_{m,0}$ is generated by the cosets of these elements.
By inductive hypothesis 
$$w=\left(\prod_{i=1}^r w_i^{\epsilon_i}\right)w'$$
where $w_i$ are $(m-1,0)$-commutators, $w'\in K_{>(m-1,0)}= K_{m,0}\cdot K_{m-1,1}$ and $\epsilon_i=\pm1$. Write $w'=w'_1w'_2$ with $w'_1\in K_{m,0}$ and $w'_2\in K_{m-1,1}$. Hence, by Lemma~\ref{r44c} we obtain

\begin{gather}\label{e41}
\begin{split}
[z,w] & = \Big[z, \left(\prod_{i=1}^r w_i^{\epsilon_i}\right)w'\Big]\\
	& \equiv \left(\prod_{i=1}^r [z,w_i]^{\epsilon_i}\right) [z,w_1']\;[z,w_2'] \pmod{K_{>(m,0)}}\\
	& \equiv \prod_{i=1}^r [z,w_i]^{\epsilon_i}  \pmod{K_{>(m,0)}}.
\end{split}
\end{gather}

Since $z\in K_{1,0}$, we can write it as
\begin{gather*}
z=\prod_{j=1}^s {}^{u_j}(z_j^{\epsilon_j})= \prod_{j=1}^s ({}^{u_j}z_j)^{\epsilon_j}
\end{gather*}
with $u_j\in \la Y\ra
\subset K_{0,1}$, $z_j\in X\subset K_{1,0}$ and $\epsilon_j=\pm1$.  Hence applying again Lemma~\ref{r44c} for each $i=1,\ldots, r$ we get
\begin{gather*}
\begin{split}
[z,w_i] & = \Big[ \prod_{j=1}^s ({}^{u_j}z_j)^{\epsilon_j}, w_i\Big]\\
	& \equiv \prod_{j=1}^s [({}^{u_j}z_j)^{\epsilon_j},w_i] \pmod{K_{>(m,0)}}\\
	& \equiv \prod_{j=1}^s [z_j^{\epsilon_j}[z_j^{-1},u_j]^{\epsilon_j},w_i]  \pmod{K_{>(m,0)}}\\
	& \equiv \prod_{j=1}^s [z_j,w_i]^{\epsilon_j} \prod_{j=1}^s [[z_j^{-1},u_j]^{\epsilon_j},w_i]   \pmod{K_{>(m,0)}}\\
	& \equiv \prod_{j=1}^s [z_j,w_i]^{\epsilon_j} \pmod{K_{>(m,0)}}.
\end{split}
\end{gather*}

Replacing in \eqref{e41} we obtain

\begin{gather*}
\begin{split}
[z,w] &\equiv \prod_{i=1}^r\prod_{j=1}^s[z_j,w_i]^{\epsilon_j\epsilon_i} \pmod{K_{>(m,0)}}
\end{split}
\end{gather*}
and each $[z_j,w_i]$ is a $(m,0)$-commutator.

The case $m=0$, $n\geq 2$ is proved similarly. Finally suppose $m,n\geq 1$, then
$$K_{m,n}=[K_{1,0}, K_{m-1,n}]\;[K_{0,1}, K_{m,n-1}]$$
is generated by elements of the form $[z,w]$ and  $[z',w']$ with $z\in K_{1,0}$, $z'\in K_{0,1}$, $w\in K_{m-1,n}$ and $w'\in K_{m,n-1}$. It follows that $\bK_{m,n}$ is generated by the cosets modulo $K_{>(m,n)}$ of these elements. Using the inductive hypothesis, similarly to the case $m\geq 2$, $n=0$, one can 
show that the cosets of the above elements can be written as  products of cosets of $(m,n)$-commutators.
\end{proof}

\begin{remark} The cosets in $\bK_{m,n}$ of $(m,n)$-commutators (for $m+n>1$) are not linearly independent. For instance, in Example~\ref{ex_comm} we have
$$[x_1,x_2]K_{>(2,0)} = -[x_2,x_1]K_{>(2,0)}.$$
\end{remark}

Let us now describe explicitly the $\modN^2_+$-graded Lie algebra associated to the double lower central series.

\medskip

Consider disjoint sets $\mathsf{A}=\{a_1,\ldots ,a_p\} $  and $\mathsf{B}=\{b_1,\ldots ,b_q\}$. Let $\mathfrak{Lie}(\mathsf{A},\mathsf{B})$ be the free Lie algebra on~$\mathsf{A}\cup \mathsf{B}$. We endow it with a $\modN^2_+$-grading by declaring the elements of the free abelian group generated by~$\mathsf{A}$ (resp.~$\mathsf{B}$) as being of degree $(1,0)$ (resp.~$(0,1)$). Then we extend this degree recursively using the Lie bracket. More precisely, if $u,v\in\mathfrak{Lie}(\mathsf{A},\mathsf{B})$ are, respectively, of degrees $(i,j)\in\modN^2_+$ and $(m,n)\in\modN^2_+$, then $[u,v]$ has degree $(i+m,j+n)\in\modN^2_+$.  We denote by $\mathfrak{Lie}_{m,n}(\mathsf{A}, \mathsf{B})$ the abelian group generated by elements of degree $(m,n)$. Thus
$$\mathfrak{Lie}(\mathsf{A}, \mathsf{B})=\bigoplus_{(m,n)\in\modN^2_+}\mathfrak{Lie}_{m,n}(\mathsf{A}, \mathsf{B}).$$

\begin{remark}
Let $A$ and $B$ be the free abelian groups generated by $\mathsf{A}$ and $\mathsf{B}$ respectively. We also denote  the $\modN^2_+$-graded Lie algebra $\mathfrak{Lie}(\mathsf{A}, \mathsf{B})$ by $\mathfrak{Lie}(A, B)$.
\end{remark}

For $(m,n)\in\modN^2_+$, we define a \emph{Lie $(m,n)$-commutator} as the Lie multibracket 
$$[u_1,...,u_{m+n}]=[u_1,[u_2,[..,[u_{m+n-1},u_{m+n}]...]\in \mathfrak{Lie}_{m,n}(\mathsf{A},\mathsf{B})$$
(for $m+n=1$, we set $[u_1]=u_1$) such that $u_1,\dots,u_{m+n}\in \mathsf{A}\cup \mathsf{B}$ and 
$\#\{i\mid u_i\in \mathsf{A}\}=m$ and $\#\{i\mid u_i\in \mathsf{B}\}=n$.

\begin{proposition}\label{rr49} For $(m,n)\in\modN^2_+$, the abelian group $\mathfrak{Lie}_{m,n}(\mathsf{A},\mathsf{B})$ is generated by Lie $(m,n)$-com\-mu\-ta\-tors.
\end{proposition}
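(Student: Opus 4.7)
The plan is to prove the slightly stronger statement that the $\mathbb{Z}$-submodule $V\subset \mathfrak{Lie}(\mathsf{A},\mathsf{B})$ spanned by all Lie $(m,n)$-commutators, $(m,n)\in\modN^2_+$, coincides with $\mathfrak{Lie}(\mathsf{A},\mathsf{B})$. Since $V$ contains the generators $\mathsf{A}\cup\mathsf{B}$ and $\mathfrak{Lie}(\mathsf{A},\mathsf{B})$ is generated as a Lie algebra by $\mathsf{A}\cup\mathsf{B}$, it suffices to check that $V$ is closed under the Lie bracket; by bilinearity this reduces to showing $[u,v]\in V$ whenever $u,v$ are Lie commutators.

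I would carry this out by induction on the length $r$ of the left factor $u$, with $v$ an arbitrary Lie commutator. The base case $r=1$ is trivial: if $u\in\mathsf{A}\cup\mathsf{B}$ and $v=[v_1,\ldots,v_s]$, then $[u,v]=[u,v_1,\ldots,v_s]$ is itself a right-bracketed commutator. For the inductive step $r\ge 2$, decompose $u=[u_1,u']$ with $u_1\in\mathsf{A}\cup\mathsf{B}$ and $u'$ a Lie commutator of length $r-1$. The Jacobi identity gives
\begin{equation*}
[u,v]=\bigl[[u_1,u'],v\bigr]=[u_1,[u',v]]-[u',[u_1,v]].
\end{equation*}
Applying the inductive hypothesis with $u'$ as the left factor (of length $r-1$) against $v$ and against $[u_1,v]$ respectively, both $[u',v]$ and $[u',[u_1,v]]$ can be written as integer linear combinations of Lie commutators. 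Since prepending the single letter $u_1$ to a right-bracketed commutator leaves it right-bracketed, $[u_1,[u',v]]$ is also a sum of Lie commutators. Hence $[u,v]\in V$, which closes the induction.

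The degree bookkeeping is automatic: $u_1$ has degree $(1,0)$ or $(0,1)$, $u'$ carries the complementary degree so that $u$ has degree $(i,j)$, and both terms on the right of the Jacobi identity remain in $\mathfrak{Lie}_{m,n}(\mathsf{A},\mathsf{B})$ whenever $(i,j)+\deg v=(m,n)$. The only real point to get right is the choice of induction parameter: one must induct on the length of the left factor rather than on $m+n$, because the Jacobi rewriting preserves the total degree but strictly decreases the length of the left factor. This is the familiar proof that a free Lie algebra is spanned by left-normed (here right-bracketed) brackets, transported verbatim to the $\modN^2_+$-graded setting.
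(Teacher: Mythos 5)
Your proof is correct.  You reduce the claim to showing that the span $V$ of all right-bracketed commutators is closed under the Lie bracket, and then induct on the length of the left factor, using the Jacobi identity
$\bigl[[u_1,u'],v\bigr]=[u_1,[u',v]]-[u',[u_1,v]]$
to strictly decrease that length.  This is the standard textbook argument that a free Lie algebra is spanned by right-normed brackets, carried over verbatim (as you note) to the $\modN^2_+$-graded setting, with the multidegree tracked automatically at each rewriting step.

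The paper's route is different.  It proves Proposition~\ref{rr49} by induction on the multidegree $(m,n)$ and instructs the reader to mirror the proof of Proposition~\ref{r48}, which handles the group-theoretic analogue: there one starts from the recursive presentation $K_{m,n}=[K_{1,0},K_{m-1,n}]\,[K_{0,1},K_{m,n-1}]$, expands a generic generator $[z,w]$ modulo $K_{>(m,n)}$, decomposes $w$ via the inductive hypothesis, and decomposes $z$ as a product of conjugates of letters using Lemma~\ref{r44c}.  Transplanting that scheme to $\mathfrak{Lie}(\mathsf{A},\mathsf{B})$ is what the paper means by ``mirroring.''  Your argument is self-contained and avoids invoking or mimicking the group-theoretic machinery of Proposition~\ref{r48}; the paper's version keeps the two proofs structurally parallel, which is the point the authors want to make, but at the cost of some opacity (the reader has to perform the translation themselves).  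Both prove the same statement, and your induction parameter -- length of the left factor rather than total degree -- is the cleaner choice for a free Lie algebra, exactly because the Jacobi rewriting preserves total degree.
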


\begin{proof}
The proof is  by induction on $(m,n)\in\modN^2_+$. The cases $(1,0)$ and $(0,1)$ are trivial.
The other cases follow by mirroring the proof of Proposition~\ref{r48}.
\end{proof}

\begin{theorem}[Cf.  {\cite[Theorem 5.12]{MR2109550}}]\label{r51} Consider the map
\begin{gather}
\varphi:\mathfrak{Lie}(\mathsf{A},\mathsf{B})\longrightarrow \bK
\end{gather}
induced by mapping $a_i\mapsto x_iK_{{>(1,0)}}$ and $b_j\mapsto y_jK_{{>(0,1)}}$ for $1\leq i\leq p$ and  $1\leq j\leq q$. Then $\varphi$ is an isomorphism of $\modN^2_+$-graded Lie algebras. In particular, $$\mathfrak{Lie}_{m,n}(\mathsf{A}, \mathsf{B})\simeq \bK_{m,n}$$
for all $(m,n)\in\modN^2_+$.
\end{theorem}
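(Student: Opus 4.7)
The plan is in three steps: first build $\varphi$ as a morphism of $\modN^2_+$-graded Lie algebras, then prove degreewise surjectivity using Propositions~\ref{r48} and~\ref{rr49}, and finally deduce injectivity by collapsing to the classical Magnus--Witt theorem for the free group $K$ of rank $p+q$.

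First, I would invoke the universal property of the free Lie algebra $\mathfrak{Lie}(\mathsf{A},\mathsf{B})$: the assignment $a_i\mapsto [x_i]_{(1,0)}$, $b_j\mapsto [y_j]_{(0,1)}$ extends uniquely to a Lie algebra homomorphism $\varphi$. Because the generators are sent to homogeneous elements of the prescribed degrees and the bracket on $\bK$ respects the $\modN^2_+$-grading (Proposition~\ref{r24}), $\varphi$ is a morphism of $\modN^2_+$-graded Lie algebras.

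For surjectivity on each $(m,n)$-piece, Proposition~\ref{rr49} tells me that $\mathfrak{Lie}_{m,n}(\mathsf{A},\mathsf{B})$ is spanned by Lie $(m,n)$-commutators in $\mathsf{A}\cup\mathsf{B}$. Since $\varphi$ is a Lie algebra homomorphism with prescribed values on generators, the definition of the bracket on $\bK$ immediately implies that a Lie $(m,n)$-commutator in $\mathsf{A}\cup\mathsf{B}$ is sent to the class in $\bK_{m,n}$ of the corresponding group $(m,n)$-commutator in $X\cup Y$. By Proposition~\ref{r48}, these classes generate $\bK_{m,n}$, so $\varphi_{m,n}$ is surjective.

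The main obstacle is injectivity, which I would handle by comparing the $\modN^2$-filtration with the usual lower central series of $K$. Since $K_{i,j}\subset \Gamma_{i+j}K$ and $K_{>(i,j)}=K_{i+1,j}K_{i,j+1}\subset \Gamma_{i+j+1}K$, the inclusions induce a homomorphism
\begin{gather*}
  \sigma_m\zzzcolon \bigoplus_{i+j=m}\bK_{i,j}\longrightarrow \Gamma_m K/\Gamma_{m+1}K,
\end{gather*}
which is surjective by Lemma~\ref{r12}. Viewing the direct sum $\bigoplus_{i+j=m}\mathfrak{Lie}_{i,j}(\mathsf{A},\mathsf{B})$ as the total-degree-$m$ component $\mathfrak{Lie}_m(\mathsf{A}\cup\mathsf{B})$ of the free Lie algebra on $\mathsf{A}\cup\mathsf{B}$, the composition $\sigma_m\circ\bigoplus_{i+j=m}\varphi_{i,j}$ is a Lie algebra morphism agreeing on generators with the canonical map $\mathfrak{Lie}(\mathsf{A}\cup\mathsf{B})\to \bigoplus_m \Gamma_mK/\Gamma_{m+1}K$ of the classical Magnus--Witt theorem. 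Since $K=\la X\cup Y\ra$ is free of rank $p+q$, this composition is therefore an isomorphism. As it factors through the surjection $\bigoplus\varphi_{i,j}$, both $\bigoplus\varphi_{i,j}$ and $\sigma_m$ must be isomorphisms; in particular each $\varphi_{m,n}$ is injective, finishing the proof.
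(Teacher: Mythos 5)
Your surjectivity argument matches the paper's exactly, but your injectivity argument takes a genuinely different route. The paper constructs an explicit left inverse $\mu^{-1}\delta$ for $\varphi$ using the Magnus expansion $\theta$ in the bigraded setting (Lemmas~\ref{r52} and the $\delta_{m,n}$ lemma), together with the injectivity of the canonical map $\mu\colon\mathfrak{Lie}(\mathsf{A},\mathsf{B})\to\modZ\la X_1,\dots,Y_q\ra$. You instead totalize: since $\bX\bY=K$, Lemma~\ref{r12} gives $\Gamma_m K=\prod_{i+j=m}K_{i,j}$, so the inclusions $K_{i,j}\subset\Gamma_{i+j}K$ and $K_{>(i,j)}\subset\Gamma_{i+j+1}K$ induce a surjective Lie algebra morphism $\sigma_m\colon\bigoplus_{i+j=m}\bK_{i,j}\to\Gamma_m K/\Gamma_{m+1}K$, and the composite $\sigma_m\circ\bigoplus_{i+j=m}\varphi_{i,j}$ is precisely the classical Magnus--Witt isomorphism $\mathfrak{Lie}_m(\mathsf{A}\cup\mathsf{B})\xto{\ \sim\ }\Gamma_mK/\Gamma_{m+1}K$ for the free group $K$ (both are Lie morphisms agreeing on the degree-one generators). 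Injectivity of the composite then forces injectivity of $\bigoplus\varphi_{i,j}$, and because $\varphi$ respects the $\modN^2_+$-bigrading this gives injectivity of each $\varphi_{m,n}$. Your argument is correct and is essentially the paper's Corollary~\ref{r51}'s-corollary run backwards; it has the advantage of bypassing the bigraded Magnus expansion machinery entirely (Lemma~\ref{r52} and the map $\delta$), at the cost of black-boxing the classical Magnus--Witt theorem, which is itself usually proved by a Magnus expansion. Two minor points worth making explicit in a write-up: (a) that $\sigma_m$ is well defined requires observing $K_{>(i,j)}=K_{i+1,j}K_{i,j+1}\subset\Gamma_{m+1}K$; and (b) that $\sigma$ is a Lie algebra morphism, which holds because both brackets are induced by the group commutator.
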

Before proving this theorem, we state the following.

\begin{corollary}
For $m\geq 1$ we have
$$\frac{\Gamma_m K}{\Gamma_{m+1}K}\simeq \bigoplus_{i+j=m}\bK_{i,j}.$$
\end{corollary}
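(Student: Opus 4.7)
The plan is to combine Lemma~\ref{r12} with Theorem~\ref{r51} and the classical Magnus--Witt theorem. First, I would observe that $K=\bar X\bar Y$: since $\bar Y$ is normal in $K$, the set $\bar X\bar Y$ is a subgroup, and it contains every generator $x_i\in\bar X$ and $y_j\in\bar Y$, hence equals $K$. Lemma~\ref{r12} then yields $\Gamma_m K=\prod_{i+j=m}K_{i,j}$ and $\Gamma_{m+1}K=\prod_{i+j=m+1}K_{i,j}$. Since $K_{i+1,j}$ and $K_{i,j+1}$ lie in $\Gamma_{m+1}K$ for $i+j=m$, the composition $K_{i,j}\hookrightarrow\Gamma_m K\twoheadrightarrow\Gamma_m K/\Gamma_{m+1}K$ kills $K_{>(i,j)}=K_{i+1,j}K_{i,j+1}$ and thus factors through $\bK_{i,j}$. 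Using that $\Gamma_m K/\Gamma_{m+1}K$ is abelian, these factorisations assemble into a homomorphism
\[
\Phi_m\zzzcolon\bigoplus_{i+j=m}\bK_{i,j}\longrightarrow \Gamma_m K/\Gamma_{m+1}K,
\]
which is surjective by Lemma~\ref{r12}.

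For injectivity I would package the $\Phi_m$ into a single graded map $\Phi\zzzcolon\bK\rightarrow\bigoplus_{m\geq 1}\Gamma_m K/\Gamma_{m+1}K$, where $\bK$ is now regarded as $\modN$-graded by the total degree $i+j$. This $\Phi$ is a graded Lie algebra homomorphism because both brackets are induced by the group commutator in $K$. Theorem~\ref{r51} identifies $\bK$ with $\mathfrak{Lie}(\mathsf{A},\mathsf{B})$, which is free on $\mathsf{A}\cup\mathsf{B}$ concentrated in total degree $1$. On the other hand, since $K$ is the free group on $X\cup Y$, the Magnus--Witt theorem identifies $\bigoplus_{m\geq 1}\Gamma_m K/\Gamma_{m+1}K$ with the free graded Lie algebra on $X\cup Y$ in degree $1$. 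The map $\Phi$ sends the free generators $[x_i]_{1,0}$ and $[y_j]_{0,1}$ of $\bK$ to the free generators $x_i\Gamma_2 K$ and $y_j\Gamma_2 K$ of the associated graded, so the universal property of free Lie algebras forces $\Phi$ to be an isomorphism of graded Lie algebras. Restricting to the degree-$m$ piece of each side yields the corollary.

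The main (though mild) obstacle is to reconcile the $\modN^2_+$-grading on $\bK$ with the usual $\modN$-grading on the lower-central-series associated graded. This is handled by the total-degree collapse: under $\mathfrak{Lie}(\mathsf{A},\mathsf{B})\simeq\mathfrak{Lie}(X\cup Y)$ (sending $a_i\mapsto x_i$, $b_j\mapsto y_j$), the degree-$m$ piece of the right-hand side corresponds to $\bigoplus_{i+j=m}\mathfrak{Lie}_{i,j}(\mathsf{A},\mathsf{B})$, which matches $\bigoplus_{i+j=m}\bK_{i,j}$ by Theorem~\ref{r51}. The verification that $\Phi$ is a well-defined graded Lie algebra homomorphism is straightforward from the definitions of the brackets (Proposition~\ref{r24} and the standard bracket on $\gr K$), and surjectivity has already been noted.
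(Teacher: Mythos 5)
Your proof is correct and takes essentially the same approach as the paper's: combine Theorem~\ref{r51} with the classical Magnus--Witt identification of $\bigoplus_m \Gamma_m K/\Gamma_{m+1}K$ with the free Lie algebra on $X\cup Y$, observing that the $\modN^2_+$-bigrading collapses to total degree. Your explicit construction of the natural map $\Phi$ and the surjectivity check via Lemma~\ref{r12} are supplementary details that the paper leaves implicit in its chain of isomorphisms.
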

\begin{proof}
Denote by $\mathfrak{Lie}(\mathsf{A}\cup\mathsf{B})=\bigoplus_{m\geq 1}\mathfrak{Lie}_m(\mathsf{A}\cup\mathsf{B})$ the graded Lie algebra freely generated by $\mathsf{A}\cup\mathsf{B}$ in degree~$1$. We use the classical isomorphism
$$\frac{\Gamma_m K}{\Gamma_{m+1}K}\simeq \mathfrak{Lie}_m(\mathsf{A}\cup\mathsf{B})$$
induced by mapping $a_i\mapsto x_i\Gamma_2K$ and $b_j\mapsto y_j\Gamma_2K$ for $1\leq i\leq p$ and  $1\leq j\leq q$. 
Clearly, we have $\mathfrak{Lie}_m(\mathsf{A}\cup\mathsf{B})\simeq\bigoplus_{i+j=m} \mathfrak{Lie}_{i,j}(\mathsf{A},\mathsf{B})$. These together with Theorem~\ref{r51} yield
$$\frac{\Gamma_m K}{\Gamma_{m+1}K}\simeq \mathfrak{Lie}_m(\mathsf{A}\cup \mathsf{B})\simeq \bigoplus_{i+j=m}\mathfrak{Lie}_{i,j}(\mathsf{A},\mathsf{B})\simeq \bigoplus_{i+j=m}\bK_{i,j}.$$
\end{proof}

In order to prove Theorem~\ref{r51} we need to introduce some notions. Let $\modZ\lala X_1,\ldots,X_p,Y_1,\ldots, Y_q\rara$ (resp. $\modZ\la X_1,\ldots,X_p,Y_1,\ldots, Y_q\ra$) be the algebra of formal power series (resp. the ring of polynomials) in the $p+q$ noncommuting variables $X_1,\ldots,X_p$, $Y_1,\ldots, Y_q$.

A monomial $u$ in the noncommuting variables $X_1,\ldots,X_p$, $Y_1,\ldots, Y_q$  is said to have \emph{degree} $(m,n)$, denoted $\mathrm{deg}(u)= (m,n)$, if it involves $m$ (resp. $n$) occurrences of the variables in $\{X_1,\ldots, X_p\}$ (resp.  $\{Y_1,\ldots, Y_q\}$) counted with repetition. Notice that this degree is compatible with the multiplication, that is, if $u$ and $v$ are two of these monomials of degree $(m,n)$ and $(i,j)$ respectively, then $uv$ has degree $(m+i,n+j)$.

The Magnus expansion is the injective multiplicative map
$$\theta\zzzcolon K\longrightarrow \modZ\lala X_1,\ldots,X_p,Y_1,\ldots, Y_q\rara$$
defined by $x_i\mapsto 1+ X_i$ ($1\leq i\leq p$) and $y_j\mapsto 1+ Y_j$ ($1\leq j\leq q$). In particular,  we have {$\theta(w) = 1+ (\text{terms of degree } >  (0,0))$} for all $w\in K$. {Here we are considering $\modN^2$ with its usual order.}

\begin{lemma}\label{r52} If $z\in K_{m,n}$ then
\begin{gather}\label{e42}
\theta(z)= 1+ z_{{m,n}} + (\mathrm{deg} >  (m,n))
\end{gather}
where $z_{m,n}$ is 
a linear combination of monomials of degree $(m,n)$. Here $(\mathrm{deg} >  (m,n))$ means terms of degree greater than $(m,n)$.
\end{lemma}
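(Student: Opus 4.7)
The plan is to pull back a natural $\modN ^2$-filtration of the Magnus algebra along $\theta $. For each $(m,n)\in \modN ^2$ let $M_{m,n}\subseteq \modZ \lala X_1,\ldots ,X_p,Y_1,\ldots ,Y_q\rara $ denote the $\modZ $-submodule of power series whose monomials all have degree $\ge (m,n)$ in the partial order on $\modN ^2$. Proving the lemma amounts to showing $\theta (z)-1\in M_{m,n}$ for every $z\in K_{m,n}$; separating the pure degree-$(m,n)$ part from the strictly higher part then yields the stated form \eqref{e42}.

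Set $\widetilde M_{0,0}:=1+M_{1,0}+M_{0,1}$, the group of units of the Magnus algebra with constant term $1$, and $\widetilde M_{m,n}:=1+M_{m,n}$ for $(m,n)>(0,0)$. The compatibility of degree with multiplication yields $M_{m,n}\cdot M_{m',n'}\subseteq M_{m+m',n+n'}$, whence each $\widetilde M_{m,n}$ is a subgroup of $\widetilde M_{0,0}$ (closure under inversion via the geometric series $(1+a)^{-1}=1-a+a^2-\cdots $, since $a^k\in M_{km,kn}\subseteq M_{m,n}$), and it is normal in $\widetilde M_{0,0}$ because $gM_{m,n}g^{-1}\subseteq M_{m,n}$ for every $g\in \widetilde M_{0,0}$. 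The key filtration axiom $[\widetilde M_{m,n},\widetilde M_{m',n'}]\subseteq \widetilde M_{m+m',n+n'}$ follows from the ring identity
\begin{gather*}
  [1+a,1+b]=1+(ab-ba)(1+a)^{-1}(1+b)^{-1},
\end{gather*}
since $ab-ba\in M_{m+m',n+n'}$ when $a\in M_{m,n}$ and $b\in M_{m',n'}$. Thus $(\widetilde M_{m,n})_{(m,n)\in \modN ^2}$ is an $\modN ^2$-filtration of $\widetilde M_{0,0}$, and its pullback $(\theta ^{-1}(\widetilde M_{m,n}))_{(m,n)\in \modN ^2}$ along the homomorphism $\theta \zzzcolon K\to \widetilde M_{0,0}$ is an $\modN ^2$-filtration of $K$.

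Since $\theta (x_i)=1+X_i\in \widetilde M_{1,0}$ and $\theta (y_j)=1+Y_j\in \widetilde M_{0,1}$, and since $\theta ^{-1}(\widetilde M_{1,0})$ and $\theta ^{-1}(\widetilde M_{0,1})$ are normal in $K$, we obtain $\bX\subseteq \theta ^{-1}(\widetilde M_{1,0})$ and $\bY\subseteq \theta ^{-1}(\widetilde M_{0,1})$. Invoking the characterization of $K_{*,*}=\Gamma _{*,*}(K;\bX,\bY)$ as the fastest decreasing $\modN ^2$-filtration of $K$ with these $(1,0)$- and $(0,1)$-terms (Example~\ref{exdlcs}), we conclude $K_{m,n}\subseteq \theta ^{-1}(\widetilde M_{m,n})$, i.e., $\theta (K_{m,n})\subseteq 1+M_{m,n}$, for every $(m,n)\in \modN ^2$, which is the lemma.

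The main subtlety is the index $(0,0)$: one must take $\widetilde M_{0,0}$ to be the units with constant term $1$ rather than $1+M_{0,0}$, so that the filtration is well-defined and $\theta ^{-1}(\widetilde M_{0,0})=K$ is automatic. A more hands-on alternative would be a direct induction on $m+n$ using the defining recursion $K_{m,n}=[K_{1,0},K_{m-1,n}]\,[K_{0,1},K_{m,n-1}]$ and the same bracket identity, but this essentially repackages the same computation.
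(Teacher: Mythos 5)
Your proof is correct, and it takes a genuinely different route from the paper's. The paper argues by a hands-on induction on $(m,n)\in\modN^2_+$: it writes elements of $K_{1,0}$ and $K_{0,1}$ explicitly as products of conjugates of the generators and computes their Magnus expansions directly, then propagates the estimate through the defining recursion $K_{m,n}=[K_{1,0},K_{m-1,n}][K_{0,1},K_{m,n-1}]$ using the commutator identity $\theta([z,w])=1+z_{1,0}w_{m-1,0}-w_{m-1,0}z_{1,0}+\cdots$. You instead package the entire estimate into a single $\modN^2$-filtration $(\widetilde M_{m,n})$ on the group of constant-term-$1$ units of the Magnus algebra, pull it back along $\theta$, and invoke the universal (fastest-decreasing) property of the double lower central series recorded in Example~\ref{exdlcs}. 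Your approach is cleaner and more conceptual: all the commutator bookkeeping is absorbed once into the ring identity $[1+a,1+b]=1+(ab-ba)(1+a)^{-1}(1+b)^{-1}$, and the rest is abstract nonsense. It also handles the index $(0,0)$ uniformly, which the paper's inductive statement quietly restricts to $\modN^2_+$.

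Two small points worth being explicit about. First, the universal property quoted from Example~\ref{exdlcs} is phrased as ``fastest decreasing among filtrations with $K_{1,0}=\bX$, $K_{0,1}=\bY$,'' whereas what you actually use is the (slightly stronger-looking but equally true) version with $\bX\subseteq K'_{1,0}$ and $\bY\subseteq K'_{0,1}$; this follows by the obvious induction on the defining recursion, so it is not a gap, but it deserves a sentence. Second, the closure of $\widetilde M_{m,n}$ under taking inverses rests on the geometric series converging in the $\modN^2$-adic topology and on $M_{m,n}$ being closed under such infinite sums; this is fine because $M_{m,n}$ is defined by a condition on the support of a series, but it is the sort of place where one should say ``the completed algebra'' rather than ``the ring of polynomials.'' With those two remarks spelled out, the proof stands.
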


\begin{proof}
The proof is by induction on $(m,n)\in\modN^2_+$. Let us consider the case $(1,0)$. Notice that for $u_j\in \la Y\ra
$ and $\epsilon_i=\pm{1}$ we have
$\theta({}^{u_j}(x_i^{\epsilon_i})) = 1 + \epsilon_i x_i + (\mathrm{deg} > (1,0))$. Hence if $z\in K_{1,0}$ we can write it in the form
\begin{gather*}
z=\prod_{j=1}^s {}^{u_j}(z_j^{\epsilon_j})
\end{gather*}
with $u_j\in \la Y\ra 
$, $z_j\in X$ and $\epsilon_j=\pm{1}$. Therefore
\begin{gather*}
\theta(z)= 1 + \sum_{j=1}^s \epsilon_jz_j + (\mathrm{deg} > (1,0)).
\end{gather*}
The case $(0,1)$ is proved similarly.
Suppose $m\geq 2$, since $K_{m,0}$  ($m\geq 2$) is generated by elements of the form $[z,w]$ with $z\in K_{1,0}$ and $w\in K_{m-1,0}$, we only need to show the result for these kind of elements. 
Write
$$\theta(z)= 1 + z_{1,0} + (\mathrm{deg}> (1,0)) \quad \quad \text{and} \quad \quad \theta(w)= 1 + w_{m-1,0} + (\mathrm{deg}> (m-1,0)).$$
We can check that
$$\theta([z,w]) = 1+ z_{1,0}w_{m-1,0} - w_{m-1,0}z_{10} + (\mathrm{deg}> (m,0)).$$
The cases $(0,n)$ with $n\geq 2$ and $(m,n)\geq (1,1)$ are treated similarly.
\end{proof}

Let $z\in K$. For $(m,n)\in\modN^2_+$ we denote by $\delta_{m,n}(z)$ the linear combination of monomials of degree $(m,n)$ in $\theta(z)$.  For instance $\delta_{1,0}([x_1,y_1])=\delta_{0,1}([x_1,y_1])=0$ and $\delta_{1,1}([x_1,y_1])= x_1y_1-y_1x_1$.

\begin{lemma} Let $(m,n)\in\modN^2_+$. The map 
\begin{gather}\label{e43}
\delta_{m,n}:\bK_{m,n}\longrightarrow \modZ\la X_1,\ldots,X_p,Y_1,\ldots, Y_q\ra
\end{gather}
which sends $zK_{>(m,n)}$ to $\delta_{m,n}(z)$ for $z\in K_{m,n}$, is a well-defined linear map.
\end{lemma}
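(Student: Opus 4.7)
The plan is to verify the two required properties---additivity of $\delta_{m,n}$ on $K_{m,n}$ (viewed as a homomorphism of abelian groups, once we pass to the quotient) and vanishing on $K_{>(m,n)}$---by direct manipulation of the Magnus expansion $\theta$ combined with Lemma~\ref{r52}.

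For additivity, given $z, w \in K_{m,n}$, Lemma~\ref{r52} expresses
\[\theta(z) = 1 + \delta_{m,n}(z) + R_z, \qquad \theta(w) = 1 + \delta_{m,n}(w) + R_w,\]
where $R_z$ and $R_w$ consist only of monomials of degree strictly greater than $(m,n)$. Expanding the multiplicative identity $\theta(zw)=\theta(z)\theta(w)$ and isolating the degree-$(m,n)$ component, the product $\delta_{m,n}(z)\cdot\delta_{m,n}(w)$ contributes only to degree $(2m,2n)$, which is strictly greater than $(m,n)$ in the usual order on $\modN^2$ because $(m,n)\in\modN^2_+$; the cross terms involving $R_z$ or $R_w$ contribute to even higher degrees. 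It follows that $\delta_{m,n}(zw) = \delta_{m,n}(z) + \delta_{m,n}(w)$.

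For vanishing on $K_{>(m,n)}$, one first notes the factorization $K_{>(m,n)} = K_{m+1,n}\cdot K_{m,n+1}$, which comes from normality of both factors together with the fact that every $(i,j)>(m,n)$ in the usual order satisfies either $i\ge m+1$ or $j\ge n+1$, so $K_{i,j}$ sits in one of the two factors. Writing $z' = z'_1 z'_2$ with $z'_1\in K_{m+1,n}$ and $z'_2\in K_{m,n+1}$, Lemma~\ref{r52} shows that $\theta(z'_1)$ and $\theta(z'_2)$ have all their nonconstant monomials in degrees strictly greater than $(m,n)$, and therefore so does their product $\theta(z')$. Hence $\delta_{m,n}(z')=0$.

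Combining these two observations gives the required factorization through $\bK_{m,n}$ and the $\modZ$-linearity. No substantive obstacle is expected: the only delicate point in the degree bookkeeping is that the hypothesis $(m,n)>(0,0)$ is precisely what guarantees $(2m,2n)>(m,n)$, which in turn is what prevents $\delta_{m,n}(z)\cdot\delta_{m,n}(w)$ from polluting the degree-$(m,n)$ component and breaking additivity.
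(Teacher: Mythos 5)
Your proof is correct and follows essentially the same route as the paper's: the paper invokes Lemma~\ref{r52} for the vanishing on $K_{>(m,n)}$ and then appeals directly to multiplicativity of the Magnus expansion for linearity, while you spell out the underlying degree bookkeeping (the factorization $K_{>(m,n)}=K_{m+1,n}\cdot K_{m,n+1}$, the observation that $(2m,2n)>(m,n)$ for $(m,n)\in\modN^2_+$, and that cross terms with $R_z,R_w$ land in degree $>(m,n)$). This is a more detailed write-up of the same argument, with no substantive departure.
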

\begin{proof}
By Lemma~\ref{r52} we have $\delta_{m,n}(K_{>(m,n)})=0$. Therefore \eqref{e43} is well-defined. The linearity follows {from} multiplicativity of the Magnus expansion.
\end{proof}

We define the linear map 
$$\delta\zzzcolon \bK\longrightarrow \modZ\la X_1,\ldots,X_p,Y_1,\ldots, Y_q\ra$$
by $\delta =\bigoplus_{(m,n)\in\modN^2_+} \delta_{m,n}$.

\medskip

For $R,S\in\modZ\la X_1,\ldots,X_p,Y_1,\ldots, Y_q\ra$ we set $[R,S]=RS-SR$. Finally we use the following well-known result. 

\begin{proposition}[{\cite[Theorem 5.9]{MR2109550}}]
The map 
$$\mu:\mathfrak{Lie}(\mathsf{A}, \mathsf{B})\longrightarrow \modZ\la X_1,\ldots,X_p,Y_1,\ldots, Y_q\ra$$
induced by mapping $a_i\mapsto X_i$ ($1\leq i\leq p$), $b_j\mapsto Y_j$ ($1\leq j\leq q$) and compatibility with the Lie bracket, is injective.
\end{proposition}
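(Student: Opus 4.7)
The plan is to exhibit an explicit $\modZ$-basis of $\mathfrak{Lie}(\mathsf{A},\mathsf{B})$ whose image under $\mu$ is linearly independent in the tensor algebra $T(V)=\modZ\la X_1,\ldots,X_p,Y_1,\ldots,Y_q\ra$, where $V$ is the free abelian group on $\{X_i,Y_j\}$. First, I would fix a total order on the alphabet $Z=\mathsf{A}\cup\mathsf{B}$, say $a_1<\cdots<a_p<b_1<\cdots<b_q$, and consider the Hall (equivalently, Lyndon) basis $\{P_w\}$ of $\mathfrak{Lie}(Z)$, indexed by Hall words $w$ on $Z$. The existence of such a basis is a purely combinatorial fact (due to P.\ Hall) that simultaneously establishes that $\mathfrak{Lie}(Z)$ is free as a $\modZ$-module.

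The heart of the argument is a leading-monomial computation. For each Hall word $w=z_{i_1}z_{i_2}\cdots z_{i_k}$ on $Z$, I would prove by induction on $k=|w|$ that
$$\mu(P_w)=Z_{i_1}Z_{i_2}\cdots Z_{i_k}\;+\;\sum_{u}c_{w,u}\,u,$$
where $Z_{i_r}$ denotes $X_j$ (resp.\ $Y_j$) when $z_{i_r}=a_j$ (resp.\ $b_j$), the sum runs over monomials $u$ of the same total multidegree as $Z_{i_1}\cdots Z_{i_k}$ but strictly greater than it under a suitable lexicographic-type order on monomials, and $c_{w,u}\in\modZ$. The induction step exploits the Hall recursion $P_w=[P_{w'},P_{w''}]$ with $w=w'w''$ the standard Hall decomposition: expanding $\mu(P_w)=\mu(P_{w'})\mu(P_{w''})-\mu(P_{w''})\mu(P_{w'})$ and inserting the inductive expansions of $\mu(P_{w'})$ and $\mu(P_{w''})$, the product $\mu(P_{w'})\mu(P_{w''})$ contributes the monomial $Z_{i_1}\cdots Z_{i_k}$ exactly once with coefficient $+1$, along with strictly larger monomials, whereas $\mu(P_{w''})\mu(P_{w'})$ contributes only strictly larger monomials. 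This is where the minimality property built into Hall's construction is crucially used.

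Since distinct Hall words yield distinct leading monomials, the family $\{\mu(P_w)\}$ is triangular, hence $\modZ$-linearly independent in $T(V)$. As the $P_w$'s span $\mathfrak{Lie}(Z)$ (and in fact form a basis), injectivity of $\mu$ follows. An alternative route is to identify $T(V)$ with the universal enveloping algebra $U(\mathfrak{Lie}(V))$ by matching universal properties and then invoke the Poincar\'e--Birkhoff--Witt theorem (which applies over $\modZ$ here because $\mathfrak{Lie}(V)$ is free as a $\modZ$-module) to deduce the injectivity of the canonical map. The main technical obstacle in the basis-based approach is verifying that the Hall recursion interacts correctly with the chosen monomial order so that the leading term does not cancel; this is precisely the content of the classical Hall--Witt theorem, and any careful write-up would need to reproduce that combinatorial core.
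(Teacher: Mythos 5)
The paper does not prove this statement; it is quoted as a known classical fact with a citation to Magnus--Karrass--Solitar (Theorem~5.9 there). Your Hall/Lyndon-basis argument with a leading-monomial triangularity computation is precisely the standard proof given in that reference, and your sketch is correct: you correctly identify that the combinatorial core is the leading-term claim $\mu(P_w)=w+(\text{larger rearrangements})$ for Hall words $w$, that this is exactly the Hall--Witt computation, and that it yields linear independence by triangularity. Your remark that the alternative PBW route is also available over $\modZ$ --- because the free Lie algebra is free as a $\modZ$-module --- is likewise correct and is a standard alternative. Since the statement is only cited in the paper, your proposal provides more detail than the paper does, but it is the argument the cited source uses, so there is no substantive divergence to compare.
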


\begin{proof}[Proof of Theorem \ref{r51}]
Clearly, the map $\varphi$ is an $\modN^2_+$-graded Lie algebra homomorphism. By Proposition~\ref{r48} and Proposition~\ref{rr49} we have that $\varphi$ is surjective. The injectivity follows by checking that the map $\mu^{-1}\delta$ is the left inverse.
\end{proof}

We finish this subsection with the following.

\begin{conjecture}\label{conjecture_2} For all $(m,n)\in\modN^2$ we have
$$ K_{m,n} = K_{m,0}\cap K_{0,n}.$$
\end{conjecture}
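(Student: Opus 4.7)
The plan is to proceed by induction on $m+n$, using the Magnus expansion $\theta\colon K\to \mathcal{A}:=\modZ\lala X_1,\ldots,X_p,Y_1,\ldots,Y_q\rara$ from Section~\ref{sec_dlcsfg}. The base cases $m=0$ and $n=0$ are immediate since $K_{0,0}=K$. For the inductive step, fix $m,n\geq 1$ and take $z\in K_{m,0}\cap K_{0,n}$. By the inductive hypothesis applied at $(m-1,n)$ one has $K_{m-1,n}=K_{m-1,0}\cap K_{0,n}$, and in particular $z\in K_{m-1,n}$.

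By Lemma~\ref{r52} together with Theorem~\ref{r51}, the bidegree $(m-1,n)$ component of $\theta(z)-1$ coincides with the image of $z$ in $\bar{K}_{m-1,n}\cong \mathfrak{Lie}_{m-1,n}(A,B)$. On the other hand $\bar{X}=\lala X\rara_K$ is a free group on the Schreier generators $\{wx_iw^{-1}:w\in F(Y),\,1\leq i\leq p\}$, and the classical Magnus characterization of $\Gamma_m\bar{X}=K_{m,0}$, transferred through $\theta$, forces every monomial of $\theta(z)-1$ to involve at least $m$ of the variables $X_1,\ldots,X_p$. Hence the bidegree $(m-1,n)$ component vanishes, so $z\in K_{>(m-1,n)}=K_{m,n}\cdot K_{m-1,n+1}$. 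Writing $z=a_1b_1$ with $a_1\in K_{m,n}$ and $b_1\in K_{m-1,n+1}$, one has $b_1=a_1^{-1}z\in K_{m,0}\cap K_{0,n}$ (since $K_{m,n}\subset K_{m,0}\cap K_{0,n}$). Applying the same argument to $b_1$ gives $b_1=c_1b_2$ with $c_1\in K_{m,n+1}\subset K_{m,n}$ and $b_2\in K_{m-1,n+2}$. Iterating produces decompositions $z=a_kb_k$ with $a_k\in K_{m,n}$ and $b_k\in K_{m-1,n+k}\subset \Gamma_{n+k}\bar{Y}\subset \Gamma_{n+k}K$ for every $k\geq 1$.

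Since $K$ is residually nilpotent, the sequence $(b_k)$ tends to $1$ and $(a_k)$ tends to $z$ in the pro-nilpotent completion $\widehat{K}=\varprojlim_N K/\Gamma_NK$; equivalently, $z\in \bigcap_{N\geq 1}(K_{m,n}\cdot \Gamma_NK)$. The conjecture will then follow from the assertion that $K/K_{m,n}$ is residually nilpotent, namely $\bigcap_{N\geq 1}(K_{m,n}\cdot\Gamma_NK)=K_{m,n}$.

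The main obstacle is this last residual nilpotency of $K/K_{m,n}$. A direct computation using Theorem~\ref{r51} shows that the $d$-th lower central series quotient of $K/K_{m,n}$ is isomorphic to $\bigoplus_{i+j=d,\,(i,j)\not\geq(m,n)}\mathfrak{Lie}_{i,j}(A,B)$, which is free abelian; consequently, every nilpotent quotient of $K/K_{m,n}$ is torsion-free. The hard part will be to promote this graded torsion-freeness to residual nilpotency of $K/K_{m,n}$ itself, which appears to require either a bigraded refinement of the Magnus--Witt integrality theorem for $\mathfrak{Lie}(A,B)$ or an alternative embedding of $K/K_{m,n}$ into an inverse limit of torsion-free nilpotent groups.
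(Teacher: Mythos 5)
This statement is an open conjecture in the paper (Conjecture~\ref{conjecture_2}); no proof is given, so there is nothing to compare against, and I can only assess your argument on its own terms.

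Your reduction is correct as far as it goes. Given $z\in K_{m,0}\cap K_{0,n}$ with $m,n\geq 1$, the induction on $m+n$ places $z\in K_{m-1,n}$; membership in $K_{m,0}=\Gamma_m\bX$ forces, by a direct commutator computation with the Magnus expansion (the direction you invoke is the easy inclusion $\Gamma_m\bX\subset\{z:\theta(z)-1 \text{ has $X$-degree}\geq m\}$), the bidegree-$(m-1,n)$ component of $\theta(z)$ to vanish; and injectivity of $\delta_{m-1,n}$ from the proof of Theorem~\ref{r51} gives $z\in K_{>(m-1,n)}=K_{m,n}\cdot K_{m-1,n+1}$. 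The iteration is also sound, but note that at stage $k$ you do \emph{not} need the induction hypothesis at $(m-1,n+k)$ (which would lie outside the induction); you already know $b_k\in K_{m-1,n+k}$ by construction, and that is all Lemma~\ref{r52} requires. This is worth saying explicitly. The upshot, $z\in\bigcap_N K_{m,n}\Gamma_N K$, is correct.

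The gap you flag is genuine and, crucially, does not appear to be any easier than the conjecture itself. You need $K/K_{m,n}$ to be residually nilpotent, i.e.\ $\bigcap_N K_{m,n}\Gamma_N K=K_{m,n}$, and your closing paragraph does not establish it: torsion-freeness of the graded quotients $\Gamma_d(K/K_{m,n})/\Gamma_{d+1}(K/K_{m,n})$ controls each finite step of the lower central series but says nothing about the triviality of the infinite intersection $\bigcap_d\Gamma_d(K/K_{m,n})$. Moreover the residual nilpotency you need is, via essentially the same Magnus manipulations you carried out, equivalent to a bidegree-refined Magnus--Witt theorem asserting $K_{m,n}=\{z\in K : \theta(z)-1 \text{ has all monomials of bidegree }\geq(m,n)\}$; that statement would immediately imply Conjecture~\ref{conjecture_2} and does not appear in the literature. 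So what you have is an honest reduction of one open statement to another of comparable depth --- a useful clarification of where the difficulty lies, but not a proof.
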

This conjecture imply a similar property for the Johnson filtration of the Goeritz group, see Proposition~\ref{dfstrong}.

\subsection{Johnson filtrations for groups acting on double lower   central series}\label{sec_2.4}

Let $\bX$ and $\bY$ be two normal subgroups of a group $K$ {such that $K=\bX\bY$}. Let $K_{*,*}=\Gamma _{*,*}(K;\bX,\bY)$ be the double lower central series of the triple $(K;\bX,\bY)$.

Let $G$ a group acting on $K=K_{0,0}$ (for instance $G$ could be a subgroup of the automorphism group $\Aut(K)$ of~$K$).  Let $G_{*,*}=(G_{m,n})_{(m,n)\in \modN ^2}$ be the Johnson filtration of~$G_{0,0}$. In general, the definition of each~$G_{m,n}$, see \eqref{e27} and \eqref{e28}, involves infinitely many inclusion conditions, but in the case \mbox{$K_{*,*}=\Gamma _{**}(K;\bX,\bY)$} we need only two such conditions as follows.

\begin{lemma}
  \label{r30}
  We have
  \begin{gather}\label{e29}
    G_{0,0}=\{g\in G\mid g(K_{1,0})=K_{1,0},\quad g(K_{0,1})=K_{0,1}\}
  \end{gather}
  and
  \begin{gather}\label{e30}
    G_{m,n}=\{g\in G_{0,0}\mid [g,K_{1,0}]\subset K_{m+1,n},\quad [g,K_{0,1}]\subset K_{m,n+1}\}
  \end{gather}
  for $(m,n)>(0,0)$.
\end{lemma}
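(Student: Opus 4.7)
The plan is to reduce each infinite family of conditions in \eqref{e27}--\eqref{e28} to the two conditions at $(i,j)=(1,0)$ and $(0,1)$ by induction on $(i,j)$, exploiting the recursive structure
\begin{gather*}
K_{i,0}=\Gamma_i\bX,\qquad K_{0,j}=\Gamma_j\bY,\qquad K_{i,j}=[K_{1,0},K_{i-1,j}]\cdot[K_{0,1},K_{i,j-1}]\quad(i,j\ge 1)
\end{gather*}
together with the standing assumption $K=\bX\bY$ and the three subgroup lemma (Lemma~\ref{tsl}). In both cases the inclusion $\supset$ in the claimed equality is immediate from the general definitions, so the work is entirely in proving $\subset$.

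For \eqref{e29}, assume $g\in G$ satisfies $g(K_{1,0})=K_{1,0}$ and $g(K_{0,1})=K_{0,1}$. I would prove by induction on $(i,j)\in\modN^2$ that $g(K_{i,j})=K_{i,j}$. The case $(0,0)$ follows from $K=\bX\bY=K_{1,0}K_{0,1}$. For $(i,0)$ with $i\ge 2$, the subgroup $K_{i,0}=\Gamma_i\bX$ is an intrinsic invariant of $\bX$, hence is preserved by any automorphism preserving $\bX$; the case $(0,j)$ with $j\ge 2$ is symmetric. For $(i,j)$ with $i,j\ge 1$, apply $g$ to the recursive expression and use that $g$ preserves commutators together with the inductive hypothesis applied to $K_{1,0}, K_{0,1}, K_{i-1,j}, K_{i,j-1}$.

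For \eqref{e30}, assume $g\in G_{0,0}$ satisfies $[g,K_{1,0}]\subset K_{m+1,n}$ and $[g,K_{0,1}]\subset K_{m,n+1}$. I would prove by induction on $(i,j)$ that $[g,K_{i,j}]\subset K_{m+i,n+j}$. The base case $(0,0)$: for $k=xy\in K$ with $x\in \bX,\,y\in\bY$, identity~\eqref{e16} gives $[g,xy]=[g,x]\cdot{}^x[g,y]\in K_{m+1,n}\cdot{}^{\bX}K_{m,n+1}\subset K_{m,n}$, using normality of $K_{m,n}$. The cases $(1,0)$ and $(0,1)$ are the hypotheses. For $i\ge 2,\,j=0$, apply the three subgroup lemma to $A=\langle g\rangle$, $B=K_{1,0}$, $C=K_{i-1,0}$ with $N=K_{m+i,n}$: the assumptions
\begin{gather*}
[B,[C,A]]\subset[K_{1,0},K_{m+i-1,n}]\subset K_{m+i,n},\qquad [C,[A,B]]\subset[K_{i-1,0},K_{m+1,n}]\subset K_{m+i,n}
\end{gather*}
follow from the inductive hypothesis and the filtration property of $K_{*,*}$, yielding $[g,K_{i,0}]=[A,[B,C]]\subset K_{m+i,n}$. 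The case $i=0,\,j\ge 2$ is symmetric, and the case $i,j\ge 1$ is handled by applying the three subgroup lemma to each of the two factors $[K_{1,0},K_{i-1,j}]$ and $[K_{0,1},K_{i,j-1}]$ of the recursion for $K_{i,j}$.

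No serious obstacle is anticipated: the argument is a standard ``commutator-calculus'' induction, and the recursive definition of $K_{*,*}$ together with the three subgroup lemma are precisely tailored to this kind of reduction. The mildly delicate point is the base case $(0,0)$ of the second part, which requires the hypothesis $K=\bX\bY$ to express a general element of $K$ as a product of one element from each generator subgroup.
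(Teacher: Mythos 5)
Your proof is correct and follows essentially the same line as the paper's: both reduce the infinite families of conditions to the two generating conditions by induction on $(i,j)$, using the recursive definition of $K_{*,*}$, the three subgroups lemma (the paper in its Hall--Witt form with explicit normal closures, you via Lemma~\ref{tsl}), and the standing assumption $K=\bX\bY$ for the $(0,0)$ case. The only mild (and pleasant) divergence is in \eqref{e29}: you note that $\Gamma_i\bX$ is intrinsic to $\bX$ and so automatically preserved, which shortcuts the induction on $(i,0)$ that the paper implicitly performs; this is a small simplification, not a different method.
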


\begin{proof}
To show equality \eqref{e29} it is enough to check that if $g\in G$ satisfies $g(K_{1,0})=K_{1,0}$ and {$g(K_{0,1})=K_{0,1}$}, then $g(K_{i,j})=K_{i,j}$ for all $(i,j)\in\modN ^2$. This follows easily by induction. Let us show equality \eqref{e30}; the inclusion $\subset$ is obvious. For $g\in G_{0,0}$ such that
  \begin{equation*}
    [g,K_{1,0}]\subset K_{m+1,n},\quad [g,K_{0,1}]\subset K_{m,n+1},
  \end{equation*}
  we need to show that
  \begin{equation*}
    [g,K_{i,j}]\subset K_{m+i,n+j}\quad \text{for all } (i,j)\in\mathbb{N}^2.
  \end{equation*}
  The cases $(i,j)=(0,0),(1,0),(0,1)$ are trivial.

  Consider the case $i\geq 2$, $j=0$.
  Then, by induction on $i$,
  \begin{equation*}
    \begin{split}
    [g,K_{i,0}]
    &=[g,[K_{1,0},K_{i-1,0}]]\\
    &\subset \langle \langle [[\langle g\rangle ,K_{1,0}],K_{i-1,0}]\;[[\langle g\rangle ,K_{i-1,0}],K_{1,0}]\rangle \rangle \\
    &\subset \langle \langle [K_{m+1,n},K_{i-1,0}]\;[K_{m+i-1,n},K_{1,0}]\rangle \rangle \\
    &\subset \langle \langle K_{m+i,n}\rangle \rangle =K_{m+i,n},
    \end{split}
  \end{equation*}
  where $\langle \langle -\rangle \rangle =\langle \langle -\rangle \rangle_{K\rtimes G}$.
  Similarly, the case $i=0$, $j\geq 2$ is proved.

  Finally, let $i,j\geq 1$.
  Then
  \begin{equation*}
    \begin{split}
      [g,K_{i,j}]
    &=[g,[K_{1,0},K_{i-1,j}]\;[K_{0,1},K_{i,j-1}]]\\
    &\subset {[g,[K_{1,0},K_{i-1,j}]]\;\lala [g,[K_{0,1},K_{i,j-1}]] \rara_{K\rtimes G}}.
    \end{split}
  \end{equation*}
  Thus it suffices to check $[g,[K_{1,0},K_{i-1,j}]]\subset K_{m+i,n+j}$ and
  $[g,[K_{0,1},K_{i,j-1}]]\subset K_{m+i,n+j}$.
  The former is checked by induction as follows:

  \begin{equation*}
    \begin{split}
    [g,[K_{1,0},K_{i-1,j}]]
    &\subset \langle \langle [[g,K_{1,0}],K_{i-1,j}]\;[[g,K_{i-1,j}],K_{1,0}]\rangle \rangle _{K\rtimes G}\\
    &\subset \langle \langle [K_{m+1,n},K_{i-1,j}]\;[K_{m+i-1,n+j},K_{1,0}]\rangle \rangle _{K\rtimes G}\\
    &\subset \langle \langle K_{m+i,n+j}\rangle \rangle_{K\rtimes G}=K_{m+i,n+j}.
    \end{split}
  \end{equation*}
  The latter is checked similarly.
\end{proof}

\begin{remark}[Andreadakis problem] Let $K_{*,*}$ be the double lower central series of the triple $(K,\bX,\bY)$ where $K$ is a free group and $\bX$ and $\bY$ are as in Section~\ref{sec_dlcsfg}. Let  $G=\mathrm{Aut}(K;\bX,\bY)$ be the subgroup of automorphisms of $K$ preserving $\bX$ and $\bY$.  The group $G$ acts naturally on $K_{*,*}$. Let~$G_{*,*}$ be the Johnson filtration of $G$. Besides, consider the double lower central series $\Gamma_{*,*} (G; G_{1,0}, G_{0,1})$ of the triple~$(G; G_{1,0}, G_{0,1})$. It follows by definition and Proposition~\ref{r14} that $\Gamma_{m,n} (G; G_{1,0}, G_{0,1})\subset G_{m,n}$ for all $(m,n)\in\modN^2$. The Andreadakis problem consists in the comparison of the two $\modN^2$-filtrations~$\Gamma_{*,*} (G; G_{1,0}, G_{0,1})$ and~$G_{*,*}$. {We refer to \cite{satoh} for details on the classical Andreadakis problem.}
\end{remark}

\section{Double Johnson filtration for the mapping class group}\label{sec_three}

Throughout this section let $g$ be a nonnegative integer and let  $\mathbb{N}^2$ be the additive monoid  with its usual order. In this section we apply the general theory from Sections \ref{sec_1} and \ref{sec_2} to define an $\mathbb{N}^2$-filtration for the subgroup of the mapping class group consisting of the elements which preserve the standard Heegaard splitting of the sphere, called the \emph{Goeritz group} of the $3$-sphere ${S}^3$. 
 Then, we extend this $\mathbb{N}^2$-filtration in order to obtain a double filtration for the whole mapping class group and study its properties.

{
\begin{notation}  For a continuous map $f:(S,s)\rightarrow (T,t)$  between  pointed topological spaces  $(S,s)$ and~$(T,t)$, we denote by $f_{\#}:\pi_1(S,s)\rightarrow\pi_1(T,t)$ and $f_{*}:H_1(S;\mathbb{Z})\rightarrow H_1(T;\mathbb{Z})$ the induced maps on the fundamental group and the first homology group, respectively. 
\end{notation}
}

\subsection{The Goeritz group \texorpdfstring{$\mathcal{G}$}{G} of \texorpdfstring{${S}^3$}{S3}}\label{sec_3_1}  Let $V_g$ denote the standardly embedded handlebody of genus $g$ in ${S}^3$, that is, such that the closure $V'_g:=\overline{{S}^{3}\setminus V_g}$ is also a handlebody (necessarily of genus $g$). A more explicit description of $V_g$ is as follows. Consider the ambient space ${S}^3=\mathbb{R}^3\cup\{\infty\}$ with coordinates $(x,y,z)$. Then $V_g$ is the compact, oriented $3$-manifold obtained from the standard cube $[-1,1]^3\subset\mathbb{R}^3$ by adding $g$  (unknotted) $1$-handles uniformly in the~$y$ direction, see Figure~\ref{sec_2_fig_1}~$(b)$. 
 
Let $\Sigma_{g,1}$ be a compact, oriented surface of genus $g$ with one boundary component and standardly embedded in $\mathbb{R}^3\subset {S}^3$. Explicitly, $\Sigma_{g,1}$ is obtained from the square $[-1,1]\times [-1,1]\times\{0\}$ by adding $g$ (unknotted) handles uniformly in the $y$ direction, see Figure~\ref{sec_2_fig_1}~$(a)$.

\begin{figure}[ht!]
\centering
\includegraphics[scale=0.85]{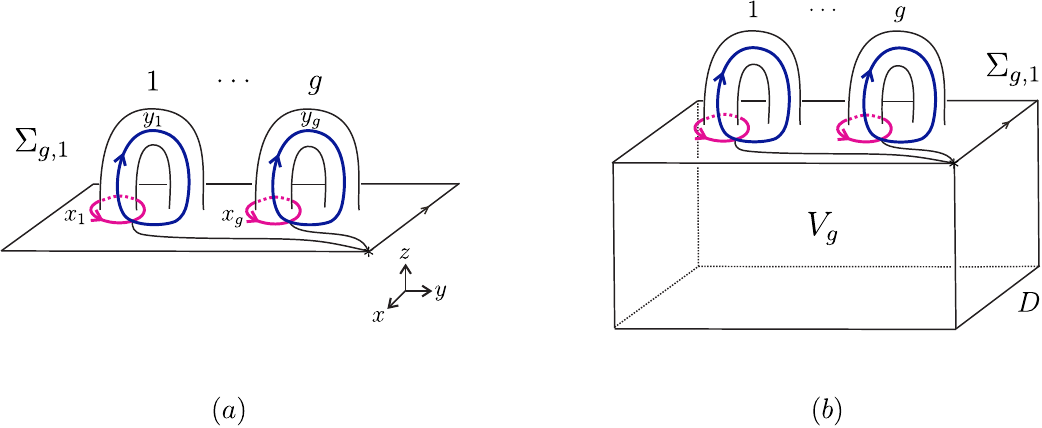}			\caption{$(a)$ Surface $\Sigma_{g,1}$ standardly embedded in ${S}^3$. $(b)$ Handlebody $V_g$ standardly embedded in $S^{3}$ and decomposition $\partial V_g = \Sigma_{g,1}\cup D$.}		\label{sec_2_fig_1}
\end{figure}

We have  $\partial V_g =\partial V'_g =\Sigma_{g,1}\cup D$, where $D\subset\partial V_g=\partial V'_g$ is a fixed disk. Notice that an orientation of $\partial \Sigma_{g,1}$ determines an orientation of $\Sigma_{g,1}\subset\partial V_g\cap\partial V'_g$, $V_g$ and $V_g'$ by using the outward normal first vector convention. From now on, we consider orientations as in Figure \ref{sec_2_fig_1}.

Fix $*\in\partial\Sigma_{g,1}=\partial D\subset V_g\cap V'_g$. Let $K:=\pi_1(\Sigma_{g,1},*)$. We fix a system of meridians and parallels $\{x_i,y_i\}_{1\leq i\leq g}$ and we join them with the base point $*\in\partial\Sigma_{g,1}$ as in Figure~\ref{sec_2_fig_1}. Thus, $\{x_i,y_i\}_{1\leq i\leq g}$ is a free basis for $K$. There are two natural embeddings $\iota\zzzcolon\Sigma_{g,1}\rightarrow V_g$ and $\iota'\zzzcolon\Sigma_{g,1}\rightarrow V'_g$. Consider the following subgroups of $K$:
\begin{gather}\label{equ_X_Y}
\bar{X}:=\mathrm{ker}\big(K\xrightarrow{\  \iota_{\#}\  }\pi_1(V_g,*)\big)=\lala x_1,\dots,x_g\rara_K, \quad
\bar{Y}:=\mathrm{ker}\big(K\xrightarrow{\  \iota'_{\#}\  }\pi_1(V'_g,*)\big)=\lala y_1,\dots,y_g\rara_K.
\end{gather}

From now on we make the following identifications
  \begin{gather*}
\pi_1(V_g,*)=K/\bar{X},\quad \pi_1(V'_g,*)=K/\bar{Y}.
  \end{gather*}

Consider also the following  subgroups of~$H:=H_1(\Sigma;\Z)$:
\begin{equation}\label{equ_A_B}
A:=\mathrm{ker}\big(H\xrightarrow{\  \iota_{*}\  }H_1(V_g;\mathbb{Z})\big) \text{\ \ \ \ \ and \ \  \ \ \ } B:=\mathrm{ker}\big(H\xrightarrow{\  \iota'_{*}\  }H_1(V'_g;\mathbb{Z})\big).
\end{equation}

Let $\mathcal{M}_{g,1}$ denote the \emph{mapping class group} of the surface $\Sigma_{g,1}$, that is, the group of isotopy classes of orientation-preserving homeomorphisms $h\zzzcolon\Sigma_{g,1}\rightarrow \Sigma_{g,1}$ fixing the boundary~$\partial\Sigma_{g,1}$ pointwise. The group~$\mathcal{M}_{g,1}$ acts naturally on $K$. From now on, we consider the semidirect product $K\rtimes \mathcal{M}_{g,1}$. For simplicity of notation, for $f\in\M$ we will also denote by $f$ the induced map on $K$ (instead of $f_{\#}$) since the meaning is clear from the context.

The \emph{handlebody group} $\mathcal{H}_{g,1}$  (resp. $\mathcal{H}'_{g,1}$) \emph{relative to the disk~$D$} is  the mapping class group of~$V_g$ (resp.~$V'_g$) relative to the disk~$D$, i.e., the group of isotopy classes of orientation-preserving homeomorphisms \mbox{$h\zzzcolon V_g\rightarrow V_g$} (resp. $h\zzzcolon V'_g\rightarrow V'_g$) which preserve the disk $D$. Equivalently, the group $\mathcal{H}_{g,1}$ (resp.~$\mathcal{H}'_{g,1}$) can be defined as the subgroup of $\mathcal{M}_{g,1}$ consisting of isotopy classes of homeomorphisms of~$\Sigma_{g,1}$  which extend  to a homeomorphism of~$V_g$ (resp.~$V'_g$).  Dehn's lemma implies the following explicit description of $\mathcal{H}_{g,1}$ and $\mathcal{H}'_{g,1}$, see~\cite[Theorem~10.1]{MR0159313}.
\begin{gather}
\mathcal{H}_{g,1} =\big\{h\in\mathcal{M}_{g,1}\ |\ h( \bar{X} ) = \bar{X}\big\}  \text{\ \ \  and\ \ \   }
\mathcal{H}'_{g,1} =\big\{h\in\mathcal{M}_{g,1}\ |\ h( \bar{Y} ) = \bar{Y}\big\}.
\end{gather}

{The subgroups $\mathcal{H}_{g,1}$ and $\mathcal{H}'_{g,1}$ of  $\mathcal{M}_{g,1}$ are infinite, of infinite index and not normal.} They are finitely generated \cite[Theorem 4.1]{MR433433}. (The arguments given in \cite[Theorem 4.1]{MR433433} are for the case of a closed surface. The case of a surface with one boundary component can be treated with the same arguments).
The subgroup $A$ in \eqref{equ_A_B} allows to define the homological version of the handlebody group. The \emph{Lagrangian mapping class group $\mathcal{L}_{g,1}$ associated with $A$} is the subgroup of $\M_{g,1}$ given by
\begin{equation}
\mathcal{L}_{g,1}=\big\{h\in\mathcal{M}_{g,1}\ |\ h_*( A ) = A\big\}. 
\end{equation}
The group $\mathcal{H}_{g,1}$ (resp.~$\mathcal{H}'_{g,1}$) acts on $\pi_1(V_g,*)\simeq K/\bar{X}$ (resp.~$\pi_1(V'_g,*)\simeq K/\bar{Y}$). 
The \emph{twist groups} or \emph{Luft groups} $\mathcal{T}_{g,1}$ of $V_g$ , and $\mathcal{T}'_{g,1}$ of $V'_g$, (relative to the disk $D$) are defined as the kernels of these actions, that is, 
\begin{equation}
\begin{split}
\mathcal{T}_{g,1} & =\mathrm{ker}\big(\mathcal{H}_{g,1}\longrightarrow \mathrm{Aut}(\pi_1(V_g,*))\big)=\big\{h\in\mathcal{M}_{g,1} \ | \ h(\bar{X})= \bar{X},\  [h,K]\subset\bar{X}\big\}  \text{\ \ \  and }\\
\mathcal{T}'_{g,1} &=\mathrm{ker}\big(\mathcal{H}'_{g,1}\longrightarrow \mathrm{Aut}(\pi_1(V'_g,*))\big)=\big\{h\in\mathcal{M}_{g,1} \ | \ h(\bar{Y})= \bar{Y},\  [h,K]\subset\bar{Y}\big\}.
\end{split}
\end{equation}

The following lemma is stated in a different form in \cite[Example 10.9]{HM}. (See also \cite{HM_2}.)

\begin{lemma}\cite{HM_2,HM}\label{sec_2_lem_T} We have
\begin{equation*}
\begin{split}
\mathcal{T}_{g,1} & =\big\{h\in\mathcal{M}_{g,1} \ | \ h(\bar{X})= \bar{X},\  [h,\bar{Y}]\subset\bar{X}\big\} = \big\{h\in\mathcal{M}_{g,1} \ | \ h(\bar{X})= \bar{X},\  [h,\bar{X}]\subset [\bar{X},\bar{X}]\big\}, \\
\mathcal{T}'_{g,1} &=\big\{h\in\mathcal{M}_{g,1} \ | \ h(\bar{Y})= \bar{Y},\  [h,\bar{X}]\subset\bar{Y}\big\} =  \big\{h\in\mathcal{M}_{g,1} \ | \ h(\bar{Y})= \bar{Y},\  [h,\bar{Y}]\subset [\bar{Y},\bar{Y}]\big\}.
\end{split}
\end{equation*}
\end{lemma}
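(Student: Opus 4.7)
The plan is to prove the three-way equivalence for $\mathcal{T}_{g,1}$; the statement for $\mathcal{T}'_{g,1}$ follows by the symmetric argument exchanging $\bar{X}\leftrightarrow\bar{Y}$ and $x_i\leftrightarrow y_i$. Writing the three conditions (always under $h(\bar{X})=\bar{X}$) as (1) $[h,K]\subset\bar{X}$, (2) $[h,\bar{Y}]\subset\bar{X}$, and (3) $[h,\bar{X}]\subset[\bar{X},\bar{X}]$, the direction (1)$\Rightarrow$(2) is immediate. For (2)$\Rightarrow$(1) I would use the semidirect decomposition $K=\bar{X}\rtimes F_Y$, where $F_Y=\langle y_1,\ldots,y_g\rangle\subset\bar{Y}$ is a section of $K\twoheadrightarrow K/\bar{X}\cong\pi_1(V_g)$: any $k\in K$ decomposes as $k=xy$ with $x\in\bar{X}$ and $y\in F_Y\subset\bar{Y}$, and $[h,xy]=[h,x]\cdot{}^{x}[h,y]$ lies in $\bar{X}$ by normality.

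The substantive equivalence (1)$\Leftrightarrow$(3) will rely on three classical inputs. First, $\bar{X}^{\mathrm{ab}}:=\bar{X}/[\bar{X},\bar{X}]$ is a \emph{free} $\mathbb{Z}[K/\bar{X}]$-module of rank $g$ with basis $\{[x_1],\ldots,[x_g]\}$; this comes from the cellular chain complex of the cover of $\Sigma_{g,1}$ corresponding to $\bar{X}$ (equivalently from Fox calculus applied to $K\twoheadrightarrow K/\bar{X}$), using that $K$ and $K/\bar{X}$ are both free. Second, the augmentation ideal $I_{K/\bar{X}}\subset\mathbb{Z}[K/\bar{X}]\cong \mathbb{Z}[F_g]$ is itself free of rank $g$ on $\{y_j-1\}$ (as both a left and a right module). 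Third, since $h\in\mathcal{M}_{g,1}$ fixes $\partial\Sigma_{g,1}$ pointwise, $h$ fixes the boundary word $\zeta=[x_1,y_1]\cdots[x_g,y_g]\in K$, and a direct commutator computation gives
\begin{equation*}
[\zeta]\ =\ \sum_{i=1}^g(1-y_i)\cdot[x_i]\ \in\ \bar{X}^{\mathrm{ab}}.
\end{equation*}

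Since $h$ preserves $\bar{X}$, it induces an automorphism $h_*$ of $K/\bar{X}$ and an automorphism of $\bar{X}^{\mathrm{ab}}$ which is semilinear with respect to $h_*$: $h(\lambda\cdot a)=h_*(\lambda)\cdot h(a)$. Applying $h$ to both sides of $[\zeta]=[\zeta]$ and using semilinearity yields
\begin{equation*}
\sum_{i=1}^g(1-h_*(y_i))\cdot h([x_i])\ =\ \sum_{i=1}^g(1-y_i)\cdot[x_i].
\end{equation*}
For (1)$\Rightarrow$(3): from $h_*(y_i)=y_i$ the equation reduces to $\sum_i(1-y_i)\cdot(h([x_i])-[x_i])=0$; expanding $h([x_i])-[x_i]$ in the $\mathbb{Z}[K/\bar{X}]$-basis $\{[x_j]\}$ and collecting coefficients, the freeness of $\{y_i-1\}$ in $I_{K/\bar{X}}$ forces all expansion coefficients to vanish, so $h$ fixes each $[x_i]$ and hence (by semilinearity) all of $\bar{X}^{\mathrm{ab}}$. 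For (3)$\Rightarrow$(1): from $h([x_i])=[x_i]$ the equation reduces to $\sum_i(h_*(y_i)-y_i)\cdot[x_i]=0$, and freeness of $\{[x_i]\}$ as a $\mathbb{Z}[K/\bar{X}]$-basis forces $h_*(y_i)=y_i$ in $K/\bar{X}$ for every $i$, whence $h_*$ is trivial.

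The main obstacle is setting up and verifying the two freeness inputs together with the formula for $[\zeta]$; these are standard but need care. It is worth emphasizing that the equivalence (1)$\Leftrightarrow$(3) does \emph{not} hold for an arbitrary automorphism $h\in\mathrm{Aut}(K)$ preserving $\bar{X}$: the whole argument hinges on $h$ fixing the boundary word $\zeta$, i.e.\ on $h$ being a mapping class.
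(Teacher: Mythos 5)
The paper does not prove Lemma~\ref{sec_2_lem_T}: it cites \cite{HM_2,HM} and notes a version appears as Example~10.9 of \cite{HM}, so there is no in-paper proof to compare against. Assessed on its own terms, your argument is correct and complete. The easy loop $(1)\Leftrightarrow(2)$, using $K=\bar X\cdot\langle y_1,\ldots,y_g\rangle$ with $\langle y_1,\ldots,y_g\rangle\subset\bar Y$, is fine, and the substantive equivalence $(1)\Leftrightarrow(3)$ is handled cleanly by the module-theoretic route. The three inputs are exactly what is needed: $\bar X^{\mathrm{ab}}$ is $\Z[K/\bar X]$-free on $\{[x_i]\}$ (the relation module of the presentation $\langle x_i,y_j\mid x_i\rangle$ of $F_g$, read off from $\ker\partial_1$ in the cellular chain complex of the associated cover); the augmentation ideal of $\Z[K/\bar X]\cong\Z[F_g]$ is free on $\{y_j-1\}$, and you correctly note that it is \emph{right}-freeness that is invoked when collecting the relations $\sum_i(1-y_i)c_{ij}=0$ (which follows from left-freeness via the anti-automorphism $w\mapsto w^{-1}$); and the boundary class is $[\zeta]=\sum_i(1-y_i)[x_i]$, from the one-line computation $[[x_i,y_i]]=[x_i]-y_i[x_i]$ in $\bar X^{\mathrm{ab}}$. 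Combined with the $h_*$-semilinearity of the induced action of $h$ on $\bar X^{\mathrm{ab}}$, these give both directions exactly as you write: under $(1)$, the boundary relation forces $h([x_i])=[x_i]$ hence triviality on $\bar X^{\mathrm{ab}}$; under $(3)$, it forces $h_*(y_i)=y_i$ hence $h_*=\mathrm{id}$ on $K/\bar X$. Your closing caveat is also correct and worth keeping: for general $h\in\Aut(K)$ with $h(\bar X)=\bar X$, condition $(1)$ does not imply $(3)$ --- the partial conjugation $x_1\mapsto y_1x_1y_1^{-1}$, all other generators fixed, satisfies $(1)$ but acts by $[x_1]\mapsto y_1[x_1]$ on $\bar X^{\mathrm{ab}}$ --- so invariance of the boundary word is an essential input, not a convenience.
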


The  \emph{genus $g$ Goeritz group $\mathcal{G}_{g,1}$ of ${S}^3$} \emph{(relative to the disk $D$)} is the group of isotopy classes of orientation-preserving homeomorphism $h\zzzcolon {S}^3\rightarrow {S}^3$ such that $h(\partial V_g)=\partial V_g$ and $h(D)=D$. Equivalently, it is the subgroup of $\mathcal{M}_{g,1}$ consisting of the mapping classes which preserve the standard Heegaard splitting of the~$3$-sphere.   Waldhausen's theorem implies the following explicit description:
\begin{equation}
\mathcal{G}_{g,1} =\mathcal{H}_{g,1}\cap\mathcal{H}'_{g,1}=\big\{h\in\mathcal{M}_{g,1}\ |\ h(\bar{X})= \bar{X} \text{ and } h(\bar{Y})= \bar{Y}\big\}.
\end{equation}

For $g\geq 4$ it is not known (at the moment of writing this paper) if $\mathcal{G}_{g,1}$ is finitely generated or not.

\begin{notation} From now on, we drop the terminology ``relative to the disk $D$'' and we drop the subscripts from the notation for simplicity. Thus, we will continue to write $\Sigma$, $V$, $V'$, $\mathcal{M}$, $\mathcal{H}$, $\mathcal{H}'$, $\mathcal{L}$, $\mathcal{T}$, $\mathcal{T}'$ and $\mathcal{G}$ instead of the corresponding notations with subscripts. 
\end{notation}

Let $\omega\zzzcolon H\otimes H\rightarrow \mathbb{Z}$ be the intersection form of $\Sigma$.
Let
$$
\mathrm{Sp}(H,\omega)=\{f\in\mathrm{Aut}(H) \ | \  \omega(f(x),f(y))=\omega(x,y) \  \text{ for all } x,y\in H\} 
$$
be the symplectic group of $(H,\omega)$. We have a surjective group homomorphism
\begin{equation}
\sigma\zzzcolon \mathcal{M}\longrightarrow \mathrm{Sp}(H,\omega),
\end{equation}
that sends $h\in\mathcal{M}$ to the induced map $h_*$ on $H$. The homomorphism $\sigma$ is known as the \emph{symplectic representation} of $\mathcal{M}$. Its kernel, denoted by $\mathcal{I}$, is known as the \emph{Torelli group} of~$\Sigma$.  Thus, we have a short exact sequence
\begin{equation}
1\longrightarrow \mathcal{I}\longrightarrow
\mathcal{M} \xrightarrow{\ \sigma \ } \text{Sp}(H,\omega)\longrightarrow 1.
\end{equation}
Let $\{a_i,b_i\}_{1\leq i\leq g}$ be the  symplectic basis of $H$ induced by  the system of meridians and parallels $\{x_i,y_i\}_{1\leq i\leq g}$ on $\Sigma$ shown in Figure \ref{sec_2_fig_1}.  We use this basis to identify $\mathrm{Sp}(H,\omega)$  
with the group  $\mathrm{Sp}(2g,\mathbb{Z})$ of \mbox{$(2g)\times(2g)$} matrices $M$ with integer entries such that  $M^TJM=J$, where $J$ is the standard  skew-symmetric matrix~$\left( \begin{smallmatrix} 0&\text{Id}_g\\ -\text{Id}_g&0 \end{smallmatrix} \right)$. The results in the following lemma are well known, see for instance~\cite[Lemma~6.3]{MR2265877} for~(iv) and~\cite[Section~3]{MR433433} for~(iii), the case~(i) can be deduced from these two ({see also \cite[Lemma 2.2]{birman1975}}) and the remaining cases follows by dual arguments.   We will sketch a proof for completeness.

\begin{lemma}\label{im_symp} We have
\begin{itemize}
\item[\emph{(i)}]  $\sigma(\mathcal{H}) { =\sigma(\mathcal{L})}= \left\{\left( \begin{smallmatrix} P &Q\\ 
0& (P^T)^{-1}\end{smallmatrix} \right) \  \bigg\rvert \ \ P^{-1}Q \text{\ is symmetric}  \right\}$,

\item[\emph{(ii)}]  $\sigma(\mathcal{H}') =\left\{\left( \begin{smallmatrix} P &0\\ 
Q& (P^T)^{-1}\end{smallmatrix} \right) \  \bigg\rvert \ \ P^{T}Q \text{\ is symmetric}  \right\}$,

\item[\emph{(iii)}]  $\sigma(\mathcal{G})= \left\{\left( \begin{smallmatrix} P &0\\ 
0& (P^T)^{-1}\end{smallmatrix} \right) \  \bigg\rvert \ \ P\in\mathrm{GL}(g,\mathbb{Z})\right\}\simeq \mathrm{GL}(g,\mathbb{Z})$,

\item[\emph{(iv)}]  $\sigma(\mathcal{T})= \left\{\left( \begin{smallmatrix} \mathrm{Id}_g &Q\\ 
0& \mathrm{Id}_g\end{smallmatrix} \right) \  \bigg\rvert \ \ Q \text{\ is symmetric}  \right\}\simeq \mathrm{Sym}(g,\mathbb{Z})$,

\item[\emph{(v)}]  $\sigma(\mathcal{T}')= \left\{\left( \begin{smallmatrix} \mathrm{Id}_g &0\\ 
Q& \mathrm{Id}_g\end{smallmatrix} \right) \  \bigg\rvert \ \ Q \text{\ is symmetric}  \right\}\simeq \mathrm{Sym}(g,\mathbb{Z})$,
\end{itemize}
where $\mathrm{Sym}(g,\mathbb{Z})\simeq \mathbb{Z}^{\frac{1}{2}g(g+1)}$ is the group of $g\times g$ symmetric matrices with integer coefficients. In particular, we have the following short exact sequences
\begin{equation}\label{ses1}
1\longrightarrow \mathcal{I}\cap\mathcal{G}\longrightarrow \mathcal{G}
\xrightarrow{\ \sigma\ } \mathrm{GL}(g,\mathbb{Z})\longrightarrow 1,
\end{equation}
\begin{equation}
1\longrightarrow \mathcal{I}\cap\mathcal{T}\longrightarrow \mathcal{T} \xrightarrow{\ \sigma\ } \mathrm{Sym}(g,\mathbb{Z})\longrightarrow 1 \quad \quad \text{and} \quad\quad  1\longrightarrow \mathcal{I}\cap\mathcal{T}'\longrightarrow \mathcal{T}' \xrightarrow{\ \sigma\ } \mathrm{Sym}(g,\mathbb{Z})\longrightarrow 1.
\end{equation}
\end{lemma}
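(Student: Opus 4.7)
The plan is to establish each of the five containments $\sigma(-) \subseteq \{\text{matrices of prescribed form}\}$ by elementary linear algebra together with the algebraic description of the relevant mapping class subgroups, and then to verify the reverse containments by exhibiting enough explicit mapping classes. The key observation throughout is that under our symplectic basis the meridian system $\{x_i\}$ abelianizes to $\{a_i\}$ and similarly $\{y_i\}$ to $\{b_i\}$, so that $A=\langle a_1,\dots,a_g\rangle$ and $B=\langle b_1,\dots,b_g\rangle$ are complementary Lagrangians with $A=\bar X/(\bar X\cap \Gamma_2 K)\cdot\mathrm{ab}$ and similarly for $B$. Thus conditions like $h(\bar X)=\bar X$ pass to the homological conditions $h_*(A)=A$, which on the matrix side force the appropriate block structure.

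First I would treat \textbf{(i)}. If $h\in\mathcal H$ then $h(\bar X)=\bar X$ implies $h_*(A)=A$; writing $\sigma(h)$ in the basis $\{a_i,b_i\}$ therefore gives the block form $\bigl(\begin{smallmatrix}P&Q\\0&R\end{smallmatrix}\bigr)$, and imposing $M^TJM=J$ yields $R=(P^T)^{-1}$ together with $P^TQ=Q^TP$, i.e.\ $P^{-1}Q$ symmetric. The same argument applied to $\mathcal L$ (where only $h_*(A)=A$ is required) gives the same set of matrices, proving $\sigma(\mathcal L)\supseteq \sigma(\mathcal H)$ lies in this set; for the reverse inclusion $\sigma(\mathcal H)\supseteq\{\cdots\}$, which is the essential content, one exhibits lifts: Dehn twists along meridian disks of $V$, handle slides, and the involution $x_i\leftrightarrow x_i^{-1}$ generate matrices of the shape $\bigl(\begin{smallmatrix}\mathrm{Id}&Q\\0&\mathrm{Id}\bigr)$ with $Q$ symmetric and of the shape $\bigl(\begin{smallmatrix}P&0\\0&(P^T)^{-1}\end{smallmatrix}\bigr)$ for $P\in\mathrm{GL}(g,\Z)$, and these two families together generate the asserted group. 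Case \textbf{(ii)} is identical after exchanging the roles of $\bar X,\bar Y$ and $A,B$. Cases \textbf{(iv)} and \textbf{(v)} then follow quickly: an element of $\mathcal T$ acts trivially on $\pi_1(V)=K/\bar X$, hence trivially on $H/A$, so in (i) the block $P$ must be $\mathrm{Id}_g$; this leaves exactly the symmetric-$Q$ family, and surjectivity onto $\mathrm{Sym}(g,\Z)$ is realized by Dehn twists along curves bounding properly embedded disks in $V$ (or, more concretely, the ``meridian twists'' of Luft).

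For \textbf{(iii)} we use $\mathcal G=\mathcal H\cap\mathcal H'$, so $\sigma(\mathcal G)\subseteq\sigma(\mathcal H)\cap\sigma(\mathcal H')$; intersecting the upper- and lower-block-triangular families from (i) and (ii) forces $Q=0$, giving the block-diagonal form with $P\in\mathrm{GL}(g,\Z)$. Surjectivity here is the main obstacle, because a priori $\sigma$ applied to an intersection need not equal the intersection of the images. One resolves this by explicitly exhibiting Goeritz elements realizing a generating set of $\mathrm{GL}(g,\Z)$: half-twists exchanging two adjacent handles realize the transpositions, the involution inverting one handle realizes the sign changes, and a handle slide that is symmetric with respect to the standard Heegaard splitting realizes the elementary matrices $\mathrm{Id}+E_{ij}$; together these generate $\mathrm{GL}(g,\Z)$. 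Finally, the exact sequences \eqref{ses1} and the pair that follows are immediate consequences: $\mathcal I=\ker\sigma$ by definition, so restricting $\sigma$ to any subgroup $\mathcal S\in\{\mathcal G,\mathcal T,\mathcal T'\}$ produces the short exact sequence $1\to\mathcal I\cap\mathcal S\to\mathcal S\xrightarrow{\sigma}\sigma(\mathcal S)\to 1$, with the identification of $\sigma(\mathcal S)$ provided by the preceding cases.
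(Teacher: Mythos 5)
Your proposal follows essentially the same route as the paper: derive the easy containments from the invariance of the Lagrangian subspace $A$ (or $B$) plus the symplectic relation $M^TJM=J$, then prove surjectivity by exhibiting explicit mapping-class lifts of generators of the target group (meridian and chain Dehn twists $t_{x_k}$, $t_{x_{ij}}$ for $\mathcal{T}$; eyeglass-type compositions $\phi_{ik}=t_{\gamma_{ik}}t_{y_i}^{-1}t_{x_k}^{-1}$ and knob twists $h_j$ for $\mathcal{G}$; and then the factorization $M=\sigma(f)\sigma(h)$ with $f\in\mathcal{G}$, $h\in\mathcal{T}$ to handle $\mathcal{H}$). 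One small algebraic slip to correct: the symplectic relation on $\left(\begin{smallmatrix}P&Q\\0&(P^T)^{-1}\end{smallmatrix}\right)$ yields $(P^{-1}Q)^T=P^{-1}Q$ directly, not $P^TQ=Q^TP$ --- that latter condition is the one appearing in part (ii).
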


\begin{proof}
By definition, we have that the target groups on each case are as stated. We only need to check the surjectivity on each case. Let $t_{x_k}$ be the Dehn twist about the  meridian curve $x_k$ (shown in Figure~\ref{sec_2_fig_1}~$(a)$) for $1\leq k\leq g$. Similarly, consider the Dehn twist $t_{x_{ij}}$ about the curve $x_{ij}$ (shown in Figure~\ref{sec_2_fig_9}~$(a)$) for $1\leq i< j\leq g$. We have $t_{x_k}, t_{x_{ij}}\in\mathcal{T}$ and they realize a basis of the target group in~(iv).

The group $\mathrm{GL}(g,\mathbb{Z})$ is generated by elementary matrices. More precisely, let $1\leq i,k\leq g$ with $i\not= k$, for $r\in\modZ$ let $e_{ik}(r)$ be the $g\times g$ matrix with $1$'s in the diagonal, $r$ in the position $(i,k)$ and~$0$ elsewhere. For $1\leq j\leq g$, let $\mathrm{diag}_j(t)$ be the $g\times g$  diagonal matrix with $t\in\{\pm 1\}$ in the position $(j,j)$ and~$1$ elsewhere in the diagonal. The families of matrices $e_{ik}(r)$ and $\mathrm{diag}_j(-1)$ generate $\mathrm{GL}(g,\mathbb{Z})$. We proceed to realize them as the image under $\sigma$ of elements in $\mathcal{G}$.

Let $1\leq i,k\leq g$  with $i\not= k$, and let $\phi_{ik}\in \mathcal{G}$ be the composition $t_{\gamma_{ik}}t_{y_i}^{-1}t_{x_k}^{-1}$  where $y_i$ and $x_k$  the $i$-th parallel and $k$-th meridian respectively (see Figure~\ref{sec_2_fig_1}~$(a)$); and $\gamma_{ik}$ is the simple closed curve which goes around the $i$-th parallel and the $k$-th meridian as shown in Figure~\ref{sec_2_fig_9}~$(b)$. We have \mbox{$\sigma(\phi_{ik}) =\left( \begin{smallmatrix} e_{ik}(1) &0\\ 
0& e_{ki}(-1)\end{smallmatrix} \right)$}. Hence the family $\phi_{ik}^r$ realize the family $e_{ik}(r)$.

\begin{figure}[ht!]
\centering   	
	\includegraphics[scale=0.73]{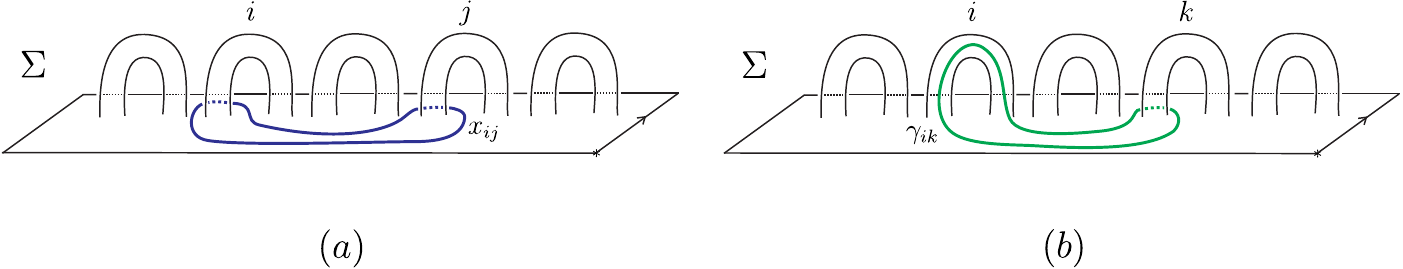}
	\caption{($a$) curve $x_{ij}$ \quad  ($b$) curve $\gamma_{ik}$.}
\label{sec_2_fig_9}
\end{figure}

Let $h_j$ be the element in $\mathcal{G}$ which rotates the $j$-th handle an angle of $\pi$ radians, this element is known as a \emph{knob twist}, see \cite[Section 3.2]{MR433433} for a precise description. We have $\sigma(h_j) = \left( \begin{smallmatrix} \mathrm{diag}_j(-1) &0\\ 
0& \mathrm{diag}_j(-1)\end{smallmatrix} \right)$. In this way we have realized a generating set of the target group in~(iii).

Consider a matrix $M=\left( \begin{smallmatrix} P &Q\\ 
0& (P^T)^{-1}\end{smallmatrix} \right)$ with $P^{-1}Q$  symmetric. By (iv) and (iii), there exist $h\in\mathcal{T}$ and $f\in\mathcal{G}$ such that 
$$ 
\sigma(h) = \left( \begin{smallmatrix} 0 &P^{-1}Q\\ 
0& 0\end{smallmatrix} \right)\quad \text{and} \quad\quad \sigma(f)=\left( \begin{smallmatrix} P &0\\ 
0& (P^T)^{-1}\end{smallmatrix} \right),
$$
hence $fh\in\mathcal{H}\subset\mathcal{L}$ and $\sigma(fh)=M$. Thus
we obtain (i). The remaining cases follow by  dual arguments.
\end{proof}

\subsection{Double Johnson filtration for the Goeritz group \texorpdfstring{$\mathcal{G}$}{G}}
 Consider the group $K=\pi_1(\Sigma,*)=\la
   x_1,\dots,x_g,y_1,\dots,y_g\ra$ and its normal subgroups  $\bar{X}=\lala x_1,\dots,x_g\rara_K$ and $\bar{Y}=\lala
   y_1,\dots,y_g\rara_K$ defined in~\eqref{equ_X_Y}. Let
 $K_{*,*}=(K_{m,n})_{(m,n)\in\mathbb{N}^2}$ be the double lower
 central series of the triple $(K;\bar{X},\bar{Y})$. Since $\mathcal{G}$ acts on $K$ preserving $\bar{X}$ and $\bar{Y}$, the group
$\mathcal{G}$ acts on the $\mathbb{N}^2$-filtration
$K_{*,*}$. By Section \ref{sec_2} the Johnson
$\mathbb{N}^2$-filtration $(\mathcal{G}_{m,n})_{(m,n)\in\mathbb{N}^2}$
of the Goeritz group $\mathcal{G}$ is the family of normal subgroups
$\mathcal{G}_{m,n}$ of $\mathcal{G}$ given by
\begin{equation}
\begin{split}
\mathcal{G}_{m,n} &= \big\{ h\in\mathcal{G}\ | \ [h,K_{i,j}]\subset K_{m+i,n+j} \ \text{for all } (i,j)\in\mathbb{N}^2\big\}\\
& = \big\{ h\in\mathcal{G}\ | \ [h,\bar{X}]\subset K_{m+1,n}, \ [h, \bar{Y}]\subset K_{m,n+1}\big\}
\end{split}
\end{equation}
for all $(m,n)\in \mathbb{N}^2$, where the second equality follows from Lemma \ref{r30}.  We call the $\mathbb{N}^2$-filtration $(\mathcal{G}_{m,n})_{(m,n)\in\mathbb{N}^2}$ the \emph{double Johnson filtration} of the Goeritz group~$\mathcal{G}$.

Recall that this is the slowest decreasing $\mathbb{N}^2$-filtration for $\mathcal{G}$   acting on $K_{*,*}$, see Proposition~\ref{r14}.

\begin{proposition}\label{dfstrong}
If Conjecture \ref{conjecture_2} holds, then we have
$\mathcal{G}_{m,n}=\mathcal{G}_{m,0}\cap\mathcal{G}_{0,n}$ for all $(m,n)\in\modN^2$.
\end{proposition}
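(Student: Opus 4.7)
The plan is to prove both inclusions separately, with the nontrivial direction relying directly on Conjecture~\ref{conjecture_2} applied to the two-condition characterization of the Johnson filtration given by Lemma~\ref{r30}.

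First, I would record the easy inclusion $\mathcal{G}_{m,n}\subset \mathcal{G}_{m,0}\cap \mathcal{G}_{0,n}$. This is automatic: since $(m,0)\le(m,n)$ and $(0,n)\le(m,n)$ in the usual order on $\modN^2$, and the double Johnson filtration is decreasing (Proposition~\ref{r14}), we have $\mathcal{G}_{m,n}\subset\mathcal{G}_{m,0}$ and $\mathcal{G}_{m,n}\subset\mathcal{G}_{0,n}$, hence the inclusion.

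For the reverse inclusion, I would take $h\in\mathcal{G}_{m,0}\cap\mathcal{G}_{0,n}$ and apply the explicit description \eqref{e30} (from Lemma~\ref{r30}, adapted to the action of $\mathcal{G}$ on $K_{*,*}$). Membership in $\mathcal{G}_{m,0}$ gives
\begin{gather*}
  [h,\bar{X}]\subset K_{m+1,0},\qquad [h,\bar{Y}]\subset K_{m,1},
\end{gather*}
while membership in $\mathcal{G}_{0,n}$ gives
\begin{gather*}
  [h,\bar{X}]\subset K_{1,n},\qquad [h,\bar{Y}]\subset K_{0,n+1}.
\end{gather*}
Since $K_{1,n}\subset K_{0,n}$ and $K_{m,1}\subset K_{m,0}$ (again by monotonicity of the filtration), combining the two sets of inclusions yields
\begin{gather*}
  [h,\bar{X}]\subset K_{m+1,0}\cap K_{0,n},\qquad [h,\bar{Y}]\subset K_{m,0}\cap K_{0,n+1}.
\end{gather*}

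Now I would invoke Conjecture~\ref{conjecture_2}, applied to the pairs $(m+1,n)$ and $(m,n+1)$, which gives $K_{m+1,0}\cap K_{0,n}=K_{m+1,n}$ and $K_{m,0}\cap K_{0,n+1}=K_{m,n+1}$. Substituting, we obtain $[h,\bar{X}]\subset K_{m+1,n}$ and $[h,\bar{Y}]\subset K_{m,n+1}$, which by the characterization \eqref{e30} means precisely $h\in\mathcal{G}_{m,n}$, completing the proof. There is essentially no obstacle here beyond correctly bookkeeping the four inclusions before intersecting; the entire content of the statement is that Lemma~\ref{r30} reduces the problem to a question about $K_{*,*}$, where Conjecture~\ref{conjecture_2} is exactly the hypothesis needed.
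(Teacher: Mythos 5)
Your proof is correct and follows essentially the same approach as the paper: intersect the commutator inclusions supplied by membership in $\mathcal{G}_{m,0}$ and $\mathcal{G}_{0,n}$, then resolve the resulting intersections of $K_{i,j}$'s using Conjecture~\ref{conjecture_2}. The one minor difference is that you first invoke the two-condition characterization from Lemma~\ref{r30} (so you only check $[h,\bar X]$ and $[h,\bar Y]$), whereas the paper verifies $[\mathcal{G}_{m,0}\cap\mathcal{G}_{0,n},K_{i,j}]\subset K_{m+i,n+j}$ directly for all $(i,j)$, performing a slightly longer chain of intersection manipulations; both reductions are equivalent and your bookkeeping is accurate.
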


\begin{proof}
Suppose
  \begin{gather}\label{e52}
K_{i,j}=K_{i,0}\cap K_{0,j}
\end{gather}
for all $(i,j)\in\modN^2$. Let us show $\mathcal{G}_{m,0}\cap\mathcal{G}_{0,n}\subset \mathcal{G}_{m,n}$ for $(m,n)\in\modN^2$. We have 
$$[\mathcal{G}_{m,0}\cap\mathcal{G}_{0,n}, K_{i,j}]\subset [\mathcal{G}_{m,0},K_{i,j}]\subset K_{i+m,j}.$$
Similarly $[\mathcal{G}_{m,0}\cap\mathcal{G}_{0,n}, K_{i,j}]\subset K_{i,j+n}$. Hence 
$$[\mathcal{G}_{m,0}\cap\mathcal{G}_{0,n}, K_{i,j}]\subset K_{i+m,j}\cap K_{i,n+j}.$$
By assumption  \eqref{e52} we have
$$K_{i+m,j}\cap K_{i,j+n}=(K_{i+m,0}\cap K_{0,j})\cap (K_{i,0}\cap K_{0,j+n})=K_{i+m,0}\cap K_{0,j+n}=K_{i+m,j+n}.$$
Therefore $[\mathcal{G}_{m,0}\cap\mathcal{G}_{0,n}, K_{i,j}]\subset K_{i+m,j+n}$ for all $(i,j)\in\modN^2$, that is,  $\mathcal{G}_{m,0}\cap\mathcal{G}_{0,n}\subset \mathcal{G}_{m,n}$. Hence  for all $(m,n)\in\modN^2$ we have $\mathcal{G}_{m,n}=\mathcal{G}_{m,0}\cap\mathcal{G}_{0,n}$.
\end{proof}

\begin{example}\label{ex_m10}
 Let $1\leq i <j\leq g$ and consider the curves $\epsilon_{ij}$, $\gamma_j$ and $\delta$ on $\Sigma$ shown in Figure~\ref{sec_2_fig_2}. In order to consider these curves as elements of $K$, we join them with the base point and orient them as shown in the same figure. The Dehn twist $t_{\delta}$ is an element of~{$\mathcal{G}_{1,1}$}.
Besides, we can check that  $h_{ij}=t_{\epsilon_{ij}}t^{-1}_{\gamma_j}$ belongs to~{$\mathcal{G}_{1,0}$}. 
\begin{figure}[ht!]
\centering   	
	\includegraphics[scale=0.8]{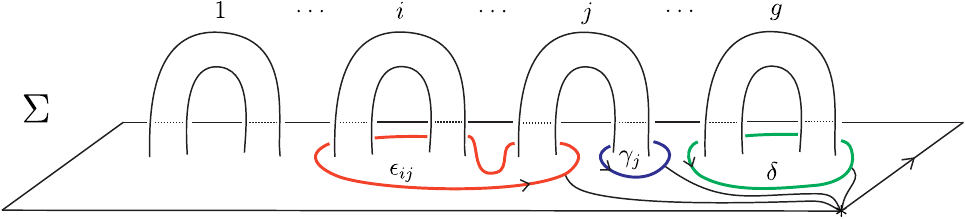}
	\caption{Curves $\epsilon_{ij}$, $\gamma_j$ and $\delta$ on $\Sigma$.}
\label{sec_2_fig_2}
\end{figure}
\end{example}

\subsection{Double Johnson filtration for the mapping class group}\label{sec_31}

We extend the double lower central series $(K_{m,n})_{(m,n)\in\mathbb{N}^2}$ of $(K,\bar{X},\bar{Y})$ to all $(m,n)\in\modZ^2$ by setting 
\begin{equation}\label{dlex}
K_{m,n}:=K_{\mathrm{max}(0,m), \mathrm{max}(0,n)}.
\end{equation}
Note that if $m,m',n,n'\in\Z$, $m\le m'$, $n\le n'$, then we have $K_{m,n}\supset K_{m',n'}$.

The double filtration $(K_{m,n})_{(m,n)\in\Z^2}$ yields a $\Z^2$-indexed filtration of $\M$:
\begin{equation}
\begin{split}
  \mathcal{M}_{m,n} :=&\big\{h\in\mathcal{M}\ |\ [h^{\pm1}, K_{i,j}]\subset K_{m+i,n+j} \text{ for every } (i,j)\in\mathbb{N}^2\big\}\\
  =&\big\{h\in\mathcal{M}\ |\ [h^{\pm1}, K_{i,j}]\subset K_{m+i,n+j} \text{ for every } (i,j)\in\mathbb{Z}^2\big\}.
\end{split}
\end{equation}
It is easy to see that if $m,m',n,n'\in\Z$, $m\le m'$, $n\le n'$, then we have $\M_{m,n}\supset\M_{m',n'}$.

The $\Z^2$-filtration $(\M_{m,n})_{(m,n)\in\Z^2}$ of $\M$ \emph{extends} the double Johnson filtration of the Goeritz group as follows.
\begin{proposition}\label{prop2_7}
We have $\mathcal{M}_{m,n}= \mathcal{G}_{m,n}$ for all $(m,n)\in\mathbb{N}^2$. In particular, we have $\mathcal{M}_{0,0}=\G_{0,0}=\G$.
\end{proposition}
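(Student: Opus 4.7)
The plan is to establish first the containment $\mathcal{M}_{m,n} \subset \mathcal{G}$ for every $(m,n) \in \mathbb{N}^2$, after which both inclusions $\mathcal{M}_{m,n} \subset \mathcal{G}_{m,n}$ and $\mathcal{G}_{m,n} \subset \mathcal{M}_{m,n}$ are short. The crucial observation is that for $(m,n) \in \mathbb{N}^2$ one has $K_{m+1,n} \subset K_{1,0} = \bar{X}$ and $K_{m,n+1} \subset K_{0,1} = \bar{Y}$, because $(m+1,n) \ge (1,0)$ and $(m,n+1) \ge (0,1)$ in the usual order on $\mathbb{N}^2$.

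First I would take $h \in \mathcal{M}_{m,n}$ and evaluate its defining condition at $(i,j) = (1,0)$: for every $x \in \bar{X}$, the commutators $[h^{\pm 1}, x] = h^{\pm 1}(x) x^{-1}$ lie in $K_{m+1,n} \subset \bar{X}$. Hence both $h(\bar{X})$ and $h^{-1}(\bar{X})$ are contained in $\bar{X}$, so $h(\bar{X}) = \bar{X}$. The same argument at $(i,j) = (0,1)$ yields $h(\bar{Y}) = \bar{Y}$, so $h \in \mathcal{G}$. Since the conditions defining $\mathcal{G}_{m,n}$ are a subset of those defining $\mathcal{M}_{m,n}$ (only $h$, not $h^{\pm 1}$), this already gives $\mathcal{M}_{m,n} \subset \mathcal{G}_{m,n}$.

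For the reverse inclusion, let $h \in \mathcal{G}_{m,n} \subset \mathcal{G}$. Because $h$ preserves $\bar{X}$ and $\bar{Y}$, it preserves every $K_{i,j}$ (as these are built inductively from $\bar{X}$, $\bar{Y}$ by commutators), and the same holds for $h^{-1}$. To transfer the commutator bound from $h$ to $h^{-1}$, I will use the identity
\[
  [h^{-1}, k] = [h, h^{-1}(k)]^{-1},
\]
a short computation in $K \rtimes \mathcal{M}$. Applying it for $k \in K_{i,j}$, and using $h^{-1}(k) \in K_{i,j}$, we get $[h^{-1}, k] \in K_{m+i, n+j}$, so $h \in \mathcal{M}_{m,n}$. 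The "in particular" assertion is the case $(m,n) = (0,0)$ together with Lemma \ref{r30}, which characterizes $\mathcal{G}_{0,0}$ by preservation of $K_{1,0} = \bar{X}$ and $K_{0,1} = \bar{Y}$; this is exactly the definition of $\mathcal{G}$.

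No serious obstacle is anticipated: the whole argument rests on the trivial inclusions $K_{m+1,n} \subset \bar{X}$ and $K_{m,n+1} \subset \bar{Y}$, which force any $h \in \mathcal{M}_{m,n}$ to preserve $\bar{X}$ and $\bar{Y}$ setwise, plus the elementary identity relating $[h, \cdot]$ to $[h^{-1}, \cdot]$.
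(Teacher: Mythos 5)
Your proof is correct and takes the same approach the paper takes implicitly; the paper's own proof is the one-line statement ``This easily follows from the definitions,'' and what you wrote is a careful unpacking of exactly that. Both halves check out: the inclusion $K_{m+1,n}\subset\bar X$, $K_{m,n+1}\subset\bar Y$ gives $\M_{m,n}\subset\G$ and hence $\M_{m,n}\subset\G_{m,n}$, and the identity $[h^{-1},k]=[h,h^{-1}(k)]^{-1}$ together with the fact that $h\in\G$ preserves every $K_{i,j}$ gives the reverse inclusion.
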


\begin{proof}
This easily follows from the definitions.
\end{proof}

\begin{proposition}\label{r59} For $(m,n)\in\mathbb{Z}^2$, we have
\begin{equation}\label{sec2_equ_12}
\mathcal{M}_{m,n}=\big\{h\in\mathcal{M}\ |\ [h^{\pm1}, K_{1,0}]\subset K_{m+1,n} \text{\ \  and\ \  } [h^{\pm1}, K_{0,1}]\subset K_{m,n+1}\big\}.
\end{equation}
Therefore,
\begin{equation}\label{sec2_equ_13}
\mathcal{M}_{m,n}=\mathcal{M}_{\mathrm{max}(-1,m),\mathrm{max}(-1,n)}
\end{equation}
for $(m,n)\in\mathbb{Z}^2$.
\end{proposition}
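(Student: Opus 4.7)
The plan is to prove~\eqref{sec2_equ_12} first and derive~\eqref{sec2_equ_13} from it. The inclusion $\mathcal{M}_{m,n}\subset\{h\in\mathcal{M}:[h^{\pm1},K_{1,0}]\subset K_{m+1,n},\ [h^{\pm1},K_{0,1}]\subset K_{m,n+1}\}$ is immediate, by specializing the defining condition of $\mathcal{M}_{m,n}$ to $(i,j)=(1,0)$ and $(0,1)$.

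For the reverse inclusion I would argue by induction on $i+j$, closely mirroring the proof of Lemma~\ref{r30}. The base case $i+j=0$ reduces to $[h^{\pm1},K]\subset K_{m,n}$: since $K=\bar{X}\bar{Y}$, any $w\in K$ factors as a product in $\bar{X}$ and $\bar{Y}$, so the identity $[h,xy]=[h,x]\cdot{}^{x}[h,y]$ together with the inclusions $K_{m+1,n},K_{m,n+1}\subset K_{m,n}$ and the normality of each $K_{a,b}$ in $K$ yields the claim iteratively. The cases $(i,j)=(1,0),(0,1)$ are exactly the hypothesis. For the inductive step with $i+j\geq 2$, I would use the recursive description $K_{i,j}=[K_{1,0},K_{i-1,j}]\cdot[K_{0,1},K_{i,j-1}]$ (dropping the appropriate factor when $i=0$ or $j=0$) and apply the three subgroups lemma (Lemma~\ref{tsl}) in $K\rtimes\mathcal{M}$ to bound $[h,[K_{1,0},K_{i-1,j}]]$ by the normal closure of $[[h,K_{1,0}],K_{i-1,j}]\cdot[[h,K_{i-1,j}],K_{1,0}]$. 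The first factor lies in $[K_{m+1,n},K_{i-1,j}]\subset K_{m+i,n+j}$ by the hypothesis and the filtration property; the second lies in $[K_{m+i-1,n+j},K_{1,0}]\subset K_{m+i,n+j}$ by the inductive hypothesis. A symmetric bound handles $[h,[K_{0,1},K_{i,j-1}]]$. The key algebraic ingredient enabling this to go through uniformly in $(m,n)\in\mathbb{Z}^2$ is that the extended filtration still satisfies $[K_{a,b},K_{c,d}]\subset K_{a+c,b+d}$ for all $(a,b),(c,d)\in\mathbb{Z}^2$, which follows from the pointwise inequality $\max(0,a)+\max(0,c)\geq\max(0,a+c)$.

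The identity~\eqref{sec2_equ_13} then drops out of~\eqref{sec2_equ_12}: under the convention $K_{a,b}=K_{\max(0,a),\max(0,b)}$, the two subgroups $K_{m+1,n}$ and $K_{m,n+1}$ are unchanged when $(m,n)$ is replaced by $(\max(-1,m),\max(-1,n))$, since $\max(0,m+1)=0=\max(0,\max(-1,m)+1)$ whenever $m\leq-1$ and similarly for $n$. Thus the characterizing conditions coincide, forcing the corresponding groups to coincide.

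The main obstacle is the normal-closure step in the three subgroups lemma: the target $K_{m+i,n+j}$ is normal in $K$ but need not be preserved by conjugation by $h$ when $h\notin\mathcal{G}$. When $(m,n)\in\mathbb{N}^2$, the two hypothesis conditions already force $h(\bar{X})=\bar{X}$ and $h(\bar{Y})=\bar{Y}$, so $h\in\mathcal{G}=\mathcal{M}_{0,0}$ and the argument reduces to Lemma~\ref{r30}; the case of $(m,n)$ with negative components requires either extracting the needed invariance of $K_{m+i,n+j}$ under $h$ from the given conditions, or else restricting normal closures to $K$ and tracking the resulting conjugation terms explicitly through the commutator expansions.
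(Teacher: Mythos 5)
Your overall plan --- reduce to the recursion $K_{i,j}=[K_{1,0},K_{i-1,j}]\,[K_{0,1},K_{i,j-1}]$, induct on $i+j$, and invoke the three subgroups lemma --- is exactly what the paper has in mind when it says the proof of \eqref{sec2_equ_12} ``is the same as that of Lemma~\ref{r30}'', and your derivation of \eqref{sec2_equ_13} from \eqref{sec2_equ_12} via the identity $\max(0,m+1)=\max(0,\max(-1,m)+1)$ matches the intended reading. So far this is the paper's proof, stated a bit more explicitly.

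The substantive part of your write-up is the closing paragraph, and there you put your finger on the genuine difficulty but then stop short of resolving it. Lemma~\ref{tsl} requires a subgroup $N$ that is normal in $K\rtimes\langle h\rangle$; taking $N=K_{m+i,n+j}$ therefore needs $h(K_{m+i,n+j})=K_{m+i,n+j}$. When $(m,n)\in\modN^2$ this holds because the hypotheses force $h\in\mathcal G$, but when $m$ or $n$ equals $-1$ it does not follow. Your proposed ``Option A'' (extract the invariance of $K_{m+i,n+j}$ from the hypotheses) is not available in general: for instance with $(m,n)=(0,-1)$ and $(i,j)=(0,j)$, $j\ge 2$, the target is $K_{0,j-1}=\Gamma_{j-1}\bY$, and the hypothesis $h(\bX)=\bX$ plus the inductive estimate $[h,K_{0,j-1}]\subset K_{0,j-2}$ only give $h(\Gamma_{j-1}\bY)\subset\Gamma_{j-2}\bY$, strictly weaker than preservation. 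Your ``Option B'' (run the Hall--Witt expansion explicitly, conjugating only inside $K$) produces cross terms of the type $[[h,K_{0,1}],[h,K_{0,1}]]$ which, from the hypothesis alone, land only in $[K_{m,0},K_{m,0}]\subset K_{2m,0}$, and $K_{2m,0}$ is not contained in the intended target $K_{m,1}$; so this route does not close either without additional input. In short: your proposal faithfully reproduces the paper's intended argument, correctly isolates the one step where the negative-index case differs from Lemma~\ref{r30}, and leaves that step unresolved --- which is precisely the gap the paper's one-line proof also glosses over. A complete argument at this point needs a different ingredient than what either you or the paper supply (for instance, a direct structural fact about how $h\in\H$ interacts with the filtration $(\Gamma_j\bY)_j$, beyond what follows formally from $h(\bX)=\bX$).
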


\begin{proof}
The proof of \eqref{sec2_equ_12} is the same as that of Lemma~\ref{r30}. 
Notice that \eqref{sec2_equ_13} easily follows from~\eqref{sec2_equ_12}. 
\end{proof}

By the above proposition we are interested in $\mathcal{M}_{m,n}$ only for $m,n\geq -1$. We call the family $(\mathcal{M}_{i,j})_{i,j\geq -1}$ \emph{the double Johnson filtration of the mapping class group} $\M$.

\subsection{The groups \texorpdfstring{$\M_{m,n}$}{M-m,n} for small \texorpdfstring{$m,n$}{m,n}}\label{sec_4_4}

In the following, we look at $\M_{m,n}$ for some small $m,n\ge-1$.

\begin{proposition}\label{prop2_8}
We have $\mathcal{M}_{1,-1} = \mathcal{T}$ and $\mathcal{M}_{-1,1} = \mathcal{T}'$.
\end{proposition}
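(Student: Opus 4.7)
The plan is to unwind the definition of $\M_{1,-1}$ using Proposition~\ref{r59} together with the convention $K_{m,n}=K_{\max(0,m),\max(0,n)}$, and then compare the resulting conditions with the alternative descriptions of $\mathcal{T}$ established in Lemma~\ref{sec_2_lem_T}. The statement for $\M_{-1,1}=\mathcal{T}'$ is entirely symmetric under the interchange $\bar X \leftrightarrow \bar Y$, so I only sketch the first equality.

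First I would record the indexing data: $K_{1,0}=\bar X$, $K_{0,1}=\bar Y$, $K_{1,0}=\bar X$, and $K_{2,-1}=K_{2,0}=\Gamma_2\bar X=[\bar X,\bar X]$. By Proposition~\ref{r59}, this turns the definition of $\M_{1,-1}$ into the concrete description
\begin{equation*}
\M_{1,-1}=\bigl\{h\in\M\ \big|\ [h^{\pm1},\bar X]\subset[\bar X,\bar X]\text{ and }[h^{\pm1},\bar Y]\subset\bar X\bigr\}.
\end{equation*}

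Next I would prove the inclusion $\mathcal{T}\subset\M_{1,-1}$. If $h\in\mathcal{T}$, then $h(\bar X)=\bar X$ and $[h,K]\subset\bar X$ by definition; since $\mathcal{T}$ is a group, the same holds for $h^{-1}$. The second alternative description of $\mathcal{T}$ in Lemma~\ref{sec_2_lem_T} gives $[h^{\pm1},\bar X]\subset[\bar X,\bar X]$, and the inclusion $\bar Y\subset K$ gives $[h^{\pm1},\bar Y]\subset\bar X$, as required.

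For the reverse inclusion $\M_{1,-1}\subset\mathcal{T}$, let $h\in\M_{1,-1}$. From $[h^{\pm1},\bar X]\subset[\bar X,\bar X]\subset\bar X$ one gets $h(\bar X)\subset\bar X$ and $h^{-1}(\bar X)\subset\bar X$, so $h(\bar X)=\bar X$. Since $K=\bar X\bar Y$ (the generators $x_i,y_j$ of $K$ all lie in $\bar X\bar Y$), any $k\in K$ can be written $k=xy$ with $x\in\bar X$, $y\in\bar Y$, and the commutator identity \eqref{e16} gives
\begin{equation*}
[h,xy]=[h,x]\cdot{}^x[h,y]\in[\bar X,\bar X]\cdot{}^x\bar X\subset\bar X,
\end{equation*}
where normality of $\bar X$ in $K$ is used in the last step. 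Hence $[h,K]\subset\bar X$, and combined with $h(\bar X)=\bar X$ this places $h$ in $\mathcal{T}$.

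This argument is essentially a verification — no real obstacle arises, since everything reduces to a bookkeeping exercise once one identifies $K_{2,-1}=[\bar X,\bar X]$ and appeals to the reformulation of $\mathcal{T}$ in Lemma~\ref{sec_2_lem_T}. The only point requiring a small amount of care is the passage from the commutator bounds $[h^{\pm1},\bar X]\subset[\bar X,\bar X]$ to the equality $h(\bar X)=\bar X$, which is why the filtration definition was set up to impose the condition on both $h$ and $h^{-1}$.
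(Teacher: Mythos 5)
Your proof is correct and follows essentially the same route as the paper's: both reduce to the two inclusion conditions via Proposition~\ref{r59} and invoke the alternative characterizations of $\mathcal{T}$ from Lemma~\ref{sec_2_lem_T}. The paper dismisses $\M_{1,-1}\subset\mathcal{T}$ as ``clear'' (it follows directly from the unreduced definition of $\M_{1,-1}$ with $(i,j)=(0,0)$), whereas you spell it out via the decomposition $K=\bar X\bar Y$ and the commutator identity~\eqref{e16}; this is a fine, slightly more explicit version of the same argument.
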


\begin{proof}
Clearly, $\mathcal{M}_{1,-1}\subset \mathcal{T}$.   The other inclusion follows {from} Lemma~\ref{sec_2_lem_T} and Proposition~\ref{r59}. Similarly, we have $ \mathcal{M}_{-1,1}=\mathcal{T}'$.
\end{proof}

\begin{proposition}\label{prop_leading}
We have 
\begin{itemize}
\item[\emph{(i)}] $\mathcal{M}_{0,-1} = \mathcal{H} = \mathcal{T}\cdot\mathcal{G}= \mathcal{M}_{1,-1}\cdot\mathcal{M}_{0,0}$.
\item[\emph{(ii)}] $\mathcal{M}_{-1,0} = \mathcal{H}' = \mathcal{T}'\cdot\mathcal{G} =  \mathcal{M}_{-1,1}\cdot\mathcal{M}_{0,0}$.

\item[\emph{(iii)}] $\mathcal{M}_{-1,-1} = \langle \mathcal{M}_{1,-1}\cdot\mathcal{M}_{0,0}\cdot\mathcal{M}_{-1,1}\rangle = \langle \mathcal{M}_{1,-1}\cdot\mathcal{M}_{-1,1}\rangle =\M$.
\end{itemize}
\end{proposition}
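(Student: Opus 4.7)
My approach is to verify each equality by rewriting $\M_{m,n}$ via Proposition~\ref{r59} and applying Lemma~\ref{im_symp} together with classical structural results on handlebody and Goeritz groups.

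\textbf{Parts (i) and (ii).} For (i), Proposition~\ref{r59} with $(m,n)=(0,-1)$ reduces the defining conditions to $[h^{\pm1},\bX]\subset K_{1,-1}=\bX$ and $[h^{\pm1},\bY]\subset K_{0,0}=K$, of which the second is vacuous and the first amounts to $h(\bX)=\bX$; hence $\M_{0,-1}=\H$. By Propositions~\ref{prop2_7} and~\ref{prop2_8}, the identification $\M_{1,-1}\cdot\M_{0,0}=\mathcal{T}\cdot\G$ reduces the remaining assertion to $\H=\mathcal{T}\cdot\G$. The inclusion $\mathcal{T}\cdot\G\subset\H$ is clear since $\mathcal{T},\G\subset\H$ and $\mathcal{T}\trl\H$, so $\mathcal{T}\G$ is a subgroup. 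For the converse I would invoke the classical short exact sequence $1\to\mathcal{T}\to\H\to\Aut(K/\bX)\to 1$ (where the right-hand group is $\Aut(F_g)$), and then show that the restriction $\G\to\Aut(F_g)$ is surjective by realizing each Nielsen generator (handle swap, handle inversion, handle slide) as a Heegaard-preserving self-homeomorphism of $S^3$: swaps and inversions come from ambient orientation-preserving symmetries of $S^3$ (such as the knob twists $h_j$ of Lemma~\ref{im_symp}), and handle slides come from the compositions $\phi_{ij}$ constructed there, together with possible adjustments by elements of $\G\cap\mathcal{T}$. Given $h\in\H$, choose $g\in\G$ inducing the same automorphism of $\pi_1(V)$; then $hg^{-1}\in\mathcal{T}$, giving $h\in\mathcal{T}\cdot\G$. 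Part (ii) is completely dual, interchanging $V\leftrightarrow V'$ and $\bX\leftrightarrow\bY$.

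\textbf{Part (iii).} The identity $\M_{-1,-1}=\M$ is immediate from Proposition~\ref{r59} since the commutator conditions then become vacuous. Combining (i) and (ii), the middle expression $\langle\M_{1,-1}\cdot\M_{0,0}\cdot\M_{-1,1}\rangle$ equals $\langle\mathcal{T},\G,\mathcal{T}'\rangle=\langle\H,\H'\rangle$, so the first nontrivial equality reduces to the classical fact $\M=\langle\H,\H'\rangle$, a consequence of the uniqueness (up to isotopy) of the Heegaard splitting of $S^3$. For $\M=\langle\mathcal{T},\mathcal{T}'\rangle$ it then remains to check $\G\subset\langle\mathcal{T},\mathcal{T}'\rangle$, which I would verify on generators of $\G$: the element $\phi_{ij}=t_{\gamma_{ij}}t_{y_i}^{-1}t_{x_j}^{-1}$ from Lemma~\ref{im_symp} satisfies $t_{x_j}\in\mathcal{T}$, $t_{y_i}\in\mathcal{T}'$, and $\gamma_{ij}$ bounds a disk in $V$ (since it is null modulo $\bX$), so $t_{\gamma_{ij}}\in\mathcal{T}$, giving $\phi_{ij}\in\langle\mathcal{T},\mathcal{T}'\rangle$; knob twists are handled by analogous factorizations into Dehn twists along curves in $\bX\cup\bY$, and the residual Torelli part reduces to showing that a Johnson-style generating set of $\mathcal{I}$ can be chosen consisting of bounding-pair maps along curves in $\bX\cup\bY$.

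The main obstacle is lifting the surjectivities from the symplectic level---obtained essentially for free from Lemma~\ref{im_symp}---to the level of $\Aut(F_g)$ and of the full mapping class group: showing $\G\to\Aut(F_g)$ is surjective in parts (i), (ii), and that both $\G$ and $\mathcal{I}$ sit inside $\langle\mathcal{T},\mathcal{T}'\rangle$ in part (iii). Both steps amount to producing explicit Heegaard-preserving realizations of Nielsen-type (resp.\ bounding-pair) generators; while the constructions in the proof of Lemma~\ref{im_symp} supply the key building blocks at the $\mathrm{GL}(g,\Z)$ level, the verification that one actually hits $\Aut(F_g)$ (not merely a subgroup projecting onto $\mathrm{GL}(g,\Z)$) is the heart of the proof and requires care with the $\mathrm{IA}$-subgroup, for which one may appeal to the literature on handlebody and Goeritz groups, e.g.~\cite{MR433433}.
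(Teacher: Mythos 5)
Your approach takes a more roundabout route than the paper and leaves the key steps unfilled; the paper's proof of each part is a short, direct check on explicit generators.

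For (i)/(ii): reformulating $\mathcal{H}=\mathcal{T}\cdot\mathcal{G}$ as surjectivity of $\mathcal{G}\to\Aut(K/\bX)\simeq\Aut(F_g)$ via the exact sequence $1\to\mathcal{T}\to\mathcal{H}\to\Aut(F_g)\to 1$ is a legitimate equivalence (using the classical surjectivity of $\mathcal{H}\to\Aut(F_g)$), but it does not advance the argument: you then try to realize Nielsen generators of $\Aut(F_g)$ by Goeritz elements and yourself note that Lemma~\ref{im_symp} only pins down the $\mathrm{GL}(g,\Z)$-level images, so the lift to $\Aut(F_g)$ is left open. That gap is exactly the content of the claim. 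The paper instead checks that each of Suzuki's explicit generators of $\mathcal{H}$ (whose action on $\pi_1(V)$ is computed in \cite{MR433433}) lies in $\mathcal{T}$ or $\mathcal{G}$; no exact sequence and no separate surjectivity-onto-$\Aut(F_g)$ argument is needed.

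For (iii): the detour through $\langle\mathcal{H},\mathcal{H}'\rangle$ is unnecessary, and ``uniqueness of the Heegaard splitting of $S^3$'' is not a proof of $\M=\langle\mathcal{H},\mathcal{H}'\rangle$. More concretely, your claim that $\gamma_{ij}$ bounds a disk in $V$ ``since it is null modulo $\bX$'' is false. By the proof of Lemma~\ref{im_symp}, the curve $\gamma_{ik}$ goes around both the $i$-th parallel $y_i$ and the $k$-th meridian $x_k$, so its class in $H$ has nonzero components in both $A$ and $B$; hence it is null in neither $\pi_1(V)=K/\bX$ nor $\pi_1(V')=K/\bY$ and $t_{\gamma_{ik}}$ lies in neither $\mathcal{T}$ nor $\mathcal{T}'$. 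The attempted factorization of $\phi_{ij}$ therefore does not establish $\phi_{ij}\in\langle\mathcal{T},\mathcal{T}'\rangle$. The paper sidesteps all of this: it observes that each of Lickorish's Dehn twist generators of $\M$ is along a curve bounding in $V$ or in $V'$, so $\M=\langle\mathcal{T}\cup\mathcal{T}'\rangle$ directly, and $\mathcal{G}\subset\langle\mathcal{T},\mathcal{T}'\rangle$ (as well as $\mathcal{I}\subset\langle\mathcal{T},\mathcal{T}'\rangle$) then follow trivially rather than being things to prove.
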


\begin{proof}
  (i) Clearly, $\mathcal{M}_{0,-1}\subset \mathcal{H}$. The other inclusion follows from Proposition~\ref{r59}.
The equality $\mathcal{H}=\mathcal{T}\cdot\mathcal{G}$ follows by showing that each of the generators of~$\mathcal{H}$ belongs to $\mathcal{T}$ or $\mathcal{G}$, see \cite{MR433433} where the generators of~$\mathcal{H}$ are given together with their action on homotopy from which the latter claim can be easily checked. Equality $\mathcal{T}\cdot\mathcal{G}=\mathcal{M}_{1,-1}\cdot\mathcal{M}_{0,0}$ follows from Propositions~\ref{prop2_7} and~\ref{prop2_8}.

  (ii) can be proved similarly to (i).

  (iii) can be verified, for example, by observing that each of Lickorish's
  generators of $\mathcal{M}$ is contained in either $\mathcal{T}$ or $\mathcal{T}'$.
\end{proof}

\subsection{Behavior of the double Johnson filtration under commutators and conjugates}\label{sec_4_5}

We now consider how the double Johnson filtration of $\M$ behaves under commutators and conjugates.

The commutator inclusion property for the double Johnson filtration for the Goeritz group extends to that for the mapping class group as follows.

\begin{proposition}\label{prop_2.10}
Let $(m,n),(m',n')\in \{(k,l)\in\mathbb{Z}^2\ | \  k,l\geq -1,\ k+l\geq 1\}\cup\{(0,0)\}$. Then we have
\begin{equation}\label{1}
[\mathcal{M}_{m,n}, \mathcal{M}_{m',n'}]\subset \mathcal{M}_{m+m',n+n'}.
\end{equation}
\end{proposition}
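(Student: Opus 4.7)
The plan is to verify $[h,h']\in\M_{m+m',n+n'}$ for $h\in\M_{m,n}$ and $h'\in\M_{m',n'}$ by checking the commutator characterizations in Proposition~\ref{r59}, namely $[[h,h']^{\pm 1},K_{1,0}]\subset K_{m+m'+1,n+n'}$ and $[[h,h']^{\pm 1},K_{0,1}]\subset K_{m+m',n+n'+1}$. When both $(m,n),(m',n')\in\modN^2$, both elements lie in $\G=\M_{0,0}$, and Proposition~\ref{r14} applied to $\G$ acting on $K_{*,*}$ (combined with Proposition~\ref{prop2_7}) directly yields the result. When $(m,n)=(0,0)$ with $(m',n')$ arbitrary, the statement reduces to normality of $\M_{m',n'}$ in $\G$, which follows by a direct computation: for $f\in\G$ one has $[fhf^{-1},x]=f([h,f^{-1}(x)])\in f(K_{m'+1,n'})=K_{m'+1,n'}$ for $x\in\bar X$ since $f$ preserves both $\bar X$ and every $K_{i,j}$.

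For the remaining cases, where $m$ or $n$ (or symmetrically $m'$ or $n'$) equals $-1$, I would first establish an extension of Lemma~\ref{r30}: for every $h\in\M_{m,n}$ with $m,n\geq -1$ and $m+n\geq 1$, and every $(i,j)\in\modN^2$, one has $[h,K_{i,j}]\subset K_{m+i,n+j}$ (with $K$ indexed over $\modZ^2$ via $K_{a,b}=K_{\max(0,a),\max(0,b)}$). The proof follows the induction on $i+j$ of Lemma~\ref{r30} verbatim, with the single adjustment that the base case $(0,0)$ is handled by writing $K=\bar X\bar Y$ and combining the two defining inclusions $[h,\bar X]\subset K_{m+1,n}$ and $[h,\bar Y]\subset K_{m,n+1}$, using normality of $K_{m,n}$ in $K$.

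For the commutator step, applying the Hall--Witt identity~\eqref{e17} inside $K\rtimes\M$ with $a=h$, $b=h'$ and $c\in K_{1,0}$ yields
\begin{gather*}
[[h,h'],{}^{h'}c]\;=\;[[h',c],{}^{c}h]^{-1}\cdot[[c,h],{}^{h}h']^{-1}.
\end{gather*}
Each factor on the right is a commutator in $K\rtimes\M$ whose ``inner'' entry lies in $K_{m'+1,n'}$ or $K_{m+1,n}$, respectively, subgroups controlled by the extended lemma. I would expand each factor explicitly using the semidirect product formula $[k,{}^k g]=k\cdot g(k^{-1})\cdot(\text{tail})$ and then apply the extended lemma once more, producing terms in $K_{m+m'+1,n+n'}$. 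The closure of the computation relies on the elementary cancellation $[h^{-1},h(w)]=[h,w]^{-1}$, which annihilates the leading obstruction arising from $h$ not preserving $K_{m+1,n}$. The passage from $[[h,h'],h'(c)]$ to $[[h,h'],c]$ uses $h'(c)=[h',c]\cdot c$ and the expansion $[[h,h'],h'(c)]=[[h,h'],[h',c]]\cdot{}^{[h',c]}[[h,h'],c]$; the residual commutator $[[h,h'],[h',c]]$ with $[h',c]\in K_{m'+1,n'}$ is treated by a secondary application of the same identity with $c$ replaced by $[h',c]$. The verification for $[[h,h'],K_{0,1}]$ is symmetric, and the statements for $h^{\pm 1}$ follow via $[h,h']^{-1}=[h',h]$.

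The main obstacle is that when negative indices appear, the target subgroup $K_{m+m'+1,n+n'}$ is not normal in the ambient group $K\rtimes\M$, since a general $h\in\M_{m,n}$ with $m=-1$ or $n=-1$ does not preserve every $K_{i,j}$. Consequently the three subgroups lemma used in the proof of Proposition~\ref{r14} cannot be applied in $K\rtimes\M$, and the Hall--Witt expansion must be carried out by hand. The saving feature is precisely the identity $[h^{-1},h(w)]\cdot[h,w]=1$, which produces a cancellation between the two ``error'' terms generated by the $h$ and $h^{-1}$ actions, placing the final commutator inside $K_{m+m'+1,n+n'}$ despite the failure of normality.
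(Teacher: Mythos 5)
The paper's proof is a one-liner --- ``From Proposition~\ref{r14} we already know the result for $(m,n),(m',n')\in\modN^2$; in fact, the same proof also works for the described set'' --- so it runs the three subgroups argument of Proposition~\ref{r14}. Your proposal rejects that route (rightly flagging that the normality of $K_{m+m'+i,n+n'+j}$ used in that proof fails once a $-1$ index appears) and instead sketches a direct Hall--Witt expansion. That is a genuinely different approach, not a variant of the paper's.

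However, the proposal has a real gap precisely in the case it is designed for, namely when both $(m,n)$ and $(m',n')$ have an index equal to $-1$. The cancellation $h\bigl([h^{-1},w]\bigr)=[h,w]^{-1}$ is indeed useful: it shows, for instance, that $[h,\ell_3]\in K_{m+m'+1,n+n'}$ for $\ell_3=[h^{-1},[h'^{-1},c]]$. But after discarding all such terms, the expansion of $[[h,h'],c]$ modulo $K_{m+m'+1,n+n'}$ is controlled by the single surviving factor $[h',\ell_3]=[h',[h^{-1},[h'^{-1},c]]]$, and the best bound one gets from the definitions is $[h',\ell_3]\in K_{m+2m'+1,\,n+2n'}$. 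This is contained in the target $K_{m+m'+1,\,n+n'}$ only when $m'\ge 0$ and $n'\ge 0$; if, say, $(m,n)=(2,-1)$ and $(m',n')=(-1,2)$ (a case explicitly used later, in Proposition~\ref{subgr}), then $[h',\ell_3]\in K_{1,3}$ while the target is $K_{2,1}$, and $K_{1,3}\not\subset K_{2,1}$. Your ``secondary application of the same identity with $c$ replaced by $[h',c]$'' reduces $[[h,h'],c]\pmod{K_{m+m'+1,n+n'}}$ to $[[h,h'],[h',c]]\pmod{K_{m+m'+1,n+n'}}$, but the a priori target for $[[h,h'],[h',c]]$ is $K_{m+2m'+1,\,n+2n'}$, again not inside $K_{m+m'+1,\,n+n'}$ when $m'<0$; iterating replaces $[h',c]$ by $[h',[h',c]]\in K_{0,2k}$ and never lands inside the target filtration level, so the recursion does not visibly terminate. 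Until you either produce a further identity (or a finer invariant of $\ell_3$, e.g., exploiting that $\ell_3$ is a specific iterated commutator of $h^{-1}$ and $h'^{-1}$ rather than a generic element of $K_{m+m'+1,n+n'}$) that forces this residual into $K_{m+m'+1,n+n'}$, the mixed case $m=-1\ne n'$ or $n=-1\ne m'$ is not closed by your argument. A minor point: the ``extended Lemma~\ref{r30}'' you propose to prove is just the content of Proposition~\ref{r59} and so needs no new proof, and the order of the two factors on the right of your Hall--Witt rearrangement is transposed.
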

\begin{proof}
From Proposition \ref{r14} we already know that the result is true for $(m,n),(m',n')\in\mathbb{N}^2$. In fact, the same proof also works for the described set.
\end{proof}

\begin{proposition}\label{sec_2_prop_711} We have $\ \!^{\mathcal{M}_{0,0}}\mathcal{M}_{i,j}= \mathcal{M}_{i,j}$ for all $i,j\geq -1$. Thus, the double filtration $(\mathcal{M}_{i,j})_{i,j\geq -1}$ is invariant under the conjugation action of $\mathcal{M}_{0,0}$.
\end{proposition}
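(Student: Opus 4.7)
The strategy is to reduce to the characterization of $\M_{i,j}$ in terms of only two commutator conditions, namely Proposition~\ref{r59}, and then exploit the fact that every element of $\M_{0,0}=\G$ preserves the entire double filtration $K_{*,*}$ (and hence its $\Z^2$-extension) by definition of~$\G$.

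Concretely, I take $g\in\M_{0,0}$ and $h\in\M_{i,j}$ with $i,j\ge -1$, and aim to show $ghg^{-1}\in\M_{i,j}$. By Proposition~\ref{r59} it suffices to check the two conditions
$[(ghg^{-1})^{\pm 1},K_{1,0}]\subset K_{i+1,j}$ and $[(ghg^{-1})^{\pm 1},K_{0,1}]\subset K_{i,j+1}$. Working inside the semidirect product $K\rtimes\M$, a direct calculation gives the basic identity
\begin{equation*}
[ghg^{-1},k]\;=\;g\bigl([h,g^{-1}(k)]\bigr)\qquad (k\in K),
\end{equation*}
and similarly for $gh^{-1}g^{-1}$.

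Now for any $k\in K_{1,0}=\bar X$, since $g\in\G$ preserves $\bar X$ we have $g^{-1}(k)\in K_{1,0}$; applying the definition of $\M_{i,j}$ to $h$ yields $[h,g^{-1}(k)]\in K_{i+1,j}$. Because $g$ preserves every $K_{m,n}$ (this is the content of $g\in\M_{0,0}=\G$, combined with the extension convention \eqref{dlex}), we conclude $g([h,g^{-1}(k)])\in K_{i+1,j}$, hence $[ghg^{-1},k]\in K_{i+1,j}$. The argument for $k\in K_{0,1}$ is identical with $(i+1,j)$ replaced by $(i,j+1)$, and the computation for $(ghg^{-1})^{-1}=gh^{-1}g^{-1}$ is the same since $h^{-1}$ also satisfies the two Proposition~\ref{r59} conditions. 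Thus $ghg^{-1}\in\M_{i,j}$, giving $\ \!^{\M_{0,0}}\M_{i,j}\subset\M_{i,j}$, and the reverse inclusion is obtained by conjugating by $g^{-1}\in\M_{0,0}$.

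There is no real obstacle: the content of the proposition is just the interaction of the commutator identity with the two facts that $\G$ preserves each $K_{m,n}$ and that $\M_{i,j}$ admits a "two-condition" description. The case $(i,j)\in\N^2$ is of course already covered by Proposition~\ref{r14} (normality of $\G_{i,j}$ in $\G_{0,0}$), so the only genuinely new cases are those where $i=-1$ or $j=-1$, but the argument above handles all cases uniformly.
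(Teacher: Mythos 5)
Your proof is correct, and it takes a genuinely different route from the paper's. The paper's proof is a one-liner: it invokes Proposition~\ref{prop_2.10} (the commutator inclusion $[\M_{m,n},\M_{m',n'}]\subset\M_{m+m',n+n'}$, itself justified by pointing back at the three-subgroups-lemma argument of Proposition~\ref{r14}) to handle all $(i,j)$ with $i+j\ge 1$, and then declares the remaining cases $(i,j)\in\{(-1,-1),(-1,0),(0,-1),(1,-1),(-1,1)\}$ ``trivial'' without argument (e.g.\ $\mathcal{T}\trl\H\supset\G$ for $(1,-1)$). Your argument is instead a uniform, direct verification: you reduce to the two-condition criterion of Proposition~\ref{r59}, use the identity $[ghg^{-1},k]=g\bigl([h,g^{-1}(k)]\bigr)$ in $K\rtimes\M$, and then use only that $g\in\G$ preserves each $K_{m,n}$ (which is the defining content of $\M_{0,0}$ together with the extension convention~\eqref{dlex}). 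This buys you a self-contained proof with no case-split and no appeal to the stronger commutator proposition, at the modest cost of unwinding a definition the paper prefers to quote. Both proofs are sound; yours is arguably cleaner for this particular statement since it avoids the implicit hand-wave over the ``trivial'' boundary cases.
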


\begin{proof}
By Proposition \ref{prop_2.10} we have $\ \!^{\mathcal{M}_{0,0}}\mathcal{M}_{i,j}= \mathcal{M}_{i,j}$ for all $i,j\geq -1$ with $i+j\geq 1$.  The remaining cases are trivial.
\end{proof}

\begin{proposition}\label{prop_2_6}
We have $\ \!^{\mathcal{M}_{1,-1}}\mathcal{M}_{i,-1}= \mathcal{M}_{i,-1}$ for all $i\geq 0$, that is, $\mathcal{M}_{i,-1}$ is a normal subgroup of~$\mathcal{M}_{1,-1}$ for all $i\geq 0$. Similarly, $\ \!^{\mathcal{M}_{-1,1}}\mathcal{M}_{-1,j}= \mathcal{M}_{-1,j}$ for all $j\geq 0$.
\end{proposition}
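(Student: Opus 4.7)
The plan is to verify conjugation invariance by directly checking the commutator conditions that characterize $\M_{i,-1}$ via Proposition~\ref{r59}. The central identity I will use is
\[
[{}^t h, k] = {}^t[h, t^{-1}(k)],
\]
valid for any $t,h\in\M$ and $k\in K$; this is easily verified in the semidirect product $K\rtimes\M$ since both sides equal $t(h(t^{-1}(k)))\cdot k^{-1}$.

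First I would dispose of the case $i=0$: here $\M_{0,-1}=\H$ by Proposition~\ref{prop_leading} and $\M_{1,-1}=\mathcal{T}\subset\H$ by Proposition~\ref{prop2_8}, so the claim reduces to the fact that $\H$ is closed under inner conjugation. For $i\geq 1$ I fix $t\in\M_{1,-1}$ and $h\in\M_{i,-1}$, and verify the two inclusions $[{}^t h, K_{1,0}]\subset K_{i+1,0}$ and $[{}^t h, K_{0,1}]\subset K_{i,0}$ (recalling $K_{a,-1}=K_{a,0}$ by convention~\eqref{dlex}). A standing observation I would invoke is that $t\in\mathcal{T}\subset\H$ acts on $\bar X$ as a group automorphism and hence preserves each term $\Gamma_m\bar X=K_{m,0}$ of its lower central series.

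For $k\in K_{1,0}$, the preceding observation gives $t^{-1}(k)\in K_{1,0}$, so $[h,t^{-1}(k)]\in K_{i+1,0}$ by the defining property of $h\in\M_{i,-1}$; applying $t$ preserves this. For $k\in K_{0,1}$, the inclusion $[\M_{1,-1},K_{0,1}]\subset K_{1,0}$ lets me write $t^{-1}(k)=\alpha\cdot k$ with $\alpha:=[t^{-1},k]\in K_{1,0}$. Expanding by identity~\eqref{e16},
\[
[h,\alpha\cdot k]=[h,\alpha]\cdot {}^\alpha[h,k],
\]
and $[h,\alpha]\in K_{i+1,0}\subset K_{i,0}$ while $[h,k]\in K_{i,0}$; since $K_{i,0}$ is normal in $K$, the product lies in $K_{i,0}$, and applying $t$ completes the verification.

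The symmetric statement $\ {}^{\M_{-1,1}}\M_{-1,j}=\M_{-1,j}$ follows by interchanging the roles of $\bar X$ and $\bar Y$ throughout. I foresee no serious obstacle; the only subtlety worth flagging is that $t\in\mathcal{T}$ does not preserve $\bar Y$ in general, so one must carefully exploit the $K_{m,0}$-invariance of $t$ rather than any would-be $K_{0,n}$-invariance when applying $t$ at the end of each case.
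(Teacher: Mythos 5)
Your proof is correct and takes essentially the same route as the paper's: both verify the two defining commutator conditions for $\M_{i,-1}$, using that $t\in\M_{1,-1}\subset\H$ preserves each $K_{j,0}=\Gamma_j\bar X$ and that $[t^{-1},k]\in K_{1,0}$ for $k\in K_{0,1}$. Your identity $[{}^t h,k]={}^t[h,t^{-1}(k)]$ together with \eqref{e16} reproduces exactly the factorization the paper writes out by hand, just packaged more cleanly.
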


\begin{proof} 
 Let $i\geq 0$, $\varphi\in\mathcal{M}_{1,-1}$ and $h\in\mathcal{M}_{i,-1}$. If $x\in K_{1,0}$, we have
\begin{equation*}
[\varphi h \varphi^{-1},x]=\varphi h \varphi^{-1}(x)x^{-1} =\varphi \big(h(\varphi^{-1}(x))\varphi^{-1}(x^{-1})\big)\in K_{i+1,0}.
\end{equation*}
If $y\in K_{0,1}$, we have
\begin{equation*}
\begin{split}
[\varphi h \varphi^{-1},y] &=\varphi h \varphi^{-1}(x)x^{-1} \\
&=\varphi \big(h(\varphi^{-1}(y)y^{-1})y\varphi^{-1}(y^{-1})\big)\cdot y\varphi(y^{-1})\cdot \varphi\big(h(y)y^{-1}\big)\cdot \varphi(y)y^{-1}\in K_{i,0}.
\end{split}
\end{equation*}
Hence, $\varphi h \varphi^{-1}\in \mathcal{M}_{i,-1}$. Similarly  $\ \!^{\mathcal{M}_{-1,1}}\mathcal{M}_{-1,j}= \mathcal{M}_{-1,j}$ for all $j\geq 0$.
\end{proof}

The following example shows that \eqref{1} does not hold for all $(m,n)$.

\begin{example} We have $[\mathcal{M}_{1,-1}, \mathcal{M}_{-1,1}]\not\subset \mathcal{M}_{0,0}$. For instance, consider the Dehn twists $t_{x_1}\in\mathcal{M}_{1,-1}$ and $t_{y_1}\in\mathcal{M}_{-1,1}$, where $x_1$ and $y_1$ are as in Figure \ref{sec_2_fig_1}. Now the image under the symplectic representation of $[t_{x_1}, t_{y_1}]$ is the matrix $\left( \begin{smallmatrix} 1 &1\\ 
1& 2\end{smallmatrix} \right)$, so  $[t_{x_1}, t_{y_1}]\notin\mathcal{M}_{0,0}$ by Lemma \ref{im_symp}. The same elements show that $[\mathcal{M}_{1,-1},\mathcal{M}_{-1,0}]\not\subset \mathcal{M}_{0,-1}$.
\end{example}

  In \cite[Example 10.9]{HM} the Johnson filtration $(\H_m)_{m\ge0}$ for the handlebody group $\H$ was introduced.  (There, $\H_m$ is denoted by $G_m$.)  The group $\H_m$ can be defined by $\H_0=\H$ and
\begin{equation}
\H_m = \{h\in\H\mid [h,K]\subset \Gamma_{m}\bX,\;[h,\bX]\subset\Gamma_{m+1}\bX\}
\end{equation}
for $m\ge1$.
Note that we have
\begin{equation}
\H_m = \{h\in\H\mid [h,\bY]\subset \Gamma_{m}\bX,\;[h,\bX]\subset\Gamma_{m+1}\bX\}
=\M_{m,-1}.
\end{equation}
Since the $(\H_m)_{m\ge0}$ is an extended N-series \cite{HM} ($\modN$-filtration in our terminology), we have the following.

\begin{proposition}[\cite{HM}]\label{2}
  For all $a,b\ge0$, we have
\begin{equation}
[\M_{a,-1},\M_{b,-1}]\subset \M_{a+b,-1}.
\end{equation}
Similarly, we also have $[\M_{-1,a},\M_{-1,b}]\subset\M_{-1,a+b}$.
\end{proposition}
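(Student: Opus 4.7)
The plan is to reduce Proposition \ref{2} directly to the identification $\M_{m,-1}=\H_m$ noted in the paragraph preceding the statement, together with the result from \cite{HM} that $(\H_m)_{m\ge 0}$ is an extended N-series (in our terminology, an $\modN$-filtration of $\H$). By the very definition of an $\modN$-filtration, the commutator inclusion $[\H_a,\H_b]\subset\H_{a+b}$ holds for all $a,b\ge 0$, which is exactly the first assertion once translated back through $\M_{m,-1}=\H_m$. The second assertion $[\M_{-1,a},\M_{-1,b}]\subset\M_{-1,a+b}$ follows by the symmetric argument obtained from interchanging the roles of the handlebodies $V$ and $V'$ (equivalently, of the subgroups $\bar X$ and $\bar Y$): defining $\H'_a:=\M_{-1,a}$ produces the analogous Johnson $\modN$-filtration of the handlebody group $\H'$, to which the same theorem of \cite{HM} applies.

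For the reader who prefers a direct verification, the proof can alternatively be carried out via the three subgroups lemma (Lemma \ref{tsl}) applied inside the semidirect product $K\rtimes\M$. Given $f\in\M_{a,-1}$ and $g\in\M_{b,-1}$, by Proposition \ref{r59} it suffices to establish the two inclusions
\begin{equation*}
[[f,g],\bar X]\subset \Gamma_{a+b+1}\bar X
\qquad\text{and}\qquad
[[f,g],\bar Y]\subset \Gamma_{a+b}\bar X.
\end{equation*}
Since $f,g\in\H$, one has $[f,\bar X]\subset\Gamma_{a+1}\bar X$, $[g,\bar X]\subset\Gamma_{b+1}\bar X$, $[f,\bar Y]\subset\Gamma_{a}\bar X$ and $[g,\bar Y]\subset\Gamma_{b}\bar X$, and the auxiliary inclusion $[\M_{c,-1},\Gamma_j\bar X]\subset\Gamma_{c+j}\bar X$ (for $c\ge 0$, $j\ge 1$) is obtained by a straightforward induction on $j$ using the commutator identities \eqref{e15}--\eqref{e16}. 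Feeding these into Lemma \ref{tsl} as
\begin{equation*}
[[f,g],\bar X]\subset \lala [f,[g,\bar X]]\;[g,[f,\bar X]]\rara_{K\rtimes\M}
\end{equation*}
and analogously with $\bar Y$ in place of $\bar X$, both required inclusions drop out.

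The main obstacle, were one insisting on a self-contained proof independent of \cite{HM}, would be the auxiliary inclusion $[\M_{c,-1},\Gamma_j\bar X]\subset\Gamma_{c+j}\bar X$ and its dual $[\M_{-1,c},\Gamma_j\bar Y]\subset\Gamma_{c+j}\bar Y$; but both are routine inductions, so no genuinely new difficulty appears. Given that the identification $\M_{m,-1}=\H_m$ together with the N-filtration property of $(\H_m)_m$ has already been established, the cleanest presentation is simply to cite these two facts and conclude.
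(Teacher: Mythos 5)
Your primary argument matches the paper's treatment exactly: the paper, too, gives no proof beyond identifying $\M_{m,-1}$ with the Johnson filtration $\H_m$ of $\H$ from \cite[Example 10.9]{HM}, recalling that $(\H_m)_{m\ge 0}$ is an extended N-series, and deducing the commutator inclusion from the defining property of such a filtration; the dual statement then follows by the $V\leftrightarrow V'$ symmetry, as you say. Your alternative verification via Lemma~\ref{tsl} is also correct in substance, but with one caution worth fixing: the normal closure $\lala\,\cdot\,\rara$ and the ambient group in which you apply the three subgroups lemma should be $K\rtimes\H$ (equivalently $K\rtimes\M_{0,-1}$), not $K\rtimes\M$, because $\Gamma_j\bar X=K_{j,0}$ is normal in $K\rtimes\H$ but not in $K\rtimes\M$ ($\M$ does not preserve $\bar X$). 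Since $f,g\in\H$ and $\bar X,\bar Y\subset K$, the entire computation lives inside $K\rtimes\H$, so once the ambient group is corrected the argument goes through with no further changes.
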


{
\begin{proposition} We have
$$[\mathcal{M}_{1,-1}, \mathcal{M}_{1,0}]\subset \mathcal{M}_{2,-1}.$$
Similarly, we also have $[\mathcal{M}_{-1,1}, \mathcal{M}_{0,1}]\subset \mathcal{M}_{-1,2}.$
\end{proposition}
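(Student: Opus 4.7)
The plan is to apply Lemma~\ref{tsl} (the three subgroups lemma) to reduce the claim to commutator inclusions that are immediate from the definitions of $\mathcal{M}_{1,-1}$ and $\mathcal{M}_{1,0}$. Fix $\varphi\in\mathcal{M}_{1,-1}$ and $\psi\in\mathcal{M}_{1,0}$. By Proposition~\ref{r59} and the extension convention~\eqref{dlex}, the containment $[\varphi,\psi]\in\mathcal{M}_{2,-1}$ is equivalent to the two inclusions
\begin{gather*}
\bigl[[\varphi,\psi]^{\pm 1},K_{1,0}\bigr]\subset K_{3,0}
\quad\text{and}\quad
\bigl[[\varphi,\psi]^{\pm 1},K_{0,1}\bigr]\subset K_{2,0},
\end{gather*}
and both of these follow once we know $[[A,B],C]\subset N$ for the appropriate choice of $C$ and $N$, where $A=\langle\varphi\rangle$ and $B=\langle\psi\rangle$.

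To apply Lemma~\ref{tsl} we must work inside an ambient group in which the target $N$ is normal. Since $\varphi\in\mathcal{T}\subset\mathcal{H}$ and $\psi\in\mathcal{G}\subset\mathcal{H}$, both elements preserve $\bar{X}$, and we can take the ambient group to be $G=K\rtimes\mathcal{H}$; then every $K_{m,0}=\Gamma_m\bar{X}$ is normal in $G$. With $C=K_{1,0}$ and $N=K_{3,0}$, the hypotheses of the three subgroups lemma are satisfied because $[B,C]\subset K_{2,0}$ (since $\psi\in\mathcal{M}_{1,0}$) gives $[A,[B,C]]\subset [A,K_{2,0}]\subset K_{3,-1}=K_{3,0}$ (since $\varphi\in\mathcal{M}_{1,-1}$ applied at $(i,j)=(2,0)$), and symmetrically $[C,A]\subset K_{2,0}$ gives $[B,[C,A]]\subset[B,K_{2,0}]\subset K_{3,0}$. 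With $C=K_{0,1}$ and $N=K_{2,0}$, the decisive input is the full definition of $\mathcal{M}_{1,-1}$ at $(i,j)=(1,1)$, namely $[\varphi,K_{1,1}]\subset K_{2,0}$: combined with $[B,K_{0,1}]\subset K_{1,1}$ this yields $[A,[B,C]]\subset K_{2,0}$; the other bracket $[B,[C,A]]\subset[B,K_{1,0}]\subset K_{2,0}$ is immediate. Lemma~\ref{tsl} then delivers $[[A,B],K_{0,1}]\subset K_{2,0}$.

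The only potential obstacle is the normality requirement in the three subgroups lemma, and it is handled precisely by restricting the ambient group to $K\rtimes\mathcal{H}$; no other subtlety arises, because the required commutator bounds are exactly the defining conditions for $\mathcal{M}_{1,-1}$ and $\mathcal{M}_{1,0}$ in the form given by Proposition~\ref{r59}. The parallel statement $[\mathcal{M}_{-1,1},\mathcal{M}_{0,1}]\subset\mathcal{M}_{-1,2}$ follows by the completely dual argument, swapping the roles of $\bar{X}$ and $\bar{Y}$ and working instead inside $K\rtimes\mathcal{H}'$.
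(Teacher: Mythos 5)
Your proof is correct and follows essentially the same strategy as the paper's: apply the three subgroups lemma (Lemma~\ref{tsl}) to the subgroups $\mathcal{M}_{1,-1}$, $\mathcal{M}_{1,0}$, $K_{1,0}$ (resp.\ $K_{0,1}$) with target $K_{3,0}$ (resp.\ $K_{2,0}$), and then invoke the criterion of Proposition~\ref{r59}. You are in fact somewhat more careful than the paper about choosing the ambient group in which normality holds — you work in $K\rtimes\mathcal{H}$, whereas the paper cites $K_{1,0}\rtimes\mathcal{M}_{1,-1}$, which does not contain $\mathcal{M}_{1,0}$ or $K_{0,1}$; your choice is the right one.
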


\begin{proof} 
We will check
$[\mathcal{M}_{1,-1}, \mathcal{M}_{1,0}]\subset \mathcal{M}_{2,-1}$. The other inclusion follows by dual arguments.
By definition $\mathcal{M}_{1,-1}$ acts on $K_{1,0}=\bar{X}$. Notice that $K_{m,0}=\Gamma_{m} K_{1,0}$ is a normal subgroup of $K_{1,0}\rtimes \mathcal{M}_{1,-1}$ for all~$m\geq 1$. 
\noindent We have 
\begin{itemize}
\item $[\mathcal{M}_{1,-1},[\mathcal{M}_{1,0},K_{1,0}]]\leq K_{3,0}, \quad\quad [\mathcal{M}_{1,0},[K_{1,0},\mathcal{M}_{1,-1}]]\leq K_{3,0},$
\item $[\mathcal{M}_{1,-1},[\mathcal{M}_{1,0},K_{0,1}]]\leq K_{2,0}, \quad\quad [\mathcal{M}_{1,0},[K_{0,1},\mathcal{M}_{1,-1}]]\leq K_{2,0}.$
\end{itemize}

\medskip

\noindent Hence by Lemma~\ref{tsl} we obtain $[[\mathcal{M}_{1,-1},\mathcal{M}_{1,0}],K_{1,0}]\leq K_{3,0}$ and $[[\mathcal{M}_{1,-1},\mathcal{M}_{1,0}],K_{0,1}]\leq K_{2,0}$. Therefore, $[\mathcal{M}_{1,-1}, \mathcal{M}_{1,0}]\subset \mathcal{M}_{2,-1}$.
\end{proof}
}

\begin{remark} In \cite{hain}, Hain considered a certain completion $\widehat{\mathcal{M}}$ of the mapping class group (the definition of this completion depends on the symplectic representation $\sigma:\mathcal{M}\rightarrow \mathrm{Sp}(H,\omega)$)  together with a filtration $(W_k \widehat{\mathcal{M}})_{k\leq 0}$ called the \emph{weight filtration}. This filtration induces a filtration   $(W_k \mathfrak{m})_{k\leq 0}$ of the Lie algebra $\mathfrak{m}$ associated to $\widehat{\M}$. Besides, by considering the surface $\Sigma$ as part of the boundary $\partial V$ of the handlebody~$V$, Hain defined a second filtration $(M^V_k\widehat{\mathcal{M}})_{k\leq 0}$ called the \emph{relative weight filtration}. The latter filtration induces a filtration on each graded quotient $\mathrm{Gr}^W_k\mathfrak{m}:= W_k \mathfrak{m}/W_{k-1} \mathfrak{m}$.

We observe the following relations between the above filtrations and some terms of the double Johnson filtration:

\begin{itemize}
\item   $\H =\mathcal{M}_{0, -1}  =   \M \cap  M^V_{0} \widehat{\M}$.

\item  $\mathcal{T} = \mathcal{M}_{1, -1}  =   \M \cap  M^V_{-2} \widehat{\M}$.

\item $\tau_1(\mathcal{M}_{0,1}) \otimes \mathbb{Q}  =  \mathrm{Gr}^{M^V}_0 \mathrm{Gr}^W_{-1} \mathfrak{m}$.

\item $\tau_2(\mathcal{M}_{0,2}) \otimes \mathbb{Q}  \subset  \mathrm{Gr}^{M^V}_0 \mathrm{Gr}^W_{-2} \mathfrak{m}$.
\end{itemize} 

It would be interesting to compare the filtrations $(\M_{k,-1})_{k\geq 0}$ and  $(\M \cap  M^V_{r} \widehat{\M})_{r\leq 0}$ of the handlebody group $\H$ or more generally to compare some terms of the double Johnson filtration with the bigraded terms $\mathrm{Gr}^{M^V}_i \mathrm{Gr}^W_j\mathfrak{m}$.
\end{remark}

\subsection{Examples of elements in some terms of the filtration}\label{sec_4_6}

In \cite[Section 2]{Freed}, Freedman and Scharlemann introduced some elements of the Goeritz group called \emph{eyeglass twists}. These elements can be used to construct examples of elements in some terms of the double Johnson filtrations of~$\mathcal{M}$ {and~$\G$}.

\begin{definition} An \emph{eyeglass} $e=(D_1,D_2,c)$ in $\Sigma$ consists of:
\begin{itemize}
\item Properly embedded disks $D_1$ in $V$ and $D_2$ in $V'$  such that $\partial D_1 \cap \partial D_2=\emptyset$. The disks $D_i$ ($i=1,2$) are called the \emph{lenses} of $e$.
\item  An embedded arc $c$ in $\Sigma$ connecting $\partial D_1$ to $\partial D_2$ with the interior of $c$ disjoint from $\partial D_1 \cup \partial D_2$. The arc $c$ is called the \emph{bridge} of $e$.
\end{itemize}
\end{definition}

See Figure \ref{sec_2_fig_3}~$(a)$ for an example of an eyeglass in~$\Sigma$. From now on we only draw $\partial D_i$ on $\Sigma$, since the disks $D_i$ are understood. 

Let $e=(D_1,D_2,c)$ be an eyeglass in $\Sigma$. Consider a regular neighborhood of $\partial D_1\cup\partial D_2\cup c$ in $\Sigma$. This neighborhood is homeomorphic to a disc with two holes, in particular it has three boundary components. Let $\gamma_i$ (resp.~$\gamma_j$) be a simple closed curve on $\Sigma$  isotopic to $\partial D_1$ (resp.~$D_2$).  Let $\gamma_{ij}$ be a simple closed curve on $\Sigma$ isotopic the third boundary component, see Figure~\ref{sec_2_fig_3}~$(b)$.

\begin{figure}[ht!]
\centering   	
	\includegraphics[scale=0.73]{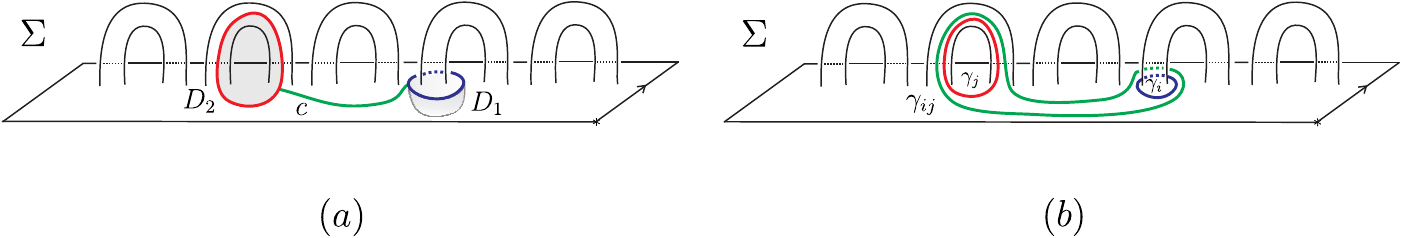}
	\caption{$(a)$ an eyeglass  $e=(D_1,D_2,c)$ in $\Sigma$ \quad  $(b)$ curves $\gamma_{i},\gamma_j$ and $\gamma_{ij}$ determined (up to isotopy) by the eyeglass $e$ shown in $(a)$.}
\label{sec_2_fig_3}
\end{figure}
\begin{definition}
Let $e=(D_1,D_2,c)$ be an eyeglass in $\Sigma$ and let $\gamma_{i},\gamma_j,\gamma_{ij}$ be simple closed curves on~$\Sigma$ determined by~$e$ as above. The \emph{eyeglass twist} on $e$ is the element in the Goeritz group given by the composition of Dehn twists $t_{\gamma_{ij}}t_{\gamma_j}^{-1}t_{\gamma_i}^{-1}$.
\end{definition}

Intuitively, the eyeglass twist on $e=(D_1,D_2,c)$ drags $\partial D_1$ around $\partial D_2$ following $c$, see Figure \ref{sec_2_fig_4}.

\begin{figure}[ht!]
\centering   	
	\includegraphics[scale=0.95]{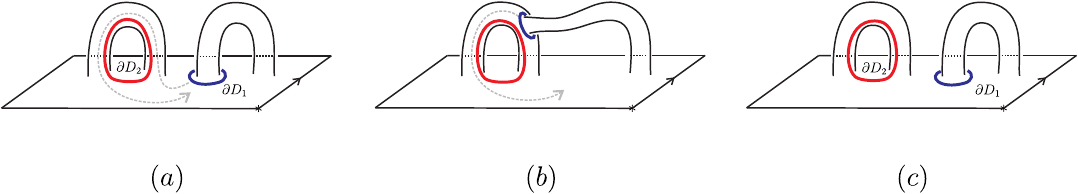}
	\caption{Time-lapse description of the eyeglass twist on $e=(D_1,D_2,c)$ from Figure \ref{sec_2_fig_3}.}
\label{sec_2_fig_4}
\end{figure}

\begin{example}\label{ex_3.17}
Consider the two pairs $e_1,e_2$ and $e_1',e_2'$ of eyeglasses shown in Figure~\ref{sec_2_fig_5}~$(a)$ and~$(b)$, respectively. Let $h_i,h_i'$  ($i=1,2$) be the corresponding  eyeglass twists. Then it can be checked that the commutator $[h_1,h_2]$ belongs to $\mathcal{M}_{1,0}$, and the commutator $[h_1',h_2']$ belongs to $\mathcal{M}_{0,1}$. 
\begin{figure}[ht!]
\centering   	
	\includegraphics[scale=0.73]{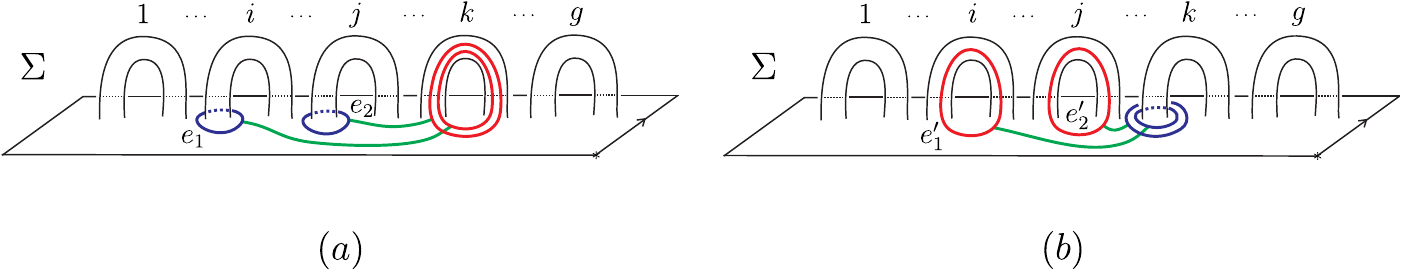}
	\caption{($a$) eyeglasses $e_1,e_2$ \quad  ($b$) eyeglasses $e'_1,e'_2$.}
\label{sec_2_fig_5}
\end{figure} 
\end{example}

\subsection{Images of framed pure {braid} groups into the mapping class group}\label{sec_4_7}

Let us see now how to construct examples of elements in the double Johnson filtration of $\mathcal{M}$ by using embeddings of the framed pure braid group.

Let $l\geq 2$ and let $D_l^{\circ}$ denote a disk with $l$ holes distributed uniformly along the horizontal direction. Recall that the \emph{framed pure braid group on $l$ strands}, denoted $\mathrm{FPB}_l$, can be defined as the group of isotopy classes of homeomorphisms of $D_l^{\circ}$ which are the identity on the boundary. Moreover, any embedding of~$D_l^{\circ}$ into $\Sigma$ induces a homomorphism $\mathrm{FPB}_l\rightarrow \mathcal{M}$. There are interesting applications of some of these embeddings for the study of the mapping class group. For instance, in Figure \ref{sec_2_fig_7} we consider three of such embeddings; the first one was considered by Hatcher and Thurston~\cite{MR579573}, the second one by Oda~\cite{oda,MR1705580}  and the third one by Levine~\cite[Section 2.2]{MR1705580}. These three embeddings satisfy that the image of the holes of $D_l^{\circ}$ bound mutually disjoint disk in $V$.

\begin{figure}[ht!]
\centering   	
	\includegraphics[scale=0.92]{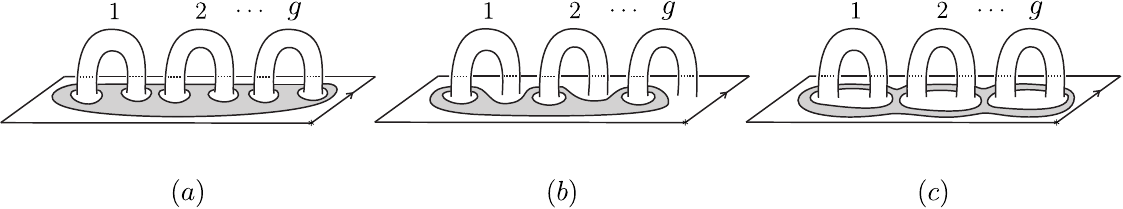}
	\caption{($a$) An embedding $D_{2g}^{\circ}\rightarrow \Sigma$.  \quad  ($b$), ($c$) Embeddings $D_g^{\circ}\rightarrow \Sigma$.}
\label{sec_2_fig_7}
\end{figure}

\begin{proposition}\label{r44} Consider an embedding of  $D_l^{\circ}$ into $\Sigma$ such that the image of the holes of~$D_l^{\circ}$  bound mutually disjoint disks in $V$. Let $f\zzzcolon \mathrm{FPB}_l\rightarrow\mathcal{M}$ be the induced homomorphism. Then we have
    \begin{equation}
f(\Gamma_k\FPB_l)\subset\M_{k,-1}.
    \end{equation}
\end{proposition}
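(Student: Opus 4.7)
The plan is to reduce the statement to the base case $k=1$ and then handle that case via the Dehn twist generation of $\mathrm{FPB}_l$ together with the meridian-disk hypothesis. The reduction is essentially formal, so the real content lies in the base case.

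For the reduction, I would invoke Proposition \ref{2}, which tells us that $[\mathcal{M}_{a,-1},\mathcal{M}_{b,-1}]\subset\mathcal{M}_{a+b,-1}$. Once the base case $f(\mathrm{FPB}_l)\subset\mathcal{M}_{1,-1}$ is established, induction on $k$ gives
\[
f(\Gamma_{k+1}\mathrm{FPB}_l)=f([\Gamma_k\mathrm{FPB}_l,\mathrm{FPB}_l])\subset[\mathcal{M}_{k,-1},\mathcal{M}_{1,-1}]\subset\mathcal{M}_{k+1,-1},
\]
as desired. This step uses nothing beyond the commutator inclusion for the $\mathbb{N}$-filtration $(\mathcal{M}_{k,-1})_{k\ge 0}$.

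For the base case I would first recall that $\mathrm{FPB}_l$ coincides with the mapping class group of $D_l^\circ$ relative to its boundary, and that this group is generated by Dehn twists about simple closed curves in $D_l^\circ$ (for instance the hole-parallel twists $\tau_1,\ldots,\tau_l$ together with the pure braid generators $A_{ij}$, each realized as a Dehn twist about a curve encircling the $i$-th and $j$-th holes). Since $D_l^\circ$ is planar, every such curve $c$ is separating, bounding in $D_l^\circ$ a planar subsurface $P_c$ whose remaining boundary components are a subset $S_c$ of the hole-circles. By the hypothesis the hole-circles bound mutually disjoint properly embedded disks in $V$, so under the embedding $D_l^\circ\hookrightarrow\Sigma$ I can glue $P_c$ to the disks in $V$ indexed by $S_c$; a genus and boundary count shows that the resulting embedded surface in $V$ is a disk with boundary $c$. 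Hence $c$ is a meridian of $V$ and $t_c\in\mathcal{T}=\mathcal{M}_{1,-1}$ by Proposition \ref{prop2_8}, so $f(\mathrm{FPB}_l)\subset\mathcal{M}_{1,-1}$.

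The main obstacle is ensuring the Dehn twist generating set is correctly invoked: I need not merely that $\mathrm{FPB}_l$ is generated by twists about \emph{some} curves, but about curves visible inside the embedded $D_l^\circ$, so that the meridian argument applies to each generator. This is why I would be explicit about choosing the $\tau_i$ and $A_{ij}$ realized as twists about curves lying in $D_l^\circ$. Once this is in place, the capping construction is straightforward and the inductive step is purely mechanical.
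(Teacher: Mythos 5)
Your proposal is correct and follows the same two-step structure as the paper's proof: the inductive step via the commutator inclusion $[\mathcal{M}_{a,-1},\mathcal{M}_{b,-1}]\subset\mathcal{M}_{a+b,-1}$ from Proposition~\ref{2}, and the base case $f(\mathrm{FPB}_l)\subset\mathcal{T}=\mathcal{M}_{1,-1}$. The only difference is that the paper dismisses the base case as ``clear,'' whereas you supply the underlying geometric argument (planarity of $D_l^{\circ}$ forces every generating twist curve to be a meridian of $V$ by capping with the given disjoint disks), which is a correct and reasonable justification for the claim the paper leaves implicit.
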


\begin{proof}
If $k=1$, then we clearly have $f(\Gamma_1\FPB)=f(\FPB)\subset\mathcal T=\M_{1,-1}$.
The case $k\ge2$ follows from Proposition \ref{2}.
\end{proof}

A similar result holds to obtain examples of elements in $\mathcal{M}_{-1,k}$ by considering an embedding of~$D_l^{\circ}$ into $\Sigma$ such that the image of the holes of~$D_l^{\circ}$  bound mutually disjoint disks in $V'$.

The description of an eyeglass twist given in Figure \ref{sec_2_fig_4}
shows its similarity with a braid move. More precisely, an eyeglass
twist can be described as the image of the (non-framing) generator of
$\mathrm{FPB}_2$ by a map $\mathrm{FPB}_2\rightarrow \mathcal{M}$
induced by an appropriate embedding of $D_2^{\circ}$ into~$\Sigma$,
see Example \ref{ex_sec2_2}.  The particularity of this kind of
embedding is that one of the holes of $D_2^{\circ}$ bounds a disk in
one of the handlebodies $V$ or $V'$ and the other one bounds a disk in
the other handlebody. This could be generalized as follows.
Fix integers $p,q\geq 1$. Consider {an} embedding of $D_{p+q}^{\circ}$ such that  the first $p$ holes bound disks in one of the handlebodies $V$ or $V'$ and the remaining $q$ holes bound disks in the other handlebody. Such an embedding induces a homomorphism $\mathrm{FPB}_{p,q}\rightarrow \mathcal{M}$ from the so-called \emph{framed mixed pure braid group} $\mathrm{FPB}_{p,q}\simeq \mathrm{FPB}_{p+q}$, see \cite{MR1465028} for the definition and properties of the non-pure version of the groups $\mathrm{FPB}_{p,q}$ which are the interesting ``mixed'' cases.

\begin{example}\label{ex_sec2_2} The eyeglass twist associated to the eyeglass from Figure \ref{sec_2_fig_4} $(a)$ is the image of the (non-framing) generator of $\mathrm{FPB}_{1,1}\simeq\mathrm{FPB}_2$ by the map $\mathrm{FPB}_2\rightarrow \mathcal{M}$ induced by the embedding shown in Figure~\ref{sec_2_fig_6}~$(a)$.
\end{example}
\begin{figure}[ht!]
\centering   	
	\includegraphics[scale=0.8]{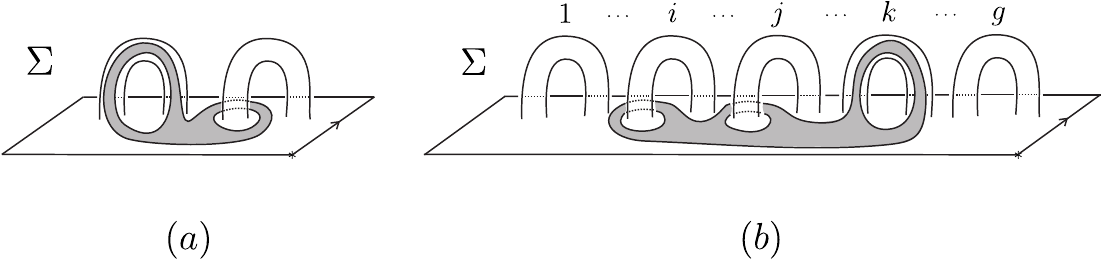}
	\caption{($a$) An embedding $D_2^{\circ}\rightarrow \Sigma$.  \quad  ($b$) An embedding $D_3^{\circ}\rightarrow \Sigma$.}
\label{sec_2_fig_6}
\end{figure} 
\begin{example}\label{ex_mixb}   Let $f\zzzcolon \mathrm{FPB}_{2,1}\simeq\mathrm{FPB}_3\rightarrow \mathcal{M}$ denote the homomorphism induced by  the embedding of~$D_3^{\circ}$ into $\Sigma$ shown in Figure~\ref{sec_2_fig_6}~$(b)$. Let $a_{1,3}$ and $a_{2,3}$ be the two non-framing generators of $\mathrm{FPB}_{2,1}\simeq\mathrm{FPB}_3$. Then $f(a_{1,3})= h_1$ and $f(a_{2,3})=h_2$, where $h_1$ and $h_2$ are the eyeglass twists on the eyeglasses $e_1$ and $e_2$ from Figure~\ref{sec_2_fig_5}~$(a)$. It follows from Example~\ref{ex_3.17} that if $\beta\in\Gamma_2\mathrm{FPB}_3$, then $f(\beta)\in\mathcal{M}_{1,0}$. 
\end{example}

\begin{remark} The above discussion suggests a possible method to construct examples in a general term $\mathcal{M}_{m,n}$ of the double Johnson filtration of the mapping class group by considering the images of the elements in the lower central series of $\mathrm{FPB}_{p,q}$ under homomorphisms of the type $\mathrm{FPB}_{p,q}\rightarrow \mathcal{M}$. It also seems possible to define a double filtration of the group $\mathrm{FPB}_{p,q}$, which would be interesting on its own, using the fundamental groupoid of $D^{\circ}_{p+q}$ and to relate it with the double Johnson filtration of the mapping class group via homomorphisms of the type  $\mathrm{FPB}_{p,q}\rightarrow \mathcal{M}$.
\end{remark}

\subsection{Relation with the usual Johnson filtration}\label{sec_3_4}

By definition we have that $\mathcal{M}_{2,-1}$, $\mathcal{M}_{1,0}$, $\mathcal{M}_{0,1}$ and~$\mathcal{M}_{-1,2}$ are subgroups of the Torelli group $\mathcal{I}$. Moreover,  we have the following.

\begin{proposition}\label{subgr} The product $\mathcal{M}_{2,-1}\cdot \mathcal{M}_{1,0}\cdot \mathcal{M}_{0,1}\cdot \mathcal{M}_{-1,2}$ is a subgroup of $\mathcal{I}$. That is, 
 $$\left\langle \mathcal{M}_{2,-1}\cdot \mathcal{M}_{1,0}\cdot \mathcal{M}_{0,1}\cdot \mathcal{M}_{-1,2} \right\rangle = \mathcal{M}_{2,-1}\cdot \mathcal{M}_{1,0}\cdot \mathcal{M}_{0,1}\cdot \mathcal{M}_{-1,2}.$$
\end{proposition}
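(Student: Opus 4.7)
Set $H_1=\M_{2,-1}$, $H_2=\M_{1,0}$, $H_3=\M_{0,1}$, $H_4=\M_{-1,2}$ and $P=H_1H_2H_3H_4$. Each $H_k$ is a subgroup of $\mathcal{I}$, so $P\subset\mathcal{I}$, and it suffices to show that $P$ is a subgroup of $\M$.

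The plan is to extract ``partial commutation'' relations among the $H_k$ from Proposition~\ref{prop_2.10} and then check closure of $P$ under products and inverses by a rewriting argument. Applying Proposition~\ref{prop_2.10} to each pair and using monotonicity of the filtration yields the containments
\begin{gather*}
[H_1,H_2]\subset\M_{3,-1}\subset H_1,\quad [H_1,H_3]\subset\M_{2,0}\subset H_1,\quad [H_2,H_3]\subset\M_{1,1}\subset H_2,\\
[H_2,H_4]\subset\M_{0,2}\subset H_4,\quad [H_3,H_4]\subset\M_{-1,3}\subset H_4,\quad [H_1,H_4]\subset\M_{1,1}\subset H_2.
\end{gather*}
The first five say that for every pair $(i,j)\ne(1,4)$ one of the two subgroups normalizes the other, whence $H_iH_j=H_jH_i$. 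From the sixth and the identity $ab=[a,b]\cdot ba$ one deduces (using $H_2H_1=H_1H_2$ and $H_2H_4=H_4H_2$) the single ``non-commutation'' rule
$$H_4H_1\subset H_1H_2H_4\subset P,\qquad H_1H_4\subset H_4H_1H_2\subset P.$$

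With these rules at hand, closure of $P$ under products reduces to verifying $H_k\cdot P\subset P$ for each $k$. The cases $k=1,2,3$ are immediate from the commutations $H_iH_j=H_jH_i$ and from each $H_k$ being a subgroup; for $k=4$ one has $H_4\cdot P\subset H_1H_2H_4H_2H_3H_4$ and successively applies $H_4H_2=H_2H_4$ and $H_4H_3=H_3H_4$ to collapse this back to $P$. For closure under inversion, one rewrites $P^{-1}=H_4H_3H_2H_1$ by moving each factor to its canonical slot using the same commutations, invoking the $(1,4)$-rule exactly once.

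The main obstacle is the single non-normalizing pair $(1,4)$: pushing an $H_4$-factor past an $H_1$-factor introduces an extra $H_2$-factor between them. Since $H_2$ commutes with every other $H_k$ and consecutive $H_2$-factors merge, this extra factor is always absorbed into the canonical $H_2$-slot, so the rewriting terminates with an element of $P$. Verifying this absorption through each step of the rewriting of $P\cdot P$ and $P^{-1}$ is tedious but purely mechanical.
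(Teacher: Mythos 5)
Your proof is correct and rests on the same underlying tool the paper uses, namely the commutator bounds of Proposition~\ref{prop_2.10} together with monotonicity of the filtration; the difference is in how the reshuffling is organized. The paper packages the conjugation input as global $\M_{0,0}$-invariance of every $\M_{i,j}$ (Proposition~\ref{sec_2_prop_711}) together with the single commutator bound $[\M_{2,-1},\M_{-1,2}]\subset\M_{1,1}$, and then performs a one-shot explicit rewriting of $abcdxyzw$. You instead extract six pairwise bounds directly from Proposition~\ref{prop_2.10}, observe that five of them give genuine commutation rules $H_iH_j=H_jH_i$ (since one factor normalizes the other), and isolate the lone non-commuting pair $(H_1,H_4)=(\M_{2,-1},\M_{-1,2})$ whose commutator falls in $H_2=\M_{1,0}$; the rewriting of $P\cdot P$ and $P^{-1}$ is then a mechanical normal-form computation in which each stray $H_2$-factor introduced by a $H_4H_1$-swap commutes back to the $H_2$-slot and is absorbed. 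Both routes identify $(\M_{2,-1},\M_{-1,2})$ as the only obstruction and tame it via $\M_{1,1}\subset\M_{1,0}$; yours is somewhat more systematic and easier to audit, the paper's is more compact. One small caveat: your reduction ``closure under products reduces to $H_k\cdot P\subset P$'' is valid precisely because $P=H_1H_2H_3H_4$, so that $P\cdot P=H_1(H_2(H_3(H_4\cdot P)))$ collapses inward; it is worth stating this associativity step explicitly.
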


\begin{proof}
It is enough to show that an element of $\langle \mathcal{M}_{2,-1}\cdot \mathcal{M}_{1,0}\cdot \mathcal{M}_{0,1}\cdot \mathcal{M}_{-1,2} \rangle$ of the form $abcdxyzw$ with $a,x\in \mathcal{M}_{2,-1} $, $b,y\in \mathcal{M}_{1,0}$, $c,z\in \mathcal{M}_{0,1} $ and $d,w\in \mathcal{M}_{-1,2}$ belongs to  $\mathcal{M}_{2,-1}\cdot \mathcal{M}_{1,0}\cdot \mathcal{M}_{0,1}\cdot \mathcal{M}_{-1,2}$.

By Propositions \ref{prop_2.10} and \ref{sec_2_prop_711} we have:
\begin{equation}\label{equ_1}
\!^{\mathcal{M}_{0,0}}\mathcal{M}_{i,j}\subset \mathcal{M}_{i,j} \text{ for } i,j\geq -1
\end{equation} 
and
$$[\mathcal{M}_{2,-1},\mathcal{M}_{-1,2}]\subset \mathcal{M}_{1,1}.$$ 
Hence 
\begin{equation}\label{equ_2}
\!^{\mathcal{M}_{2,-1}}\mathcal{M}_{-1,2}\subset \mathcal{M}_{1,1}\cdot \mathcal{M}_{-1,2}.
\end{equation}

Thus, using \eqref{equ_1} and \eqref{equ_2} we obtain
\begin{equation*}
\begin{split}
abcdxyzw &= abcx(x^{-1}dx)yzw\\
 & = abcxpqyzw \text{\ \ \ \ \ \  (we write } x^{-1}dx =pq\in\mathcal{M}_{1,1} \cdot\mathcal{M}_{-1,2}\text{)}\\
  & = \big(a(bcx(bc)^{-1})\big)\cdot \big(bpy\big)\cdot \big(((py)^{-1}cpy)z\big)\cdot\big(((yz)^{-1}qyz)w\big) \in \mathcal{M}_{2,-1}\cdot \mathcal{M}_{1,0}\cdot \mathcal{M}_{0,1}\cdot \mathcal{M}_{-1,2}.
\end{split}
\end{equation*}
\end{proof}

\begin{remark} In fact we expect  $\mathcal{M}_{2,-1}\cdot \mathcal{M}_{1,0}\cdot \mathcal{M}_{0,1}\cdot \mathcal{M}_{-1,2} =\mathcal{I}$, see Conjecture~\ref{conjecture_1}.
\end{remark}

Let us recall the Johnson filtration of the mapping class group and its respective Johnson homomorphisms.

Consider the $\modN$-filtration of $K=\pi_1(\Sigma,*)$ induced by the lower central series $(\Gamma_n K)_{n\geq 1}$, i.e., {$K_0=K_1= K$} and $K_n= \Gamma_n K$ for all $n\geq 2$. Then the Johnson filtration of $\mathcal{M}$ is the decreasing sequence of subgroups 
\begin{equation*}\label{JTFequ1}
\mathcal{M}\supset\mathcal{I}=J_1\mathcal{M} \supset J_2\mathcal{M} \supset J_3\mathcal{M} \supset \cdots
\end{equation*}
given by
\begin{equation*}\label{JTFequ2}
J_n\mathcal{M}=\{h\in\mathcal{M}\ | \   [h,K]{\subset}\Gamma_{n+1} K \}
\end{equation*}
for every $n\geq 1$. The group $J_2\mathcal{M}$ is known as the \emph{Johnson subgroup} and it is denoted by~$\mathcal{K}$.

\begin{proposition} We have $\mathcal{M}_{m,n}\subset J_{m+n}\mathcal{M}$ for all $m,n\geq -1$ with $m+n\geq 1$.
\end{proposition}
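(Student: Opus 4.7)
The plan is short: reduce everything to the observation that $K=\bar X\bar Y$ together with Lemma~\ref{r12}, which says $\Gamma_k K = \prod_{i+j=k}K_{i,j}$ for $k\ge1$. From this last identity (or already from the fact that $\bar X$ and $\bar Y$ are normal and together contain a generating set of $K$) one gets $K=K_{1,0}\cdot K_{0,1}$, and more importantly the key containment
\[
  K_{i,j}\subset \Gamma_{i+j}K\qquad \text{for all } (i,j)\in\modN^2\text{ with } i+j\ge 1.
\]

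With this in hand I would proceed as follows. Take $h\in\M_{m,n}$ with $m,n\ge -1$ and $m+n\ge 1$. To check $h\in J_{m+n}\M$ I need $[h,k]\in\Gamma_{m+n+1}K$ for every $k\in K$. Since $K=\bar X\cdot\bar Y$, write $k=ab$ with $a\in\bar X=K_{1,0}$ and $b\in\bar Y=K_{0,1}$. Then the commutator identity \eqref{e16} gives
\[
  [h,k]=[h,a]\cdot {}^{a}[h,b].
\]
By definition of $\M_{m,n}$ (Proposition~\ref{r59}) together with the extension convention \eqref{dlex}, we have $[h,a]\in K_{m+1,n}$ and $[h,b]\in K_{m,n+1}$.

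The final step is a short case check that both of these subgroups are contained in $\Gamma_{m+n+1}K$. When $m,n\ge 0$ this is immediate from the containment displayed above, since $(m+1)+n=m+(n+1)=m+n+1$. The only subtlety is when one of $m,n$ equals $-1$: say $m=-1$, so by hypothesis $n\ge 2$; then $K_{m+1,n}=K_{0,n}=\Gamma_n\bar Y\subset \Gamma_n K=\Gamma_{m+n+1}K$, and $K_{m,n+1}=K_{-1,n+1}=K_{0,n+1}=\Gamma_{n+1}\bar Y\subset\Gamma_{n+1}K\subset\Gamma_{m+n+1}K$. The case $n=-1$ is symmetric. Since $\Gamma_{m+n+1}K$ is normal in $K$, ${}^{a}[h,b]\in\Gamma_{m+n+1}K$ as well, and hence $[h,k]\in\Gamma_{m+n+1}K$.

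I do not expect any real obstacle; the argument is essentially bookkeeping, and the only thing to be careful about is the extension convention that glues the definition of $\M_{m,n}$ for $m$ or $n$ equal to $-1$ to the filtration values actually stored in $K_{*,*}$. The genuine content of the proposition lives in Lemma~\ref{r12}, which is already available.
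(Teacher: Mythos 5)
Your argument is correct and is essentially the paper's own proof: both start from $[h,\bar X]\subset K_{m+1,n}$ and $[h,\bar Y]\subset K_{m,n+1}$, establish the containments $K_{m+1,n}, K_{m,n+1}\subset\Gamma_{m+n+1}K$, and conclude via the commutator identity~\eqref{e16} together with $K=\bar X\bar Y$. The only difference is that you spell out the boundary cases $m=-1$ or $n=-1$ explicitly (and route the containment $K_{i,j}\subset\Gamma_{i+j}K$ through Lemma~\ref{r12} rather than the remark in Example~\ref{exdlcs}), whereas the paper leaves that bookkeeping implicit; this is a presentational difference, not a different proof.
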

\begin{proof}
If $h\in\mathcal{M}_{m,n}$, then $[h,K_{1,0}]\subset K_{m+1,n}\subset \Gamma_{m+n+1} K$ and $[h,K_{0,1}]\subset K_{m,n+1}\subset \Gamma_{m+n+1} K$. By \eqref{e16}, we have $[h,K]\subset \Gamma_{m+n+1} K$. Hence $h\in J_{m+n} \mathcal{M}$.
\end{proof}

 The $\modN_+$-graded Lie algebra associated to the lower central series is 
\begin{equation}
\bK = \bigoplus_{n\geq 1}\frac{\Gamma_n K}{\Gamma_{n+1} K}\simeq \bigoplus_{n\geq 1}\mathfrak{Lie}_n(H)=\mathfrak{Lie}(H),
\end{equation}
where $\mathfrak{Lie}(H)$ is the $\modN_+$-graded  Lie algebra freely generated by $H=H_1(\Sigma;\mathbb{Z})$ in degree~$1$. Notice that the abelian group $\mathrm{Der}_n(\mathfrak{Lie}(H))$ of degree $n$ derivations of $\mathfrak{Lie}(H)$ is in bijection with $\mathrm{Hom}(H,\mathfrak{Lie}_n(H))$. Hence, the $n$-th Johnson homomorphism can be described as the homomorphism
\begin{equation}\label{e31_b} 
\tau_n\zzzcolon  J_n\mathcal{M}\longrightarrow \text{Hom}(H,\Gamma_{n+1} K/\Gamma_{n+2} K)\simeq H^*\otimes \mathfrak{Lie}_{n+1}(H) \simeq H\otimes \mathfrak{Lie}_{n+1}(H),
\end{equation}
\noindent which sends   $h\in J_n\mathcal{M}$ to the map
$x\mapsto [h,\tilde x]\Gamma_{n+2} K$
 for all $x\in H=K/\Gamma_2 K$, where $\tilde{x}\in K$ is any lift of $x$. The second isomorphism in  \eqref{e31_b} is given by  the identification $H\stackrel{\sim}{\longrightarrow} H^*$ that maps $x$ to $\omega(x, \cdot)$, where $\omega\zzzcolon H\otimes H\to \mathbb{Z}$ is the intersection form of $\Sigma$. Moreover, by using the system of meridians and parallels $\{x_i,y_i\}_{1\leq i\leq g}$ and the induced symplectic basis $\{a_i,b_i\}_{1\leq i\leq g}$ of $H$ as in Section~\ref{sec_3_1}, for $h\in J_n\mathcal{M}$ we have
\begin{equation}\label{eq_tau_b}
\tau_n(h)  = \sum_{i=1}^g a_i\otimes [h,y_i]\Gamma_{n+2}K - \sum_{i=1}^g b_i\otimes [h,x_i]\Gamma_{n+2}K.
\end{equation}

These homomorphisms were considered by Johnson~\cite{MR579103,MR718141} and extensively studied by Morita~\cite{MR1133875,MR1224104}. In particular, Morita proved that the $n$-th Johnson homomorphism takes values in the kernel $D_n(H)$ of the Lie bracket  $\left[\ ,\ \right]\zzzcolon H\otimes\mathfrak{Lie}_{n+1}(H)\rightarrow\mathfrak{Lie}_{n+2}(H)$~ \cite[Corollary 3.2]{MR1224104}. 

The abelian group $D_1(H)\leq H\otimes \mathfrak{Lie}_2(H)=H\otimes \ext^2H$ can be identified with $\ext^3H$ via the map $\ext^3H\to H\otimes\ext^2 H$ given by 
\begin{equation}\label{eq_D_1}
x\wedge y\wedge z\longmapsto x\otimes y\wedge z + y\otimes z\wedge x + z\otimes x\wedge y.
\end{equation}
Hence, we can see the first Johnson homomorphism as a group homomorphism
\begin{equation}\label{eq_tau_1}
\tau_1\zzzcolon  \mathcal{I}\longrightarrow \ext^3 H,
\end{equation}
such that $\mathrm{ker}(\tau_1)=J_2\mathcal{M}=\mathcal{K}$. It is well known that the homomorphism \eqref{eq_tau_1} is surjective \cite[Theorem~1]{MR579103}.

{Recall that we have two subgroups $A$ and $B$ of $H$ such that $H=A\oplus B$, see \eqref{equ_A_B}. This decomposition of $H$} gives the following decomposition of $\ext^3 H$:
\begin{equation}\label{eq:1}
\ext^3 H = \ext^3 A \oplus (\ext^2 A\otimes B)\oplus (A\otimes \ext^2B)\oplus \ext^3 B.
\end{equation}

\begin{proposition}\label{r45} The first Johnson homomorphism $\tau_1\zzzcolon \mathcal{I}\to \ext^3 H$ induces, by restriction, the following surjective homomorphisms:
\begin{itemize}
\item $\tau_1{|_{\mathcal{M}_{2,-1}}}\zzzcolon \mathcal{M}_{2,-1}\longrightarrow \ext^3 A$,
\item $\tau_1|_{\mathcal{M}_{1,0}}\zzzcolon \mathcal{M}_{1,0}\longrightarrow \ext^2 A\otimes B$,
\item $\tau_1|_{\mathcal{M}_{0,1}}\zzzcolon  \mathcal{M}_{0,1}\longrightarrow  A\otimes \ext^2 B$,
\item $\tau_1|_{\mathcal{M}_{-1,2}}\zzzcolon  \mathcal{M}_{-1,2}\longrightarrow \ext^3B$.
\end{itemize}
\end{proposition}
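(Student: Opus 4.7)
The proof splits into two stages: first, I would show that the restriction of $\tau_1$ to each listed subgroup lands in the claimed summand; second, I would prove surjectivity onto that summand. My plan for containment is to combine the explicit formula~\eqref{eq_tau_b} for $\tau_1$ with the refinement
\begin{equation*}
\Gamma_2 K/\Gamma_3 K\;\simeq\; \bar K_{2,0}\oplus \bar K_{1,1}\oplus \bar K_{0,2}\;\simeq\;\ext^2 A\oplus (A\otimes B)\oplus \ext^2 B
\end{equation*}
coming from Lemma~\ref{r12} together with Theorem~\ref{r51}; this refines the classical isomorphism $\Gamma_2 K/\Gamma_3 K\simeq\ext^2 H$ compatibly with the decomposition $H=A\oplus B$. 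Consequently $H\otimes(\Gamma_2 K/\Gamma_3 K)$ splits as six pieces, and inspection of~\eqref{eq_D_1} shows that each summand of $\ext^3 H$ embeds in a disjoint subset of these: $\ext^3 A\subset A\otimes \ext^2 A$, $\ext^2 A\otimes B\subset A\otimes(A\otimes B)\oplus B\otimes \ext^2 A$, $A\otimes\ext^2 B\subset A\otimes\ext^2 B\oplus B\otimes(A\otimes B)$, and $\ext^3 B\subset B\otimes\ext^2 B$.

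With this setup, for $h\in\M_{2,-1}$ we have $[h,x_i]\in K_{3,0}\subset\Gamma_3 K$ and $[h,y_i]\in K_{2,0}$, forcing the image of $\tau_1(h)$ to lie in $A\otimes\ext^2 A$; since $\tau_1(h)\in\ext^3 H$ by Morita's theorem, necessarily $\tau_1(h)\in\ext^3 A$. The three other cases are treated identically by tracking which $\bar K_{i,j}$ the classes $[h,x_k]$ and $[h,y_k]$ land in at the prescribed filtration degree. For surjectivity, I plan to exhibit explicit elements for each summand. For $\ext^3 A$ I would use Levine's embedding $f\zzzcolon \FPB_g\hookrightarrow\M$ from Figure~\ref{sec_2_fig_7}$(c)$, whose holes all bound disks in $V$, so that Proposition~\ref{r44} gives $f(\Gamma_2\FPB_g)\subset\M_{2,-1}$; triple Borromean-type commutators in $\Gamma_2\FPB_g$ should map under $\tau_1$ to the basis $a_i\wedge a_j\wedge a_k$. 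For $\ext^2 A\otimes B$ I would use commutators of eyeglass twists as in Example~\ref{ex_3.17}, or equivalently images of pure-braid commutators via the mixed-braid embedding of Example~\ref{ex_mixb}, which lie in $\M_{1,0}$ and realize $a_i\wedge a_j\otimes b_k$; the remaining cases $A\otimes\ext^2 B$ and $\ext^3 B$ follow by the symmetric constructions with $V$ and $V'$ interchanged.

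The containment stage is essentially bookkeeping once the graded refinement of $\Gamma_2 K/\Gamma_3 K$ is in hand. The main obstacle lies in the surjectivity stage, where one must explicitly compute $\tau_1$ on the constructed braid and eyeglass commutators in $\pi_1(\Sigma)$ and verify that the resulting families actually span their summands. This requires concrete but delicate commutator manipulations, together with careful sign bookkeeping coming from the embedding~\eqref{eq_D_1} of $\ext^3 H$ into $H\otimes\ext^2 H$.
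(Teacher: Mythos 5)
Your containment argument is sound and in fact more explicit than the paper's one-line appeal to ``the explicit formula \eqref{eq_tau_b}'': tracking which graded piece $\bar K_{2,0}\oplus\bar K_{1,1}\oplus\bar K_{0,2}$ of $\Gamma_2K/\Gamma_3K$ the classes $[h,x_i]$ and $[h,y_i]$ fall into, then intersecting with the image of $\ext^3 H$ under \eqref{eq_D_1}, is exactly the right bookkeeping and correctly identifies the target summand in all four cases. The surjectivity plan is broadly compatible with the paper's, which also produces explicit elements and computes $\tau_1$ on them; your choice of braid-embedding commutators for $\ext^3 A$ is essentially equivalent to the paper's $[t_{x_{ik}},t_{x_{ij}}]$ (both are commutators of Dehn twists about curves bounding disks in $V$) and would work.

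The genuine gap is in the surjectivity argument for the two mixed summands $\ext^2 A\otimes B$ and $A\otimes\ext^2 B$. A single mixed-braid embedding (Example~\ref{ex_mixb}) or a single commutator of eyeglass twists (Example~\ref{ex_3.17}) only produces a small, highly constrained subset of $\M_{1,0}$, with images under $\tau_1$ involving only a few fixed indices, e.g.\ something like $b_i\wedge a_i\wedge a_j$ with $b$-index tied to an $a$-index. To hit an arbitrary basis vector $b_k\wedge a_i\wedge a_j$ with $k$ unconstrained you need a further step: the paper conjugates its element $h_{ij}\in\M_{1,0}$ by suitable eyeglass twists $\phi_{ki}\in\G$ and exploits the $\G$-equivariance of $\tau_1$ (together with Proposition~\ref{sec_2_prop_711}, which guarantees the conjugate stays in $\M_{1,0}$). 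Your plan does not mention this equivariance/conjugation step, nor an alternative (such as systematically varying the embedding $D^\circ_{p,q}\hookrightarrow\Sigma$), so as written the surjectivity onto $\ext^2 A\otimes B$ and $A\otimes\ext^2 B$ is not actually established. You flag that the computations are ``delicate'' but this is a structural ingredient, not just calculational detail.
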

\begin{proof}
The fact that each one of the restrictions of $\tau_1$ arrives to the good target subgroup of $\ext^3 H$ follows from the explicit formula \eqref{eq_tau_b}. We need only to check the surjectivity of each one of the restrictions. By using the symplectic basis $\{a_i,b_i\}_{1\leq i\leq g}$ of $H$ and the correspondence \eqref{eq_D_1}, we have that the induced basis of $\ext^3 A$,   $\ext^2 A\otimes B$, $A\otimes \ext^2 B$ and $\ext^3 B$ are
$\{a_i\wedge a_j\wedge a_k \ |\ 1\leq i<j<k\leq g\}$,  $\{b_k\wedge a_i\wedge a_j \ |\ 1\leq k\leq g,  \ 1\leq i<j\leq g\}$, $\{a_i\wedge b_j\wedge b_k \ |\ 1\leq i\leq g,\  1\leq j<k\leq g\}$ and $\{b_i\wedge b_j\wedge b_k \ |\ 1\leq i<j<k\leq g\}$, respectively. We proceed to explicitly realize these basis.
\begin{figure}[ht!]
\centering   	
	\includegraphics[scale=0.73]{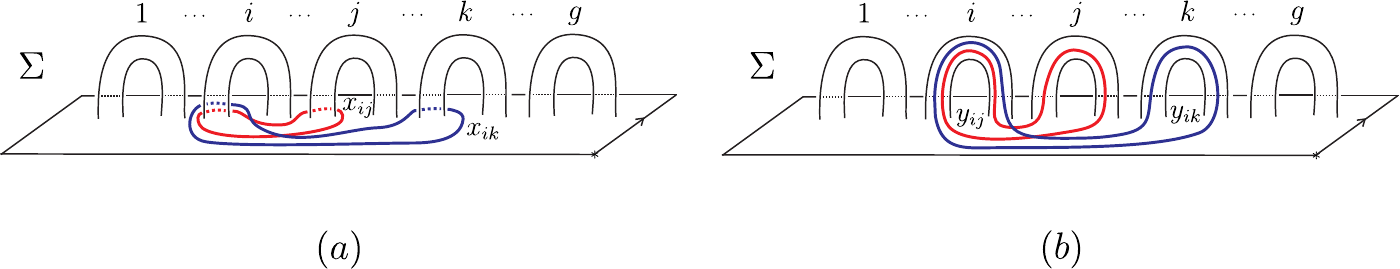}
	\caption{($a$) curves $x_{ij}$ and $x_{ik}$ \quad  ($b$) curves $y_{ij}$ and $y_{ik}$.}
\label{sec_2_fig_8}
\end{figure} 
Let $1\leq i<j<k\leq g$ and consider the curves $x_{ij}$ and $x_{ik}$ as in Figure~\ref{sec_2_fig_8}~$(a)$. Let $t_{x_{ij}}$ and $t_{x_{ik}}$ {be} their associated Dehn twists. We have  $[t_{x_{ik}}, t_{x_{ij}}]\in\mathcal{M}_{2,-1}$ and
$$\tau_1([t_{x_{ik}}, t_{x_{ij}}])= {a_i\otimes [x_j^{-1}, x_k^{-1}]\Gamma_3 K + a_j\otimes [x_k,  x_i]\Gamma_3 K + a_k\otimes [x_j,x_i^{-1}]\Gamma_3 K,}$$
which corresponds to $a_i\wedge a_j \wedge a_k$ under \eqref{eq_D_1}. Hence $\tau_1|_{\mathcal{M}_{2,-1}}\zzzcolon \mathcal{M}_{2,-1}\rightarrow \ext^3 A$ is surjective. Similarly, by using the Dehn twists about the curves $y_{ij}$ and $y_{ik}$, we can show that $\tau_1|_{\mathcal{M}_{-1,2}}\zzzcolon  \mathcal{M}_{-1,2}\rightarrow \ext^3 B$ is surjective.

Let $1\leq i<j\leq g$ and consider the element $h_{ij}\in\mathcal{M}_{1,0}$ defined in Example~\ref{ex_m10}. By a direct computation, we obtain
$$\tau_1(h_{ij}) = a_i\otimes [x_j, y_i]\Gamma_3 K + a_j\otimes [y_i^{-1}, x_i^{-1}]\Gamma_3 K - b_i\otimes [x_j, x_i]\Gamma_3 K,$$
which, under~\eqref{eq_D_1}, corresponds to $b_i\wedge a_i\wedge a_j$. Let $1\leq k\leq g$ with $k\not=i$ and let $\phi_{ki}\in \mathcal{G}$ be the eyeglass twist associated to an eyeglass $(D_1,D_2,c)$ as shown in Figure~\ref{sec_2_fig_3} but such that $\partial D_2$ is isotopic to the parallel $y_k$ and $\partial D_1$ is isotopic to the meridian $x_i$. The action in homology of $(\phi_{ki})^{-1}$ on $b_i$ is $b_i\mapsto b_i+b_k$ and it is trivial on $a_i$ and $a_j$. By the equivariance property of $\tau_1$, we have $\tau_1(({ }^{(\phi_{ki})^{-1}}h_{ij})h_{ij}^{-1})=b_k\wedge a_i\wedge a_j$ and ${ }^{(\phi_{ki})^{-1}}h_{ij}\in\mathcal{G}=\mathcal{M}_{0,0}$ by Proposition~\ref{prop_2_6}. Therefore $\tau_1|_{\mathcal{M}_{1,0}}\zzzcolon \mathcal{M}_{1,0}\rightarrow \ext^2 A\otimes B$ is surjective. A similar argument shows that $\tau_1|_{\mathcal{M}_{0,1}}\zzzcolon  \mathcal{M}_{0,1}\rightarrow  A\otimes \ext^2 B$ is surjective.
\end{proof}

\begin{theorem}\label{r46} We have
$$\mathcal{I} = \mathcal{M}_{2,-1}\cdot \mathcal{M}_{1,0}\cdot \mathcal{M}_{0,1}\cdot \mathcal{M}_{-1,2}\cdot \mathcal{K}.$$
\end{theorem}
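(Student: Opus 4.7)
The plan is to reduce the statement to a direct application of Proposition~\ref{r45} (the surjectivity of the restrictions of $\tau_1$ onto the four summands of $\ext^3 H$) combined with the decomposition~\eqref{eq:1}
\[
\ext^3 H \;=\; \ext^3 A \;\oplus\; (\ext^2 A\otimes B) \;\oplus\; (A\otimes \ext^2 B) \;\oplus\; \ext^3 B,
\]
and the fact that $\tau_1\zzzcolon \mathcal{I}\to\ext^3 H$ is a group homomorphism with kernel $\mathcal{K}$.

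First I would dispose of the inclusion ``$\supset$'': each of the groups $\mathcal{M}_{2,-1}$, $\mathcal{M}_{1,0}$, $\mathcal{M}_{0,1}$, $\mathcal{M}_{-1,2}$ is contained in $J_1\mathcal{M}=\mathcal{I}$ by the relation $\mathcal{M}_{m,n}\subset J_{m+n}\mathcal{M}$ (here $m+n=1$), and $\mathcal{K}=J_2\mathcal{M}\subset\mathcal{I}$ by definition; hence the whole product sits inside $\mathcal{I}$. (This product is in fact a subgroup: the first four factors form a subgroup by Proposition~\ref{subgr}, and $\mathcal{K}$ is normal in $\mathcal{M}$, so the total product is the product of a subgroup with a normal subgroup.)

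For the nontrivial inclusion ``$\subset$'', I would argue as follows. Given $h\in\mathcal{I}$, decompose
\[
\tau_1(h) \;=\; \alpha_1+\alpha_2+\alpha_3+\alpha_4
\]
according to~\eqref{eq:1}, with $\alpha_1\in\ext^3 A$, $\alpha_2\in\ext^2 A\otimes B$, $\alpha_3\in A\otimes\ext^2 B$, $\alpha_4\in\ext^3 B$. By Proposition~\ref{r45}, pick $h_1\in\mathcal{M}_{2,-1}$, $h_2\in\mathcal{M}_{1,0}$, $h_3\in\mathcal{M}_{0,1}$, $h_4\in\mathcal{M}_{-1,2}$ with $\tau_1(h_i)=\alpha_i$. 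Since $\tau_1\zzzcolon \mathcal{I}\to \ext^3 H$ is a homomorphism into an abelian group, we have
\[
\tau_1(h_1 h_2 h_3 h_4) \;=\; \alpha_1+\alpha_2+\alpha_3+\alpha_4 \;=\; \tau_1(h),
\]
so $k := (h_1 h_2 h_3 h_4)^{-1} h \in \ker(\tau_1) = \mathcal{K}$, whence
\[
h \;=\; h_1\,h_2\,h_3\,h_4\,k \;\in\; \mathcal{M}_{2,-1}\cdot\mathcal{M}_{1,0}\cdot\mathcal{M}_{0,1}\cdot\mathcal{M}_{-1,2}\cdot\mathcal{K}.
\]

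There is no real obstacle here once Proposition~\ref{r45} is in hand: the decomposition~\eqref{eq:1} aligns precisely with the four subgroups, and the homomorphism property of $\tau_1$ on $\mathcal{I}$ turns the set-theoretic decomposition of $\tau_1(h)$ into a group-theoretic factorisation of $h$ modulo $\mathcal{K}$. The only mildly delicate point is bookkeeping with the order of factors, and the computation above shows that putting $\mathcal{K}$ as the rightmost factor works directly without invoking the normality of $\mathcal{K}$ in $\mathcal{M}$.
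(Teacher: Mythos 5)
Your proof is correct and follows essentially the same route as the paper: decompose $\tau_1(h)$ via the direct sum \eqref{eq:1}, lift the four summands using Proposition~\ref{r45}, and observe that the difference lands in $\ker(\tau_1)=\mathcal{K}$. The only addition over the paper's write-up is your explicit treatment of the easy inclusion~``$\supset$'', which the paper leaves implicit.
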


\begin{proof}
Let $h\in\mathcal{I}$ and write $\tau_1(h)= p+q+r+s$ with $p\in\ext^3A$, $q\in \ext^2 A\otimes B$, $r\in A\otimes \ext^2 B$ and $s\in\ext^3 B$. By Proposition~\ref{r45} there exist $h_1\in\mathcal{M}_{2,-1}$, $h_2\in\mathcal{M}_{1,0}$, $h_3\in\mathcal{M}_{0,1}$ and $h_4\in\mathcal{M}_{-1,2}$ such that $\tau_1(h_1)=p$, $\tau_1(h_2)=q$, $\tau_1(h_3)=r$ and $\tau_1(h_4)=s$. Hence $\tau_1((h_1h_2h_3h_4)^{-1}h)=0$, that is, $(h_1h_2h_3h_4)^{-1}h\in \mathcal{K}$ which finishes the proof.  
\end{proof}

\begin{conjecture}\label{conjecture_1} We have 
$\mathcal{K}\subset \mathcal{M}_{2,-1}\cdot \mathcal{M}_{1,0}\cdot \mathcal{M}_{0,1}\cdot \mathcal{M}_{-1,2}$ and therefore
$$\mathcal{I}=\mathcal{M}_{2,-1}\cdot \mathcal{M}_{1,0}\cdot \mathcal{M}_{0,1}\cdot \mathcal{M}_{-1,2}.$$
\end{conjecture}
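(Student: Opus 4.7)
The plan is to reduce the conjecture to the containment $\mathcal{K}\subset P$, where $P := \mathcal{M}_{2,-1}\cdot \mathcal{M}_{1,0}\cdot \mathcal{M}_{0,1}\cdot \mathcal{M}_{-1,2}$; combined with Theorem~\ref{r46} and Proposition~\ref{subgr}, this yields $\mathcal{I}=P$. The overall strategy is a step-by-step approximation of elements of $\mathcal{K}$ along the usual Johnson filtration, using Proposition~\ref{r45} as the base case and establishing its higher-level analogues as the inductive step.

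The key steps are as follows. First, I would extend Proposition~\ref{r45} by proving that for every $n\geq 1$ and every pair $(p,q)$ with $p,q\geq -1$ and $p+q=n$, the restriction $\tau_n|_{\mathcal{M}_{p,q}}$ surjects onto the direct summand $D_{p,q}(A,B)$ appearing in the bigrading decomposition of $D_n(H)$ coming from $H=A\oplus B$; compatibility between the double and the usual Johnson homomorphisms is guaranteed by Proposition~\ref{r55}. Second, observe that for $n\geq 2$, each such $(p,q)$ dominates one of $(2,-1),(1,0),(0,1),(-1,2)$ coordinate-wise, so $\mathcal{M}_{p,q}\subset P$ by monotonicity of the double filtration. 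Third, given $h\in\mathcal{K}$, construct inductively a sequence $g_2,g_3,\ldots\in P$ with $g_n g_{n-1}^{-1}\in J_n\mathcal{M}\cap P$ and $hg_n^{-1}\in J_{n+1}\mathcal{M}$: at level $n$ the image $\tau_n(hg_{n-1}^{-1})$ decomposes into its $D_{p,q}(A,B)$-components, each killed by multiplying by a suitable element of $\mathcal{M}_{p,q}\subset P$ supplied by Step~1.

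The main obstacle will be the limit step. Since $\bigcap_n J_n\mathcal{M}=\{1\}$ (a standard consequence of the residual nilpotence of the free group $\pi_1(\Sigma)$), the sequence $(g_n)$ pins down $h$ uniquely as a ``limit'', but $\mathcal{M}$ is not complete in the Johnson topology, so one must argue that this limit actually lies in $P$. A natural route is to prove that $P$ is closed in the Johnson topology of $\mathcal{I}$, equivalently $P=\bigcap_n P\cdot(J_n\mathcal{M}\cap\mathcal{I})$; this would reduce to a detailed study of the image of $P$ in each nilpotent quotient $\mathcal{I}/(J_n\mathcal{M}\cap\mathcal{I})$ and of how the double bigrading behaves in these quotients. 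As an alternative avoiding the topological limit, one could use Johnson's theorem that $\mathcal{K}$ is generated by Dehn twists along separating simple closed curves, and attempt to exhibit an explicit four-term factorization $t_\gamma=abcd$ with $a\in\mathcal{M}_{2,-1}$, $b\in\mathcal{M}_{1,0}$, $c\in\mathcal{M}_{0,1}$, $d\in\mathcal{M}_{-1,2}$ for each separating curve $\gamma$; this replaces the limit obstacle by a delicate but finite piece of combinatorial surgery, which might be tractable by cutting $\Sigma$ along $\gamma$ and analyzing the position of the resulting subsurfaces with respect to the Heegaard splitting.
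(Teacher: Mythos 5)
You should first note that the paper does \emph{not} prove this statement: it is explicitly labeled a conjecture, accompanied by the remark before it that the authors merely ``expect'' the inclusion $\mathcal{K}\subset \mathcal{M}_{2,-1}\cdot \mathcal{M}_{1,0}\cdot \mathcal{M}_{0,1}\cdot \mathcal{M}_{-1,2}$ to hold. The ``and therefore'' clause is indeed immediate from Theorem~\ref{r46} together with Proposition~\ref{subgr}, as you say, so the genuine content is the first inclusion, and that is what remains open.

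Your proposal is a reasonable reduction strategy, but it is not a proof, and it has two unresolved gaps that you partly acknowledge yourself. The first gap is Step~1: you assume that for every $n\geq 1$ and every $(p,q)$ with $p,q\geq -1$, $p+q=n$, the restriction $\tau_n|_{\mathcal{M}_{p,q}}$ surjects onto $D_{p,q}(A,B)$. The paper establishes this only for $n=1$ (Proposition~\ref{r45}); for $n\geq 2$ this is precisely the content of the paper's open Question~\ref{conjecture_1}-adjacent questions, in particular the question asking whether $J_n\mathcal{M} = (\prod_{i+j=n,\,i,j\geq -1}\mathcal{M}_{i,j})\cdot J_{n+1}\mathcal{M}$. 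So Step~1 is not a ``lemma to be proved by extending Proposition~\ref{r45}'' but an independent open problem; without it the inductive descent cannot even begin past level $1$. (Also note that the compatibility of the double and usual Johnson homomorphisms is Proposition~\ref{r56}, not Proposition~\ref{r55} as you cite.) The second gap is the limit step, which you correctly flag as the ``main obstacle'' but leave entirely open: even granting Step~1, your inductive construction produces approximants $g_n\in P$ with $hg_n^{-1}\in J_{n+1}\mathcal{M}$, and since $\bigcap_n J_n\mathcal{M}=\{1\}$ this determines $h$ as a limit, but you give no argument that $P$ is closed in the Johnson topology. Closedness of $P$ is a strong statement and would itself require a proof of comparable difficulty to the conjecture; the alternative route through Johnson's generating set of $\mathcal{K}$ by separating twists is likewise only sketched. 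In short, your proposal reframes the conjecture as ``Question~3.26 plus closedness of $P$ in the Johnson topology,'' but it does not resolve either, so the conjecture remains open after your argument just as it does in the paper.
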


\begin{question}
Do we have 
$$ J_n\mathcal{M} =\prod_{i+j=n,\  i,j\geq -1} \mathcal{M}_{ij}$$
for all $n\geq 1$?
\end{question}

We can state a weaker version of the previous question.

\begin{question}
Do we have 
$$ J_n\mathcal{M} =\left(\prod_{i+j=n,\  i,j\geq -1} \mathcal{M}_{ij}\right)\cdot J_{n+1}\mathcal{M}$$
for all $n\geq 1$?
\end{question}

\begin{corollary}\label{cor_L}
We have $$\mathcal{L} = \H\cdot\M_{-1,2}\cdot \mathcal{K} = \M_{1,-1}\cdot\M_{0,0}\cdot\M_{-1,2}\cdot\mathcal{K}.$$
\end{corollary}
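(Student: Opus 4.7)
\textbf{Proof plan for Corollary \ref{cor_L}.}

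The strategy is to establish the first equality via two inclusions and then derive the second equality from Proposition \ref{prop_leading}(i).

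First I would show $\H\cdot\M_{-1,2}\cdot\mathcal{K}\subset\mathcal{L}$. The handlebody group $\H$ preserves $\bar X$ by definition, hence preserves its image $A$ in $H$, so $\H\subset\mathcal{L}$. Both $\M_{-1,2}$ and $\mathcal{K}=J_2\M$ are subgroups of the Torelli group $\mathcal{I}$, which acts trivially on $H$ and so preserves $A$ tautologically; thus $\M_{-1,2},\mathcal{K}\subset\mathcal{I}\subset\mathcal{L}$. Since $\mathcal{L}$ is a subgroup, the product of these three subsets is contained in $\mathcal{L}$.

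For the reverse inclusion, the key input is Lemma \ref{im_symp}(i), which gives $\sigma(\mathcal{L})=\sigma(\H)$. For each $h\in\mathcal{L}$, I choose $h'\in\H$ with $\sigma(h')=\sigma(h)$; then $(h')^{-1}h\in\ker\sigma=\mathcal{I}$, so $\mathcal{L}=\H\cdot\mathcal{I}$. Applying Theorem \ref{r46} to expand $\mathcal{I}$,
\[
\mathcal{L}\;=\;\H\cdot\M_{2,-1}\cdot\M_{1,0}\cdot\M_{0,1}\cdot\M_{-1,2}\cdot\mathcal{K}.
\]
Now $\M_{2,-1}\subset\M_{1,-1}\subset\M_{0,-1}=\H$ and $\M_{1,0},\M_{0,1}\subset\M_{0,0}=\G=\H\cap\H'\subset\H$. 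The first four factors are therefore all contained in the subgroup $\H$, so $\H\cdot\M_{2,-1}\cdot\M_{1,0}\cdot\M_{0,1}\subset\H$, yielding $\mathcal{L}\subset\H\cdot\M_{-1,2}\cdot\mathcal{K}$.

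The second equality is then immediate by substituting the identification $\H=\M_{0,-1}=\M_{1,-1}\cdot\M_{0,0}$ from Proposition \ref{prop_leading}(i). No substantive obstacle arises here: the corollary is a direct consequence of the symplectic-image computation of $\mathcal{L}$, the normality of $\mathcal{I}$ in $\M$, the decomposition of $\mathcal{I}$ in Theorem \ref{r46}, and the containments $\M_{2,-1},\M_{1,0},\M_{0,1}\subset\H$.
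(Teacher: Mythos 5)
Your proof is correct and takes essentially the same route as the paper: using Lemma \ref{im_symp}(i) to get $\mathcal{L}=\H\cdot\mathcal{I}$, expanding $\mathcal{I}$ via Theorem \ref{r46}, absorbing $\M_{2,-1}\cdot\M_{1,0}\cdot\M_{0,1}$ into $\H$, and finally substituting $\H=\M_{1,-1}\cdot\M_{0,0}$ from Proposition \ref{prop_leading}(i). You simply spell out the absorption step and the easy inclusion in more detail than the paper's terse two-sentence proof.
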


\begin{proof}
By Lemma~\ref{im_symp}~(i) we have $\mathcal{L}=\mathcal{H}\cdot\mathcal{I}$. Combining  Theorem~\ref{r46} and Proposition~\ref{prop_leading}~(i) we obtain the desired result.
\end{proof}

\begin{lemma}\label{lemma_2} We have the following
\begin{itemize}
\item[\emph{(i)}] $\mathcal{M}_{0,0}\cap \mathcal{M}_{2,-1} \subset \mathcal{M}_{1,0}$ \ \ \ and \ \ \ $\mathcal{M}_{0,0}\cap \mathcal{M}_{-1,2} \subset \mathcal{M}_{0,1}$.
\item[\emph{(ii)}] $(\mathcal{M}_{0,0}\cdot \mathcal{M}_{-1,2})\cap \mathcal{M}_{2,-1}\subset \mathcal{M}_{0,0}$ \ \ \ and \ \ \ $(\mathcal{M}_{0,0}\cdot \mathcal{M}_{2,-1})\cap \mathcal{M}_{-1,2}\subset \mathcal{M}_{0,0}$.
\end{itemize}
\end{lemma}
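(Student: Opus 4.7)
The plan is to reduce both parts of the lemma to the single algebraic identity
\[
\bX\cap\bY=[\bX,\bY]=K_{1,1}
\]
in the free group $K=\pi_1(\Sigma)=F(X)\ast F(Y)$, where $F(X)=\langle x_1,\ldots,x_g\rangle$ and $F(Y)=\langle y_1,\ldots,y_g\rangle$.

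For part~(i), take $h\in\M_{0,0}\cap\M_{2,-1}$. By Proposition~\ref{r59}, to show $h\in\M_{1,0}$ it suffices to verify $[h^{\pm1},\bX]\subset K_{2,0}$ (immediate from $[h^{\pm1},\bX]\subset K_{3,0}$) and $[h^{\pm1},\bY]\subset K_{1,1}$. Since $h,h^{-1}\in\G$ preserve $\bY$, we have $[h^{\pm1},\bY]\subset\bY$; combined with $[h^{\pm1},\bY]\subset\Gamma_2\bX\subset\bX$ coming from $h\in\M_{2,-1}$, this gives $[h^{\pm1},\bY]\subset\bX\cap\bY=K_{1,1}$ via the identity above. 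The symmetric inclusion $\M_{0,0}\cap\M_{-1,2}\subset\M_{0,1}$ follows by interchanging the roles of $\bX$ and $\bY$.

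For part~(ii), write $h=gf\in\M_{2,-1}$ with $g\in\M_{0,0}$ and $f\in\M_{-1,2}$. From $h\in\M_{2,-1}$ we have $[h^{\pm1},\bX]\subset\Gamma_3\bX\subset\bX$, so $h(\bX)=\bX$. From $f\in\M_{-1,2}$ we have $[f^{\pm1},\bY]\subset\Gamma_3\bY\subset\bY$, so $f(\bY)=\bY$; together with $g(\bY)=\bY$ (since $g\in\G$), this yields $h(\bY)=g(f(\bY))=\bY$. Hence $h$ preserves both $\bX$ and $\bY$, i.e.\ $h\in\G=\M_{0,0}$. The symmetric inclusion $(\M_{0,0}\cdot\M_{2,-1})\cap\M_{-1,2}\subset\M_{0,0}$ follows by the same argument with the roles of $\bX$ and $\bY$ exchanged.

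The main obstacle is the identity $\bX\cap\bY=[\bX,\bY]$, which is the case $(m,n)=(1,1)$ of Conjecture~\ref{conjecture_2}; unlike the full conjecture, this case is provable unconditionally. To establish it, the universal property of $K=F(X)\ast F(Y)$ yields a homomorphism $\phi\colon K\to F(X)\times F(Y)$ with $\phi(x_i)=(x_i,1)$ and $\phi(y_j)=(1,y_j)$; then $\ker\phi=\bX\cap\bY$, and $[\bX,\bY]\subset\ker\phi$ is immediate because the images of $\bX$ and $\bY$ land in commuting factors. Conversely, since $[\bX,\bY]\subset\bY$ and $F(X)\cap\bY=\{1\}$, the composition $F(X)\hookrightarrow K\to K/[\bX,\bY]$ is injective; likewise for $F(Y)$. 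Their commuting images in this quotient assemble into a homomorphism $\psi\colon F(X)\times F(Y)\to K/[\bX,\bY]$ with $\bar\phi\circ\psi=\mathrm{id}$, where $\bar\phi$ is the map induced by $\phi$. Since $\psi$ is also surjective (its image contains the generating set of $K/[\bX,\bY]$), it is a bijection with $\bar\phi$ as inverse, and therefore $\ker\phi=[\bX,\bY]$.
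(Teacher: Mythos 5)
Your proof is correct and follows essentially the same route as the paper's: for part~(i), reduce to showing $[h^{\pm1},\bar{Y}]\subset\bar{X}\cap\bar{Y}=[\bar{X},\bar{Y}]=K_{1,1}$, and for part~(ii), simply check that the element preserves both $\bar{X}$ (from $\mathcal{M}_{2,-1}$) and $\bar{Y}$ (from the factorization). The one extra contribution is your careful justification of the identity $\bar{X}\cap\bar{Y}=[\bar{X},\bar{Y}]$ via the retraction $K\to K/\bar{X}\times K/\bar{Y}$, which the paper invokes without proof; that supplementary lemma is correct and worth having spelled out.
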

\begin{proof} 
(i) If $h\in\mathcal{M}_{0,0}\cap \mathcal{M}_{2,-1}$, then $[h,K_{1,0}]\subset K_{3,0}\subset K_{{2},0}$ and $$[h,K_{0,1}]\subset K_{2,0}\cap K_{0,1}\subset \bar{X}\cap \bar{Y} =[\bar{X},\bar{Y}]=K_{1,1}.$$ Thus $h\in \mathcal{M}_{1,0}$.  Similarly, if $h\in\mathcal{M}_{0,0}\cap \mathcal{M}_{-1,2}$, then $h\in \mathcal{M}_{0,1}$.

(ii) Let $fh\in(\mathcal{M}_{0,0}\cdot \mathcal{M}_{-1,2})\cap \mathcal{M}_{2,-1}$ with $f\in\mathcal{M}_{0,0}$ and $h\in\mathcal{M}_{-1,2}$. Then $$[fh, K_{1,0}]\subset K_{3,0}\subset K_{1,0}$$ and $[fh, K_{0,1}]\subset K_{0,1}$, that is, $fh\in\mathcal{M}_{0,0}$. Similarly $(\mathcal{M}_{0,0}\cdot \mathcal{M}_{2,-1})\cap \mathcal{M}_{-1,2}\subset \mathcal{M}_{0,0}$.
\end{proof}

\begin{proposition}\label{lemma_3}
$(\mathcal{M}_{2,-1}\cdot \mathcal{M}_{1,0}\cdot \mathcal{M}_{0,1}\cdot \mathcal{M}_{-1,2})\cap \mathcal{M}_{0,0}= \mathcal{M}_{1,0}\cdot \mathcal{M}_{0,1}$.
\end{proposition}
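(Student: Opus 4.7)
The inclusion $\mathcal{M}_{1,0}\cdot\mathcal{M}_{0,1}\subset(\mathcal{M}_{2,-1}\cdot\mathcal{M}_{1,0}\cdot\mathcal{M}_{0,1}\cdot\mathcal{M}_{-1,2})\cap\mathcal{M}_{0,0}$ is immediate, since $\mathcal{M}_{1,0}$ and $\mathcal{M}_{0,1}$ are contained in $\mathcal{M}_{0,0}$ by the monotonicity of the filtration. So the plan is to prove the reverse inclusion.

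Take $w=abcd\in\mathcal{M}_{0,0}$ with $a\in\mathcal{M}_{2,-1}$, $b\in\mathcal{M}_{1,0}$, $c\in\mathcal{M}_{0,1}$, $d\in\mathcal{M}_{-1,2}$. The key idea is to first force $d$ into $\mathcal{M}_{0,0}$ (and therefore into $\mathcal{M}_{0,1}$), and then to deduce the same for $a$ (landing it in $\mathcal{M}_{1,0}$), so that the product splits in the desired way. For the first step I would note that $bc\in\mathcal{M}_{0,0}$ since both factors are, and compute
\begin{equation*}
(bc)^{-1}w=\bigl((bc)^{-1}a(bc)\bigr)\cdot d=a'\cdot d,
\end{equation*}
where $a':=(bc)^{-1}a(bc)\in\mathcal{M}_{2,-1}$ by Proposition~\ref{sec_2_prop_711}. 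Since $w,bc\in\mathcal{M}_{0,0}$, we get $a'd\in\mathcal{M}_{0,0}$, hence $d\in\mathcal{M}_{2,-1}\cdot\mathcal{M}_{0,0}=\mathcal{M}_{0,0}\cdot\mathcal{M}_{2,-1}$ (the equality again uses Proposition~\ref{sec_2_prop_711}). Applying Lemma~\ref{lemma_2}(ii) then gives $d\in\mathcal{M}_{0,0}$, and Lemma~\ref{lemma_2}(i) upgrades this to $d\in\mathcal{M}_{0,0}\cap\mathcal{M}_{-1,2}\subset\mathcal{M}_{0,1}$.

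With $d\in\mathcal{M}_{0,1}\subset\mathcal{M}_{0,0}$ in hand, the second step is straightforward: $abc=wd^{-1}\in\mathcal{M}_{0,0}$, and right-multiplying by $(bc)^{-1}\in\mathcal{M}_{0,0}$ yields $a\in\mathcal{M}_{0,0}$. Then $a\in\mathcal{M}_{0,0}\cap\mathcal{M}_{2,-1}\subset\mathcal{M}_{1,0}$ by Lemma~\ref{lemma_2}(i). Finally, since $ab\in\mathcal{M}_{1,0}$ and $cd\in\mathcal{M}_{0,1}$, we conclude $w=(ab)(cd)\in\mathcal{M}_{1,0}\cdot\mathcal{M}_{0,1}$.

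I do not expect any serious obstacle: the proof is essentially a bookkeeping argument that chains Lemma~\ref{lemma_2}(i)--(ii) together with the normalization property of Proposition~\ref{sec_2_prop_711}. The only subtle point is the initial rearrangement via the conjugate $a'=(bc)^{-1}a(bc)$, which is what allows Lemma~\ref{lemma_2}(ii) to be applied to $d$ despite $d$ not lying in $\mathcal{M}_{0,0}$ a priori.
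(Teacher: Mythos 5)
Your proof is correct and takes essentially the same approach as the paper: isolate one of the extremal factors, show it lands in $\mathcal{M}_{0,0}$ via Lemma~\ref{lemma_2}(ii), upgrade it via Lemma~\ref{lemma_2}(i), and then deduce the same for the other extremal factor. The only cosmetic difference is that you handle $d$ first (making the conjugation by $bc$ explicit via Proposition~\ref{sec_2_prop_711}) and then obtain $a\in\mathcal{M}_{0,0}$ by direct group arithmetic, whereas the paper handles $a$ first and says ``similarly'' for $d$, leaving the conjugation rearrangement implicit.
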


\begin{proof}
Let $w\in (\mathcal{M}_{2,-1}\cdot \mathcal{M}_{1,0}\cdot \mathcal{M}_{0,1}\cdot \mathcal{M}_{-1,2})\cap \mathcal{M}_{0,0}$ and write $w=abcd$ with $a\in\mathcal{M}_{2,-1}$, $b\in\mathcal{M}_{1,0}$, $c\in\mathcal{M}_{0,1}$ and $d\in\mathcal{M}_{-1,2}$. Hence $a=wd^{-1}c^{-1}b^{-1}\in \mathcal{M}_{0,0}\cdot \mathcal{M}_{-1,2}$. Thus, by Lemma~\ref{lemma_2}~{(ii)} $a\in \mathcal{M}_{0,0}$. It follows from Lemma~\ref{lemma_2}~{(i)} that $a\in \mathcal{M}_{1,0}$. It can be shown similarly that $d\in \mathcal{M}_{0,1}$. Therefore, $abcd=(ab)(cd)\in\mathcal{M}_{1,0}\cdot \mathcal{M}_{0,1}$.
\end{proof}

\begin{conjecture}\label{conjecture_3} We have 
$$\overline{\mathcal{M}}_{0,0}:=\frac{\mathcal{M}_{0,0}}{\mathcal{M}_{1,0}\cdot \mathcal{M}_{0,1}}\simeq \left\{\left( \begin{smallmatrix} P &0\\ 0& (P^t)^{-1}\end{smallmatrix} \right) \  \bigg\rvert \ \ P\in\mathrm{GL}(g,\mathbb{Z})  \right\} \simeq \mathrm{GL}(g,\mathbb{Z}).$$  
\end{conjecture}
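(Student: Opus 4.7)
The plan is to identify $\overline{\M}_{0,0}$ with the image of the symplectic representation restricted to the Goeritz group. By Lemma~\ref{im_symp}~(iii) and the exact sequence~\eqref{ses1}, $\sigma$ restricts to a surjection $\sigma|_\G\colon \G=\M_{0,0}\twoheadrightarrow \mathrm{GL}(g,\Z)$ with kernel $\mathcal{I}\cap\G$, so the conjecture is equivalent to the equality $\mathcal{I}\cap\G = \M_{1,0}\cdot \M_{0,1}$. The inclusion ``$\supset$'' is easy: both $\M_{1,0}$ and $\M_{0,1}$ lie in $\G$ by monotonicity of the double filtration (Proposition~\ref{r59}) and in $\mathcal{I}$ by Proposition~\ref{r45}; since $[\M_{1,0},\M_{0,1}]\subset \M_{1,1}\subset \M_{1,0}\cap \M_{0,1}$ by Proposition~\ref{prop_2.10}, the product $\M_{1,0}\cdot\M_{0,1}$ is in fact a subgroup of $\mathcal{I}\cap\G$, so $\sigma$ descends to a well-defined surjection $\overline{\sigma}\colon\overline{\M}_{0,0}\twoheadrightarrow\mathrm{GL}(g,\Z)$.

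For the reverse inclusion the shortest route is via Conjecture~\ref{conjecture_1}: combining $\mathcal{I} = \M_{2,-1}\cdot\M_{1,0}\cdot\M_{0,1}\cdot\M_{-1,2}$ with Proposition~\ref{lemma_3} immediately gives $\mathcal{I}\cap\M_{0,0} = \M_{1,0}\cdot\M_{0,1}$. Since that merely reduces one conjecture to another, I would also pursue a direct inductive proof. For $h\in\mathcal{I}\cap\G$, preservation of $\bX$ and $\bY$ together with $h\in\mathcal{I}$ forces $[h,x_i]\in \bX\cap\Gamma_2 K=[K,\bX]$ and $[h,y_i]\in \bY\cap\Gamma_2 K=[K,\bY]$ (by Hopf's formula, since $K/\bX$ and $K/\bY$ are free). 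Plugging these into~\eqref{eq_tau_b} and using~\eqref{eq:1}, one verifies that the $\ext^3 A$ and $\ext^3 B$ components of $\tau_1(h)$ vanish, so $\tau_1(h)\in (\ext^2 A\otimes B)\oplus(A\otimes\ext^2 B)$. By Proposition~\ref{r45} there exist $b\in\M_{1,0}$ and $c\in\M_{0,1}$ with $\tau_1(bc)=\tau_1(h)$, whence $(bc)^{-1}h\in\mathcal{K}\cap\G$.

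The main obstacle is then the claim $\mathcal{K}\cap\G\subset \M_{1,0}\cdot\M_{0,1}$, which I would attack by iterating the previous step at every level $n\ge 2$: for $h'\in J_n\M\cap\G$ one needs a ``no pure components'' vanishing statement for $\tau_n(h')\in D_n(H)$, using the $\modN^2$-graded refinement of $D_n(H)$ induced by $H=A\oplus B$ and developed in Section~\ref{sec_4}, together with surjectivity properties of the double Johnson homomorphisms $\tau_{m,n-m}$ to produce correction elements that can be absorbed into $\M_{1,0}\cdot\M_{0,1}$ modulo $J_{n+1}\M$. Passing from a countable sequence of such corrections to an honest factorization inside $\M_{1,0}\cdot\M_{0,1}$ requires the residual nilpotence $\bigcap_n J_n\M=\{1\}$ and a careful convergence argument. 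Both the structural decomposition of each $D_n(H)$ and the convergence of the correction process are in essence equivalent in strength to Conjecture~\ref{conjecture_1}, which explains why the authors pose the two conjectures in parallel.
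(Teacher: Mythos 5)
The statement you are addressing is explicitly posed as a \emph{Conjecture} in the paper; there is no proof in the text, only the Remark that follows it. That Remark records two things: the surjection $\overline{\M}_{0,0}\twoheadrightarrow\mathrm{GL}(g,\Z)$ coming from Lemma~\ref{im_symp} and the short exact sequence~\eqref{ses1}, and the conditional reduction to Conjecture~\ref{conjecture_1} via Proposition~\ref{lemma_3}, which would upgrade~\eqref{ses1} to~\eqref{ses_goe_2}. Your proposal reproduces exactly this reduction, so in that respect it matches the paper. You also add a partial argument that the paper does not spell out, and it is correct: from $h(\bX)=\bX$, $h(\bY)=\bY$, $h\in\mathcal{I}$, Hopf's formula gives $[h,x_i]\in\bX\cap\Gamma_2K=[K,\bX]=K_{2,0}K_{1,1}$ and $[h,y_i]\in K_{1,1}K_{0,2}$, which by~\eqref{eq_tau_b} and~\eqref{eq_D_1} kills the $\ext^3 A$ and $\ext^3 B$ components of $\tau_1(h)$; Proposition~\ref{r45} then yields $\mathcal{I}\cap\G\subset\M_{1,0}\cdot\M_{0,1}\cdot(\mathcal{K}\cap\G)$. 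You rightly identify the remaining step $\mathcal{K}\cap\G\subset\M_{1,0}\cdot\M_{0,1}$ as the genuinely open part, and you do not claim to close it.

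One inaccuracy: the closing claim that a direct iterated-$\tau_n$ argument would be ``in essence equivalent in strength'' to Conjecture~\ref{conjecture_1} overstates the relationship. Via Proposition~\ref{lemma_3}, Conjecture~\ref{conjecture_1} does imply $\mathcal{K}\cap\G\subset\M_{1,0}\cdot\M_{0,1}$ and hence Conjecture~\ref{conjecture_3}, but the converse implication is not apparent: Conjecture~\ref{conjecture_1} constrains all of $\mathcal{K}$, whereas your argument would only need to control $\mathcal{K}\cap\G$, so Conjecture~\ref{conjecture_3} looks formally weaker. That asymmetry is consistent with the paper asserting only the one-way implication.
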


\begin{remark} Notice that by Lemma~\ref{im_symp} we have a surjective map from $\overline{\mathcal{M}}_{0,0}$ to $\mathrm{GL}(g,\mathbb{Z})$ which we expect to be injective. In fact, Conjecture \ref{conjecture_3} follows from Conjecture~\ref{conjecture_1} because in this case Proposition~\ref{lemma_3} becomes $\mathcal{I}\cap \mathcal{M}_{0,0}=\mathcal{M}_{1,0}\cdot \mathcal{M}_{0,1}$. Hence  the short exact sequence \eqref{ses1}  could be written as
\begin{equation}\label{ses_goe_2}
1\longrightarrow \mathcal{M}_{1,0}\cdot \mathcal{M}_{0,1}\longrightarrow \mathcal{M}_{0,0}\xrightarrow{\ \sigma\ }\mathrm{GL}(g,\mathbb{Z})\longrightarrow 1.
\end{equation}
and we would obtain the desired result.
\end{remark}

\subsection{The case of automorphisms of free groups}\label{sec_5}

Several results from Section~\ref{sec_three} can be developed in the context of the automorphism group of a free group. In this subsection we develop some of these results.

Let $(K;\bX,\bY)$ be as in Section~\ref{sec_dlcsfg}. Let $(K_{m,n})_{(m,n)\in\modN^2}$ be the double lower central series of $(K;\bX,\bY)$. Extend the definition of $K_{m,n}$ for every $m,n\in\mathbb{Z}$ as in \eqref{dlex}.

Let $\A=\mathrm{Aut}(K)$ be the automorphism group of $K$. As usual, we regard $K$ and $\A$ as subgroups of the semidirect product $K\rtimes {\A}$. 
Set 
  \begin{equation}
    \A_{0,0}=\{h\in\A\mid h(\bX)=\bX,h(\bY)=\bY\},
  \end{equation}
  which we call the \emph{fake Goeritz group} of type $(p,q)$.
  We have the double Johnson filtration $(\A_{m,n})_{(m,n)\in\modN^2}$ of $\A_{0,0}$ defined by
  \begin{equation}
    \A_{m,n}=\{h\in\A_{0,0}\mid [h,\bX]\subset K_{m+1,n},\;[h,\bY]\subset K_{m,n+1}\}
  \end{equation}
  
  Similarly to Section \ref{sec_31}, we can extend $(\A_{m,n})_{(m,n)\in\modN^2}$ to $(\A_{m,n})_{m,n\ge-1}$ by setting
\begin{equation}
\begin{split}
  \A_{m,n} =&\big\{h\in\A\mid[h^{\pm1}, K_{i,j}]\subset K_{m+i,n+j} \text{ for every } (i,j)\in\modN^2\big\}\\
  =&\big\{h\in\A\mid[h^{\pm1}, K_{1,0}]\subset K_{m+1,n},\;[h^{\pm1}, K_{0,1}]\subset K_{m,n+1}\big\}.
\end{split}
\end{equation}
The family $(\A_{m,n})_{m,n\ge-1}$ is called the \emph{double
Andreadakis-Johnson filtration} of $\A=\Aut(K)$ with respect to $(\bX,\bY)$. 
It satisfies properties similar to the double Johnson filtration of the mapping class group that are stated in Sections~\ref{sec_31} and~\ref{sec_4_5}.

Suppose that $K= \la x_1,\ldots, x_g, y_1,\ldots, y_g\ra$.  Recall that the Dehn-Nielsen  representation  $\rho:\mathcal{M}\rightarrow \Aut(\pi_1(\Sigma,*))\simeq \A$ which sends $h\in\M$ to the induced map $h_{\#}$ on $\pi_1(\Sigma,*)\simeq K$, is injective. The   double Andreadakis-Johnson filtration of $\A$  and the double Johnson filtration of $\M$ are compatible under the Dehn-Nielsen map, that is, for $m,n\geq -1$ we have $\rho(\M_{m,n})\subset \A_{m,n}$. In particular, we can construct examples in some terms of the family $(\A_{m,n})_{m,n\ge-1}$ by using those constructed for $(\M_{m,n})_{m,n\ge-1}$ in Sections~\ref{sec_4_6} and~\ref{sec_4_7}.

Let us go back to the general case $K= \la x_1,\ldots, x_p, y_1,\ldots, y_q\ra$. We here recall the usual Andreadakis-Johnson filtration of $\A$, {see \cite{satoh} for a survey}. For each $m\geq 0$, the action of~$\A$ on the nilpotent quotient $K/\Gamma_{m+1}K$ induces a homomorphism 
$$\rho_m\zzzcolon \A\longrightarrow\mathrm{Aut}(K/\Gamma_{m+1}K).$$
Set
$$\mathcal{IA}_m=\mathrm{ker}(\rho_m)=\{h\in \A \ | \ [h,K]\subset\Gamma_{m+1} K\}.$$

The family $(\mathcal{IA}_m)_{m\geq 1}$ is called the \emph{Andreadakis-Johnson} filtration of~$\A$. In particular, $\mathcal{IA}_1=\mathcal{IA}$ is the \emph{$\mathrm{IA}$-automorphism group of $K$}. By definition for $(m,n)> (-1,-1)$  with $m+n\geq 1$, we have $\A_{m,n}\subset \mathcal{IA}_{m+n}$. 

{It is well known that $\mathcal{IA}$ is finitely generated by an explicit set of automorphisms, see \cite[Theorem~4.1]{satoh} for the list of generators  and see \cite[Theorem~5.6]{bbm} for a modern proof. {We just need to state  such a set of automorphisms by  considering  the partition of the generators of $K=\la x_1,\ldots, x_p, y_1,\ldots,y_q\ra$.}

\begin{theorem}[Magnus \cite{mag}]\label{Magnus_thm}
Let $p+q\geq 3$ and $a,b,c\in \{x_1,\ldots, x_p,y_1,\ldots,y_q\}$.  Let $\varphi_{ab}$ be the automorphism of $K$ defined by $\varphi(a)=b^{-1}ab$ and $\varphi(z)=z$ for $z\in\{x_1,\ldots, x_p,y_1,\ldots,y_q\}$ with $z\not=a$. Let $\varphi_{abc}$ be the automorphism of $K$ defined by $\varphi(a)=a[b,c]$ and $\varphi(z)=z$ for $z\in\{x_1,\ldots, x_p,y_1,\ldots,y_q\}$ with $z\not=a$. Then the $\mathrm{IA}$-automorphism group $\mathcal{IA}$ of $K$ is finitely generated by 
    \begin{enumerate}
        \item $\varphi_{x_ix_j}$ with $1\leq i,j\leq p$ and $i\not = j$.
        \item $\varphi_{y_iy_j}$ with $1\leq i,j\leq q$ and $i\not = j$.
        \item $\varphi_{x_iy_j}$ with $1\leq i\leq p$ and $1\leq j\leq q$.
        \item $\varphi_{y_ix_j}$ with $1\leq i\leq q$ and $1\leq j\leq p$.
        \item $\varphi_{x_ix_jx_k}$ with $1\leq i,j,k\leq p$ distinct and $j<k$.
        \item $\varphi_{x_ix_jy_k}$ with $1\leq i,j\leq p$ distinct and $1\leq~k\leq q$.
        \item $\varphi_{x_iy_jy_k}$ with $1\leq i\leq p$ and $1\leq~j<k\leq q$.
        \item $\varphi_{y_iy_jy_k}$ with $1\leq i,j,k\leq q$ distinct and $j<k$.
        \item $\varphi_{y_ix_jy_k}$ with $1\leq i,k\leq q$ distinct and $1\leq~j\leq p$.
        \item $\varphi_{y_ix_jx_k}$ with $1\leq i\leq q$ and $1\leq~j<k\leq p$.
    \end{enumerate}
\end{theorem}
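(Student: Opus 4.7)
The plan is to derive Theorem~\ref{Magnus_thm} directly from Magnus's classical result on generators of the IA-automorphism group of a free group. Fix the total order $x_1 < \cdots < x_p < y_1 < \cdots < y_q$ on the basis of $K$, and write these $n = p+q$ generators as $z_1, \ldots, z_n$. Magnus's theorem asserts that, for any $n \geq 3$, $\mathcal{IA}(F_n)$ is generated by the family of conjugations $z_i \mapsto z_j^{-1} z_i z_j$ (one per ordered pair $i \neq j$) together with the family of commutator insertions $z_i \mapsto z_i[z_j, z_k]$ (one per triple with $i \not\in \{j, k\}$ and $j < k$). I would not reproduce its proof, referring instead to \cite{mag} for the original argument, or to \cite[Theorem~5.6]{bbm} for a modern proof.

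With this in hand, the verification that the ten families listed are exactly Magnus's generators partitioned according to the decomposition of the basis into $\{x_1, \ldots, x_p\}$ and $\{y_1, \ldots, y_q\}$ is routine. The conjugations $\varphi_{ab}$ split into four cases (1)--(4) depending on whether each of $a, b$ is an $x$ or a $y$. The commutator insertions $\varphi_{abc}$ (with $b < c$) split into six cases (5)--(10) depending on the positions of $a, b, c$; the inequalities on indices in each case reflect precisely the constraint $b < c$ under the chosen ordering. In particular, the constraint reduces to $j < k$ in the four cases where $b, c$ are of the same type (items (5), (7), (8), (10)), and is automatic in the two mixed cases (items (6), (9)) since every $x$-generator precedes every $y$-generator.

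The main (indeed, the only) obstacle is to confirm that no Magnus generators are absent from the list. A quick sanity check handles the one subtle point: commutator insertions $\varphi_{abc}$ with $b > c$ in our ordering---such as $\varphi_{x_i y_j x_k}$ or $\varphi_{y_i y_j x_k}$---do not appear in the ten families, but are redundant because $[b, c] = [c, b]^{-1}$ implies that $\varphi_{abc} = \varphi_{acb}^{-1}$, and $\varphi_{acb}$ belongs to family (6) or (9). Thus the ten families form a complete set of generators for $\mathcal{IA}$, which completes the argument.
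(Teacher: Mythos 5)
Your proposal is correct and follows essentially the same route the paper takes: the paper itself offers no proof of Theorem~\ref{Magnus_thm} and simply attributes it to Magnus, pointing to \cite{satoh} and \cite{bbm}, with the understanding that the ten listed families are just Magnus's generators re-indexed according to the partition of the basis into $\{x_1,\ldots,x_p\}$ and $\{y_1,\ldots,y_q\}$. Your check that the constraint $b<c$ under the ordering $x_1<\cdots<x_p<y_1<\cdots<y_q$ produces exactly items (1)--(10), and that the omitted triples with $b>c$ are recovered via $\varphi_{abc}=\varphi_{acb}^{-1}$ (which follows from $[b,c]=[c,b]^{-1}$), is the correct bookkeeping and in fact is slightly more explicit than what the paper records.
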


Mirroring Proposition~\ref{subgr} we also have that $\mathcal{A}_{2,-1}\cdot\mathcal{A}_{1,0}\cdot\mathcal{A}_{0,1}\cdot\mathcal{A}_{-1,2}$ is a subgroup of~$\mathcal{IA}$. We can now prove the analogue of Theorem~\ref{r46} in this context. 

\begin{theorem}\label{thm_free} We have
$$\mathcal{IA} = \mathcal{A}_{2,-1}\cdot\mathcal{A}_{1,0}\cdot\mathcal{A}_{0,1}\cdot\mathcal{A}_{-1,2}.$$
\end{theorem}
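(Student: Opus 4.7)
The plan is to exploit Magnus's explicit finite generating set for $\mathcal{IA}$ (Theorem~\ref{Magnus_thm}) and place each of the ten families of generators into one of the four subgroups $\mathcal{A}_{2,-1}$, $\mathcal{A}_{1,0}$, $\mathcal{A}_{0,1}$, $\mathcal{A}_{-1,2}$. The inclusion $\mathcal{A}_{2,-1}\cdot\mathcal{A}_{1,0}\cdot\mathcal{A}_{0,1}\cdot\mathcal{A}_{-1,2}\subset\mathcal{IA}$ is immediate, since each factor $\mathcal{A}_{m,n}$ with $m+n=1$ satisfies $[\varphi,\bX]\subset\Gamma_2 K$ and $[\varphi,\bY]\subset\Gamma_2 K$, hence $[\varphi,K]\subset\Gamma_2 K$ by Lemma~\ref{r12}. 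The content is the reverse inclusion.

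On free generators, a Magnus conjugation $\varphi_{ab}$ gives $[\varphi_{ab},a]={}^{b^{-1}}[a,b]\in K_{p,q}$ where $(p,q)$ records the number of $X$- and $Y$-letters in $\{a,b\}$, while a commutator-insertion $\varphi_{abc}$ gives $[\varphi_{abc},a]={}^a[b,c]\in K_{p,q}$ where $(p,q)$ is the bi-degree of $[b,c]$; in either case the other free generators are fixed. The placement rule is then $\varphi\in\mathcal{A}_{p-1,q}$ if $a\in X$ and $\varphi\in\mathcal{A}_{p,q-1}$ if $a\in Y$. Running this over the Magnus list places $\varphi_{y_ix_jx_k}\in\mathcal{A}_{2,-1}$; the generators $\varphi_{x_ix_j}$, $\varphi_{y_ix_j}$, $\varphi_{x_ix_jx_k}$, $\varphi_{y_ix_jy_k}$ in $\mathcal{A}_{1,0}$; the generators $\varphi_{y_iy_j}$, $\varphi_{x_iy_j}$, $\varphi_{y_iy_jy_k}$, $\varphi_{x_ix_jy_k}$ in $\mathcal{A}_{0,1}$; and $\varphi_{x_iy_jy_k}$ in $\mathcal{A}_{-1,2}$.

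To upgrade these computations on free generators to the inclusions $[\varphi,\bX]\subset K_{m+1,n}$ and $[\varphi,\bY]\subset K_{m,n+1}$ actually required by the definition of $\mathcal{A}_{m,n}$, we invoke the $\mathcal{A}$-analogue of Lemma~\ref{r30} (its proof is purely formal and transfers verbatim to $\Aut(F)$) together with the observation that for each Magnus generator $\varphi$, which acts nontrivially only on one free generator $z$ via a fixed commutator or conjugator $c$, the cocycle $u=[\varphi,w]=\varphi(w)w^{-1}$ lies in the normal closure $\langle\langle c\rangle\rangle_K$ for every $w\in K$; this claim follows by induction on word length using $[\varphi,w_1w_2]=[\varphi,w_1]\cdot{}^{w_1}[\varphi,w_2]$. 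Writing an element of $\bX$ as a product of conjugates of the $x_k$ and expanding $[\varphi,\cdot]$, each resulting term has the form $[u,wx_kw^{-1}]$, with an extra ${}^w c$-factor when $x_k=z$, and lies in the required piece of $K_{*,*}$ by the bi-degree inclusions $[K_{a,b},K_{c,d}]\subset K_{a+c,b+d}$ and Lemma~\ref{r22}; the argument for $\bY$ is symmetric. Finally, $\mathcal{A}_{2,-1}\cdot\mathcal{A}_{1,0}\cdot\mathcal{A}_{0,1}\cdot\mathcal{A}_{-1,2}$ is a subgroup of $\mathcal{A}$ by the same proof as Proposition~\ref{subgr}, which uses only the commutator inclusion $[\mathcal{A}_{m,n},\mathcal{A}_{m',n'}]\subset\mathcal{A}_{m+m',n+n'}$ and the conjugation invariance ${}^{\mathcal{A}_{0,0}}\mathcal{A}_{i,j}=\mathcal{A}_{i,j}$, both of which are instances of Proposition~\ref{r14}.

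The main obstacle is the extension from generators to the full normal closures $\bX$ and $\bY$: knowing $[\varphi,z]$ for $z\in X\cup Y$ only pins down the ``diagonal'' part of the commutator, whereas $\mathcal{A}_{m,n}$ is defined through the action on all of $\bX$ and $\bY$, and the cross term $u=[\varphi,w]$ could a priori destroy the bi-degree count. What saves the argument is the very restricted form of Magnus's generators---each modifies a single free letter by a single commutator---so that $u$ is forced into one normal closure $\langle\langle c\rangle\rangle_K$ whose bi-degree is read off by inspection; once this is in place the ten cases are routine bi-degree computations and the subgroup step is formal.
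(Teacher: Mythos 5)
Your argument matches the paper's proof: both reduce to Magnus's explicit generating set for $\mathcal{IA}$, assign each of the ten families of generators to one of the four terms $\mathcal{A}_{2,-1}$, $\mathcal{A}_{1,0}$, $\mathcal{A}_{0,1}$, $\mathcal{A}_{-1,2}$ by the same bi-degree bookkeeping, and close by invoking that the product set is a subgroup (the free-group analogue of Proposition~\ref{subgr}). The paper states the placements without verification, whereas you spell out why $[\varphi,\bX]$ and $[\varphi,\bY]$ actually land in the required pieces of $K_{*,*}$ via the normal-closure observation $[\varphi,w]\in\langle\langle c\rangle\rangle_K$; this is a correct and worthwhile elaboration of the same route, not a different one.
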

\begin{proof}
Clearly, $\mathcal{A}_{2,-1}\cdot\mathcal{A}_{1,0}\cdot\mathcal{A}_{0,1}\cdot\mathcal{A}_{-1,2}\subset \mathcal{IA}$. The other inclusion follows by using Magnus generators. {The} generators described in Theorem \ref{Magnus_thm} {\it{(1)}},{\it{(4)}}, {\it{(5)}} and {\it{(9)}} belong to $\mathcal{A}_{1,0}$. The generators described in {\it{(2)}}, {\it{(3)}}, {\it{(6)}} and {\it{(8)}} belong to $\mathcal{A}_{0,1}$. The generators described in {\it{(7)}} belong to $\mathcal{A}_{-1,2}$. Finally, the generators described in {\it{(10)}} belong to $\mathcal{A}_{2,-1}$.  
\end{proof}

{
We can identify  $\Aut(K/\Gamma_2K)$ with the general linear group $\mathrm{GL}(p+q, \Z)$. Besides, it is well known that the map $\rho_1\zzzcolon \A\rightarrow \Aut(K/\Gamma_2K)\simeq \mathrm{GL}(p+q, \Z)$ is surjective, see for instance \cite[Chapter~I: Proposition~4.4]{lyndon}. Hence we have the following short exact sequence
\begin{equation}
1\longrightarrow \mathcal{IA}\longrightarrow
\A \xrightarrow{\ \rho_1 \ } \mathrm{GL}(p+q,\Z)\longrightarrow 1.
\end{equation}

Moreover we have
$$\rho_1(\A_{0,0}) = \left\{\left( \begin{matrix} R &0\\ 
0& S\end{matrix} \right) \  \bigg\rvert \ \ R\in\mathrm{GL}(p,\Z), \   S\in\mathrm{GL}(q,\Z)\right\}\simeq \mathrm{GL}(p,\Z)\times \mathrm{GL}(q,\Z),$$
Thus we have the short exact sequence 
\begin{equation}\label{ses_free}
1\longrightarrow \mathcal{IA}\cap\A_{0,0}\longrightarrow \A_{0,0}
\xrightarrow{\ \rho_1\ } \mathrm{GL}(p,\Z)\times\mathrm{GL}(q,\Z)\longrightarrow 1.
\end{equation}

\begin{corollary}\label{cor_free}
 We have 
$$\overline{\A}_{0,0}:=\frac{\A_{0,0}}{\A_{1,0}\cdot A_{0,1}}\simeq  \mathrm{GL}(p,\Z)\times \mathrm{GL}(q,\Z).$$  
\end{corollary}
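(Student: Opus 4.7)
The plan is to deduce the corollary from the short exact sequence \eqref{ses_free} by identifying its kernel $\mathcal{IA}\cap\A_{0,0}$ with $\A_{1,0}\cdot\A_{0,1}$. The inclusion $\A_{1,0}\cdot\A_{0,1}\subset\mathcal{IA}\cap\A_{0,0}$ is immediate from the definitions: $\A_{1,0}$ and $\A_{0,1}$ sit inside $\A_{0,0}$, and since $K_{2,0},K_{1,1},K_{0,2}\subset\Gamma_2K$, each of their elements acts trivially on $K/\Gamma_2K$ and hence lies in $\mathcal{IA}$.

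For the reverse inclusion, I would invoke Theorem \ref{thm_free} to write $\mathcal{IA}=\A_{2,-1}\cdot\A_{1,0}\cdot\A_{0,1}\cdot\A_{-1,2}$ and then establish the $\A$-analogue of Proposition \ref{lemma_3}, namely $(\A_{2,-1}\cdot\A_{1,0}\cdot\A_{0,1}\cdot\A_{-1,2})\cap\A_{0,0}=\A_{1,0}\cdot\A_{0,1}$. Combined with Theorem \ref{thm_free} this gives at once $\mathcal{IA}\cap\A_{0,0}=\A_{1,0}\cdot\A_{0,1}$, and then the short exact sequence \eqref{ses_free} yields the desired isomorphism. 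The proof of the $\A$-analogue of Proposition \ref{lemma_3} is a transcription of the mapping class argument, provided one has the $\A$-analogues of Lemma \ref{lemma_2}: the inclusions $\A_{0,0}\cap\A_{2,-1}\subset\A_{1,0}$ and $\A_{0,0}\cap\A_{-1,2}\subset\A_{0,1}$ (the (i)-analogues), together with $(\A_{0,0}\cdot\A_{-1,2})\cap\A_{2,-1}\subset\A_{0,0}$ and its symmetric counterpart (the (ii)-analogues). The (ii)-analogues transcribe verbatim, since that part of the proof uses only commutator identities in $K\rtimes\A$ together with the basic inclusions between the $K_{m,n}$.

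The only genuinely new step is the (i)-analogues, whose original proof relies on the identity $\bar X\cap\bar Y=[\bar X,\bar Y]$. I would verify this identity in the free group $K=F_{p,q}$ as follows. Since $\bar X$ and $\bar Y$ are normal in $K$, the subgroup $[\bar X,\bar Y]$ coincides with the normal closure in $K$ of the family $\{[x_i,y_j]\}_{i,j}$; hence $K/[\bar X,\bar Y]$ admits the presentation $\langle x_1,\dots,x_p,y_1,\dots,y_q\mid [x_i,y_j],\ \forall i,j\rangle$ and is therefore isomorphic to $F_p\times F_q$. On the other hand, the natural surjection $\pi\zzzcolon K\to(K/\bar X)\times(K/\bar Y)\cong F_q\times F_p$ has $\ker\pi=\bar X\cap\bar Y$, and it factors through $K/[\bar X,\bar Y]$ via the factor-swap isomorphism $F_p\times F_q\xrightarrow{\simeq}F_q\times F_p$. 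Thus $\bar X\cap\bar Y=[\bar X,\bar Y]$. With this identity at hand, the original argument applies verbatim: for $h\in\A_{0,0}\cap\A_{2,-1}$, we have $[h,K_{1,0}]\subset K_{3,0}\subset K_{2,0}$ and $[h,K_{0,1}]\subset K_{2,0}\cap K_{0,1}\subset\bar X\cap\bar Y=K_{1,1}$, whence $h\in\A_{1,0}$; the other (i)-analogue is symmetric.

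The main obstacle is the verification of $\bar X\cap\bar Y=[\bar X,\bar Y]$ in the free group; everything else is formal bookkeeping mirroring Section \ref{sec_3_4}. Once this is in place, the kernel identification $\mathcal{IA}\cap\A_{0,0}=\A_{1,0}\cdot\A_{0,1}$ follows, and \eqref{ses_free} completes the proof.
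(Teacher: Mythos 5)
Your proof is correct and follows essentially the same route as the paper: combine the short exact sequence \eqref{ses_free} with Theorem~\ref{thm_free} and the $\A$-analogue of Proposition~\ref{lemma_3} (stated in the paper as Lemma~\ref{lemma_3_free}), whose proof reduces, as you observe, to the identity $\bar X\cap\bar Y=[\bar X,\bar Y]$. Your explicit verification of that identity via $K/[\bar X,\bar Y]\cong F_p\times F_q$ and the factorization of $K\to(K/\bar X)\times(K/\bar Y)$ is a helpful addition, since the paper leaves that step implicit when it simply cites the proof of Lemma~\ref{lemma_2}.
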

\begin{proof}
The result follows {from} Theorem~\ref{thm_free}, Lemma~\ref{lemma_3_free} below and \eqref{ses_free}.
\end{proof}
Compare Corollary~\ref{cor_free} with Conjecture~\ref{conjecture_3}.

\begin{lemma}\label{lemma_3_free}
$(\mathcal{A}_{2,-1}\cdot \mathcal{A}_{1,0}\cdot \mathcal{A}_{0,1}\cdot \mathcal{A}_{-1,2})\cap \mathcal{A}_{0,0}= \mathcal{A}_{1,0}\cdot \mathcal{A}_{0,1}$.
\end{lemma}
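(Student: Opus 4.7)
The structure of the argument mirrors that of Proposition~\ref{lemma_3} in the mapping class group setting, the point being that the only substantive input there was the identity $\bar{X}\cap\bar{Y}=[\bar{X},\bar{Y}]=K_{1,1}$, and this identity holds in the free group case as well. So the plan is to first establish the automorphism-group analogue of Lemma~\ref{lemma_2}, and then repeat the rearrangement of Proposition~\ref{lemma_3} verbatim.

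First I would verify $\bar{X}\cap\bar{Y}=[\bar{X},\bar{Y}]$ for $K=\langle x_1,\ldots,x_p,y_1,\ldots,y_q\rangle$. The inclusion $[\bar{X},\bar{Y}]\subset\bar{X}\cap\bar{Y}$ is immediate from normality. For the converse, I would exhibit an isomorphism $K/[\bar{X},\bar{Y}]\simeq F_p\times F_q$: the map $K\to F_p\times F_q$ sending $x_i\mapsto(x_i,1)$ and $y_j\mapsto(1,y_j)$ kills $[\bar{X},\bar{Y}]$, hence factors through $K/[\bar{X},\bar{Y}]$, and conversely $F_p\times F_q$ is the universal quotient of $K$ in which the images of $\bar{X}$ and $\bar{Y}$ commute, giving the reverse map; these are mutually inverse. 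Under this identification $\bar{X}$ and $\bar{Y}$ correspond to $F_p\times 1$ and $1\times F_q$ respectively, whose intersection is trivial, so $\bar{X}\cap\bar{Y}\subset[\bar{X},\bar{Y}]$.

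Next I would prove the analogue of Lemma~\ref{lemma_2}: (i)~$\mathcal{A}_{0,0}\cap\mathcal{A}_{2,-1}\subset\mathcal{A}_{1,0}$ and $\mathcal{A}_{0,0}\cap\mathcal{A}_{-1,2}\subset\mathcal{A}_{0,1}$; (ii)~$(\mathcal{A}_{0,0}\cdot\mathcal{A}_{-1,2})\cap\mathcal{A}_{2,-1}\subset\mathcal{A}_{0,0}$ and symmetrically. For (i), if $h\in\mathcal{A}_{0,0}\cap\mathcal{A}_{2,-1}$ then $[h,K_{1,0}]\subset K_{3,0}\subset K_{2,0}$ and $[h,K_{0,1}]\subset K_{2,0}\cap K_{0,1}\subset\bar{X}\cap\bar{Y}=K_{1,1}$, so $h\in\mathcal{A}_{1,0}$. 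For (ii), writing an element as $fh$ with $f\in\mathcal{A}_{0,0}$, $h\in\mathcal{A}_{-1,2}$, one checks directly that $[fh,K_{1,0}]$ and $[fh,K_{0,1}]$ stay inside $K_{1,0}$ and $K_{0,1}$ respectively, so $fh\in\mathcal{A}_{0,0}$. These proofs are identical to those of Lemma~\ref{lemma_2}.

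Finally, granting (i) and (ii), I would run the argument of Proposition~\ref{lemma_3}. Take $w=abcd\in\mathcal{A}_{0,0}$ with $a\in\mathcal{A}_{2,-1}$, $b\in\mathcal{A}_{1,0}$, $c\in\mathcal{A}_{0,1}$, $d\in\mathcal{A}_{-1,2}$. Since $b,c\in\mathcal{A}_{0,0}$ and $\mathcal{A}_{0,0}$ normalizes $\mathcal{A}_{-1,2}$ (the free-group analogue of Proposition~\ref{sec_2_prop_711}, which is a formal consequence of the commutator inclusion $[\mathcal{A}_{0,0},\mathcal{A}_{-1,2}]\subset\mathcal{A}_{-1,2}$), we can rewrite $a=wd^{-1}c^{-1}b^{-1}$ as a product of an element of $\mathcal{A}_{0,0}$ with a conjugate $d'\in\mathcal{A}_{-1,2}$ of $d^{-1}$. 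Applying (ii) then (i) gives $a\in\mathcal{A}_{1,0}$. The same manipulation with the roles of the two sides swapped gives $d\in\mathcal{A}_{0,1}$, and then $w=(ab)(cd)\in\mathcal{A}_{1,0}\cdot\mathcal{A}_{0,1}$; the reverse inclusion is immediate. The only non-formal step is the identity $\bar{X}\cap\bar{Y}=[\bar{X},\bar{Y}]$, whose verification via the direct product model $K/[\bar{X},\bar{Y}]\simeq F_p\times F_q$ is the main (and only) obstacle.
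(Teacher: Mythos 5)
Your proof is correct and follows exactly the approach the paper intends: the paper's proof of this lemma simply refers back to Proposition~\ref{lemma_3}, whose argument you reproduce faithfully, together with the free-group analogues of Lemma~\ref{lemma_2} and the conjugation-invariance of Proposition~\ref{sec_2_prop_711}. Your explicit verification of $\bar{X}\cap\bar{Y}=[\bar{X},\bar{Y}]$ via the direct-product model $K/[\bar{X},\bar{Y}]\simeq F_p\times F_q$ is a detail the paper leaves unstated, but it is the right justification and adds nothing beyond filling that gap.
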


\begin{proof}
The proof is the same as the proof of Proposition~\ref{lemma_3}.
\end{proof}

}

\section{Double Johnson homomorphisms for the mapping class group}\label{sec_4}

In this section we apply the general theory from Sections \ref{sec_1} and \ref{sec_2} to obtain Johnson homomorphisms for the $\mathbb{N}^2$-filtration of $\mathcal{G}$. Then, following Morita's result for the usual Johnson homomorphisms, we refine their target groups. Finally we extend this family of homomorphisms to obtain a family of Johnson homomorphisms for the double Johnson filtration of the mapping class group and we study their relation with the usual Johnson homomorphisms.

\subsection{Double Johnson homomorphisms}
Consider the group $K=\pi_1(\Sigma,*)$ and its normal subgroups~$\bar{X}$ and~$\bar{Y}$ defined in \eqref{equ_X_Y}. Let $K_{*,*}=(K_{m,n})_{(m,n)\in\mathbb{N}^2}$ be the double lower central series of the triple $(K;\bar{X},\bar{Y})$ as in previous section. Let $\bK=\bigoplus_{(m,n)\in\modN^2_+}\bK_{m,n}$ be its associated {$\modN^2_+$-}graded Lie algebra. 

Let $A,B\leq H$ be the subgroups defined in~\eqref{equ_A_B}. By Lemma~\ref{r49} we have 
\begin{gather}
\bK_{1,0}\simeq A \quad \text{ and } \quad \bK_{0,1}\simeq B.
\end{gather}

Let $$\mathfrak{Lie}(A,B)=\bigoplus_{(m,n)\in\modN^2_+}\mathfrak{Lie}_{m,n}(A,B)$$ be the $\modN^2_+$-graded Lie algebra (over~$\mathbb{Z}$) {freely} generated by $A$ in degree $(1,0)$ and $B$ in degree $(0,1)$. Theorem~\ref{r51} yields
$$\bK = \bigoplus_{(m,n)\in\modN^2_+}\bK_{m,n}\simeq\bigoplus_{(m,n)\in\modN^2_+}\mathfrak{Lie}_{m,n}(A,B) = \mathfrak{Lie}(A,B).$$

Following Section~\ref{sec_dla}, for $(m,n)\in\modN^2_+$ denote by $\mathrm{Der}_{m,n}(\mathfrak{Lie}(A,B))$ the abelian group of derivations of $\mathfrak{Lie}(A,B)$ of degree $(m,n)$, i.e., the derivations $d$ of $\mathfrak{Lie}(A,B)$ such that $d(A)\subset \mathfrak{Lie}_{m+1,n}(A,B)$ and $d(B)\subset \mathfrak{Lie}_{m,n+1}(A,B)$. Let $d$ be a derivation of $\mathfrak{Lie}(A,B)$, we denote its restriction to $\mathfrak{Lie}_{i,j}(A,B)$ by~$d_{i,j}$.

For $(m,n)\in\modN^2_+$, set
\begin{gather}\label{e44}
\begin{split}
\mathrm{D}_{m,n}(\mathfrak{Lie}(A,B)) &:= \mathrm{Hom}(A, \mathfrak{Lie}_{m+1,n}(A,B))\oplus \mathrm{Hom}(B, \mathfrak{Lie}_{m,n+1}(A,B))\\
	&\simeq \mathrm{Hom}(\bK_{1,0}, \bK_{m+1,n})\oplus \mathrm{Hom}(\bK_{0,1}, \bK_{m,n+1}).
\end{split}
\end{gather}

The following is a classical result, see for instance  \cite[Lemma 0.7]{MR1231799}.
\begin{proposition}\label{r53} 
For every $(m,n)\in\modN^2_+$, there is a bijection
\begin{gather}
\Psi_{m,n}\zzzcolon \mathrm{Der}_{m,n}(\mathfrak{Lie}(A,B))\longrightarrow \mathrm{D}_{m,n}(\mathfrak{Lie}(A,B)),
\end{gather}
defined by $\Psi_{m,n}(d)= d_{1,0} + d_{0,1}$ for $d\in\mathrm{Der}_{m,n}(\mathfrak{Lie}(A,B))$.
\end{proposition}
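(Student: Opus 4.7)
The plan is to exploit the universal property of the free Lie algebra $\mathfrak{Lie}(A,B)$ to identify a derivation with its restriction to the free generators $A$ and $B$.

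First, I would check that $\Psi_{m,n}$ is well-defined. If $d\in \mathrm{Der}_{m,n}(\mathfrak{Lie}(A,B))$, then by definition of a degree $(m,n)$ derivation we have $d_{1,0}\zzzcolon A=\mathfrak{Lie}_{1,0}(A,B)\to \mathfrak{Lie}_{m+1,n}(A,B)$ and $d_{0,1}\zzzcolon B=\mathfrak{Lie}_{0,1}(A,B)\to \mathfrak{Lie}_{m,n+1}(A,B)$, and these restrictions are $\modZ$-linear since $d$ is. So the pair $(d_{1,0},d_{0,1})$ lies in $\mathrm{D}_{m,n}(\mathfrak{Lie}(A,B))$, and clearly $\Psi_{m,n}$ is additive.

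Next, I would prove injectivity. By Proposition~\ref{rr49}, every element of $\mathfrak{Lie}_{i,j}(A,B)$ is a $\modZ$-linear combination of Lie $(i,j)$-commutators in the generators from $A\cup B$. Hence $\mathfrak{Lie}(A,B)$ is generated as a Lie algebra by $A\cup B$, and the derivation property $d([u,v])=[d(u),v]+[u,d(v)]$ determines $d$ on any Lie bracket once it is known on $A$ and $B$. Thus $\Psi_{m,n}(d)=0$ forces $d=0$.

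For surjectivity, given $(f_A, f_B)\in \mathrm{D}_{m,n}(\mathfrak{Lie}(A,B))$, I would construct a derivation $d$ with $d_{1,0}=f_A$ and $d_{0,1}=f_B$ by using that $\mathfrak{Lie}(A,B)=\mathfrak{Lie}(\mathsf{A}\cup \mathsf{B})$ is the free Lie algebra on the set $\mathsf{A}\cup \mathsf{B}$. The universal property of the free Lie algebra states that for any Lie algebra $L$ and any set map $\mathsf{A}\cup \mathsf{B}\to L$, there is a unique Lie algebra homomorphism extending it; the analogous universal property for derivations (equivalently, the $\mathfrak{Lie}(A,B)$-bimodule structure on $\mathfrak{Lie}(A,B)$ itself via the adjoint action) produces a unique derivation $d$ of $\mathfrak{Lie}(A,B)$ extending the assignments $a_i\mapsto f_A(a_i)$ and $b_j\mapsto f_B(b_j)$. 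By induction on bracket length, $d$ preserves the $\modN^2_+$-grading shifted by $(m,n)$, hence $d\in \mathrm{Der}_{m,n}(\mathfrak{Lie}(A,B))$ and $\Psi_{m,n}(d)=(f_A,f_B)$.

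The only mild subtlety is making the extension in the surjectivity step rigorous; concretely one defines $d$ recursively on bracket length by setting $d([u,v])=[d(u),v]+[u,d(v)]$ and verifies that this is compatible with antisymmetry and the Jacobi identity using the free presentation of $\mathfrak{Lie}(A,B)$, so that $d$ descends to a well-defined $\modZ$-linear map on the whole free Lie algebra. This is the standard extension-of-derivation argument for free Lie algebras, as in \cite[Lemma 0.7]{MR1231799}, and is the only nontrivial point in an otherwise routine verification.
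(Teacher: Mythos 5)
Your proof is correct, and it fills in the details of a fact that the paper simply cites without proof as a ``classical result'' (the paper points to \cite[Lemma 0.7]{MR1231799}, which you also cite). You give the standard argument: restriction to generators is well-defined and injective because $\mathfrak{Lie}(A,B)$ is generated in degrees $(1,0)$ and $(0,1)$ and a derivation is determined on brackets by the Leibniz rule, and surjectivity follows from the universal property of derivations of a free Lie algebra together with an induction on bracket length to check the shifted grading. This is precisely the proof the paper is implicitly relying on, so there is nothing to compare against.
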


The intersection form $\omega\zzzcolon H\otimes H\rightarrow \mathbb{Z}$ yields the identifications $A\simeq B^*$  and  $B\simeq A^*$ (mapping~$z$ to~$ \omega(z,\cdot)$). Hence 
\begin{gather}\label{e47}
\begin{split}
\mathrm{D}_{m,n}(\mathfrak{Lie}(A,B)) & \simeq (A^*\otimes\mathfrak{Lie}_{m+1,n}(A,B))\oplus(B^*\otimes\mathfrak{Lie}_{m, n+1}(A,B))\\
			& \simeq (B\otimes\mathfrak{Lie}_{m+1,n}(A,B))\oplus(A\otimes\mathfrak{Lie}_{m,n+1}(A,B))\\
			& = (A\otimes\mathfrak{Lie}_{m,n+1}(A,B))\oplus (B\otimes\mathfrak{Lie}_{m+1,n}(A,B)).
\end{split}
\end{gather}

Let $(\mathcal{M}_{m,n})_{(m,n)\in\modN^2_+}$ be the double Johnson filtration of the Goeritz group. By Proposition~\ref{r39}, for all $(m,n)\in\modN^2_+$ there is a group homomorphism
\begin{gather}
\tau_{m,n}\zzzcolon \mathcal{M}_{m,n}\longrightarrow \mathrm{Der}_{m,n}(\mathfrak{Lie}(A,B))\stackrel{\Psi_{m,n}}{\simeq} \mathrm{D}_{m,n}(\mathfrak{Lie}(A,B)).
\end{gather}
such that
\begin{gather}\label{ker_tau}
\begin{split}
\mathrm{ker}(\tau_{m,n})&=\{h\in\mathcal{M}_{m,n} \ | \ [h,K_{i,j}]\subset K_{>(m+i,n+j)} \ \text{for all } \ (i,j)\in\modN^2_+\}\\
 & =\{h\in\mathcal{M}_{m,n} \ | \ [h,K_{1,0}]\subset K_{>(m+1,n)}, \ [h,K_{0,1}]\subset K_{>(m,n+1)}\}.
\end{split}
\end{gather}
The second equality in \eqref{ker_tau} can be checked by induction. Notice that $\mathcal{M}_{m+1,n}\cdot \mathcal{M}_{m,n+1}\subset\mathrm{ker}(\tau_{m,n})$.

We refer to  $\tau_{m,n}$ (or to $\Psi_{m,n}\circ\tau_{m,n}$) as the \emph{$(m,n)$-double Johnson homomorphism of the Goeritz group}.

In terms of the generators $\{x_i,y_i\}$ of $K$, the symplectic basis $\{a_i,b_i\}$ of $H$ and identification \eqref{e47}, the $(m,n)$-double Johnson homomorphism $\tau_{m,n}(h)$ for $h\in\mathcal{M}_{m,n}$ is given by
\begin{gather}\label{e48}
\tau_{m,n}(h)  = \sum_{i=1}^g a_i\otimes \left([h,y_i]K_{{>(m,n+1)}}\right) - \sum_{i=1}^g b_i\otimes \left([h,x_i]K_{{>(m+1,n)}}\right).
\end{gather}

\subsection{Double Johnson homomorphisms and symplectic derivations}

We always consider homology and cohomology groups with integer coefficients, so from now on we omit $\mathbb{Z}$ in the notation.  Throughout this subsection we consider the symplectic basis $\{a_i,b_i\}$ of $H$ as in Section~\ref{sec_3_1}.

 From the long exact sequence in homology associated to the pair $(V,\partial V)$ we obtain the short exact sequence
\begin{gather}
0\longrightarrow H_2(V,\partial V)\xrightarrow{\ \delta_*\ } H_1(\Sigma)\xrightarrow{\ \iota_*\ } H_1(V)\longrightarrow 0.
\end{gather}
Hence $A\simeq H_2(V,\partial V)\simeq H^1(V)$, where the latter isomorphism is given by Poincar\'e duality. Therefore we have an intersection form
\begin{gather}\label{intV}
\omega_V\zzzcolon H_1(V)\times H_2(V,\partial V)\longrightarrow \mathbb{Z}.
\end{gather}
Similarly $B\simeq H_2(V',\partial V')\simeq H^1(V')$ and we have an intersection form
\begin{gather}
\omega_{V'}\zzzcolon H_1(V')\times H_2(V',\partial V')\longrightarrow \mathbb{Z}.
\end{gather}

The above intersection forms are related with the intersection form $\omega\zzzcolon H\times H\rightarrow \mathbb{Z}$ of~$\Sigma$ by the following commutative diagrams

\begin{multicols}{2}
\noindent
\begin{gather}
\xymatrix{  \ \ \ \ \ \ \ \ \ H_1(V)\times H_2(V,\partial V)\ar[r]^{\ \ \ \ \ \ \ \ \ \ \ \ \ \ \ \ \omega_{V}}\ar@<3.7ex>[d]_{\simeq}^{\delta_*} & \mathbb{Z}.  \\
\frac{H_1(\Sigma)}{A}\times A \ar@<3ex>[u]^{\iota_*}_{\simeq}\ar@/_1pc/[ru]_{\ \ \ \omega} & }
\end{gather}
\columnbreak
\begin{gather}
\xymatrix{  \ \ \ \ \ \ \ \ H_1(V')\times H_2(V',\partial V')\ar[r]^{\ \ \ \ \ \ \ \ \ \ \ \ \ \ \ \ \omega_{V'}}\ar@<3.7ex>[d]_{\simeq}^{\delta_*} & \mathbb{Z}.  \\
\frac{H_1(\Sigma)}{B}\times B \ar@<3ex>[u]^{\iota'_*}_{\simeq}\ar@/_1pc/[ru]_{\ \ \ \omega} & }
\end{gather}
\end{multicols}

The intersection form $\omega\zzzcolon H\otimes H\rightarrow\mathbb{Z}$ determines an element $\Omega\in\mathfrak{Lie}_{1,1}(A,B)$. In terms of the symplectic basis $\{a_i,b_i\}$ of $H$ this element is given by
\begin{gather}\label{e46}
\Omega=\sum_{i=1}^g[a_i,b_i].
\end{gather}

\begin{definition}
Let $d$ be a derivation of $\mathfrak{Lie}(A,B)$.
We say that $d$ is a \emph{symplectic} derivation if $d(\Omega)=0$.
\end{definition}

For $(m,n)\in\modN^2_+$, denote by  $\mathrm{Der}^{\omega}_{m,n}(\mathfrak{Lie}(A,B))$   the abelian group of symplectic derivations of $\mathfrak{Lie}(A,B)$ of degree $(m,n)$.

For $(m,n)\in\modN^2_+$, consider the Lie bracket map
\begin{gather}\label{e49}
\Xi_{m,n}\zzzcolon (A\otimes\mathfrak{Lie}_{m,n+1}(A,B))\oplus(B\otimes\mathfrak{Lie}_{m+1,n}(A,B))\longrightarrow \mathfrak{Lie}_{m+1,n+1}(A,B).
\end{gather}

Set $D_{m,n}(A,B):=\mathrm{ker}(\Xi_{m,n})$. 
\begin{proposition}\label{r54}
Let $d\in \mathrm{Der}_{m,n}(\mathfrak{Lie}(A,B))$. Then  $\Xi_{m,n}\Psi_{m,n}(d)=0$ if and only if $d(\Omega)=0$. That is
$$\mathrm{Der}^{\omega}_{m,n}(\mathfrak{Lie}(A,B))\simeq D_{m,n}(A,B).$$
Here $\Psi_{m,n}$ is the map from Proposition~\ref{r53}.
\end{proposition}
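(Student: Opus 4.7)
The strategy is to show the stronger equality $\Xi_{m,n}\Psi_{m,n}(d) = d(\Omega)$ in $\mathfrak{Lie}_{m+1,n+1}(A,B)$, from which the proposition follows immediately.

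First I would unpack the identification $\mathrm{D}_{m,n}(\mathfrak{Lie}(A,B)) \simeq (A\otimes \mathfrak{Lie}_{m,n+1}(A,B))\oplus (B\otimes \mathfrak{Lie}_{m+1,n}(A,B))$ given in \eqref{e47} using the symplectic basis $\{a_i,b_i\}_{1\le i\le g}$. Since $\omega(a_i,b_j)=\delta_{ij}$ and $\omega(b_i,a_j)=-\delta_{ij}$, the isomorphism $A\simeq B^*$ sends $a_i$ to $b_i^*$ and the isomorphism $B\simeq A^*$ sends $b_i$ to $-a_i^*$. Therefore, for $d\in\mathrm{Der}_{m,n}(\mathfrak{Lie}(A,B))$ with restrictions $d_{1,0}\colon A\to\mathfrak{Lie}_{m+1,n}(A,B)$ and $d_{0,1}\colon B\to\mathfrak{Lie}_{m,n+1}(A,B)$, the image $\Psi_{m,n}(d)$ reads as
\begin{equation*}
\Psi_{m,n}(d)=\sum_{i=1}^{g} a_i\otimes d(b_i)-\sum_{i=1}^{g} b_i\otimes d(a_i),
\end{equation*}
which is consistent with the formula \eqref{e48}.

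Next I would apply the bracket map $\Xi_{m,n}$ defined in \eqref{e49} to this expression:
\begin{equation*}
\Xi_{m,n}\Psi_{m,n}(d)=\sum_{i=1}^{g}[a_i,d(b_i)]-\sum_{i=1}^{g}[b_i,d(a_i)]=\sum_{i=1}^{g}[a_i,d(b_i)]+\sum_{i=1}^{g}[d(a_i),b_i],
\end{equation*}
using antisymmetry of the Lie bracket. On the other hand, since $d$ is a derivation of $\mathfrak{Lie}(A,B)$, the Leibniz rule applied to $\Omega=\sum_{i=1}^{g}[a_i,b_i]$ from \eqref{e46} yields
\begin{equation*}
d(\Omega)=\sum_{i=1}^{g}\bigl([d(a_i),b_i]+[a_i,d(b_i)]\bigr).
\end{equation*}
Comparing the two displays gives the equality $\Xi_{m,n}\Psi_{m,n}(d)=d(\Omega)$, and the equivalence $\Xi_{m,n}\Psi_{m,n}(d)=0 \Leftrightarrow d(\Omega)=0$ follows at once, establishing the claimed isomorphism $\mathrm{Der}^{\omega}_{m,n}(\mathfrak{Lie}(A,B))\simeq D_{m,n}(A,B)$.

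There is no genuine obstacle here; the proof is purely a book-keeping computation. The only subtlety to watch for is the sign coming from $\omega(b_i,a_j)=-\delta_{ij}$, which is precisely what makes the minus sign in $\Psi_{m,n}(d)$ combine with the antisymmetry of the bracket to reproduce the two Leibniz terms of $d(\Omega)$ with matching signs.
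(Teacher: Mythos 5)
Your proof is correct and follows essentially the same route as the paper's: both express $\Psi_{m,n}(d)$ in the symplectic basis as $\sum_i a_i\otimes d(b_i)-\sum_i b_i\otimes d(a_i)$, apply $\Xi_{m,n}$, and recognize the result as $d(\Omega)$ via the Leibniz rule.
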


\begin{proof}
Using the symplectic basis $\{a_i,b_i\}$ of $H$ and  identification \eqref{e47} we obtain
\begin{gather}\label{JTH2equ10}
\Psi_{m,n}(d)=\sum_{i=1}^g b^*_i\otimes d(b_i) + \sum_{i=1}^g a^*_i\otimes d(a_i) = \sum_{i=1}^g a_i\otimes d(b_i)-\sum_{i=1}^g b_i\otimes d(a_i).
\end{gather}
Hence
\begin{gather}\label{JTH2equ11}
\Xi_{m,n}\Psi_{m,n}(d)=\sum_{i=1}^g [a_i, d(b_i)]-\sum_{i=1}^g [b_i, d(a_i)]=d\left(\sum_{i=1}^g [a_i, b_i]\right)=d(\Omega).
\end{gather}
Therefore $\Xi_{m,n}\Psi_{m,n}(d)=0$ if and only if $d(\Omega)=0$.
\end{proof}

\begin{proposition}\label{r55}
Let $(m,n)\in\modN^2_+$. For $h\in \mathcal{M}_{m,n}$ we have $\Xi_{m,n}\tau_{m,n}(h)=0$, that is $$\tau_{m,n}(h)\in D_{m,n}(A,B)\simeq \mathrm{Der}^{\omega}_{m,n}(\mathfrak{Lie}(A,B)).$$
In other words, $\tau_{m,n}(h)$ is a symplectic derivation of $\mathfrak{Lie}(A,B)$.
\end{proposition}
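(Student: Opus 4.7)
The strategy is to exploit the fact that every $h\in\mathcal{M}$ fixes $\partial\Sigma$ pointwise, hence fixes the boundary loop
\begin{gather*}
\zeta := \prod_{i=1}^g [x_i,y_i]\in K=\pi_1(\Sigma,*),
\end{gather*}
so that $[h,\zeta]=h(\zeta)\zeta^{-1}=1$ in $K\rtimes\mathcal{M}$.

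First I would check that $\zeta\in K_{1,1}$: since $x_i\in K_{1,0}$ and $y_i\in K_{0,1}$, each $[x_i,y_i]\in[K_{1,0},K_{0,1}]\subset K_{1,1}$. Next I would identify the class $[\zeta]_{1,1}\in \bK_{1,1}$ with the element $\Omega\in\mathfrak{Lie}_{1,1}(A,B)$ under the isomorphism $\varphi$ of Theorem~\ref{r51}: by construction $\varphi([x_i]_{1,0})=a_i$ and $\varphi([y_i]_{0,1})=b_i$, so $\varphi([\zeta]_{1,1})=\sum_{i}[a_i,b_i]=\Omega$.

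Then I would apply the derivation $\tau_{m,n}(h)$, viewed as an element of $\Der_{m,n}(\mathfrak{Lie}(A,B))$ via $\Psi_{m,n}^{-1}$, to the element $\Omega=[\zeta]_{1,1}$. On one hand, by Proposition~\ref{r39} applied to $a=\zeta\in K_{1,1}$, we obtain
\begin{gather*}
\tau_{m,n}(h)(\Omega)=[[h,\zeta]]_{m+1,n+1}=[1]_{m+1,n+1}=0,
\end{gather*}
using precisely that $h(\zeta)=\zeta$. On the other hand, Proposition~\ref{r54} (together with the identity $\Xi_{m,n}\Psi_{m,n}(d)=d(\Omega)$ established in its proof via \eqref{JTH2equ10}--\eqref{JTH2equ11}) tells us that $\Xi_{m,n}\tau_{m,n}(h)=\tau_{m,n}(h)(\Omega)$. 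Combining these two equalities gives $\Xi_{m,n}\tau_{m,n}(h)=0$, so $\tau_{m,n}(h)\in D_{m,n}(A,B)$ and the corresponding derivation is symplectic.

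There is no real obstacle here: the only subtle point is the bookkeeping of the identifications (the isomorphism $\bK\simeq\mathfrak{Lie}(A,B)$ from Theorem~\ref{r51}, and the identification $\Der_{m,n}\simeq\mathrm{D}_{m,n}$ from Proposition~\ref{r53}) so that the boundary relation $\prod_i[x_i,y_i]$ in $K$ is recognized as the symplectic form element $\Omega=\sum_i[a_i,b_i]$ in $\mathfrak{Lie}_{1,1}(A,B)$. Once this is set up, the argument is just the derivation property of $\tau_{m,n}(h)$ applied to a single fixed element of $K_{1,1}$.
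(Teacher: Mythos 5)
Your proof is correct, and it takes a genuinely cleaner route than the paper's. Both arguments rest on the same geometric fact, namely that $h\in\mathcal M_{m,n}\subset\mathcal M$ fixes $\partial\Sigma$ pointwise and hence fixes the boundary element $\zeta=\prod_i[x_i,y_i]$ of $K=\pi_1(\Sigma,*)$. The paper (following Morita's original computation for the classical Johnson homomorphisms) expands the identity $h_\#(\prod_i[x_i^{-1},y_i^{-1}])=\prod_i[x_i^{-1},y_i^{-1}]$ by hand, uses the commutator identities of Lemma~\ref{r22} to reduce modulo $K_{>(m+1,n+1)}$, and arrives at a congruence (their equation~\eqref{e50}) which, read through the identification~\eqref{e47}, says exactly $\Xi_{m,n}\tau_{m,n}(h)=0$. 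You instead repackage this: since $\zeta\in K_{1,1}$ and $[\zeta]_{1,1}$ is sent to $\Omega=\sum_i[a_i,b_i]$ under the isomorphism $\varphi$ of Theorem~\ref{r51}, the formula $\tau_{m,n}(h)_{1,1}([a]_{1,1})=[[h,a]]_{m+1,n+1}$ from Proposition~\ref{r39} gives $\tau_{m,n}(h)(\Omega)=[[h,\zeta]]_{m+1,n+1}=0$ outright; combined with the identity $\Xi_{m,n}\Psi_{m,n}(d)=d(\Omega)$ established inside the proof of Proposition~\ref{r54}, this yields $\Xi_{m,n}\tau_{m,n}(h)=0$ with no further commutator manipulation. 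What you gain is conceptual economy: the commutator gymnastics that the paper performs by hand are precisely the calculations already carried out once and for all in the proofs of Propositions~\ref{r24}, \ref{r39} and \ref{r54}, so you are entitled to use them as black boxes, and the resulting proof makes it visible that the ``symplectic'' constraint on $\tau_{m,n}$ is nothing but the statement that the derivation kills the image of the boundary class. What the paper's version buys is a self-contained, hands-on verification that does not require the reader to have internalized the identification $\bar K\simeq\mathfrak{Lie}(A,B)$ nor the formula $\Xi_{m,n}\Psi_{m,n}(d)=d(\Omega)$. One small point to flag: the paper writes the boundary relator as $\prod_i[x_i^{-1},y_i^{-1}]$ rather than $\prod_i[x_i,y_i]$; since $[[x_i^{-1},y_i^{-1}]]_{1,1}=[-[x_i]_{1,0},-[y_i]_{0,1}]=[[x_i]_{1,0},[y_i]_{0,1}]$ by Lemma~\ref{r44c}, both choices give the same class $\Omega$ in $\bar K_{1,1}$, so your version is consistent with theirs.
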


\begin{proof}
The proof follows the lines of the proof of \cite[Corollary 3.2]{MR1224104}. Consider the free basis $\{x_i,y_i\}$ of $K$ as in Section~\ref{sec_3_1}.

Let $h\in \mathcal{M}_{m,n}$. The induced map $h_{\#}$ in homotopy fixes the inverse of the homotopy class $[\partial\Sigma]$ of a loop parallel to the boundary  $\Sigma$. That is,
\begin{equation}\label{JTH2equ15}
h_{\#}\left(\prod_{i=1}^g [x_i^{-1}, y_i^{-1}]\right) = \prod_{i=1}^g [x_i^{-1}, y_i^{-1}].
\end{equation} 
For $1\leq i\leq g$ we have
$$h_{\#}(x_i^{-1})x_i=\gamma_i\in K_{m+1,n}\ \ \ \ \ \text{ and }\ \ \ \ \ h_{\#}(y_i^{-1})y_i=\delta_i\in K_{m,n+1}.$$
Hence
\begin{equation*}
\begin{split}
[h_{\#}(x^{-1}_i),h_{\#}(y^{-1}_i)] & = [\gamma_ix^{-1}_i, \delta_iy^{-1}_i]\\
 & = \left(\gamma_i[x^{-1}_i, \delta_i]\gamma^{-1}_i\right)\left(\gamma_i\delta_i[x^{-1}_i, y^{-1}_i]\delta^{-1}_i\gamma^{-1}_i\right)[\gamma_i,\delta_i]\left(\delta_i[\gamma_i,y^{-1}_i]\delta^{-1}_i\right).
\end{split}
\end{equation*}

\medskip

\noindent It follows from Equality (\ref{JTH2equ15}) that
\begin{align}\label{JTH2equ161}
\prod_{i=1}^g [x^{-1}_i, y^{-1}_i] 
= \prod_{i=1}^g \left(\gamma_i[x^{-1}_i, \delta_i]\gamma^{-1}_i\right)\left(\gamma_i\delta_i[x^{-1}_i, y^{-1}_i]\delta^{-1}_i\gamma^{-1}_i\right)[\gamma_i,\delta_i]\left(\delta_i[\gamma_i,y^{-1}_i]\delta^{-1}_i\right).
\end{align}

\medskip

\noindent Now $[x^{-1}_i, \delta_i]\in K_{m+1,n+1}$, $[\gamma_i,\delta_i]\in K_{2m+1,2n+1}\subset K_{{>(m+1,n+1)}}$ and $[\gamma_i,y^{-1}_i]\in K_{m+1,n+1}$. Therefore, by considering Equation (\ref{JTH2equ161}) modulo $K_{{>(m+1,n+1)}}$ and using Lemma~\ref{r22} we obtain
\begin{align*}
\prod_{i=1}^g [x^{-1}_i, y^{-1}_i]& \equiv \prod_{i=1}^g [x^{-1}_i, \delta_i][x^{-1}_i, y^{-1}_i][\gamma_i,y^{-1}_i]\\
 & \equiv \left(\prod_{i=1}^g [x^{-1}_i, y^{-1}_i]\right)\left(\prod_{i=1}^g  [x_i, \delta^{-1}_i][\gamma^{-1}_i,y_i]\right).
\end{align*}
Thus
\begin{equation}\label{e50}
\prod_{i=1}^g  [x_i, \delta^{-1}_i][\gamma^{-1}_i,y_i]\in K_{{>(m+1,n+1)}}.
\end{equation}

\noindent From \eqref{e50}, identification \eqref{e47} and \eqref{e48} we have
\begin{equation}\label{e51}
\begin{split}
0 & = \Xi_{m,n}\left(\sum_{i=1}^{g}a_i\otimes (\delta^{-1}_iK_{{>(m,n+1)}}) - \sum^{g}_{i=1}b_i\otimes (\gamma^{-1}_iK_{{>(m+1,n)}})\right)\\
 & = \sum_{i=1}^{g}[a_i,[h,y_i]K_{{>(m,n+1)}}]-\sum_{i=1}^g [b_i,[h,x_i]K_{{>(m+1,n)}}]\\
 & = \Xi_{m,n}\tau_{m,n}(h).
\end{split}
\end{equation}
In the second equality of \eqref{e51} we used
\begin{gather}
\begin{split}
\delta^{-1}_iK_{{>(m,n+1)}} & = y_i\delta^{-1}_iy^{-1}_iK_{{>(m,n+1)}}\\
& = h_{\#}(y_i)y^{-1}_iK_{{>(m,n+1)}}\\
& = [h,y_i]K_{{>(m,n+1)}},
\end{split}
\end{gather}
and a similar equivalence for $\gamma_i$.
\end{proof}

\subsection{Relation with the usual Johnson homomorphisms}

We have seen in Section~\ref{sec_3_4} that $n$-th Johnson homomorphism take values in the kernel $D_n(H)$ of the Lie bracket  $\left[\ ,\ \right]\zzzcolon H\otimes\mathfrak{Lie}_{n+1}(H)\rightarrow\mathfrak{Lie}_{n+2}(H)$~\cite[Corollary 3.2]{MR1224104}. The group $D_n(H)$ is isomorphic to the group $\mathrm{Der}^{\omega}_n(\mathfrak{Lie}(H))$ of symplectic (vanishing in $\Omega$) degree $n$ derivations of $\mathfrak{Lie}(H)$.

For $m,n\geq 2$ we set 
\begin{gather}
D_{m,-1}(A,B)=D_{m-1}(A)=\mathrm{ker}\big(A\otimes\mathfrak{Lie}_{m}(A)\xrightarrow{\ [\ , \ ]\ }\mathfrak{Lie}_{m+1}(A)\big),\\
D_{-1,n}(A,B)=D_{n-1}(B)=\mathrm{ker}\big(B\otimes\mathfrak{Lie}_{n}(B)\xrightarrow{\ [\ , \ ]\ }\mathfrak{Lie}_{n+1}(B)\big),
\end{gather}
where $\mathfrak{Lie}(A)=\bigoplus_{j\geq 1}\mathfrak{Lie}_j(A)$ (resp. $\mathfrak{Lie}(B)=\bigoplus_{j\geq 1}\mathfrak{Lie}_j(B)$) is the  $\modN_+$-graded Lie algebra freely generated by $A$ (resp. $B$) in degree~$1$. For instance $D_{2,-1}(A,B)\simeq\ext^3 A$ and $D_{-1,2}(A,B)\simeq\ext^3 B$. Notice that $\mathfrak{Lie}(A)$ and $\mathfrak{Lie}(B)$ can be seen as subalgebras of $\mathfrak{Lie}(H)$. Similarly  $D_n(A)$ and $D_n(B)$ can be seen as subgroups of $D_n(H)$.

\begin{lemma}\label{r57} For $m,n\geq -1$ with $m+n\geq 1$, there is a well-defined (inclusion) homomorphism $$j\zzzcolon D_{m,n}(A,B)\longrightarrow D_{m+n}(H).$$
\end{lemma}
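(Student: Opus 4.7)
The plan is to treat the three cases $m=-1$, $n=-1$, and $m,n\ge 0$ (with $m+n\ge 1$) uniformly by using the identification of $\mathfrak{Lie}(H)$ with $\mathfrak{Lie}(A,B)$ regarded as an $\modN$-graded Lie algebra via the total degree. First recall from the corollary following Theorem~\ref{r51} that, for every $k\ge 1$,
\begin{equation*}
\mathfrak{Lie}_k(H)\simeq\bigoplus_{i+j=k}\mathfrak{Lie}_{i,j}(A,B),
\end{equation*}
and that this isomorphism is induced by the inclusions $A,B\hookrightarrow H=A\oplus B$ together with the universal property of the free Lie algebra. In particular, the inclusions give injective Lie algebra homomorphisms $\mathfrak{Lie}(A)\hookrightarrow\mathfrak{Lie}(H)$ and $\mathfrak{Lie}(B)\hookrightarrow\mathfrak{Lie}(H)$, and the Lie bracket on $\mathfrak{Lie}(A,B)$ is compatible with the Lie bracket on $\mathfrak{Lie}(H)$ through the above decomposition.

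For the cases $(m,-1)$ with $m\ge 2$ and $(-1,n)$ with $n\ge 2$, define $j$ as the map induced by the inclusions $A\hookrightarrow H$, $\mathfrak{Lie}_m(A)\hookrightarrow\mathfrak{Lie}_m(H)$ (respectively $B\hookrightarrow H$, $\mathfrak{Lie}_n(B)\hookrightarrow\mathfrak{Lie}_n(H)$). Since the Lie bracket is preserved, an element of $D_{m-1}(A)=\ker(A\otimes\mathfrak{Lie}_m(A)\to\mathfrak{Lie}_{m+1}(A))$ is automatically sent into $\ker(H\otimes\mathfrak{Lie}_m(H)\to\mathfrak{Lie}_{m+1}(H))=D_{m-1}(H)$, and the map is injective because all the underlying inclusions are.

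For the remaining case $m,n\ge 0$, $m+n\ge 1$, consider the natural inclusion
\begin{equation*}
\bigl(A\otimes\mathfrak{Lie}_{m,n+1}(A,B)\bigr)\oplus\bigl(B\otimes\mathfrak{Lie}_{m+1,n}(A,B)\bigr)\ \hookrightarrow\ H\otimes\mathfrak{Lie}_{m+n+1}(H),
\end{equation*}
obtained by combining $A,B\hookrightarrow H$ with the inclusions $\mathfrak{Lie}_{m,n+1}(A,B),\mathfrak{Lie}_{m+1,n}(A,B)\hookrightarrow\mathfrak{Lie}_{m+n+1}(H)$ coming from the decomposition above. Under the Lie bracket $[\,,\,]\colon H\otimes\mathfrak{Lie}_{m+n+1}(H)\to\mathfrak{Lie}_{m+n+2}(H)$, an element in the image lands in the summand $\mathfrak{Lie}_{m+1,n+1}(A,B)\subset\mathfrak{Lie}_{m+n+2}(H)$ (since $A\otimes\mathfrak{Lie}_{m,n+1}$ brackets into bidegree $(m+1,n+1)$ and similarly for the other summand). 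Therefore an element lies in $D_{m+n}(H)$ if and only if the sum of these two images in $\mathfrak{Lie}_{m+1,n+1}(A,B)$ vanishes, which is exactly the condition $\Xi_{m,n}=0$ defining $D_{m,n}(A,B)$. Hence $j$ is well defined on $D_{m,n}(A,B)$, and its injectivity follows from the injectivity of the underlying inclusion.

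The only subtle point is checking the compatibility of the two bracket maps, i.e.\ verifying that the image of $\Xi_{m,n}$ really coincides with the restriction of the bracket $H\otimes\mathfrak{Lie}_{m+n+1}(H)\to\mathfrak{Lie}_{m+n+2}(H)$; this is immediate from the fact that the isomorphism $\mathfrak{Lie}(H)\simeq\mathfrak{Lie}(A,B)$ (with the grading collapsed to the total degree) is an isomorphism of Lie algebras.
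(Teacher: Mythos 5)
Your argument is correct and uses essentially the same idea as the paper's proof: both rely on the decomposition $\mathfrak{Lie}_{m+n}(H)\simeq\bigoplus_{i+j=m+n}\mathfrak{Lie}_{i,j}(A,B)$ (the corollary after Theorem~\ref{r51}) and compatibility of the Lie brackets to see that the inclusion carries $\ker\Xi_{m,n}$ into $\ker[\ ,\ ]$. The paper phrases the inclusion $\mathfrak{Lie}_{m,n}(A,B)\hookrightarrow\mathfrak{Lie}_{m+n}(H)$ via the group quotients $K_{m,n}/K_{>(m,n)}\to\Gamma_{m+n}K/\Gamma_{m+n+1}K$, whereas you work directly on the free Lie algebras, but these are identified by the cited isomorphisms, so the two presentations are equivalent.
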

\begin{proof}
The cases $m=-1$ and $n=-1$ follows by definition. If $(m,n)\in\modN^2_+$ we have a  map
$$\mathfrak{Lie}_{m,n}(A,B)\simeq\frac{K_{m,n}}{K_{{>(m,n)}}}\longrightarrow \frac{\Gamma_{m+n}K}{\Gamma_{m+n+1}K}\simeq \mathfrak{Lie}_{m+n}(H),$$
which sends $zK_{{>(m,n)}}$ to $z\Gamma_{m+n+1}K$ for all $z\in K_{m,n}$. This map is compatible with the Lie bracket on both sides, in particular, the following diagram commutes.
\begin{gather*}
\xymatrixcolsep{4pc}\xymatrix{\left(A\otimes\mathfrak{Lie}_{m,n+1}(A,B)\right)\oplus\left(B\otimes\mathfrak{Lie}_{m+1,n}(A,B)\right)\ar[r]^-{\Xi_{m,n}\ }\ar[d]_{} & \mathfrak{Lie}_{m+1,n+1}(A,B) \ar[d]^{} \\
						H\otimes\mathfrak{Lie}_{m+n+1}(H)\ar[r]^-{\left[\ ,\ \right]} & \mathfrak{Lie}_{m+n+2}(H).}
\end{gather*}
Therefore we obtain a  homomorphism $j\zzzcolon D_{m,n}(A,B)\rightarrow D_{m+n}(H).$
\end{proof}

Let $(\mathcal{M}_{m,n})_{m,n\geq -1}$ be the double Johnson filtration of the mapping class group. We extend the double Johnson homomorphisms for the Goeritz group to the whole family $(\mathcal{M}_{m,n})_{m,n\geq -1}$ by using the usual Johnson homomorphisms as follows. For $m,n\geq 2$ we set
\begin{gather}
\tau_{m,-1} ={\tau_{m-1}}{|}_{\mathcal{M}_{m,-1}} \quad \quad \text{and} \quad \quad \tau_{-1,n} ={\tau_{n-1}}{|}_{\mathcal{M}_{-1,n}}.
\end{gather}
We can check that $\tau_{m,-1}$ (resp. $\tau_{-1,n}$) takes values in $D_{m,-1}(A,B)$ (resp. $D_{-1,n}(A,B)$). Moreover, we have
$\mathcal{M}_{m+1,-1}\cdot\mathcal{M}_{m,0}\subset \mathrm{ker}(\tau_{m,-1})$ and $\mathcal{M}_{-1,n+1}\cdot\mathcal{M}_{0,n}\subset \mathrm{ker}(\tau_{-1,n})$.

\begin{proposition}\label{r56}
 For $m,n\geq -1$ with $m+n\geq 1$, the following diagram commutes. 
\begin{equation*}%
\xymatrix{\mathcal{M}_{m,n}\ar[r]^{\subset}\ar[d]_{\tau_{m,n}} & J_{m+n}\mathcal{M} \ar[d]^{\tau_{m+n}} \\
						D_{m,n}(A,B)\ar[r]^{j} & D_{m+n}(H).}
\end{equation*}
In other words, ${\tau_{m+n}}{|}_{{\mathcal{M}_{m,n}}} = j\tau_{m,n}$.
\end{proposition}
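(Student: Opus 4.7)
The plan is to split the verification into three cases according to the sign pattern of $(m,n)$: (a) $m=-1$, $n\ge 2$; (b) $m\ge 2$, $n=-1$; and (c) $(m,n)\in\modN^2_+$ with $m+n\ge 1$. Cases (a) and (b) should be dealt with first, and they are essentially tautological: since $\tau_{m,-1}$ and $\tau_{-1,n}$ have been defined as the restrictions of the usual Johnson homomorphisms $\tau_{m-1}$ and $\tau_{n-1}$ to $\M_{m,-1}$ and $\M_{-1,n}$ respectively, commutativity of the diagram reduces to the statement that the map $j$ in Lemma~\ref{r57} is the natural inclusion $D_{m-1}(A)\hookrightarrow D_{m-1}(H)$ (resp.\ $D_{n-1}(B)\hookrightarrow D_{n-1}(H)$) induced by the inclusion of Lie subalgebras $\mathfrak{Lie}(A)\hookrightarrow\mathfrak{Lie}(H)$ (resp.\ $\mathfrak{Lie}(B)\hookrightarrow\mathfrak{Lie}(H)$); this is immediate from the construction of $j$ in the proof of Lemma~\ref{r57}.

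The substantive case is (c). Here the strategy is to compare the two explicit formulas available: formula~\eqref{eq_tau_b} for $\tau_{m+n}$ and formula~\eqref{e48} for $\tau_{m,n}$, both written in terms of the symplectic basis $\{a_i,b_i\}$ of $H$ induced by the meridians and parallels $\{x_i,y_i\}$ of~$K$. Fix $h\in\M_{m,n}$. Since $\M_{m,n}\subset J_{m+n}\M$, the commutators $[h,x_i]\in K_{m+1,n}$ and $[h,y_i]\in K_{m,n+1}$ lie a fortiori in $\Gamma_{m+n+1}K$, so both formulas make sense at once, and the two expressions differ only in the class one takes of the commutators $[h,x_i]$ and $[h,y_i]$, namely modulo $K_{>(m+1,n)}$ and $K_{>(m,n+1)}$ on one side, and modulo $\Gamma_{m+n+2}K$ on the other.

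It therefore suffices to check that the map $j$ sends $a_i\otimes [h,y_i]K_{>(m,n+1)}$ to $a_i\otimes [h,y_i]\Gamma_{m+n+2}K$ and $b_i\otimes [h,x_i]K_{>(m+1,n)}$ to $b_i\otimes [h,x_i]\Gamma_{m+n+2}K$. This in turn follows from the definition of $j$ given in the proof of Lemma~\ref{r57}: on a coset $zK_{>(i,j)}$ with $z\in K_{i,j}$, the map sends it to $z\Gamma_{i+j+1}K$, and this map was shown there to be compatible with the respective bracket maps $\Xi_{i,j}$ and $[\,,\,]$; in particular it extends tensorially to $A\otimes \mathfrak{Lie}_{m,n+1}(A,B)\oplus B\otimes\mathfrak{Lie}_{m+1,n}(A,B)\to H\otimes \mathfrak{Lie}_{m+n+1}(H)$ via the inclusions $A,B\hookrightarrow H$. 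Putting these pieces together yields $j\circ\tau_{m,n}(h)=\tau_{m+n}(h)$.

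The only mildly delicate point will be making sure that the identification $H\simeq H^*$ used implicitly in~\eqref{eq_tau_b} via the intersection form is coherent with the identifications $A\simeq B^*$ and $B\simeq A^*$ used in~\eqref{e47}; this is a bookkeeping verification rather than a real obstacle, handled once and for all by tracing through the definitions on the symplectic basis. No step is expected to present a genuine difficulty.
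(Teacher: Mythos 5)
Your proposal is correct and follows essentially the same route as the paper's proof: split off the cases $m=-1$ (resp. $n=-1$), which are tautological given the definitions of $\tau_{-1,n}$ and $\tau_{m,-1}$ as corestrictions of the usual Johnson homomorphisms, and then for $(m,n)\in\modN^2_+$ compare the explicit expressions~\eqref{e48} and~\eqref{eq_tau_b} on the symplectic basis and observe that $j$ carries the one to the other. The paper states this more tersely; your write-up just makes explicit the unwinding of $j$ on cosets, which is precisely where the content lies.
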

\begin{proof} The cases $m=-1$ and $n=-1$ follows by definition. Let $(m,n)\in\modN^2_+$ and $h\in\mathcal{M}_{m,n}$. If we apply the map $j$ to $\tau_{m,n}(h)$ expressed as in \eqref{e48} we obtain  $\tau_{m+n}(h)$ expressed as in \eqref{eq_tau_b}.
\end{proof}

\begin{corollary} The homomorphisms $\tau_{i,j}\zzzcolon \mathcal{M}_{i,j}\rightarrow D_{i,j}(A,B)$ for $(i,j)=(2,-1)$, $(1,0)$, $(0,1)$, $(-1,2)$
are surjective.
\end{corollary}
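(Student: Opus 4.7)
The plan is to deduce the surjectivity of each $\tau_{i,j}$ directly from Proposition~\ref{r45} (which handles $\tau_1$) together with the compatibility diagram from Proposition~\ref{r56}. The key observation is that for the four extremal bidegrees $(i,j) \in \{(2,-1),(1,0),(0,1),(-1,2)\}$, the map $j\zzzcolon D_{i,j}(A,B) \to D_1(H)$ is injective and its image matches one of the four summands in the decomposition $\ext^3 H = \ext^3 A \oplus (\ext^2 A\otimes B)\oplus (A\otimes \ext^2 B)\oplus \ext^3 B$ from \eqref{eq:1}.

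The first step is to verify the injectivity of $j$ on these four groups. For $(2,-1)$ and $(-1,2)$ this is immediate since $D_{2,-1}(A,B)=D_1(A)\simeq \ext^3 A$ and $D_{-1,2}(A,B)=D_1(B)\simeq \ext^3 B$ sit inside $D_1(H)$ as subgroups corresponding to subalgebras $\mathfrak{Lie}(A),\mathfrak{Lie}(B)\subset\mathfrak{Lie}(H)$. For $(1,0)$ and $(0,1)$ we use the corollary to Theorem~\ref{r51}, which shows $\mathfrak{Lie}_{m+n}(H)\simeq\bigoplus_{i+j=m+n}\mathfrak{Lie}_{i,j}(A,B)$; the inclusion of summands gives injectivity of the map $\mathfrak{Lie}_{m,n}(A,B)\hookrightarrow\mathfrak{Lie}_{m+n}(H)$ constructed in Lemma~\ref{r57}, and this injectivity passes to $D_{m,n}(A,B)\hookrightarrow D_{m+n}(H)$.

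The second step is to identify the image of $j$ with the appropriate summand of $\ext^3 H$ under the Morita identification \eqref{eq_D_1}. For instance, for $(i,j)=(1,0)$ the image of $j$ lies in the part of $D_1(H)=\ext^3 H$ corresponding to ``two $A$-factors and one $B$-factor'', i.e., $\ext^2 A\otimes B$, and a straightforward check using \eqref{eq_D_1} shows the image equals $\ext^2 A\otimes B$. Dually $(0,1)\mapsto A\otimes\ext^2 B$, while the extremal cases map to $\ext^3 A$ and $\ext^3 B$ essentially by definition.

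With these identifications in hand, the surjectivity of $\tau_{i,j}$ follows from a small diagram chase. By Proposition~\ref{r56} we have $\tau_1|_{\mathcal{M}_{i,j}}=j\circ\tau_{i,j}$. By Proposition~\ref{r45}, $\tau_1|_{\mathcal{M}_{i,j}}$ is surjective onto the corresponding summand of $\ext^3 H$, which is precisely the image of $j$. Since $j$ is injective, $\tau_{i,j}\zzzcolon \mathcal{M}_{i,j}\to D_{i,j}(A,B)$ must be surjective. I do not expect any serious obstacle; the only substantive bookkeeping is the concrete identification of $\mathrm{im}(j)$ with the expected summand of $\ext^3 H$, which amounts to checking the image of $\Xi_{m,n}^{-1}(0)$ under the Morita map~\eqref{eq_D_1} on the explicit bases used in the proof of Proposition~\ref{r45}.
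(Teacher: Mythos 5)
Your argument is correct and is essentially the deduction the paper intends by its one-line proof "This follows from Propositions~\ref{r56} and \ref{r45}." You have merely filled in the implicit step: that $j$ is injective with image in the corresponding summand of $\ext^3 H$, so that surjectivity of $\tau_1|_{\mathcal{M}_{i,j}} = j\circ\tau_{i,j}$ onto that summand forces surjectivity of $\tau_{i,j}$. The injectivity of $j$ is indeed what makes the diagram chase work, and you supply a valid justification for it (direct summand inclusion of $\mathfrak{Lie}$-components via the corollary to Theorem~\ref{r51}, and literal subgroup inclusion in the extremal cases); the paper signals this by calling $j$ an ``(inclusion) homomorphism'' in Lemma~\ref{r57} without spelling it out.
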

\begin{proof}
This follows {from} Propositions~\ref{r56} and \ref{r45}.
\end{proof}

\begin{corollary} Let $h\in\mathcal{I}$, then there exist $h_1\in\mathcal{M}_{2,-1}$, $h_2\in\mathcal{M}_{1,0}$, $h_3\in\mathcal{M}_{0,1}$ and $h_4\in\mathcal{M}_{-1,2}$ such that
$$\tau_1(h)=\tau_{2,-1}(h_1) + \tau_{1,0}(h_2) + \tau_{0,1}(h_3) + \tau_{-1,2}(h_4).$$
\end{corollary}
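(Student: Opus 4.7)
The plan is to combine Theorem~\ref{r46} with Proposition~\ref{r56}, exploiting the fact that $\tau_1$ is a homomorphism into the abelian group $\ext^3 H$ and that its kernel is the Johnson subgroup.

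First I would apply Theorem~\ref{r46} to decompose an arbitrary $h\in \mathcal{I}$ as a product $h = h_1 h_2 h_3 h_4 k$ with $h_1\in \mathcal{M}_{2,-1}$, $h_2\in \mathcal{M}_{1,0}$, $h_3\in \mathcal{M}_{0,1}$, $h_4\in \mathcal{M}_{-1,2}$ and $k\in \mathcal{K}=J_2\mathcal{M}$. Then I would apply $\tau_1$ to both sides. Because $\tau_1\zzzcolon \mathcal{I}\to \ext^3 H$ is a group homomorphism into an abelian group, the image of the product is the sum of the images; since $\mathcal{K}=\ker(\tau_1)$ by definition of the Johnson subgroup, the $k$-contribution vanishes, leaving
\begin{equation*}
\tau_1(h)=\tau_1(h_1)+\tau_1(h_2)+\tau_1(h_3)+\tau_1(h_4).
\end{equation*}

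Next I would invoke Proposition~\ref{r56}: for each pair $(m,n)\in\{(2,-1),(1,0),(0,1),(-1,2)\}$ and $h_i\in \mathcal{M}_{m,n}$, one has $\tau_1(h_i)=j\bigl(\tau_{m,n}(h_i)\bigr)$, where $j\zzzcolon D_{m,n}(A,B)\hookrightarrow D_1(H)\simeq \ext^3 H$ is the inclusion from Lemma~\ref{r57}. Identifying each $D_{m,n}(A,B)$ with its image inside $\ext^3 H$ under~$j$ (which lands in one of the four direct summands appearing in the decomposition~\eqref{eq:1}), the displayed identity becomes precisely
\begin{equation*}
\tau_1(h)=\tau_{2,-1}(h_1)+\tau_{1,0}(h_2)+\tau_{0,1}(h_3)+\tau_{-1,2}(h_4),
\end{equation*}
which is the claim.

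There is no real obstacle here: the result is essentially a bookkeeping consequence of the two main ingredients already established, namely the factorization of $\mathcal{I}$ modulo $\mathcal{K}$ (Theorem~\ref{r46}) and the compatibility of the double Johnson homomorphisms with the classical first Johnson homomorphism (Proposition~\ref{r56}). The only small point to keep in mind is that the equality in the statement must be interpreted in $\ext^3 H$ via the inclusions $j$, using the direct sum decomposition~\eqref{eq:1} that pairs each $D_{m,n}(A,B)$ with a canonical summand of $\ext^3 H$.
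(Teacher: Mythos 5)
Your argument is correct and follows exactly the route the paper takes: decompose $h$ via Theorem~\ref{r46}, use that $\tau_1$ is a homomorphism into the abelian group $\ext^3 H$ with kernel $\mathcal{K}$, and then apply Proposition~\ref{r56} to identify $\tau_1(h_i)$ with $j(\tau_{m,n}(h_i))$. The paper's own proof is just a terser version of the same bookkeeping.
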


\begin{proof}
This follows {from} Proposition~\ref{r56} and Theorem~\ref{r46}. Notice that the elements $h_1$, $h_2$, $h_3$ and $h_4$ can be found explicitly from the action of $h$ in homotopy as we have seen in Proposition~\ref{r45}.
\end{proof}

{
\begin{remark} In this remark we work over $\Q$ and understand $A$, $B$, etc. as $A\otimes\Q, B\otimes\Q$, etc.
The vector spaces $D_k(H)=D_k(A\oplus B)$ have a natural structure of $\mathrm{GL}(H)$-module, and their irreducible decompositions are known for some small values of $k$.

Once such an irreducible decomposition is given for $D_k(H)$, one can decompose the $\mathrm{GL}(A)\times \mathrm{GL}(B)$-module $D_{m,n}(A\oplus B)$ with $m+n=k$, by using the behavior of the Schur functor~$S^\lambda$ with respect to direct sum:
\begin{gather*}
  S^{\lambda}(A\oplus B) = \bigoplus_{\mu,\nu} \mathrm{LR}^\lambda_{\mu,\nu}(S^\mu(A)\otimes S^\nu(B)),
\end{gather*}
where $\lambda,\mu,\nu$ are partitions, and $\mathrm{LR}^\lambda_{\mu,\nu}$ denotes the Littlewood-Richardson coefficient.
For instance,
\begin{gather*}
\begin{split}
D_1(A\oplus B) &= S^{[111]}(A\oplus B)\\
 &= \big(S^{[111]}(A)\otimes S^{[0]}(B)\big)\oplus \big(S^{[11]}(A)\otimes S^{[1]}(B)\big) \oplus \big(S^{[1]}(A)\otimes S^{[11]}(B)\big) \oplus \big(S^{[0]}(A)\otimes S^{[1]}(B)\big)\\
 &=\ext^3 A \oplus (\ext^2 A\otimes B)\oplus (A\otimes \ext^2B)\oplus \ext^3 B \\
 &= D_{2,-1}(A,B)\oplus D_{1,0}(A,B) \oplus D_{0,1}(A,B) \oplus D_{-1,2}(A,B), 
\end{split}
\end{gather*}
which is the decomposition given previously in \eqref{eq:1} (in the case of integer coefficients). Now, for $k=2$, we have
\begin{gather*}
\begin{split}
D_2(A\oplus B) &= S^{[22]}(A\oplus B)\\
 &= \big(S^{[22]}(A)\otimes S^{[0]}(B)\big)\oplus \big(S^{[21]}(A)\otimes S^{[1]}(B)\big) \oplus \big(\big(S^{[2]}(A)\otimes S^{[2]}(B)\big) \oplus \big(S^{[11]}(A)\otimes S^{[11]}(B)\big)\big)\\ & \quad \quad \oplus  \big(S^{[1]}(A)\otimes S^{[21]}(B)\big)\oplus  \big(S^{[0]}(A)\otimes S^{[22]}(B)\big)\\ 
 &= D_{3,-1}(A,B)\oplus D_{2,0}(A,B) \oplus D_{1,1}(A,B) \oplus D_{0,2}(A,B)\oplus D_{-1,3}(A,B).
\end{split}
\end{gather*}
\end{remark}
}

\begin{remark} For each $k\geq 1$, the space $D_k(H)\otimes \mathbb{Q}= D_k(A\oplus B)\otimes\mathbb{Q}$ can be identified with the space generated by \emph{tree-like Jacobi diagrams} (i.e. planar unitrivalent trees subject to some local relations) with~$k$ trivalent vertices and whose univalent vertices are labelled with elements of $A\oplus B$, see for instance~\cite{MR1783857}. This implies a diagrammatic description of the space $D_{m,n}(A,B)\otimes \mathbb{Q}$: it can be identified with the space generated by tree-like Jacobi diagrams with exactly $i+1$ univalent vertices colored by $A$ and $j+1$ univalent vertices colored by $B$. 
\end{remark}

\subsection{Relation with the alternative Johnson homomorphisms}
In \cite{HM} Massuyeau and the first author considered the $\modN$-filtration $(K^{\mathfrak{a}}_m)_{m\in\modN}$ of $K=\pi_1(\Sigma,*)$ defined recursively as follows: $K^{\mathfrak{a}}_0 = K^{\mathfrak{a}}_1 = K$, $K^{\mathfrak{a}}_2 = [K,K]\bX$ and 
$$K^{\mathfrak{a}}_m = [K^{\mathfrak{a}}_1, K^{\mathfrak{a}}_{m-1}]\;[K^{\mathfrak{a}}_2, K^{\mathfrak{a}}_{m-2}]$$
for $m\geq 3$. The Lagrangian mapping class group $\mathcal{L}$ acts on this $\modN$-filtration. The associated Johnson filtration of $\mathcal{L}$ is given by
$$J^{\mathfrak{a}}_m\mathcal{L} = \{h\in\mathcal{L}\ | \ [h, K^{\mathfrak{a}}_1]\subset K^{\mathfrak{a}}_{m+1} \quad \text{and} \quad [h, K^{\mathfrak{a}}_2]\subset K^{\mathfrak{a}}_{m+2}\}.$$
The $\modN$-filtration $(J^{\mathfrak{a}}_m\mathcal{L})_{m\in\modN}$ is called the \emph{alternative Johnson filtration}. We refer to~\cite{vera} for more details about this filtration and its associated \emph{alternative} Johnson homomorphisms. 

It follows easily that for $(m,n)\in\modN^2$ we have $K_{m,n}\subset K^{\mathfrak{a}}_{2m+n}$. Therefore, using \eqref{e16} and \eqref{e17}, we obtain 
$$\mathcal{M}_{m,n}\subset J^{\mathfrak{a}}_{2m+n}\mathcal{L}$$
for all $m,n\geq -1$ with $2m+n\geq 1$. The $\modN_+$-graded Lie algebra associated to $(K^{\mathfrak{a}}_m)_{m\in\modN}$ is isomorphic to the $\modN_+$-graded Lie algebra $\mathfrak{Lie}^{\mathfrak{a}}(H_1(V),A)=\bigoplus_{m\geq 1}\mathfrak{Lie}^{\mathfrak{a}}_m(H_1(V),A)$ freely generated by $H_1(V)$ in degree~$1$ and~$A$ in degree~$2$. 

A derivation of $\mathfrak{Lie}^{\mathfrak{a}}(H_1(V),A)$ is said to be \emph{symplectic} if it vanishes on the element $\Omega_V\in\mathfrak{Lie}^{\mathfrak{a}}_3(H_1(V), A)$ determined by the intersection form $\omega_V$ given in \eqref{intV}. 
For $m\geq 1$, consider the Lie bracket
\begin{gather}
\Xi^{\mathfrak{a}}_m: \left(A\otimes\mathfrak{Lie}^{\mathfrak{a}}_{m+1}(H_1(V),A)\right)\oplus\left(H_1(V)\otimes\mathfrak{Lie}^{\mathfrak{a}}_{m+2}(H_1(V),A)\right)\longrightarrow \mathfrak{Lie}^{\mathfrak{a}}_{m+3}(H_1(V),A).
\end{gather}
Set $D_m^{\mathfrak{a}}(H_1(V), A):=\mathrm{ker}(\Xi_m^{\mathfrak{a}})$. This group can be identified with the group of symplectic derivations of~$\mathfrak{Lie}^{\mathfrak{a}}(H_1(V), A)$ of degree $m$.

For $m\geq 1$ we have a group homomorphism
$$\tau^{\mathfrak{a}}_m: J^{\mathfrak{a}}_m\mathcal{L}\longrightarrow D^{\mathfrak{a}}_m(H_1(V),A),$$
called the \emph{$m$-th alternative Johnson homomorphism}, which in terms of the generators $\{x_i,y_i\}$ of $K$ and the symplectic basis $\{a_i,b_i\}$ of $H$  is given by
\begin{gather}\label{e53}
\tau^{\mathfrak{a}}_{m}(h)  = \sum_{i=1}^g a_i\otimes \left([h,y_i]K^{\mathfrak{a}}_{m+2}\right) - \sum_{i=1}^g b_i\otimes \left([h,x_i]K^{\mathfrak{a}}_{m+3}\right)
\end{gather}
for every $h\in J^{\mathfrak{a}}_m\mathcal{L}$.

For $m,n\geq -1$ with $2m+n\geq 1$, there is a  group homomorphism 
\begin{gather}
j^{\mathfrak{a}}\zzzcolon D_{m,n}(A,B)\longrightarrow D_{2m+n}^{\mathfrak{a}}(H_1(V),A)
\end{gather}
defined as follows.

For $m=-1$ and $n\geq 3$, the homomorphism $j^{\mathfrak{a}}:D_{-1,n}(A,B)\rightarrow D^{\mathfrak{a}}_{n-2}(H_1(V),A)$ is induced by the  map 
$$B\otimes \mathfrak{Lie}_n(B)\longrightarrow \big(A\otimes \mathfrak{Lie}^{a}_{n-1}(H_1(V),A)\big)\oplus\big(H_1(V)\otimes \mathfrak{Lie}^{a}_{n}(H_1(V),A)\big)$$
arising from the identification $B\simeq H_1(V)$.

For $n=-1$ and $m\geq 1$, the homomorphism $j^{\mathfrak{a}}:D_{m,-1}(A,B)\rightarrow D^{\mathfrak{a}}_{2m-1}(H_1(V),A)$ is induced by the canonical map 
$$A\otimes \mathfrak{Lie}_m(A)\longrightarrow \big(A\otimes \mathfrak{Lie}^{a}_{2m}(H_1(V),A)\big)\oplus\big(H_1(V)\otimes \mathfrak{Lie}^{a}_{2m+1}(H_1(V),A)\big).$$

Finally, for $(m,n)\in\modN^2_+$ the homomorphism $j^{\mathfrak{a}}\zzzcolon D_{m,n}(A,B)\longrightarrow D_{2m+n}^{\mathfrak{a}}(H_1(V),A)$ is induced by the identification $B\simeq H_1(V)$ and the map
$$\mathfrak{Lie}_{m,n}(A,B)\simeq\frac{K_{m,n}}{K_{{>(m,n)}}}\longrightarrow \frac{K^{\mathfrak{a}}_{2m+n}}{K^{\mathfrak{a}}_{2m+n+1}}\simeq \mathfrak{Lie}^{\mathfrak{a}}_{2m+n}(H_1(V), A),$$
which sends $zK_{{>(m,n)}}$ to $zK^{\mathfrak{a}}_{2m+n+1}$ for all $z\in K_{m,n}$.

We can now prove the analogue of Proposition~\ref{r56} in the case of the alternative Johnson homomorphisms.

\begin{proposition}\label{r58}
 For $m,n\geq -1$ with $2m+n\geq 1$, the following diagram commutes. 
\begin{equation*}%
\xymatrix{\mathcal{M}_{m,n}\ar[r]^-{\subset}\ar[d]_{\tau_{m,n}} & J^{\mathfrak{a}}_{2m+n}\mathcal{L} \ar[d]^{\tau^{\mathfrak{a}}_{2m+n}} \\
						D_{m,n}(A,B)\ar[r]^-{j^{\mathfrak{a}}} & D^{\mathfrak{a}}_{2m+n}(H_1(V),A).}
\end{equation*}
In other words, ${\tau^{\mathfrak{a}}_{2m+n}}{|}_{{\mathcal{M}_{m,n}}} = j^{\mathfrak{a}}\tau_{m,n}$.
\end{proposition}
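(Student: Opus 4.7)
The plan is to mimic the short proof of Proposition~\ref{r56}: both composites in the diagram are given by explicit formulas, so the commutativity reduces to a term-by-term comparison. The inclusion $\mathcal{M}_{m,n}\subset J^{\mathfrak{a}}_{2m+n}\mathcal{L}$ recorded just before the statement (which stems from $K_{m,n}\subset K^{\mathfrak{a}}_{2m+n}$) already guarantees that both vertical arrows of the square are defined on $\mathcal{M}_{m,n}$.

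I would then split the argument into cases matching the piecewise definition of $j^{\mathfrak{a}}$. For the interior range $(m,n)\in\modN^2_+$, the map $j^{\mathfrak{a}}$ is, by construction, induced on each graded piece $\bK_{i,j}\simeq\mathfrak{Lie}_{i,j}(A,B)$ by the canonical map $K_{i,j}/K_{>(i,j)}\to K^{\mathfrak{a}}_{2i+j}/K^{\mathfrak{a}}_{2i+j+1}\simeq\mathfrak{Lie}^{\mathfrak{a}}_{2i+j}(H_1(V),A)$, together with the identification $B\simeq H_1(V)$. Applying it to formula~\eqref{e48} sends the coset $[h,y_i]K_{>(m,n+1)}$ to $[h,y_i]K^{\mathfrak{a}}_{2m+n+2}$ and $[h,x_i]K_{>(m+1,n)}$ to $[h,x_i]K^{\mathfrak{a}}_{2m+n+3}$, which is exactly formula~\eqref{e53} for $\tau^{\mathfrak{a}}_{2m+n}(h)$.

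For the boundary cases $n=-1$ with $m\ge 1$ (and dually $m=-1$ with $n\ge 2$), one uses $\tau_{m,-1}=\tau_{m-1}|_{\mathcal{M}_{m,-1}}$ and compares~\eqref{eq_tau_b} with~\eqref{e53} directly: for $h\in\mathcal{M}_{m,-1}$ the $b_i$-component vanishes on both sides, because $[h,x_i]\in K_{m+1,0}\subset K^{\mathfrak{a}}_{2m+2}$, while the surviving $a_i$-component corresponds under $j^{\mathfrak{a}}$ via the inclusion $K_{m,0}/K_{>(m,0)}\hookrightarrow K^{\mathfrak{a}}_{2m}/K^{\mathfrak{a}}_{2m+1}$ (and $a_i$ is untouched in $A$). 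The dual case is symmetric, with only the $b_i$-terms surviving.

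The main obstacle will be the degree bookkeeping: one must verify that an element of $\bK_{i,j}$ lands in degree $2i+j$ of $\mathfrak{Lie}^{\mathfrak{a}}(H_1(V),A)$ (since $a_i\in A$ has alternative degree $2$ while $b_j\in B\simeq H_1(V)$ has alternative degree $1$), so that the bidegree shifts $(1,0)$ and $(0,1)$ appearing in~\eqref{e48} translate respectively to the degree shifts $2$ and $1$ appearing in~\eqref{e53}. Once this matching of degrees is in place, the identity $\tau^{\mathfrak{a}}_{2m+n}|_{\mathcal{M}_{m,n}}=j^{\mathfrak{a}}\,\tau_{m,n}$ is tautological.
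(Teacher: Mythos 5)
Your proposal matches the paper's proof, which is just to apply $j^{\mathfrak{a}}$ to the explicit formula for $\tau_{m,n}(h)$ (namely~\eqref{e48} in the interior and~\eqref{eq_tau_b} on the boundary) and observe that the result is the formula~\eqref{e53} for $\tau^{\mathfrak{a}}_{2m+n}(h)$, with the degree bookkeeping you describe. One small correction: the dual boundary case is $m=-1$ with $n\geq 3$ (not $n\geq 2$), since the hypothesis $2m+n\geq 1$ forces $n\geq 3$ when $m=-1$, consistent with the paper's definition of $j^{\mathfrak{a}}$ on $D_{-1,n}(A,B)$.
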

\begin{proof} Let $h\in\mathcal{M}_{m,n}$. If we apply the map $j^{\mathfrak{a}}$ to $\tau_{m,n}(h)$ (expressed as in \eqref{eq_tau_b} in the cases $m=-1$ or $n=-1$; or as in \eqref{e48} in the remaining cases) we obtain  $\tau^{\mathfrak{a}}_{2m+n}(h)$ expressed as in \eqref{e53}.
\end{proof}

\bibliographystyle{plain} 
\bibliography{DF_MCG}

\end{document}